\documentclass[a4paper, 12pt]{amsart}
 \usepackage{amsmath, amsthm, amssymb, amsfonts}
\usepackage[utf8]{inputenc}
\usepackage[normalem]{ulem}
\usepackage[colorlinks = true,
            linkcolor = black,
            urlcolor  = blue,
            citecolor = red,
            anchorcolor = red,
            hidelinks]{hyperref}
\usepackage{url}
\usepackage{longtable} 
\usepackage{enumitem}
\usepackage{relsize}
\usepackage{bm}
\usepackage{xcolor}
\usepackage{graphicx}
\usepackage{mathabx}
\usepackage{mathdots}
\usepackage{mathtools}
\usepackage{color}
\usepackage{tikz-cd,tikz}
\usepackage{braket}
\usepackage{stmaryrd}
\usepackage{amssymb}
\usepackage{tipa}
\usepackage[margin=1.2 in]{geometry}
\usepackage{appendix}
\usepackage{physics}
\usepackage{mathrsfs}  
\usepackage{longtable}


\usetikzlibrary{arrows, decorations.markings}
\usetikzlibrary{positioning}
\tikzstyle{vecArrow} = [thick, decoration={markings,mark=at position
   1 with {\arrow[semithick]{open triangle 60}}},
   double distance=1.4pt, shorten >= 5.5pt,
   preaction = {decorate},
   postaction = {draw,line width=1.4pt, white,shorten >= 4.5pt}]
\tikzstyle{innerWhite} = [semithick, white,line width=1.4pt, shorten >= 4.5pt]
\tikzstyle{vecEq} = [thick,
   double distance=1.4pt,
   preaction = {decorate},
   postaction = {draw,line width=1.4pt, white,shorten >= 4.5pt}]
\usetikzlibrary{matrix}
\usetikzlibrary{shapes}
\usetikzlibrary{arrows,decorations.markings, tikzmark}
\usepackage{stmaryrd}

\pgfarrowsdeclare{bad to}{bad to}
{
  \pgfarrowsleftextend{-2\pgflinewidth}
  \pgfarrowsrightextend{\pgflinewidth}
}
{
  \pgfsetlinewidth{0.8\pgflinewidth}
  \pgfsetdash{}{0pt}
  \pgfsetroundcap
  \pgfsetroundjoin
  \pgfpathmoveto{\pgfpoint{-3\pgflinewidth}{4\pgflinewidth}}
  \pgfpathcurveto
  {\pgfpoint{-2.75\pgflinewidth}{2.5\pgflinewidth}}
  {\pgfpoint{0pt}{0.25\pgflinewidth}}
  {\pgfpoint{0.75\pgflinewidth}{0pt}}
  \pgfpathcurveto
  {\pgfpoint{0pt}{-0.25\pgflinewidth}}
  {\pgfpoint{-2.75\pgflinewidth}{-2.5\pgflinewidth}}
  {\pgfpoint{-3\pgflinewidth}{-4\pgflinewidth}}
  \pgfusepathqstroke
}

\def\v{\textsf{v}}

\theoremstyle{definition}
\newtheorem{definition}{Definition}[section]
\newtheorem{assumption}[definition]{Assumption}
\newtheorem{theorem}[definition]{Theorem}
\newtheorem{example}[definition]{Example}
\newtheorem{proposition}[definition]{Proposition}

\newtheorem{corollary}[definition]{Corollary}

\newtheorem{lemma}[definition]{Lemma}

\newtheorem{remark}[definition]{Remark}

\newtheorem{claim}{Claim}

\newtheorem{thm}{Theorem}

\newcommand{\Gr}{\mathsf{Gr}}
\newcommand{\Flag}{\mathsf{Flag}}
\newcommand{\Vir}{\textup{Vir}}


\newcommand{\BA}{{\mathbb{A}}}

\newcommand{\BC}{{\mathbb{C}}}
\newcommand{\BD}{{\mathbb{D}}}

\newcommand{\BF}{{\mathbb{F}}}
\newcommand{\BG}{{\mathbb{G}}}
\newcommand{\BH}{{\mathbb{H}}}

\newcommand{\BN}{{\mathbb{N}}}

\newcommand{\BP}{{\mathbb{P}}}
\newcommand{\BQ}{{\mathbb{Q}}}
\newcommand{\BR}{{\mathbb{R}}}

\newcommand{\BZ}{{\mathbb{Z}}}

\newcommand{\CA}{{\mathcal A}}

\newcommand{\CD}{{\mathcal D}}

\newcommand{\CH}{{\mathcal H}}

\newcommand{\CM}{{\mathcal M}}
\newcommand{\CN}{{\mathcal N}}
\newcommand{\CO}{{\mathcal O}}
\newcommand{\CP}{{\mathcal P}}
\newcommand{\CQ}{{\mathcal Q}}

\newcommand{\CT}{{\mathcal T}}

\newcommand{\CV}{{\mathcal V}}

\newcommand{\1}{\mathbf 1}

\newcommand{\bfQ}{{\bf Q}}
\newcommand{\bfA}{{\bf A}}
\newcommand{\bfT}{{\bf T}}

\newcommand{\dgQ}{{\bold{Q}}}
\newcommand{\Ac}{\textnormal{Ac}}
\newcommand{\hproj}{\textnormal{h-proj}}

\newcommand{\bfCM}{{\bm \CM}}
\newcommand{\bfCN}{{\bm \CN}}

\newcommand{\FE}{{\mathfrak{E}}}

\DeclareMathOperator{\Sym}{Sym}
\DeclareMathOperator{\id}{id}
\DeclareMathOperator{\coker}{coker}

\newcommand{\Ext}{\mathcal{E}\text{xt}}

\newcommand{\End}{{\rm End}}
\newcommand{\Stab}{{\rm Stab}}

\newcommand{\bL}{{\mathsf{L}}}

\newcommand{\bS}{{\mathsf{S}}}
\newcommand{\bT}{{\mathsf{T}}}

\newcommand{\bR}{\mathsf{R}}

\newcommand{\GL}{{\rm GL}}

\newcommand{\pt}{{\mathsf{pt}}}

\newcommand{\congpf}{\xymatrix@1@=15pt{\ar[r]^-\sim&}}

\newcommand{\ch}{\mathrm{ch}}

\newcommand{\td}{\mathrm{td}}
\newcommand{\Coh}{\mathrm{Coh}}

\newcommand{\Perf}{\mathrm{Perf}}
\newcommand{\Rep}{\mathrm{Rep}}
\newcommand{\Hom}{\mathrm{Hom}}
\newcommand{\HHom}{\mathcal{H}\mathrm{om}}

\newcommand{\vir}{\mathrm{vir}}

\newcommand{\sym}{\text{sym}}

\newcommand{\RHom}{\mathrm{RHom}}

\newcommand{\rk}{\mathrm{rk}}
\newcommand{\pl}{\mathrm{pl}}

\newcommand{\inv}{{\mathrm{wt}_0}}
\newcommand{\inva}{\mathrm{inv}}

\newcommand{\dg}{\mathsf{dg}}
\newcommand{\gr}{\mathrm{gr}}

\newcommand{\mcomment}[1]{{\color{blue}M: #1}}
\newcommand{\wcomment}[1]{{\color{cyan}W: #1}}

\usepackage{tikz}
\usepackage{tikz-cd}
\usetikzlibrary{decorations.pathmorphing}
\AtBeginDocument{%
	\def\MR#1{}
}

\renewcommand{\Ext}{\textup{Ext}}

\usepackage{MyMnSymbol}

\begin{document}

\baselineskip=16pt
\parskip=5pt

    \title{Virasoro constraints and representations for quiver moduli spaces}

\author[W. Lim]{Woonam Lim}
\address{Utrecht University, Department of Mathematics}
\email{w.lim@uu.nl}

\author[M. Moreira]{Miguel Moreira}
\address{Massachusetts Institute of Technology, Department of Mathematics}
\email{miguel@mit.edu}

\date{}

\maketitle
\begin{abstract} 
We study the Virasoro constraints for moduli spaces of representations of quiver with relations by Joyce's vertex algebras. Using the framed Virasoro constraints, we construct a representation of half of the Virasoro algebra on the cohomology of moduli stacks of quiver representations under smoothness assumption. By exploiting the non-commutative nature of the Virasoro operators, we apply our theory for quivers to del Pezzo surfaces using exceptional collections. In particular, the Virasoro constraints and representations are proven for moduli of sheaves on $\BP^2$, $\BP^1\times \BP^1$ and $\textup{Bl}_\pt(\BP^2)$. Lastly, we unravel the Virasoro constraints for Grassmannians in terms of symmetric polynomials and Hecke operators.

\end{abstract}
\setcounter{tocdepth}{1} 
\tableofcontents

\section{Introduction}\label{sec: introduction}

\subsection{Overview}

Virasoro constraints is a ubiquitous phenomenon in enumerative geometry which predicts a rich set of relations between descendent integrals. It is called the Virasoro constraints because these relations are described by a representation of half of the Virasoro algebra $\Vir_{\geq -1}$. The first instance of the Virasoro constraints was Witten-Kontsevich theorem \cite{witten, kontsevich} about $\psi$-integrals over the moduli space of stable curves. This was then extended to Gromov--Witten theory \cite{ehx} which remains one of the foremost unsolved problems in the field. 

Virasoro constraints were introduced in a sheaf theoretic context via the Gromov--Witten/Stable pairs correspondence \cite{moop}, followed by subsequent developments \cite{moreira, bree}. It was then realized in \cite{blm} that Virasoro constraints in sheaf theory have their independent origin in terms of Joyce's vertex algebra \cite{Jo17, grossjoycetanaka, Jo21}. This provided a purely representation theoretic characterization of the Virasoro constraints in terms of primary states. More importantly, this suggests that one may formulate the Virasoro constraints in other contexts if the setting of Joyce's vertex algebra applies. 

In this paper, we study the Virasoro constraints for moduli spaces of quiver representations, which appear also in \cite{bojko}. In order to apply the theory of quivers to smooth projective varieties, we show that the Virasoro constraints are preserved under derived equivalences. This reveals the non-commutative nature of the Virasoro constraints and leads to a proof of the Virasoro constraints for certain moduli spaces of Bridgeland semistable objects on $\BP^2$, $\BP^1\times \BP^1$ and $\textup{Bl}_\pt(\BP^2)$; these include, in particular, both torsion-free and 1-dimensional Gieseker semistable sheaves.

We study in this paper the new phenomenon of geometricity of the Virasoro operators. A priori, the formulation of Virasoro constraints uses a representation of $\Vir_{\geq -1}$ on some formal algebra, called the descendent algebra. However, it turns out that this representation descends to the cohomology of moduli stacks of semistable representations. We use the framed Virasoro constraints to construct this natural representation of $\Vir_{\geq -1}$ on cohomology of smooth quiver moduli stacks and of $\Vir_{\geq 0}$ on the cohomology of smooth framed quiver moduli spaces. It is remarkable that such representations exist in this generality, opening many directions of future study; applications of these ideas appear in \cite{KLMP}. We study this phenomenon in detail for Grassmannians and explain how the Virasoro constraints and representations relate to the theory of symmetric functions and Hecke operators. 

\subsection{Main results}

Let $(Q,I)$ be a finite acyclic quiver with relations. In order to formulate the Virasoro constraints, one has to choose a generating set of relations $I=(r_1,\dots, r_n)$ and consider the corresponding quasi-smooth dg quiver $\dgQ$. Such a choice is required to fix the derived enhancement of the related moduli spaces and also to define the Virasoro operators. The following theorem is one of the main results in \cite{bojko}. We explain the ingredients of the proof, following \cite{blm, bojko}, because they appear in this paper for other uses. 

\begin{thm}[\cite{bojko}]\label{main thm: quiver Virasoro}
    Let $\dgQ$ be a quasi-smooth dg quiver. If ${M}^{\theta-\textup{ss}}_d={M}^{\theta-\textup{st}}_d$, then we have 
    $$\int_{[{M}^{\theta-\textup{ss}}_d]^\vir}\xi\circ \bL_\inv(D)=0\quad \textup{for all}
    \quad D\in \BD^{\dgQ}\,.
    $$
\end{thm}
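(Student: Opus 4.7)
The plan is to deploy the vertex algebra framework of \cite{blm, bojko}. Let $\CM$ denote the derived moduli stack of all representations of $\dgQ$. Joyce's construction endows $H_*(\CM)$ with a vertex algebra structure $V$, whose translation operator $T$ is induced by the $B\mathbb{G}_m$-scaling action on $\CM$; the quotient $\check{V} = V/T V$ is naturally a graded Lie algebra. Under the assumption $M^{\theta-\textup{ss}}_d = M^{\theta-\textup{st}}_d$, the semistable locus is an open substack of $\CM$ whose $B\mathbb{G}_m$-gerbe structure is trivializable, and the reduced virtual fundamental class pushes forward to a well-defined class $[M^{\theta-\textup{ss}}_d]^{\vir} \in \check{V}$.

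Next, I would translate both sides of the desired identity into statements inside $\check{V}$. The descendent algebra $\BD^{\dgQ}$ admits a realization map into $H^*(\CM)$ built from Chern characters of the tautological complex of $\dgQ$, and integration against the virtual class realizes the pairing between $\check{V}$ and the image of this map. The Virasoro operators $\bL_\inv$ on $\BD^{\dgQ}$ are constructed precisely so that, under the realization, they correspond to the action of the non-negative modes $L_k$ ($k\geq 0$) of the conformal element of $V$. The subscript $\inv$ records the weight-zero shift that arises when one passes from the vertex algebra $V$ to its Lie algebra quotient $\check{V}$. Consequently, the desired identity for all $D\in \BD^{\dgQ}$ is equivalent to the statement that $[M^{\theta-\textup{ss}}_d]^{\vir}$ is a \emph{primary state} in $\check{V}$, i.e.\ annihilated by $L_k$ for every $k \geq 0$.

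The primary property is then established by exploiting the compatibility of Joyce's perfect obstruction theory with the conformal structure of $V$. The conformal element is assembled from tautological Chern characters and the obstruction complex of $\dgQ$, and a direct computation shows that its positive modes act on a virtual class by operators that measure $B\mathbb{G}_m$-weights of the tautological data. Since semistability equals stability, the rigidification of $M^{\theta-\textup{ss}}_d$ is a genuine scheme and the virtual class is concentrated in weight zero, so these operators vanish on $[M^{\theta-\textup{ss}}_d]^{\vir}$, giving the primary condition.

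The hard part is the precise matching of the operators $\bL_\inv$ on the descendent algebra with the action of the conformal modes on the Lie quotient $\check{V}$, rather than on $V$ itself. The weight-zero shift $\inv$ must be aligned with the translation operator $T$ coming from the $B\mathbb{G}_m$-gerbe in a way that is compatible with the quasi-smooth dg enhancement of $Q$, which in turn dictates the obstruction complex entering the definition of the conformal vector. Once this bookkeeping is in place, the proof reduces to the primary-state criterion, which is the conceptual core of the vertex-algebra approach to Virasoro constraints.
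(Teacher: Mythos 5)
Your reduction of the statement to the condition that $[M_d^{\theta}]^{\vir}$ is a primary state in $\widecheck V^{\bfQ}$ is exactly the paper's first step (it uses the duality of the Virasoro operators with the lattice-vertex-algebra operators, Proposition \ref{prop: duality of operators}, together with the characterization of primary states in Corollary \ref{cor: primarywt0}). But your argument for \emph{why} the class is primary does not work. You claim that the positive modes $L_n$, $n>0$, ``measure $B\BG_m$-weights of the tautological data'' and therefore vanish on the virtual class because the rigidification is a scheme and the class is ``concentrated in weight zero.'' The weight-zero/$B\BG_m$ story only concerns $T=L_{-1}$ (dual to $\bR_{-1}$): it is what lets the virtual class descend to the Lie algebra quotient $\widecheck V=V/TV$ and be paired with $\BD^{\dgQ}_{d,\inv}=\ker(\bR_{-1})$. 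It says nothing about the action of $L_n$ for $n>0$. If your vanishing argument were valid, every virtual class of every moduli space with $\textup{ss}=\textup{st}$ would satisfy the Virasoro constraints for formal reasons, which is false; the constraints are a genuinely nontrivial set of relations among descendent integrals.

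The missing content is the wall-crossing argument, which is the actual engine of the proof. The paper first verifies the primary condition for an \emph{increasing} stability condition $\theta$, where Proposition \ref{prop: increasingstability} shows the invariant classes are either $0$ or of the form $\overline{e^{\delta_v}\otimes 1}$, and these are primary directly from the formula $L_n(e^{\alpha})=0$ for $n>0$. It then invokes Joyce's wall-crossing formula (Theorem \ref{thm: joycewc}), which writes $[M_d^{\theta}]^{\inva}$ for an arbitrary $\theta$ as a sum of iterated Lie brackets of the classes for the increasing stability condition, and uses the fact that the primary states $\widecheck P_0\subseteq \widecheck V$ form a Lie subalgebra (Corollary \ref{cor: liesubalgebra}) to conclude. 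Without the base case at the increasing stability condition and the closure of $\widecheck P_0$ under the bracket, there is no way to establish the primary property for a general $\theta$; your proposal contains neither ingredient.
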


In the theorem, $\BD^\dgQ$ is a formal algebra, called the descendent algebra, that is equipped with a natural $\Vir_{\geq -1}$ representation. The operator $\bL_\inv$ is defined on $\BD^\dgQ$ in terms of this Virasoro representation, and $\xi$ is the realization morphism, which maps (a subalgebra of) $\BD^\dgQ$ to the cohomology $H^\ast({M}^{\theta-\textup{ss}}_d)$ of the moduli space.

It was observed in \cite{blm} that Virasoro constraints for moduli spaces of sheaves have a counterpart for moduli spaces of pairs, and the two are related in a fundamental way. In the case of quiver representations, framed representations play the role of pairs. By the framed/unframed correspondence, Theorem \ref{main thm: quiver Virasoro} implies the framed Virasoro constraints. For us, the moduli spaces ${M}^{\theta}_{f\rightarrow d}$ of limit $\theta$-stable framed representations are of particular importance due to the role they play in the proof of geometricity.

\begin{thm}\label{main thm: framed Virasoro}
    Let $\dgQ$ be a quasi-smooth dg quiver. Then we have 
    $$\int_{[{M}^{\theta}_{f\rightarrow d}]^\vir}\xi\circ \bL_n^{f\rightarrow d}(D)=0\quad \textup{for all}
    \quad n\geq 0,\ \ D\in \BD^{\dgQ}\,.
    $$
\end{thm}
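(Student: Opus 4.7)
The plan is to deduce this framed Virasoro vanishing from the unframed Theorem~\ref{main thm: quiver Virasoro}, using the framed/unframed correspondence in the style of \cite{blm} adapted to the quiver setting.

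First, I would augment the quasi-smooth dg quiver $\dgQ$ to a larger quasi-smooth dg quiver $\widetilde{\dgQ}$ by adjoining a framing vertex $\infty$ together with $f_i$ arrows $\infty\to i$ for each original vertex $i$ (keeping the same relations). A representation of $\widetilde{\dgQ}$ with dimension vector $(1,d)$ at $(\infty,Q_0)$ is precisely the data of a framed representation of $\dgQ$ with framing vector $f$ and dimension $d$. With a stability $\widetilde{\theta}$ obtained from $\theta$ by choosing a sufficiently extremal component at $\infty$, the $\widetilde{\theta}$-(semi)stable representations of $\widetilde{\dgQ}$ in dimension $(1,d)$ recover exactly the limit $\theta$-stable framed representations ${M}^{\theta}_{f\rightarrow d}$, semistability coincides with stability, and the two virtual fundamental classes agree. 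This gives an isomorphism of virtual moduli ${M}^{\widetilde{\theta}-\textup{ss}}_{(1,d)}\cong {M}^{\theta}_{f\rightarrow d}$.

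Second, I would apply Theorem~\ref{main thm: quiver Virasoro} to $\widetilde{\dgQ}$ at dimension vector $(1,d)$. Because $\widetilde{\theta}$-semistability equals stability, this yields
$$\int_{[{M}^{\widetilde{\theta}-\textup{ss}}_{(1,d)}]^\vir}\xi\circ \bL_\inv(\widetilde D)=0\qquad\text{for all}\qquad \widetilde D\in \BD^{\widetilde{\dgQ}}\,.$$
The descendent algebra $\BD^{\widetilde{\dgQ}}$ factors as a tensor product of $\BD^{\dgQ}$ with the descendent variables at $\infty$; since the dimension at $\infty$ is fixed to $1$, the latter reduces to a polynomial algebra in a single variable $t$ corresponding to the Chern class of the universal line bundle at $\infty$. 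Expanding $\widetilde D=\sum_{k\geq 0}t^k D_k$ with $D_k\in \BD^{\dgQ}$ and collecting powers of $t$, the operator $\bL_\inv$ on $\BD^{\widetilde{\dgQ}}$ rewrites as a generating series whose coefficients on $\BD^{\dgQ}$ are precisely the framed operators $\bL_n^{f\rightarrow d}$ for $n\geq 0$. The index $n=-1$ does not appear because it corresponds to the $\BC^\times$-rescaling of the framing section, which is rigidified away on the framed moduli stack. Separating coefficients in $t$ yields the claimed vanishing for each $n\geq 0$ and each $D\in \BD^{\dgQ}$.

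The main obstacle will be the last step: performing the algebraic unpacking of $\bL_\inv$ on $\BD^{\widetilde{\dgQ}}$ and verifying that the weight-zero projection $\inv$, the realization morphism $\xi$ on the framed moduli, and the virtual class all split compatibly under the decomposition $\BD^{\widetilde{\dgQ}}\cong \BD^{\dgQ}\otimes \BQ[t]$ so as to produce exactly the operators $\bL_n^{f\rightarrow d}$ of \cite{blm}. This reduces to a direct computation inside Joyce's vertex algebra of $\widetilde{\dgQ}$ together with a comparison of the obstruction theories on the two sides of the framed/unframed correspondence, both of which parallel the arguments for moduli of sheaves given in \cite{blm}.
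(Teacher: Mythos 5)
Your proposal is correct and follows essentially the same route as the paper's proof (Proposition~\ref{prop: f/uf Virasoro}): pass to the augmented quiver $(Q^f,I^f)$, identify ${M}^{\theta}_{f\rightarrow d}$ with ${M}^{\tilde\theta}_{(1,d)}$, and apply the unframed Theorem~\ref{main thm: quiver Virasoro}. The only difference is in the final unpacking step you flag as the main obstacle: rather than expanding in a formal variable $t$ at the vertex $\infty$, the paper normalizes the universal representation so that $\CV_\infty=\CO$, whence $\xi(\ch_k(\infty))=0$ for all $k>0$ and the weight-zero constraint collapses (via the argument of \cite[Proposition 2.16]{blm}) directly to the operators $\bL_n^{f\rightarrow d}$ for $n\geq 0$.
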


A priori, the Virasoro constraints concerns integral of tautological classes only in degree equal to the virtual dimension. Nevertheless, the Virasoro constraints has strong implications on the structure of tautological relations in middle degrees. In order to state the result, we make Assumption~\ref{ass: smoothness} which implies smoothness of the moduli stack and framed moduli space below and surjectivity of the realization homomorphisms 
\begin{equation*}
    \BD_d^{\dgQ}\rightarrow H^*({\CM}^{\theta-\textup{ss}}_{d}),\quad \BD_d^{\dgQ}\rightarrow H^*({M}^\theta_{f\rightarrow d})\,.
\end{equation*}
We say that the Virasoro operators are geometric if their action on the formal algebra  $\BD^{\dgQ}_d$ descend to the cohomology via realization homomorphism.

\begin{thm}\label{main thm: Virasoro representation}
    Under Assumption~\ref{ass: smoothness}, the Virasoro operators in range $n\geq -1$ (resp. $n\geq 0$) are geometric for ${\CM}^{\theta-\textup{ss}}_{d}$ (resp. ${M}^\theta_{f\rightarrow d}$). In particular, there are induced representations 
    $$\Vir_{\geq -1}\lefttorightarrow H^*({\CM}^{\theta-\textup{ss}}_{d}),\quad 
    \Vir_{\geq 0}\lefttorightarrow H^*({M}^\theta_{f\rightarrow d})\,.
    $$
\end{thm}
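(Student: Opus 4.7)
The plan is to prove geometricity first for the framed moduli $M^\theta_{f\rightarrow d}$, where the setting is cleanest, and then to descend to the stack $\CM^{\theta-\textup{ss}}_d$ via the forgetful map. Under Assumption~\ref{ass: smoothness} the framed moduli is smooth and projective, so the Poincar\'e pairing on $H^*(M^\theta_{f\rightarrow d})$ is non-degenerate; combined with the surjectivity of $\xi : \BD_d^{\dgQ} \to H^*(M^\theta_{f\rightarrow d})$, geometricity of $\bL_n^{f\rightarrow d}$ in the range $n\geq 0$ is equivalent to the concrete statement that whenever $D \in \BD^{\dgQ}$ satisfies $\xi(D) = 0$, one has
$$\int_{[M^\theta_{f\rightarrow d}]^{\vir}} \xi\!\left(\bL_n^{f\rightarrow d}(D)\right) \cdot \xi(D') \;=\; 0 \qquad \text{for every } D' \in \BD^{\dgQ}.$$

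The key algebraic ingredient is a Leibniz-type identity for $\bL_n^{f\rightarrow d}$ on the descendent algebra coming from the vertex algebra structure. In the favorable case where $\bL_n^{f\rightarrow d}$ acts as a derivation of the commutative product on $\BD^{\dgQ}$, one rewrites
$$\bL_n^{f\rightarrow d}(D) \cdot D' \;=\; \bL_n^{f\rightarrow d}(D \cdot D') \;-\; D \cdot \bL_n^{f\rightarrow d}(D');$$
applying $\xi$ and integrating, the first term vanishes by the framed Virasoro constraints (Theorem~\ref{main thm: framed Virasoro}), while the second vanishes because $\xi(D) = 0$ and $\xi$ respects multiplication. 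More generally, if $\bL_n^{f\rightarrow d}$ is a derivation only up to corrections built from operators that already preserve $\ker(\xi)$ (for instance, further framed Virasoro operators), one decomposes $\bL_n^{f\rightarrow d}$ accordingly and treats each piece separately.

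For the stack, the forgetful map $\pi : M^\theta_{f\rightarrow d} \to \CM^{\theta-\textup{ss}}_d$ is expected, under Assumption~\ref{ass: smoothness}, to realize the framed moduli as a projective bundle (or similar smooth fibration) over the stack, so that $\pi^*$ injects $H^*(\CM^{\theta-\textup{ss}}_d)$ into $H^*(M^\theta_{f\rightarrow d})$. The framed/unframed compatibility of Virasoro operators underlying the derivation of Theorem~\ref{main thm: framed Virasoro} (see \cite{blm, bojko}) then intertwines the stack-side and framed-side operators modulo framing-dependent contributions, and geometricity in the range $n \geq 0$ descends. The extra generator $\bL_{-1}$, present on the stack but absent on the framed side, is handled separately via its description as a natural derivation of the descendent algebra whose descent to $H^*(\CM^{\theta-\textup{ss}}_d)$ can be verified directly.

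The main obstacle is the Leibniz-type identity and the verification that its correction terms preserve $\ker(\xi)$. The Virasoro operators are not naive derivations of the commutative descendent algebra, and their failure to commute with multiplication reflects precisely the non-commutative nature of $\Vir$ emphasized in the introduction. Isolating the derivation part of $\bL_n^{f\rightarrow d}$, identifying the corrections with operators from other modes of Joyce's vertex algebra, and checking that these corrections preserve $\ker(\xi)$ constitutes the technical heart of the proof.
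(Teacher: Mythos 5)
Your proposal follows essentially the same route as the paper: geometricity on ${M}^\theta_{f\rightarrow d}$ is deduced from the framed Virasoro constraints together with surjectivity of $\xi$ and Poincar\'e duality, and geometricity on the stack is then transferred back through the forgetful map. Two points, however, deserve correction. First, the ``Leibniz-type identity'' that you flag as the main obstacle and technical heart is in fact immediate from the definitions: by construction $\bL_n^{f\rightarrow d}=\bR_n+\bT_n^{f\rightarrow d}$ where $\bR_n$ is an honest derivation of the commutative algebra $\BD^{\dgQ}$ and $\bT_n^{f\rightarrow d}$ is multiplication by a fixed element, so
$$\bL_n^{f\rightarrow d}(DE)=\bR_n(D)E+D\,\bL_n^{f\rightarrow d}(E)$$
holds exactly, with no correction terms from other modes of the vertex algebra. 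Applying $\xi$, integrating, and using $\xi(D)=0$ gives $\int \xi(\bR_n(D))\xi(E)=0$ for all $E$, and surjectivity plus Poincar\'e duality yields $\xi(\bR_n(D))=0$. There is nothing further to isolate or verify here, and since $\bT_n$ trivially preserves $\ker(\xi)$, geometricity of $\bL_n$ and of $\bR_n$ are equivalent.

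Second, your descent to the stack leans on the claim that $\pi\colon M^\theta_{f\rightarrow d}\to \CM^{\theta-\textup{ss}}_d$ is a projective bundle (or similar fibration) so that $\pi^*$ is injective. That is not the right geometric picture: $M^\theta_{f\rightarrow d}$ is an open subset of the total space of a vector bundle over the stack, with complement of codimension at least $\min_v(f_v-d_v)$, so for a fixed $f$ the map $\pi^*$ is only an isomorphism in degrees $\leq 2k$ when $\min_v(f_v-d_v)>k$ (Proposition~\ref{prop: approximation}). The argument still goes through because $\xi_{\CM}$ does not depend on $f$ and one may take $f$ arbitrarily large relative to the cohomological degree of $\bR_n(D)$; but the correct mechanism is this approximation in a range of degrees, not injectivity of $\pi^*$ for a single $f$. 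No ``framing-dependent intertwining'' is needed either: $\bR_n$ is the same operator on $\BD^{\dgQ}_d$ in both cases, and the realization maps are compatible because the universal representation on $M^\theta_{f\rightarrow d}$ is pulled back from the stack. Your treatment of $\bR_{-1}$ via the $B\BG_m$-action is consistent with the paper.
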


In other words, this statement means that the Virasoro action preserves the ideals of tautological relations 
\[\ker\big(\BD_d^{\dgQ}\rightarrow H^*({\CM}^{\theta-\textup{ss}}_{d})\big)\textup{ and }\ker\big( \BD_d^{\dgQ}\rightarrow H^*({M}^\theta_{f\rightarrow d})\big)\,.\]
Understanding the tautological relations can be thought of as an analogue of the (much harder) study of the tautological ring of moduli spaces of curves, see \cite{pandharipandecalculus} for a survey. When $\bfQ=Q$ is a quiver with no relations, the ideal of tautological relations, when there are no strictly semistables, is described in \cite{franzen}.

Let $X$ be a smooth projective variety admitting a full exceptional collection $\FE=(E_1,\dots, E_n)$ such that its left dual collection is strong. Then there is an exact equivalence of triangulated categories
$$B:D^b(X)\xrightarrow{\sim} D^b(Q,I)
$$
for some quiver with relations $(Q,I)$. When $\Rep_{Q,I}$ has homological dimension at most $2$, there is a canonical choice of relations generating $I$ which in turn define a quasi-smooth dg quiver $\dgQ$. Since Virasoro constraints for quasi-smooth dg quivers are proven in Theorem \ref{main thm: quiver Virasoro}, this suggests to study the Virasoro constraints for moduli spaces of objects in $D^b(X)$ using the exact equivalence $B$. In order to achieve this, we prove the following theorem which identifies Joyce's vertex algebra with (possibly degenerate) lattice vertex algebra and explains the non-commutative nature of the Virasoro operators. 
\begin{thm}\label{main thm: natural iso}
Let $\bfT$, $\bfT_1$ and $\bfT_2$ be dg categories satisfying Assumption \ref{ass: dg category}. 
\begin{enumerate}
    \item[(i)] We have a natural isomorphism between Joyce's vertex algebra and lattice vertex algebra
    $$H_\ast(\CN^{\,\bfT})\xrightarrow{\sim} \textup{VA}(K(\bfT), \chi^\sym_{\bfT}). 
    $$
    Via this isomorphism, $H_\ast(\CN^{\,\bfT})$ is endowed with a $\Vir_{\geq -1}$-representation. 

\item[(ii)] A quasi-equivalence ${\bf B}:\bfT_1\rightarrow \bfT_2$ induces a vertex algebra isomorphism 
    $$B_*:H_\ast(\CN^{\,\bfT_1})\xrightarrow{\sim}H_\ast(\CN^{\,\bfT_2})
    $$
    intertwining the $\Vir_{\geq -1}$-representations from (i). 
\end{enumerate}
\end{thm}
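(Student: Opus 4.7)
For part (i), the plan is to first use Assumption~\ref{ass: dg category} to decompose the moduli stack as $\CN^{\,\bfT} = \bigsqcup_{\alpha\in K(\bfT)} \CN^{\,\bfT}_\alpha$ and to recognize each component as a rigidification (modulo the $B\BG_m$ of scaling automorphisms) whose rational homology is a polynomial algebra generated by Chern characters of the universal complex. The underlying vector space of $\textup{VA}(K(\bfT),\chi^\sym_{\bfT})$ is by construction a tensor product of a Heisenberg Fock space with the group algebra of $K(\bfT)$, so I would match tautological classes on $\CN^{\,\bfT}_\alpha$ with Heisenberg creation operators applied to the group-algebra vector $e^\alpha$, thereby producing a graded linear isomorphism between $H_\ast(\CN^{\,\bfT})$ and this Fock space.

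The next step is to verify that Joyce's vertex operator, built from the direct-sum morphism $\CN\times\CN\to\CN$, the $B\BG_m$-action, and a shift by the Euler class of the universal $\textup{Ext}$-complex, coincides with the standard lattice vertex operator under the above identification. Unwinding the definition on products of tautological classes, the Euler-class shift produces the factor $(z_1-z_2)^{\chi_{\bfT}(\alpha,\beta)}$ that enters the lattice cocycle, while Joyce's sign twist supplies the cocycle correction that makes locality equivalent to the symmetry of $\chi^\sym_{\bfT}$. Once this vertex-algebra isomorphism is in place, the conformal element of $\textup{VA}(K(\bfT),\chi^\sym_{\bfT})$---constructed via Segal--Sugawara from the symmetrized pairing---transports to a canonical $\Vir_{\geq -1}$-representation on $H_\ast(\CN^{\,\bfT})$, completing part (i).

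For part (ii), a quasi-equivalence ${\bf B}\colon \bfT_1\to\bfT_2$ induces an equivalence of derived moduli stacks $\CN^{\,\bfT_1}\xrightarrow{\sim}\CN^{\,\bfT_2}$ that is compatible with the direct-sum map, the $B\BG_m$-action, and the universal $\textup{Ext}$-complex (the latter because $\chi_{\bfT_1} = \chi_{\bfT_2}\circ K({\bf B})^{\otimes 2}$). The induced map $B_\ast$ on homology therefore respects all the ingredients of Joyce's construction, and part (i) translates this into a lattice-vertex-algebra isomorphism compatible with the $K$-theoretic lattice morphism $K({\bf B})$. The intertwining of $\Vir_{\geq -1}$ is then automatic, because the conformal vector of the lattice vertex algebra is canonically determined by the pair $(K(\bfT),\chi^\sym_{\bfT})$, which is preserved by $K({\bf B})$.

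The hardest step will be the locality check in part (i): confirming that the signs, the power of $z_1-z_2$ coming from the Euler-class shift, and Joyce's sign-cocycle twist combine on the nose to the standard lattice vertex-algebra cocycle. Once this matching of cocycles is settled, the vector-space isomorphism, the Virasoro transport via Sugawara, and the functoriality under quasi-equivalence are all formal consequences of the construction.
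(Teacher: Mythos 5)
Your overall architecture (component decomposition, matching tautological classes with Heisenberg creation operators, then checking the state--field correspondence) parallels the paper's, which constructs the linear isomorphism as the dual $\xi_\alpha^\dagger$ of the realization map $\BD^\bfT_\alpha\to H^*(\CN_\alpha)$ with respect to the topological pairing and the pairing $\widetilde\chi_\bfT$. However, your locality check has a concrete gap: you attribute the lattice cocycle to ``the Euler-class shift'' producing $(z_1-z_2)^{\chi_\bfT(\alpha,\beta)}$, but that power of $z$ only accounts for the \emph{rank} of $\Theta$. The exponential factors $\exp(\pm\sum \alpha_{(k)}z^{-k}/k)$ in the lattice vertex operator come from the higher Chern classes of $\Theta$ capped against $u\boxtimes v$, and to identify these you must express $\ch_k(\Ext_\bfT)$ on $\CN^{\,\bfT}\times\CN^{\,\bfT}$ as a sum of box products of descendents pulled back from each factor. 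This is exactly what Assumption~\ref{ass: dg category}~(3) provides: the diagonal bimodule splits in $K$-theory, giving $\Ext_\bfT=\sum_i \Ext_\bfT(-,\alpha_i^L)\boxtimes\Ext_\bfT(\alpha_i^R,-)$ and hence $\ch_k(\Ext)=\sum_{a+b=k}\sum_i \ch^R_a(\alpha_i^L)\boxtimes \ch_b^L(\alpha_i^R)$. Your proposal never invokes this K\"unneth-type splitting, and without it the computation of $c_{z^{-1}}(\Theta)\cap(u\boxtimes v)$ in terms of creation/annihilation operators does not go through.

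The second gap is your use of a conformal element via Segal--Sugawara to produce the $\Vir_{\geq -1}$-representation. The Sugawara construction requires a dual basis for $\chi^\sym_\bfT$, i.e.\ non-degeneracy, but the symmetrized Euler pairing is degenerate in cases covered by the theorem (e.g.\ the Kronecker quiver $K_2$, where the symmetrized form has matrix $\left(\begin{smallmatrix}2&-2\\-2&2\end{smallmatrix}\right)$). The paper instead defines $L_n$ for $n\geq -1$ directly by \eqref{eq: virlva1} and \eqref{eq: virlva2}, and proves the Virasoro bracket, the Lie-subalgebra property of primary states, and the intertwining in part~(ii) by functorially embedding the degenerate lattice into a non-degenerate one (Proposition~\ref{prop: functorial}, Corollary~\ref{cor: virbracket}). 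Your claim that the intertwining in (ii) is ``automatic because the conformal vector is preserved'' inherits the same problem; it should be replaced by the functoriality of the explicitly defined $L_n$ under lattice embeddings.
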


We show that the Assumption \ref{ass: dg category} holds for the dg category of representations of a dg quiver (not necessarily quasi-smooth). Let $\bfT_1$ and $\bfT_2$ be dg enhancements of $D^b(X)$ and $D^b(Q,I)\simeq D^b(\bfQ)$, respectively. Applying Theorem \ref{main thm: natural iso} to the quasi-equivalence ${\bf B}:\bfT_1\rightarrow \bfT_2$ lifting the exact equivalence $B$, we can use the Virasoro constraints on $\dgQ$ to study the Virasoro constraints for moduli spaces of objects in $D^b(X)$. To be more precise, since Theorem \ref{main thm: quiver Virasoro} concerns the moduli spaces of objects in the standard heart $\Rep_{Q,I}\subset D^b(Q,I)$, we obtain the corresponding statements for $B^{-1}(\Rep_{Q,I})\subset D^b(X)$. Since this is different from the standard heart $\Coh(X)\subset D^b(X)$, it is nontrivial to apply this technique to moduli spaces of sheaves on $X$. 

This problem naturally leads us to Bridgeland stability conditions. For any smooth projective surface $S$, there are geometric stability conditions $\sigma=\sigma_{E,H}$ that depend on $\BR$-divisors $E$ and $H$ where $H$ is ample. Such stabilities define moduli stacks $\CM^{\sigma-\textup{ss}}_\v$ parametrizing $\sigma$-semistable objects in a tilted heart $\Coh_{E,H}(S)\subset D^b(S)$ of type $\v\in K(S)$. It was conjectured in \cite{am} that for any del Pezzo surface $S$ the moduli stack $\CM^{\sigma-\textup{ss}}_\v$ can be identified with a quiver moduli stack. For those moduli stacks admitting a quiver description, see Definition \ref{def: quiver description} for a precise meaning, we prove the following. 

\begin{thm}\label{main thm: abch implies Virasoro}
    Let $S$ be a del Pezzo surface and $\sigma$ be a geometric stability condition. Assume that $\CM^{\sigma-\textup{ss}}_\v$ admits a quiver description. 
    \begin{enumerate}
        \item [(i)]  If $M^{\sigma-\textup{ss}}_\v=M^{\sigma-\textup{st}}_\v$, then $M^\sigma_\v$ satisfies the Virasoro constraints with respect to its natural virtual class. 
        \item [(ii)]  If $\Ext^2_S(F,F')=0$ for all $F,\ F'$ in $\CM_\v^{\sigma-\textup{ss}}$, then $\CM_\v^{\sigma-\textup{ss}}$ is smooth, tautologically generated, and the operators $\bR_n$ are geometric, so we have an induced representation
        \[\Vir_{\geq -1}\lefttorightarrow H^*(\CM_\v^{\sigma-\textup{ss}})\,.\]
    \end{enumerate}
\end{thm}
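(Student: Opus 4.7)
The plan is to transport the quiver results (Theorems~\ref{main thm: quiver Virasoro} and \ref{main thm: Virasoro representation}) to the sheaf side using the derived equivalence furnished by the exceptional collection on $S$, combined with the functoriality of Joyce's vertex algebra and its Virasoro representation proved in Theorem~\ref{main thm: natural iso}. Since $S$ is del Pezzo, I would choose a full exceptional collection on $D^b(S)$ with strong left dual, producing an exact equivalence $B:D^b(S)\xrightarrow{\sim} D^b(Q,I)$; as $S$ is a surface, $\Rep_{Q,I}$ has homological dimension at most $2$ and the canonical choice of relations yields a quasi-smooth dg quiver $\dgQ$ together with a lift $\mathbf{B}:\bfT_1\xrightarrow{\sim}\bfT_2$ of $B$ to dg enhancements. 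The quiver description hypothesis (Definition~\ref{def: quiver description}) then supplies an isomorphism of derived moduli stacks identifying $\CM^{\sigma-\textup{ss}}_\v$, which lives in the tilted heart $\Coh_{E,H}(S)$, with a quiver moduli stack $\CM^{\theta-\textup{ss}}_d$ inside the standard heart $\Rep_{Q,I}$, compatibly with universal families.

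For part (i), the equality $M^{\sigma-\textup{ss}}_\v = M^{\sigma-\textup{st}}_\v$ translates across the equivalence to $M^{\theta-\textup{ss}}_d = M^{\theta-\textup{st}}_d$, and Theorem~\ref{main thm: quiver Virasoro} delivers the Virasoro constraints for $M^{\theta-\textup{ss}}_d$. Theorem~\ref{main thm: natural iso}(ii) guarantees that the induced vertex algebra isomorphism $B_*$ intertwines the two $\Vir_{\geq -1}$-representations; together with the compatibility of universal families (hence of the realization morphisms $\xi$), this lets me pull back the quiver-side identity to the desired Virasoro constraints for $M^\sigma_\v$ against its natural virtual class.

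For part (ii), since $B$ is an equivalence of triangulated categories, the vanishing $\Ext^2_S(F,F') = 0$ for $F,F' \in \CM^{\sigma-\textup{ss}}_\v$ is equivalent to the corresponding $\Ext^2$-vanishing on the quiver side. This verifies Assumption~\ref{ass: smoothness}, so Theorem~\ref{main thm: Virasoro representation} gives smoothness and tautological generation of $\CM^{\theta-\textup{ss}}_d$ and produces a $\Vir_{\geq -1}$-representation on $H^*(\CM^{\theta-\textup{ss}}_d)$ refining the abstract action on the descendent algebra. Transporting this across the isomorphism of stacks and along $B_*$ yields geometricity of $\bR_n$ and the advertised representation on $H^*(\CM^{\sigma-\textup{ss}}_\v)$.

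The main difficulty lies in fully unpacking Definition~\ref{def: quiver description}: one must confirm that the identification $\CM^{\sigma-\textup{ss}}_\v \cong \CM^{\theta-\textup{ss}}_d$ respects not only the underlying stack but also its derived enhancement, obstruction theory, natural virtual class, universal family, and the resulting map into Joyce's vertex algebra. Once this package of compatibilities is in place, Theorem~\ref{main thm: natural iso}(ii) takes care of the Virasoro intertwining and the whole argument becomes a formal transport from the quiver statements.
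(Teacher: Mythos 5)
Your proposal follows essentially the same route as the paper: transport the quiver statements (Theorems~\ref{main thm: quiver Virasoro} and \ref{main thm: Virasoro representation}) across the derived equivalence supplied by the quiver description, using the naturality of the vertex algebra isomorphism and the intertwining of Virasoro operators from Theorem~\ref{main thm: natural iso}(ii) for part (i), and the transfer of the $\Ext^2$-vanishing to verify Assumption~\ref{ass: smoothness} for part (ii).

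The one step you defer to the final paragraph is the only place where a genuine additional idea is needed: the Virasoro constraints of Theorem~\ref{main thm: quiver Virasoro} are stated for the virtual class built from the explicit zero-locus presentation of Section~\ref{sec: explicit virtual class}, whereas the class that matches the sheaf side under $B_*$ is the one induced by the derived enhancement of $\CN^{\dgQ}$. The paper reconciles the two by invoking Siebert's result that a virtual class depends only on the $K$-theory class of the virtual tangent complex, and then checking via the standard resolution (Proposition~\ref{prop: extquasismooth}) that both $K$-theory classes equal $\CO-\sum_{v}\CH\textup{om}(\CV_v,\CV_v)+\sum_{e}\CH\textup{om}(\CV_{s(e)},\CV_{t(e)})-\sum_{\tilde r}\CH\textup{om}(\CV_{s(\tilde r)},\CV_{t(\tilde r)})$. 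Also note that the homological dimension bound you attribute to ``$S$ being a surface'' actually uses the del Pezzo (nef anticanonical) hypothesis through Lemma~\ref{lem: dimension 2} applied to the tilted heart; for a general surface the heart $\CA_{E,H}[\phi]$ need not have homological dimension $2$.
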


For some del Pezzo surfaces, all moduli stacks with respect to geometric stability conditions are known to admit a quiver description, see \cite{abch} for $\BP^2$ and \cite{am} for $\BP^1\times \BP^1$ and $\textup{Bl}_\pt(\BP^2)$. Theorem \ref{main thm: abch implies Virasoro} can be applied to those surfaces to obtain the Virasoro constraints and representations. On the other hand, geometric stability conditions at the large volume limit recover Gieseker stability. Combining these results, we obtain the following theorem. 

\begin{thm}\label{main thm: three surfaces}
    Let $S$ be one of $\BP^2$, $\BP^1\times \BP^1$ or $\textup{Bl}_\pt(\BP^2)$ with nonzero-dimensional topological type $\v$.
    \begin{enumerate}
        \item[(i)] If $M^{H-\textup{ss}}_\v=M^{H-\textup{st}}_\v$, then $M^H_\v$ satisfies the Virasoro constraints with respect to its smooth fundamental class. 
        \item[(ii)] The moduli stack $\CM^{H-\textup{ss}}_\v$ is smooth, tautologically generated, and the operators $\bR_n$ are geometric, so we have a representation
        $$\Vir_{\geq -1}\lefttorightarrow H^*(\CM^{H-\textup{ss}}_\v)\,.$$
    \end{enumerate}
\end{thm}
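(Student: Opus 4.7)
The plan is to deduce Theorem~\ref{main thm: three surfaces} from Theorem~\ref{main thm: abch implies Virasoro} by exhibiting, for each nonzero-dimensional topological type $\v$, a geometric Bridgeland stability condition $\sigma$ on $S$ such that (a) $\sigma$-semistability for class $\v$ coincides with $H$-Gieseker semistability, and (b) the resulting moduli admits a quiver description in the sense of Definition~\ref{def: quiver description}. Condition (b) is supplied for every geometric stability condition on $\BP^2$ by \cite{abch} and on $\BP^1\times\BP^1$ or $\textup{Bl}_\pt(\BP^2)$ by \cite{am}. For (a), I would take $\sigma=\sigma_{E,tH}$ in the large volume limit $t\gg 0$, with $E$ chosen according to whether $\v$ is two- or one-dimensional, and use the standard wall-and-chamber picture: in the torsion-free case the tilted heart $\Coh_{E,tH}(S)$ contains every $H$-Gieseker semistable sheaf of class $\v$ and no new objects of class $\v$ become $\sigma$-semistable in the limit; in the one-dimensional case a parallel wall-and-chamber argument identifies pure one-dimensional $H$-Gieseker semistability with $\sigma$-semistability. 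This yields $M^{\sigma-\textup{ss}}_\v\cong M^{H-\textup{ss}}_\v$ and $\CM^{\sigma-\textup{ss}}_\v\cong\CM^{H-\textup{ss}}_\v$.

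Granted these identifications, part (i) follows from Theorem~\ref{main thm: abch implies Virasoro}(i): the hypothesis $M^{H-\textup{ss}}_\v=M^{H-\textup{st}}_\v$ transports to $M^{\sigma-\textup{ss}}_\v=M^{\sigma-\textup{st}}_\v$, giving the Virasoro constraints for the natural virtual class. To identify this with the smooth fundamental class, I would check smoothness of $M^H_\v$ via Serre duality: for $F\in M^{H-\textup{ss}}_\v$ one has $\Ext^2_S(F,F)\cong \Hom_S(F,F\otimes K_S)^*$, and since $-K_S$ is ample on a del Pezzo surface, the twist by $K_S$ strictly decreases the reduced Hilbert polynomial of $F$, so $H$-Gieseker semistability forces this $\Hom$ to vanish. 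Thus the obstruction sheaf is zero, $M^H_\v$ is smooth, and its virtual class agrees with its fundamental class.

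For (ii), I would apply the same Serre duality argument to two members $F,F'\in\CM^{H-\textup{ss}}_\v$ of equal reduced Hilbert polynomial: semistability combined with the strict decrease of reduced Hilbert polynomial under $-\otimes K_S$ gives $\Ext^2_S(F,F')=0$ for every such pair. This is exactly the hypothesis of Theorem~\ref{main thm: abch implies Virasoro}(ii), which then delivers smoothness of $\CM^{H-\textup{ss}}_\v$, tautological generation, geometricity of the $\bR_n$, and the induced $\Vir_{\geq -1}$-representation on $H^*(\CM^{H-\textup{ss}}_\v)$.

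The main obstacle is step (a): one must carry out the large-volume-limit comparison uniformly for both torsion-free and one-dimensional classes, and crucially choose $\sigma$ inside a chamber covered by the quiver descriptions of \cite{abch} or \cite{am}. A secondary delicate point is that the Gieseker-theoretic natural virtual class must match the quiver-theoretic virtual class provided by Theorem~\ref{main thm: quiver Virasoro}, transported along the derived equivalence of Theorem~\ref{main thm: natural iso}(ii); this requires the descendent algebras and the action of $\bL_\inv$ to be carried across compatibly, which is the most technically involved ingredient of the argument.
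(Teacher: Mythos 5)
Your proposal is correct and follows essentially the same route as the paper: identify Gieseker $H$-semistability with a geometric Bridgeland stability in the large volume limit (the paper's Lemma \ref{lem: large volume} and Remark \ref{rem: Gieseker is geometric}), invoke the quiver descriptions of \cite{abch, am}, verify $\Ext^2_S(F,F')=0$ by Serre duality and negativity of $K_S$, and apply Theorem \ref{main thm: abch implies Virasoro}. The only remark worth making is that your "secondary delicate point" about matching the Gieseker-theoretic and quiver-theoretic virtual classes is already absorbed into the proof of Theorem \ref{main thm: abch implies Virasoro} (via the comparison of obstruction theories in $K$-theory), so it imposes no additional burden at this stage.
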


We remark (see also \cite{bojko}) that, unlike in \cite{moreira, blm}, the proof is completely independent of the results in \cite{moop} via Gromov--Witten theory and the GW/PT correspondence. Indeed, this makes all the results in \cite{moreira, blm} completely independent of \cite{moop} since the only input previosuly needed coming from Gromov--Witten theory was the Virasoro constraints for Hilbert schemes of points on $\BP^2,\ \BP^1\times \BP^1$ (see the proof of \cite[Proposition 3.8]{moreira}), which is contained in Theorem \ref{main thm: three surfaces} (i).

In the last section, we explore in further detail the case of the Grassmannian. We explain that, once we suitably describe Schubert calculus using the ring of symmetric functions $\Lambda$, the Virasoro constraints for the Grassmannian can be interpreted as a well-known fact in the representation theory of the Virasoro algebra. Given $c, h\in \BC$, the Virasoro Lie algebra acts naturally on the ring of symmetric functions $\Lambda$ (regarded as the Fock space) so that $1\in \Lambda$ is the highest weight vector of weight $(c, h)$; $c$ is called the central charge.

\begin{thm}\label{main thm: Grass}
    The Virasoro constraints for the Grassmannian $\Gr(k, N)$ are equivalent to the rectangular Schur polynomial $s_{(N-k)^k}$ being singular vectors for representations of the Virasoro Lie algebra on $\Lambda$ with central charge $c=1$.
\end{thm}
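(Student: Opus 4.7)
The plan is to realize $\Gr(k,N)$ as a framed quiver moduli space and identify every ingredient of the framed Virasoro constraints, under a suitable identification of the descendent algebra with the ring of symmetric functions $\Lambda$, with the corresponding classical structure in the free boson representation of $\Vir$ on $\Lambda$. The constraint then becomes exactly the singular vector equation for $s_{(N-k)^k}$.

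First, I would realize $\Gr(k,N)$ as the framed quiver moduli space $M^\theta_{f\to d}$ for the quiver with a single vertex and no arrows, with dimension vector $d=k$ and framing vector $f=N$. Under this identification the descendent algebra $\BD^{\dgQ}$ is the polynomial ring generated by the descendents $\ch_1,\ch_2,\dots$ of the tautological rank-$k$ bundle, which I identify with $\Lambda$ in the standard way by matching power sums $p_j$ with rescaled Chern characters. The realization map $\xi:\BD^{\dgQ}\to H^*(\Gr(k,N))$ then becomes the classical surjection $\Lambda\twoheadrightarrow \Lambda/I$, where $I$ is spanned by the Schur polynomials $\{s_\lambda:\lambda\not\subset(N-k)^k\}$, with $s_\lambda$ for $\lambda\subset(N-k)^k$ mapping to the corresponding Schubert classes.

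Next, I would compute the framed Virasoro operators $\bL_n^{f\to d}$ produced by Joyce's vertex algebra construction for this particular quiver and show, via the explicit form of the pairing $\chi^\sym$ recorded in Theorem \ref{main thm: natural iso}, that they coincide (up to a framing-dependent shift in conformal weight governed by $N$) with the standard $L_n$ coming from the free boson representation of $\Vir$ on $\Lambda$ with central charge $c=1$. Integration over $\Gr(k,N)$ translates, via Poincar\'e duality and the fact that $s_{(N-k)^k}$ represents the class of a point, into pairing with $s_{(N-k)^k}$ in the Hall inner product, so that $\int_{\Gr(k,N)}\xi(v)=\langle s_{(N-k)^k},v\rangle_\Lambda$ for all $v\in\Lambda$.

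Combining these identifications, the framed Virasoro constraint from Theorem \ref{main thm: framed Virasoro} specialises to
$$\langle s_{(N-k)^k},\,L_n D\rangle_\Lambda=0 \quad\text{for all } D\in\Lambda\ \text{and}\ n\geq 0.$$
By the adjointness $L_n^\dagger=L_{-n}$ of the free boson Virasoro under the Hall pairing, this is equivalent to $L_n s_{(N-k)^k}=0$ for $n\geq 1$, together with the $L_0$ eigenvalue equation fixing the correct highest weight; this is precisely the statement that $s_{(N-k)^k}$ is a singular vector in the $c=1$ Fock representation. Conversely, the singular vector equation implies the Virasoro constraint by reversing the same adjointness argument. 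The main obstacle I anticipate is in the middle step: carefully matching Joyce's $\bL_n^{f\to d}$ with the free boson Virasoro on $\Lambda$, and identifying the correct framing-induced shift so that the sign flip coming from taking adjoints lands exactly on the customary singular vector condition $L_n v=0$ for $n\geq 1$. Once the identification of operators and adjoints is in place, the remaining equivalence reduces to a formal manipulation with the Hall inner product.
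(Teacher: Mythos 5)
Your proposal is correct and follows essentially the same route as the paper: realize $\Gr(k,N)$ as the framed $A_1$-quiver moduli, identify the descendent algebra with $\Lambda$ via $p_j\leftrightarrow j!\ch_j(\CV)$, show that integration over the Grassmannian is Hall pairing against $(-1)^{k(N-k)}s_{(N-k)^k}$ (the paper's Proposition on $[\Gr(k,N)]$, proved by Schubert calculus), and dualize the framed Virasoro operators to recognize the $c=1$ Fock representation with momentum shift $2k-N$ and the singular vector condition. The only cosmetic issues are the suppressed sign $(-1)^{k(N-k)}$ and the slightly loose phrasing of the adjointness step, neither of which affects the argument.
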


We also find that the wall-crossing formula for the Grassmannian leads to a new proof that the rectangular Schur polynomials admit a formula in terms of what we call symmetrized Hecke operators, re-deriving (a particular case of) a result in \cite{CJ}. 

\begin{remark}
    We explain some connections of this paper to other works. The Virasoro constraints for quiver moduli spaces (Theorem \ref{main thm: quiver Virasoro}) is proven in \cite{bojko}. A version of the framed Virasoro constraints (Theorem \ref{main thm: framed Virasoro}) is proven in \cite[Equation (12)]{bojko} for $\theta=0$. We use more general stability condition to construct the Virasoro representations on cohomology groups. Part (i) of Theorem \ref{main thm: three surfaces} is proven for torsion free sheaves on any del Pezzo surfaces in \cite{blm}, allowing strictly semistable sheaves; for $\BP^2$ and $\BP^1\times \BP^1$, this was reproved in \cite{bojko}.
\end{remark}

\subsection{Organization of the paper}
This paper consists of 4 parts listed below in order; standard results about quivers, technical heart of the paper, application to del Pezzo surfaces and a case study of Grassmannian varieties. 

In Section \ref{sec: quiver}, we recollect standard definitions and results about quivers with relations and their framed analogues. In Section \ref{sec: dg quiver}, we discuss topics related to dg quivers and their modules; this includes a discussion about bounded derived categories, dg replacement of quivers with relations, and the standard resolution.

In Section \ref{sec: Virasoro}, we state the Virasoro constraints for quasi-smooth quivers and their framed analogue. We prove the Virasoro constraints for framed quivers (Theorem \ref{main thm: framed Virasoro}) and the geometricity of the Virasoro operators under a smoothness assumption (Theorem \ref{main thm: Virasoro representation}), assuming Theorem \ref{main thm: quiver Virasoro}. In Section \ref{sec: vertexalgebra}, we introduce lattice vertex algebras and Joyce's vertex algebras and construct a natural isomorphisms between them (Theorem \ref{main thm: natural iso}). 

We work in a fairly general setting of dg categories satisfying Assumption \ref{ass: dg category}, and show in Section \ref{sec: Virasoro and wall-crossing} that this assumption is satisfied for the dg category of representations of a dg quiver. In particular, we explain a construction of the descendent algebra and the Virasoro representation on it that is intrinsic to a dg category. We finish Section \ref{sec: Virasoro and wall-crossing} by proving the Virasoro constraints for quasi-smooth quivers (Theorem \ref{main thm: quiver Virasoro}) using wall-crossing formulas in Joyce's vertex algebra. 

In Section \ref{sec: applicaton}, we apply our theory to moduli spaces of Bridgeland stable objects on del Pezzo surfaces. We use the Virasoro constraints and geometricity for quivers (Theorem \ref{main thm: quiver Virasoro} and \ref{main thm: Virasoro representation}) and the non-commutative nature of the Virasoro operators (Theorem \ref{main thm: natural iso} (ii)) to prove the Virasoro constraints and representations of moduli spaces admitting a quiver description (Theorem \ref{main thm: abch implies Virasoro} and \ref{main thm: three surfaces}). 

In Section \ref{sec: Grass}, we discuss the Virasoro constraints for Grassmannians in terms of ring of symmetric functions (Theorem \ref{main thm: Grass}).

\subsection{Notations} We explain some of the notations repeatedly used in this paper. 

{\renewcommand*{\arraystretch}{1.3}
\begin{center}
\begin{longtable}{ p{4.3cm}  p{10cm} }
   $\BN$& the set of nonnegative integers\\
    
    $(Q,I)$, $\dgQ$& (finite and acyclic) quiver with relations, dg quiver\\

    $\bfT$, [$\bfT]$& saturated dg category, associated triangulated category\\

    $K(\bfT)$& (algebraic) $K$-theory of $\bfT$\\
    
    $\CM^{Q,I}$, $\CM^X$&moduli stack paramtrizing objects in the abelian categories of $(Q,I)$-representations or sheaves on $X$, respectively\\

    $\CN^\dgQ, \CN^{\,\bfT}, \CN^X$& higher moduli stacks parametrizing objects in the triangulated categories $D^b(\dgQ), [\bfT], D^b(X)$, respectively\\

    $\CM^{\theta-\textup{ss}}_d$ (resp. $M^{\theta-\textup{ss}}_d$) & moduli stack (resp. good moduli space) of $\theta$-semistable $(Q,I)$-representations of dimension vector $d$\\

    $M^{\theta}_{f\to d}$ & moduli space of limit $\theta$-stable $(Q,I)$-representations of dimension vector $d$ and framing vector $f$\\

    $\CM^{\sigma-\textup{ss}}_\v$ (resp. $M^{\sigma-\textup{ss}}_\v$)& moduli stack (resp. good moduli space) of $\sigma$-semistable objects in $D^b(S)$ with topolotical type $\v$\\

   $\bfCN^\dgQ, \bfCN^{\,\bfT}, \bfCN^X, \bfCM_d^{\theta-\textup{ss}}$& natural derived enhancements of the corresponding stacks\\

    $\BD^\bfT$ (resp. $\BD^\dgQ, \BD^X$) & Ext (resp. cohomlogical) descendent algebra\\
    
    $V^\bfT, V^{\bfQ}, V^X$& Joyce's vertex algebra associated to $\bfT$ (with underlying vector space $H_*(\CN^{\,\bfT},\BQ)$), $\bfQ$ and $X$, respectively\\
    $\textup{VA}(\Lambda,B)$& vertex algebra associated to the lattice $(\Lambda,B)$ whose underlying vector space is $V_\Lambda=\BQ[\Lambda]\otimes \BD_\Lambda$\\
    $\bL_n$, $\bR_n$ (resp. $L_n$, $R_n$)& Virasoro operators acting on cohomology or descendent algebra (resp. Joyce's vertex algebra or lattice vertex algebra)\\
\end{longtable}
\end{center}}\vspace{-20pt}

\subsection{Acknowledgement}
We thank R. Pandharipande and W. Pi for related discussions. We thank A. Mellit for a discussion that led to the construction of Virasoro representations. This work originated from the collaboration of the authors with A. Bojko. WL is supported by SNF-200020-182181 and ERC Consolidator Grant FourSurf 101087365. MM was supported during part of the project by ERC-2017-AdG-786580-MACI. The project received funding from the European Research Council (ERC) under the European Union Horizon 2020 research and innovation programme
(grant agreement 786580).

\section{Moduli of quiver representations}\label{sec: quiver}

\subsection{Fundamentals of quiver representations}\label{sec: Fundamentals of quiver}

In this section, we recall standard definitions in quiver representations and set up notations. See \cite{King, Reineke_quiver} and references therein for further details.

\subsubsection{Quiver representations}

A quiver is a tuple $Q=(Q_0,Q_1,s,t)$ where $Q_0$ and $Q_1$ are the finite sets of vertices and arrows, respectively, and $s,t:Q_1\rightarrow Q_0$ send arrows to their source and target. We assume throughout the paper that $Q$ has no oriented cycle, i.e., acyclic. 

Representation of a quiver $Q$ is a tuple $(V,\rho)$ where $V=(V_v)_{v\in Q_0}$ is a collection of finite dimensional vector spaces and $\rho=(\rho_e:V_{s(e)}\rightarrow V_{t(e)})_{e\in Q_1}$ is a collection of linear maps. We will often abuse notation and say that $V$ is a representation of $Q$, leaving the collection of linear maps implicit. Representations of $Q$ form an abelian category $\Rep_Q$. Associated to $\Rep_Q$ are the bounded derived category $D^b(Q)$ and Grothendieck's $K$-group $K(Q)\simeq \BZ^{Q_0}$. If $V$ is a representation, its dimension vector $\dim V\coloneqq(\dim V_v)_{v\in Q_0}$ is an element of $K(Q)$.

\subsubsection{Path algebra}

A path of length $k\geq 0$ in a quiver $Q$ is a sequence of the form 
$$e_k\cdots e_1\coloneqq \Big[i_0\xrightarrow{e_1} i_1 \xrightarrow{e_2}\cdots\xrightarrow{e_{k}} i_k\Big]
$$
where $t(e_j)=s(e_{j+1})$ for all $j=1,\dots, k-1$. A source and target of a path $e_k\cdots e_1$ is defined as $s(e_1)$ and $t(e_k)$, respectively. Denote the length zero path at $v\in Q_0$ by ${\bf 1}_v$. Path algebra $\BC[Q]$ is defined as a $\BC$-algebra with a basis given by paths of arbitrary lengths and the multiplication given by composition of composable paths. Then $\BC[Q]$ is an associative (typically non-commutative) $\BC$-algebra with a unit ${\bf 1}=\sum_{v\in Q_0} {\bf 1}_v$. Since $Q$ has no oriented cycle, $\BC[Q]$ is finite dimensional. 

A quiver representation $(V,\rho)$ gives rise to a finitely generated left $\BC[Q]$-module $\oplus_{v\in Q_0} V_v$ with a left multiplication by $e\in Q_1$ induced from $\rho_e$. Conversely, given a finitely generated left $\BC[Q]$-module $V$, we get a quiver representation with $V_v\coloneqq{\bf 1}_v\cdot V$ and $\rho_e:{\bf 1}_{s(e)}\cdot V\rightarrow {\bf 1}_{t(e)}\cdot V$ given by left multiplication by $e\in \BC[Q]$. This defines an equivalence of categories\footnote{By $\BC[Q]$-mod, we mean a category of fintely generated left $\BC[Q]$-modules. } 
$$\Rep_Q\simeq \BC[Q]\textup{-mod}\,.$$

\begin{example}\label{ex: simpleprojective}
Associated to each $v\in Q_0$ are a projective representation $P(v)\coloneqq\BC[Q]\cdot {\bf 1}_v$ and a simple representation $S(v)\coloneqq{\bf 1}_v\cdot\BC[Q]\cdot {\bf 1}_v$. By definition, we have $$\Hom_Q\big(P(v),V\big)=V_v\quad\text{and}\quad \Ext^n_Q\big(P(v),V\big)=0\text{ for }n>0.$$
\end{example}

\subsubsection{Standard resolution and Euler form} \label{sec: standard resolution without relations}

Define the Euler characteristics between two $Q$-representations $V$ and $V'$ as 
$$\chi_Q\big(V,V'\big)\coloneqq\sum_{n\geq 0}(-1)^n \dim\Ext_Q^n(V,V'). 
$$
By additivity with respect to short exact sequences, the Euler form is defined in $K(Q)\simeq \BZ^{Q_0}$.

For any $Q$-representation $V$, we have a standard projective resolution 
\begin{equation}\label{eq: standard resolution}
    0\rightarrow \bigoplus_{e\in Q_1} P(t(e))\otimes V_{s(e)}\rightarrow \bigoplus_{v\in Q_0} P(v)\otimes V_v\rightarrow V\rightarrow 0.
\end{equation}
In particular, the abelian category $\Rep_Q$ has homological dimension at most 1. Applying $\Hom_Q(-,V')$ to \eqref{eq: standard resolution}, we obtain a complex of vector spaces representing  $\RHom_Q(V,V')$:
$$0\rightarrow \bigoplus_{v\in Q_0}\Hom_\BC(V_v,V'_v)\rightarrow \bigoplus_{e\in Q_1}\Hom_\BC(V_{s(e)},V'_{t(e)})\rightarrow 0
$$
whose cohomology computes the Ext groups $\Ext^n_Q(V,V')$. Furthermore, we obtain an explicit formula for the Euler form 
$$\chi_Q(-,-)\colon \BZ^{Q_0}\times \BZ^{Q_0}\rightarrow \BZ,\quad (d,d')\mapsto\sum_{v\in Q_0}d_v\cdot d'_v-\sum_{e\in Q_1}d_{s(e)}\cdot d_{t(e)}'. 
$$
Since $Q$ has no oriented cycle, there is an ordering of $Q_0$ such that the Euler form with respect to the induced basis is upper-triangular with $1$ along the diagonal. In particular, the Euler form $\chi_Q(-,-)$ is a perfect pairing on $\BZ^{Q_0}$. 

Given a perfect integral pairing $\chi:\Lambda\times \Lambda\rightarrow \BZ$ on a finitely generated free abelian group $\Lambda$, we define its diagonal as 
$$\Delta\coloneqq\sum_{i\in I}v_i\otimes \hat{v}_i\in \Lambda\otimes_\BZ \Lambda
$$
where $\{v_i\}_{i\in I}$ is an integral basis and $\{\hat{v}_i\}_{i\in I}$ is the dual basis satisfying $\chi(\hat{v}_i, v_j)=\delta_{ij}$. The diagonal is characterized by the property that the composition 
$$\Lambda\xrightarrow{\Delta\otimes \id}\Lambda\otimes_\BZ\Lambda\otimes_\BZ\Lambda\xrightarrow{\id\otimes \chi}\Lambda
$$
is an identity, or equivalently by $(\chi\otimes \id)\circ (\id\otimes \Delta)=\id$. 

\begin{example}\label{ex: K theoretic diagonal}
By Example \ref{ex: simpleprojective}, we have 
$$\chi_Q(P(v),S(w))=\sum_{n\geq 0}(-1)^n \dim\Ext^n_Q(P(v),S(w))= \delta_{vw}\,.
$$
This implies that the diagonal with respect to the perfect pairing $\chi_Q$ is
$$\sum_{v\in Q_0}[S(v)]\otimes [P(v)]\in \BZ^{Q_0}\otimes \BZ^{Q_0}
$$
where $[-]$ denotes the corresponding $K$-theory class.
\end{example}

\subsubsection{Representations of quivers with relations}

We say that a representation $V$ of $Q$ satisfies a relation $r=\sum_s \lambda_s\, e^{(s)}_{k_s}\cdots e^{(s)}_{1}$
with source $v$ and target $w$ if we have
$$0=\sum_s \lambda_s\cdot \rho(e^{(s)}_{k_s})\circ \cdots\circ \rho(e^{(s)}_1)\in \Hom_\BC(V_v, V_w). 
$$
A representation of $(Q,I)$ is a representation of $Q$ that satisfies all the relations in $I$. If we choose generators of the ideal $I=(r_1,\dots,r_n)$, then it suffices to check that $V$ satisfies the relations $r_1,\dots, r_n$. A morphism between two representations $V$ and $V'$ of $(Q,I)$ is simply a morphism between representations of the quiver $Q$, so representations of $(Q,I)$ form an abelian category $\Rep_{Q,I}$, which is a full abelian subcategory of $\Rep_Q$. We have an equivalence of categories\footnote{Again, $\BC[Q]/I\textup{-mod}$ denotes the category of finitely generated, hence finite dimensional, left $\BC[Q]/I$-modules. }
$$\Rep_{Q,I}\simeq \BC[Q]/I\textup{-mod}.
$$
We also note that there is a corresponding bounded derived category $D^b(Q,I)$ and $K$-group $K(Q,I)\simeq \BZ^{Q_0}$.  

\begin{example}\label{ex: beilinson}
We illustrate the definitions with an example that will be useful in Section \ref{sec: beilinson}. Consider a quiver $Q$ as in the following picture
\begin{center}\begin{tikzcd}
\tikz \node[draw, circle]{1}; \arrow[rr, "a_3" description, bend right, shift right] \arrow[rr, "a_2" description] \arrow[rr, "a_1" description, bend left, shift left] &  & \tikz \node[draw, circle]{2}; \arrow[rr, "b_2" description] \arrow[rr, "b_3" description, bend right, shift right] \arrow[rr, "b_1" description, bend left, shift left] &  & \tikz \node[draw, circle]{3};
\end{tikzcd}\end{center}
with an ideal generated by six relations
$$I\coloneqq(\,b_2a_1+b_1a_2\,,\, b_3a_2+b_2a_3\,,\, b_1a_3+b_3a_1\,,\,a_1b_1\,,\, a_2b_2\,,\, a_3b_3\,).
$$
We call $(Q,I)$ the Beilinson quiver for $\BP^2$. The derived category $D^b(Q,I)$ is equivalent to $D^b(\BP^2)$ via Beilinson isomorphism \cite{beil}. 
\end{example}

\subsubsection{Euler form of quivers with relations}

Given two representations $V$ and $V'$ of $(Q,I)$, we define the Euler form 
$$\chi_{Q,I}(V,V')\coloneqq\sum_{n\geq 0}(-1)^n \dim\Ext^n_{Q,I}(V,V')
$$
as in the case of a quiver without relations. This extends to $K(Q,I)\simeq \BZ^{Q_0}$.

Questions regarding homological dimension and the Euler form of $\Rep_{Q,I}$ are more delicate compared to the case of $\Rep_Q$. While the homological dimension of $\Rep_{Q,I}$ is bounded by the maximal length of paths \cite[page 98]{HZ}, it is difficult to find a sharper or exact bound with respect to $I$. Also, an explicit formula for the Euler form is not available in general. We come back to these questions in Section \ref{sec: dg quiver} using the language of differential graded quivers. 

\subsection{Stability conditions, moduli spaces and moduli stacks}\label{sec: quiver, moduli}

Let $(Q, I)$ be a quiver with relations and $d\in \BN^{Q_0}$. We have a moduli stack $\mathcal{A}_d$ of representations of $Q$ with fixed dimension vector $d$. We explain here its construction. Let
\[A_d=\bigoplus_{e\in Q_1}\Hom_\BC(\BC^{d_{s(e)}},\BC^{d_{t(e)}})\,.\]
The product of general linear groups
\[\GL_d=\prod_{v\in Q_0}\GL_{d_v}\]
acts on $A_d$ via conjugation. The global quotient stack 
\[\CA_d\coloneqq [A_d/\GL_d]\]
parametrizes representations of the quiver $Q$ without imposing relations. Note that the dimension of the stack $\CA_d$ is given by
\begin{align*}
\dim A_d - \dim \GL_d
&=\sum_{e\in Q_1} d_{s(e)}\cdot d_{t(e)} - \sum_{v\in Q_0} d_v^2
= -\chi_Q(d,d)\,. 
\end{align*}
By definition of $A_d$, there exists $(\tilde{\CV},\tilde{\rho})$ where $\tilde{\CV}$ is a collection of trivial vector bundles $\tilde{\CV}_v=\BC^{d_v}\otimes \CO_{A_d}$ on $A_d$ and $\tilde{\rho}$ is a collection of morphisms $\tilde{\rho}_e:\tilde{\CV}_{s(e)}\rightarrow \tilde{\CV}_{t(e)}$ that restricts tautologically to each point $[\rho]\in A_d$. Furthermore, $\GL_d$-action on $A_d$ lifts to a natural $\GL_d$-equivariant structure on $\tilde{\CV}_i$ such that  the morphisms $\tilde{\rho}_e$ are invariant. Therefore, the data $(\tilde{\CV},\tilde{\rho})$ descends to a universal representation $(\CV,\rho)$ on $\CA_d$ which is unique up to a unique isomorphism. 

Given a family of representations, such as $A_d$ or $\CA_d$, relations impose Zariski closed condition. To be more explicit, we choose generating relations of the ideal $I=(r_1,\dots, r_n)$. We have a vector bundle over $A_d$
$$\tilde{E}_d\coloneqq \bigoplus_{i=1}^n \CH om(\tilde{\CV}_{s(r_i)},\tilde{\CV}_{t(r_i)})
$$
and a section 
$$\tilde{s}_d=\bigoplus_{i=1}^n\tilde{s}_d^{(r_i)}\in \Gamma(A_d, \tilde{E}_d)
$$
canonically associated to the choice of generators of $I$ defined as follows: if 
\[r=\sum_s \lambda_s\, e^{(s)}_{k_s}\cdots e^{(s)}_{1}\in I\subseteq \BC[Q]\] then 
$$\tilde{s}_d^{(r)}=\sum_s \lambda_s\cdot \tilde{\rho}_{e^{(s)}_{k_s}}\circ \cdots\circ \tilde{\rho}_{e^{(s)}_1}\in \Hom(\tilde{\CV}_{s(r)}, \tilde{\CV}_{t(r)})\,.
$$
We let $R_d\subseteq A_d$ be the zero locus
\begin{center}
\begin{tikzcd}
                  & \tilde{E}_d \arrow[d]                   \\
R_d=\textnormal{Zero}(\tilde{s}_d) \arrow[r,"\tilde{i}_d", hook] & A_d\arrow[u, "\tilde{s}_d"', bend right]
\end{tikzcd}
\end{center}
The moduli stack 
\[\CM^{Q,I}_d=\CM_d\coloneqq [R_d/\GL_d]\hookrightarrow [A_d/\GL_d]=\CA_d\]
parametrizes representations of $(Q, I)$. Since the vector bundle $\tilde E_d$ and the section $\tilde s_d$ are respected by the $\GL_d$-action, they descend to the moduli stack $\CA_d$; Therefore we can describe the closed substack $\CM_d\hookrightarrow \CA_d$ as the zero locus of a section of a vector bundle 
\begin{center}
\begin{tikzcd}
                  & E_d \arrow[d]                   \\
\CM_d=\textnormal{Zero}(s_d) \arrow[r,"i_d", hook] & \CA_d \arrow[u, "s_d"', bend right]
\end{tikzcd}
\end{center}
The universal representation $(\CV, \rho)$ on $\CA_d$ restricts to a universal representation on $\CM_d$, for which we use the same notation.

\begin{remark}
  Note that, although the construction of $R_d$ and $\CM_d$ uses a choice of generators (and the corresponding vector bundle and section), $\CM_d$ and $R_d$ do not actually depend on this choice, since satisfying all the relations on $I$ is the same as satisfying the generating relations $r_1, \ldots, r_n$. We will see later that a choice of the generators defines a quasi-smooth dg quiver which then induces a choice of derived enhancement of the above moduli stack.
\end{remark}

Note that $\mathbb{G}_m\subseteq \GL_d$, included via the diagonal, acts trivially on $R_d$. Therefore the action of $\GL_d$ on $R_d$ factors through the projective linear group $\textnormal{PGL}_d\coloneqq \GL_d/\mathbb{G}_m$.\footnote{Note that $\textnormal{PGL}_d$ is in general different from the product of $\textnormal{PGL}_{d_i}$ because we only remove one copy of $\BG_m$.} This defines a projective linear moduli stack $\CM^{\textnormal{pl}}_d\coloneqq [R_d/\textnormal{PGL}_d]$. The natural map 
$$\Pi^{\textnormal{pl}}_d\colon \CM_d\rightarrow \CM^{\textnormal{pl}}_d
$$
is a $\BG_m$-gerbe as long as $d\neq 0$.

\subsubsection{Stability conditions and moduli spaces}
\label{subsubsec: stabilityquivers}

A stability condition on $Q$ is a linear functional $\theta\in \Hom(\BZ^{Q_0},\BQ)$. Note that a functional $\theta$ can be uniquely written as
\[\theta(d)=\sum_{v\in Q_0}\theta_vd_v\]
for some weights $\theta_v\in \BQ$. Given a stability condition $\theta$, we define a slope function 
$$\mu^\theta:\BN^{Q_0}\backslash\{0\}\rightarrow \BQ,\quad d=(d_v)\mapsto\frac{\theta(d)}{\sum_v d_v}.
$$
The slope of a nonzero quiver representation $V$ with respect to $\theta$ is defined to be $\mu^\theta(V)=\mu^\theta(\dim V)$. We say that $V$ is $\theta$-(semi)stable if $\mu^\theta(V')(\leq) \mu^\theta(V)$ for all nonzero proper subrepresentations $0\subsetneq V'\subsetneq V$.

A stability condition $\theta$ defines $\GL_d$-equivariant Zariski open subsets
$$R^{\theta-\textup{st}}_d\subseteq R^{\theta-\textup{ss}}_d\subseteq R_d$$
that correspond to the locus of $\theta$-(semi)stable representations of $(Q, I)$, which are possibly emptysets. By \cite{King}, $\theta$-(semi)stability can be interpreted as GIT-(semi)stability with respect to certain character of $\GL_d$ constructed from $\theta$. Since $Q$ is acyclic, this implies that there is a projective good moduli scheme
$$\CM^{\theta-\textup{ss}}_d=[R^{\theta-\textup{ss}}_d/\GL_d]\rightarrow R^{\theta-\textup{ss}}_d\sslash\GL_d\eqqcolon M^{\theta-\textup{ss}}_d. 
$$

If $V$ is $\theta$-stable, then $\Hom_{Q, I}\big(V, V\big)\simeq \BC$. Therefore, the $\textnormal{GL}_d$-action on $R^{\theta-\textup{st}}_d$ has only constant stabilizer group $\BG_m\subseteq \GL_d$ and hence the good moduli map is a $\BG_m$-gerbe
$$\CM_d^{\theta-\textup{st}}=[R^{\theta-\textup{st}}_d/\GL_d]\rightarrow [R^{\theta-\textup{st}}_d/\textnormal{PGL}_d]\eqqcolon M^{\theta-\textup{st}}_d.
$$
We call $M^{\theta-\textup{ss}}_d$ (resp. $M^{\theta-\textup{st}}_d$) the moduli space of $\theta$-semistable (resp. $\theta$-stable) representations of $(Q, I)$. The stable one defines a Zariski open subset $M^{\theta-\textup{st}}_d\subseteq M^{\theta-\textup{ss}}_d$. When $M^{\theta-\textup{st}}_d=M^{\theta-\textup{ss}}_d$ we will abreviate and write only $M^{\theta}_d$.

\subsection{Framed quiver}

We review here the notions of framed representations and corresponding moduli spaces, mostly following the presentation in \cite{Reineke_framed}. Framed representations play an important role in the wall-crossing theory of Joyce \cite{grossjoycetanaka, Jo21} as they are the analogues of Joyce--Song pairs in the quiver setting. 

Moduli of framed representations can be understood in terms of unframed representations of a larger quiver. Nevertheless, we treat the framed analog because Virasoro constraints will have simpler formulation without going through the framed/unframed correspondence. They will also play an important role in the study of Virasoro representations on the cohomology of the stacks $\CM^{\theta-\textup{ss}}_d$, cf. Theorem \ref{main thm: Virasoro representation}.

\subsubsection{Framed representations of quiver}

Fix a framing vector $f\in \BN^{Q_0}\backslash\{0\}$. An $f$-framed representation of $(Q, I)$ is a tuple $((V,\rho),\phi)$ where $(V,\rho)$ is a representation of $(Q, I)$ and $\phi=(\phi_v:\BC^{f_v}\rightarrow V_v)_{v\in Q_0}$ is a collection of linear maps. We will abreviate the notation to $(V, \phi)$ and leave $\rho$ implicit. The $f$-framed representations form a category $\Rep^f_Q$, where the morphisms are morphisms of quiver representations that respect the framings. We remark that $\Rep^f_Q$ is no longer an abelian category.

\subsubsection{Moduli stack of framed representations}

For a framing vector $f\in \BN^{Q_0}\backslash\{0\}$ and a usual dimension vector $d\in \BN^{Q_0}$, we define a moduli stack $\CM_{f\shortrightarrow d}$ of $f$-framed representations of $Q$ with a dimenison vector $d$. Precisely, the moduli stack is given by the global quotient stack 
$$\CM_{f\shortrightarrow d}=[R_{f\shortrightarrow d}/\GL_d]
$$
where $R_{f\shortrightarrow d}=R_d\oplus \big(\oplus_{v\in Q_0}\Hom_\BC(\BC^{f_v},\BC^{d_v})\big)$ is the space of all linear maps involved and $\GL_d=\prod_{v\in Q_0}\GL_{d_v}$ is a product of general linear groups acting on $R_{f\shortrightarrow d}$ by
$$(g_v)\cdot \big( (\rho_e),(\phi_v)\big)\coloneqq\Big(\big(g_{t(e)}\circ \rho_e\circ g_{s(e)}^{-1}\big),\big(g_v\circ \phi_v\big)\Big). 
$$
The stack $\CM_{f\to d}$ maps to $\CM_d$ by forgetting the framing. In factr, $\CM_{f\to d}$ it is the total space of a vector bundle of rank $f\cdot d=\sum_{v\in Q_0}f_v\cdot d_v$ over $\CM_d$. 

The moduli stack $\CM_{f\shortrightarrow d}$ is equipped with a universal framed representation $\big((\CV,\rho),\phi\big)$ where $(\CV,\rho)$ is the universal representation pulled back from $\CM_d$, and $\phi=(\phi_v:\BC^{f_v}\otimes \CO_{\CM_{f\shortrightarrow d}}\rightarrow \CV_v)_{v\in Q_0}$ is a collection of universal framings. We remark that $\mathbb{G}_m\subseteq \GL_d$ no longer acts trivially on $R_{f\shortrightarrow d}$ if $f\neq 0$ and $d\neq 0$.

\subsubsection{Limit stability condition and moduli spaces}

Let $\theta$ be a stability condition of a quiver $Q$. We say that a framed representation $\big((V,\rho),\phi\big)$ is limit $\theta$-stable (which is the same as being limit $\theta$-semistable) if
\begin{enumerate}
\item  $\phi$ is nonzero
    \item $\mu^\theta(V')\leq \mu^\theta(V)$ for all $0\subsetneq V'\subsetneq V$, and
    \item $\mu^\theta(V')< \mu^\theta(V)$ for all $0\subsetneq V'\subsetneq V$ for which the framing $\phi$ factors through, i.e. $\phi_v(\BC^{f_v})\subseteq V'_v$. 
\end{enumerate}
Note that the second condition is equivalent to the $\theta$-semistability of $V$, so a framed representation being limit $\theta$-semistable implies that the underlying representation $V$ is $\theta$-semistable. On the other hand, if $V$ is $\theta$-stable then $(V, \phi)$ is automatically limit $\theta$-stable for any nonzero framing $\phi$. 

The limit $\theta$-stability condition defines a $\GL_d$-invariant Zariski open subset 
$$R^{\theta}_{f\shortrightarrow d}\subseteq R_{f\shortrightarrow d}.$$
The $\GL_d$-action on $R^\theta_{f\shortrightarrow d}$ is free (without even $\BG_m$-stabilizers) and the quotient stack 
$$M^\theta_{f\shortrightarrow d}=[R^\theta_{f\shortrightarrow d}/\GL_d]
$$
is a projective scheme as we will see in the next subsection. We call this the moduli space of limit $\theta$-stable $f$-framed representations of $(Q,I)$ with a dimension vector $d$. This moduli space is equipped with a universal framed representation restricted from the entire stack $\CM_{f\shortrightarrow d}$. Since the underlying representation $(V,\rho)$ of a limit $\theta$-stable representation is $\theta$-semistable, there is a forgetful morphism 
$$\pi:M^{\theta}_{f\shortrightarrow d}\rightarrow M^{\theta-\textup{ss}}_d. 
$$
When there are no strictly $\theta$-semistable representations in $M^{\theta-\textup{ss}}_d$, the forgetful morphism $\pi$ is a projective bundle of dimension $f\cdot d-1$. 

\subsubsection{Framed/unframed correspondence}\label{sec: f/uf corr}

Fix a quiver with relations $(Q, I)$ and a framing vector $f\in \BN^{Q_0}\backslash\{0\}$. We explain how $f$-framed representations of $(Q,I)$ can be understood as usual representations of a bigger quiver with relations $(Q^f, I^f)$ constructed from $(Q,I)$ and $f$. 

Define a quiver $Q^f$ by adding a new vertex $\infty$ and $f_v$ number of arrows $\infty\rightarrow v$ for each $v\in Q_0$. Precisely, we have $Q^f=(Q^f_0,Q^f_1,\tilde{s},\tilde{t}\,)$ where
\begin{enumerate}
    \item $Q^f_0=\{\infty\}\sqcup Q_0$,
    \item $Q^f_1=\{e_{v,s}\,|\,v\in Q_0,\ 1\leq s\leq f_v\}\sqcup Q_1$,
    \item $\tilde{s}:Q^f_1\rightarrow Q^f_0$ with $Q_1\ni e\mapsto s(e)$ and $e_{v,s}\mapsto \infty$,
    \item $\tilde{t}:Q^f_1\rightarrow Q^f_0$ with $Q_1\ni e \mapsto t(e)$ and  $e_{v,s}\mapsto v$. 
\end{enumerate}
Note that since $Q$ is acyclic, so is $Q^f$. The path algebra $\BC[Q]$ embeds naturally in $\BC[Q^f]$ and we define the ideal $I^f$ by extending scalars, i.e.
\[I^f=I\otimes_{\BC[Q]}\BC[Q^f]\,.\]
A representation of $\BC[Q^f]$ satisfies the relations in $I^f$ if and only if the induced representation of $\BC[Q]$ satisfies all the relations in $I$.

It is clear from the construction of $(Q^f, I^f)$ that $f$-framed representations of $(Q,I)$ of dimension vector $d\in \BN^{Q_0}$ are the same as usual representations of $(Q^f,I^f)$ of dimension vector $(1,d)\in \BN\times \BN^{Q_0}=\BN^{Q^f_0}$. In other words, we have
\begin{equation}\label{eq: framed/unframed}
    R_{f\shortrightarrow d}(Q, I)=R_{(1,d)}(Q^f, I^f). 
\end{equation}
However, the notion of isomorphism differs in this correspondence because in the framed case we only allow identity maps between the framing vector spaces $\BC^{f_v}$, but in the unframed case we allow a $\BG_m$-action on the one dimensional vector space $\BC^1$ at the infinity vertex $\infty$. 

Precisely, we have 
$$\CM_{(1,d)}(Q^f, I^f)=[R_{(1,d)}(Q^f, I^f)/\GL_{(1,d)}]=[R_{f\shortrightarrow d}/\BG_m\times \GL_d]
$$
whereas the framed moduli stack is
$$\CM_{f\shortrightarrow d}(Q, I)=[R_{f\shortrightarrow d}/\GL_d]. 
$$
Therefore, $\CM_{f\shortrightarrow d}(Q, I)$ is the projective linearization
$$\Big(\CM_{(1,d)}(Q^f, I^f)\Big)^{\textnormal{pl}}\simeq \CM_{f\shortrightarrow d}(Q, I)\,. 
$$
Through this isomorphism, the universal framed representation on $\CM_{f\shortrightarrow d}(Q, I)$ corresponds to a family of $Q^f$-representations on $\Big(\CM_{(1,d)}(Q^f, I^f)\Big)^{\textnormal{pl}}$ with $\CV_\infty=\CO$. By the universal property of the stack $\CM_{(1,d)}(Q^f, I^f)$, this family defines a section of the projective linearization map
$$\Pi^{\textnormal{pl}}_{(1,d)}:\CM_{(1,d)}(Q^f, I^f)\rightarrow \Big(\CM_{(1,d)}(Q^f, I^f)\Big)^{\textnormal{pl}}.
$$
This canonical section of the projective linearization map of a quiver $Q^f$ is related to the $\infty$-normalization used in the proof of Proposition \ref{prop: f/uf Virasoro}.

Given a stability condition $\theta$ of $Q$, we define a stability condition $\tilde{\theta}\in \Hom(\BZ^{\{\infty\}}\times \BZ^{Q_0},\BQ)$ of $Q^f$ as follows: $\tilde{\theta}$ maps $\BZ^{Q_0}$ using $\theta$ and $1\in\BZ^{\{\infty\}}$ to $\mu^{\theta}(d)+\epsilon$. Here $\epsilon\in \BQ_{>0}$ is small enough  according to $(Q,\theta,d,f)$. Then $\tilde{\theta}$-semistability is the same as $\tilde{\theta}$-stability and recovers the limit $\theta$-stability via the framed/unframed correspondence \eqref{eq: framed/unframed}. Therefore, we have 
$$M^{\tilde{\theta}}_{(1,d)}(Q^f,I^f)\simeq 
\Big(\CM_{(1,d)}^{\tilde{\theta}}(Q^f, I^f)\Big)^{\textnormal{pl}}
\simeq M_{f\shortrightarrow d}^{\theta}(Q, I).
$$
In particular, this shows that $M^{\theta}_{f\shortrightarrow d}(Q, I)$ is a projective scheme. When there are no relations, it also follows that $M^{\theta}_{f\shortrightarrow d}(Q)$ is smooth.

\begin{example}\label{ex: Grassmannian}
    Let $A_1$ be the quiver with only 1 vertex and no arrows, $I=0$ and let $\theta$ be arbitrary. It is straightforward to check that a framed representation $\phi\colon \BC^N\to V$ is limit $\theta$-stable if and only if $\phi$ is surjective, hence
    \[M^\theta_{N\to k}(A_1)=\Gr(N, k)\,.\]
    Under the framed/unframed correspondence, the Grassmannian is also identified with the moduli of representations
    \[M_{(1, k)}^{\tilde \theta}(K_N)\,,\]
    where $K_N$ is the Kronecker quiver with two vertices $\{\infty, 1\}$ and $N$ arrows from $\infty$ to $1$ and $\tilde \theta$ is such that $\tilde \theta_\infty>\tilde \theta_1$.
    \begin{center}
\begin{tikzcd}
\tikz \node[draw, circle]{$\infty$}; \arrow[rr, bend left=49, shift left] \arrow[rr, bend left] \arrow[rr, "\vdots", phantom] \arrow[rr, bend right] \arrow[rr, bend right=49, shift right] &  & \tikz \node[draw, circle]{1};
\end{tikzcd}
    \end{center}
\end{example}

\begin{example}
    Let $A_\ell$ be the linear quiver with $\ell$ vertices $\{1, 2, \ldots, \ell\}$ and arrows $i\to i+1$. The flag variety (of quotients) $\mathsf{Flag}(N; k_1, \ldots, k_\ell)$ is identified with the moduli space of framed quiver representations
    \[\mathsf{Flag}(N; k_1, \ldots, k_\ell)=M_{(N, 0, \ldots, 0)\to (k_1, \ldots, k_\ell)}^\theta(A_\ell)\]
    where $\theta$ is such that $\theta_i\gg\theta_{i+1}$. Under the framed/unframed correspondence the flag variety is identified with representations of the following quiver:
    \begin{center}
\begin{tikzcd}
\tikz \node[draw, circle]{$\infty$};\arrow[rr, bend left=49, shift left] \arrow[rr, bend left] \arrow[rr, "\vdots", phantom] \arrow[rr, bend right] \arrow[rr, bend right=49, shift right] &  & \tikz \node[draw, circle]{1}; \arrow[rr] &  & \tikz \node[draw, circle]{2}; \arrow[r] & \cdots \arrow[r] & \tikz \node[draw, circle]{$\ell$};
\end{tikzcd}
    \end{center}
\end{example}

\begin{example}
    When $I=0$ and $\theta=0$, the moduli spaces $M_{f\to d}^{\theta=0}(Q)$ were studied in \cite{Reineke_framed}. Theorem 4.10 in loc. cit. gives a description of such moduli spaces as an iterated Grassmann bundle.
\end{example}

\subsubsection{Approximation by framed moduli spaces}

Framed moduli spaces can be used to approximate the cohomology of the moduli stack of representations. This will be used later to construct the Virasoro representations on the cohomology of moduli stacks under some assumptions.
\begin{proposition}\label{prop: approximation}
    Let $(Q,I)$ be a quiver with relations and $\CM^{\theta-\textup{ss}}_d$ be a smooth moduli stack. If a framing vector $f$ satisfies $\min_{v\in Q_0}(f_v-d_v)>k$, then pull back under the forgetful morphism defines an isomorphism in the specified degrees
    $$H^{\leq 2k}(\CM_d^{\theta-\textup{ss}})\xrightarrow{\sim}H^{\leq 2k} (M^\theta_{f\rightarrow d}). 
    $$
\end{proposition}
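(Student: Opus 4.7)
The plan is to factor the forgetful morphism $\pi\colon M^\theta_{f\to d}\to\CM^{\theta-\textup{ss}}_d$ through an auxiliary vector bundle on the stack and then invoke two standard cohomological principles: invariance of rational cohomology under pullback along a vector bundle, and invariance under restriction to an open substack whose complement has high codimension. Concretely, let $\CE = [R_{f\to d}|_{R^{\theta-\textup{ss}}_d}/\GL_d]$ be the restriction of the vector bundle $\CM_{f\to d}\to\CM_d$ to the semistable locus; this is a vector bundle of rank $f\cdot d$ over the smooth stack $\CM^{\theta-\textup{ss}}_d$, and $M^\theta_{f\to d}$ is an open substack of $\CE$. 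The forgetful morphism therefore factors as $M^\theta_{f\to d}\xhookrightarrow{\,j\,}\CE\xrightarrow{\,p\,}\CM^{\theta-\textup{ss}}_d$, and $p^*\colon H^*(\CM^{\theta-\textup{ss}}_d,\BQ)\xrightarrow{\sim} H^*(\CE,\BQ)$ is an isomorphism because $p$ is a vector bundle.

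It remains to show that $j^*$ is an isomorphism in degrees $\leq 2k$; for this I estimate the codimension of $Z\coloneqq\CE\setminus M^\theta_{f\to d}$. By limit $\theta$-stability, $Z$ is the union of the locus $\{\phi=0\}$ with the strata $Z_{d'}$, indexed by the finitely many dimension vectors $0\neq d'\lneq d$ with $\mu^\theta(d')=\mu^\theta(d)$, consisting of pairs $(V,\phi)$ such that $\phi$ factors through a subrepresentation $V'\subset V$ of dimension $d'$. The incidence stack parametrizing triples $(V,V',\phi)$ surjects onto $Z_{d'}$; its fibers over $\CM^{\theta-\textup{ss}}_d$ have dimension at most $\sum_v d'_v(d_v-d'_v)+\sum_v f_v d'_v$, the first term bounding the scheme of subrepresentations $V'\subset V$ inside $\prod_v\Gr(d'_v,d_v)$ and the second accounting for $\bigoplus_v\Hom(\BC^{f_v},V'_v)$. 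Subtracting from the fiber dimension $\sum_v f_v d_v$ of $\CE\to\CM^{\theta-\textup{ss}}_d$ gives
\[\textup{codim}(Z_{d'},\,\CE)\ \geq\ \sum_v(f_v-d'_v)(d_v-d'_v)\ \geq\ (k+1)\sum_v(d_v-d'_v)\ \geq\ k+1\,,\]
where I use $f_v-d'_v\geq f_v-d_v\geq k+1$ and $d'\neq d$; the locus $\{\phi=0\}$ has codimension $\sum_v f_v d_v\geq k+1$ as well.

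Since $\CE$ is smooth and $Z$ is a closed substack of codimension at least $k+1$, the standard local cohomology vanishing gives $H^i_Z(\CE,\BQ) = 0$ for $i\leq 2k+1$. The long exact sequence of the pair $(\CE, M^\theta_{f\to d})$ then shows that $j^*$ is an isomorphism in degree $\leq 2k$, and composing with the isomorphism $p^*$ from the first step completes the proof.

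The main obstacle is the codimension estimate: there are in general positive-dimensional families of destabilizing subrepresentations $V'\subset V$, parametrized by closed subschemes of $\prod_v\Gr(d'_v,d_v)$, and one has to ensure that the codimension gain coming from restricting the framing to land in $V'$ (of order $f_v$) strictly dominates the dimension of this Grassmannian family (of order $d_v$). The hypothesis $f_v-d_v>k$ is precisely what makes this balance work, uniformly in $d'$.
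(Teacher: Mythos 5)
Your proof is correct and follows essentially the same route as the paper's: factor $\pi$ through the total space of the framing vector bundle over $\CM^{\theta-\textup{ss}}_d$ (the paper calls it $\HHom^\theta_{f\to d}$), use homotopy invariance for the bundle projection, and bound the codimension of the unstable locus by the fiberwise count $\sum_v(d_v-d_v')(f_v-d_v')\geq\min_v(f_v-d_v)$, which is exactly the paper's estimate. Your added details (the incidence stack over $\prod_v\Gr(d'_v,d_v)$, the separate treatment of $\{\phi=0\}$, and the local-cohomology justification of the open-restriction isomorphism) only make explicit what the paper leaves implicit.
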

\begin{proof}
    This type of statement is well-known in various context, see for instance \cite[Lemma 4.1]{Davison_Meinhardt}. We write the proof for completeness. By definition of limit $\theta$-stable $f$-framed representations, we have a diagram
\begin{equation}\label{eq: framed approximation}
    \begin{tikzcd}
\HHom_{f\rightarrow d}^\theta\coloneqq\bigoplus\limits_{v\in Q_0}\HHom(\CO^{f_v}, \CV_v) \arrow[d, "\pi"] & {M}^{\theta}_{f\rightarrow d} \arrow[l, "j"', hook'] \\
{\CM}^{\theta-\textup{ss}}_d                &                         
\end{tikzcd}
\end{equation}
where $\HHom_{f\rightarrow d}^\theta$ is the moduli stack parametrizing $\theta$-semistable representations $V$ together with any framings $\phi=(\phi_v:\BC^{f_v}\rightarrow V_v)$ and $j$ corresponds to the open locus of limit $\theta$-stable framings. Since $\HHom_{f\rightarrow d}^\theta$ is a total space of a vector bundle over ${\CM}^{\theta-\textup{ss}}_d$, pull back under $\pi$ defines an isomorphism of cohomology rings. Since every space in the diagram \eqref{eq: framed approximation} is smooth by assumption, it suffices to show that $\textup{codim}(j)\geq \min_{v\in Q_0}(f_v-d_v)$ which would imply
\begin{equation*}
    H^{\leq 2k}({\CM}^{\theta-\textup{ss}}_d )\xrightarrow[\sim]{\pi^*} 
H^{\leq 2k}(\HHom^\theta_{f\rightarrow d} )\xrightarrow[\sim]{j^*} 
H^{\leq 2k}({M}^{\theta}_{f\rightarrow d})\,.
\end{equation*}

Over a fixed representation $V$ in ${\CM}^{\theta-\textup{ss}}_{d}$, the complement of $j$ is 
$$\Big(\bigoplus_{v\in Q_0}\Hom_\BC(\BC^{f_v},V_v)\Big)^{\textup{unstable}}\hookrightarrow\bigoplus_{v\in Q_0}\Hom_\BC(\BC^{f_v},V_v)\,.
$$
By definition of limit $\theta$-stability, an unstable framing $\phi=(\phi_v:\BC^{f_v}\rightarrow V_v)$ factors through some subrepresentation $V'\subsetneq V$ of dimension vector $d'\lneq d$. Therefore, the dimension of the space of unstable framing is bounded above by 
$$\sum_{v\in Q_0}(d_v-d_v')d_v'+f_v\cdot d_v'
$$
where the first term corresponds to a choice of arbitrary subspaces $V'\subsetneq V$ which are not necessarily a subrepresentation. In other words, the codimension of $j$ is bounded below by 
\begin{align*}
    \textup{codim}(j)&\geq \min_{d'\lneq d}\sum_{v\in Q_0}f_v d_v - \sum_{v\in Q_0}\big((d_v-d_v')d_v'+f_v\cdot d_v'\big)\\
    &=\min_{d'\lneq d} \sum_{v\in Q_0}(d_v-d_v')(f_v-d_v')\\
    &\geq \min_{v\in Q_0}(f_v-d_v)
\end{align*}
which completes the proof.
\end{proof}

\section{Differential graded quivers}\label{sec: dg quiver}

Many of the constructions in this paper, except Section \ref{sec: quiver}, depend not only on $(Q,I)$ but also on the choice of generators $I=(r_1,\dots, r_n)$. For example, choosing generators is necessary to endow the moduli spaces of stable representations $M^{\theta-\textup{st}}_d$ with a virtual fundamental class. Quasi-smooth differential graded (=dg) quivers provide a natural language to keep track of such a choice of generators. 

One of the advantages of working in the generality of dg quivers is that it provides a more natural approach to vertex algebras for quivers with relations. The vertex algebras constructed in \cite{Jo21} for quivers with relations require a choice of generators of $I$, and in particular they are not intrinsic to $D^b(Q, I)$. This raises an important question in applications: if $X$ is a variety with $D^b(X)\simeq D^b(Q, I)$, do we have an isomorphism between the vertex algebra associated to $X$ and a vertex algebra associated to $(Q, I)$ together with a choice of generators of $I$? The answer is typically no if $\dim X>2$. Instead, the vertex algebra associated to $X$ is isomorphic to the vertex algebra canonically associated to a certain dg quiver $\bfQ$. When $\bfQ$ is quasi-smooth, this recovers the construction in \cite{Jo21} with a choice of generators. 

In this section, we review definitions and known results on dg quivers and their modules which we need for applications to Virasoro constraints. 

\subsection{Fundamentals of dg quivers}

\subsubsection{Dg algebra and derived category}\label{sec: dg algebra}
Before explaining dg quivers, we briefly review dg algebras (over the complex numbers) and their derived categories, see \cite{Toen, Keller} and \cite[\href{https://stacks.math.columbia.edu/tag/09JD}{Tag 09JD}]{stacks-project} for details.

A dg algebra is a  $\BZ$-graded  associative algebra ${\bf A}=\bigoplus\limits_{n\in \BZ}{\bf A}^{(n)}$ together with a unit $1\in {\bf A}^{(0)}$ and a differential $\partial:{\bf A}\rightarrow {\bf A}$ satisfying the graded Leibniz rule
$$\partial(ab)=\partial(a)\cdot b+(-1)^{\deg(a)}\,a\cdot\partial(b). 
$$
All the differentials will be of degree $1$ so we will not mention this again.

A dg $\bfA$-module is a $\BZ$-graded ${\bf A}$-module $M=\bigoplus\limits_{n\in \BZ}M^{(n)}$ together with a differential $\partial:M\rightarrow M$ satisfying the graded Leibniz rule. For any $k\in \BZ$, we define the $k$-shifted dg ${\bf A}$-module $M[k]$ with a shifted grading $M[k]^{(n)}\coloneqq M^{(n+k)}$ and the same structure of ${\bf A}$-module and differential $\partial$. Given two dg modules $M$ and $N$, we denote by $\Hom_{\bf A}^\gr(M,N)$ the vector space of degree preserving ${\bf A}$-module homomorphisms.\footnote{Note that $f\in \Hom^\gr_\bfA(M,N)$ is required to preserve the gradings but not the differentials.} We define a complex of vector spaces 
$$\Hom_{\bf A}^\bullet(M,N)\coloneqq\bigoplus_{n\in \BZ}\Hom_{\bf A}^\gr(M,N[n])
$$
whose differential is given by
$$\partial(f)=f\circ\partial_M-(-1)^{n}\partial_N\circ f\quad \textnormal{for}\quad f\in \Hom_{\bf A}^\gr(M,N[n]). 
$$
This defines a dg category $\Rep^\dg_\bfA$ of dg $\bfA$-modules.

Given any dg category $\bfT$, we can take the associated categories $Z^0(\bfT)$ and $[\bfT]$ whose objects are the same as $\bfT$ but morphisms between two elements $M$ and $N$ are given by 
$$Z^0(\Hom^\bullet_\bfT(M,N)),\quad H^0(\Hom^\bullet_\bfT(M,N)),
$$
respectively. We call $[\bfT]$ the homotopy category of $\bfT$. According to these notations, $Z^0(\Rep^\dg_\bfA)$ is the category of dg $\bfA$-modules with dg homomorphisms and $[\Rep^\dg_\bfA]$ is the homotopy category of dg $\bfA$-modules. 

We say that a dg $\bfA$-module $M$ is acyclic if $H^n(M,\partial_M)=0$ for all $n\in \BZ$. Denote the full subcategory of acyclic dg modules by $\Ac_{\bf A}^\dg\subset \Rep^\dg_\bfA$ which is naturally a dg category. A dg $\bfA$-module $P$ is called h-projective if $(\Hom_{\bfA}^\bullet(P,M),\partial)$ is acyclic for every acyclic $M$. Denote by $\hproj_\bfA^\dg$ the full sub dg category of h-projective dg modules. If $P$ is h-projective and $P\to M$ induces an isomorphism on cohomology, we say $P$ is an h-projective resolution of $M$.

The derived category of ${\bf A}$ is defined as a Verdier quotient
$$D(\bfA)\coloneqq[\Rep_{\bf A}^\dg]\,/\,[\Ac_{\bf A}^\dg]
$$
which is naturally a triangulated category. 
Equivalently, one can first take a dg quotient category $\Rep_{\bf A}^\dg/\Ac_{\bf A}^\dg$ and then take its homotopy category
$$D(\bfA)\simeq [\Rep_{\bf A}^\dg/\Ac_{\bf A}^\dg]\,.
$$
Therefore, the dg category $\Rep_{\bf A}^\dg/\Ac_{\bf A}^\dg$ gives a dg enhancement of the triangulated category $D(\bfA)$. Since $\hproj_\bfA^\dg$ is quasi-equivalent to $\Rep_{\bf A}^\dg/\Ac_{\bf A}^\dg$, we can also think of it as a dg enhancement 
$$D(\bfA)\simeq [\hproj_\bfA^\dg]\,.
$$

An object $M\in D(\bfA)$ is called perfect (also known as compact) if $\Hom_{D(\bfA)}(M,-)$ commutes with arbitrary direct sums. We define a bounded derived category, denoted by $D^b(\bfA)$, as the full subcategory of perfect objects in $D(\bfA)$.\footnote{In the literature, this is often called as a perfect derived category and denoted by $D_{\textup{perf}}(\bfA)$. We use the nonstandard notation and terminology because we work only in cases where the bounded derived category agrees with the perfect derived category.} This is again a triangulated category and is equipped with a dg enhancement induced from that of $D(\bfA)$. Given $M\in D(\bfA)$ we have a derived functor $\RHom_{\bfA}(M, -)\colon D(\bfA)\to D(\BC)$ and we write $\Ext^i_\bfA(M, N)$ for $H^i(\RHom_\bfA(M, N))$. If $\bfA$ is a compact dg algebra (i.e. $\sum_{n\in \BZ}\dim_\BC H^n(\bfA)<\infty$) then $\RHom_\bfA(M, N)\in D^b(\BC)$ as long as $M, N\in D^b(\bfA)$ by \cite[Lemma 3.2]{Shk}.

A morphism between two dg algebras $\bfA$ and $\bfA'$ is a degree preserving algebra homomorphism $\phi:\bfA\rightarrow \bfA'$ with $\phi\circ \partial_{\bfA}=\partial_{\bfA'}\circ \phi$. Such a morphism is called a quasi-isomorphism if it induces an isomorphism $H^n(\bfA,\partial_{\bfA})\simeq H^n(\bfA',\partial_{\bfA'})$ for all $n\in \BZ$. A quasi-isomorphism induces an equivalence of triangulated categories
\begin{center}
    \begin{tikzcd}
D(\bfA) \arrow[r, "f", bend left] & D(\bfA') \arrow[l, "g", bend left]
\end{tikzcd}
\end{center}
where $f$ and $g$ are the extension and restriction of scalars, respectively, with respect to the quasi-isomorphism [Lemma 37.1 of Stacks project]. Since $f$ and $g$ preserves direct sum, they also preserve perfect objects. Therefore, we also have an equivalence $D^b(\bfA)\simeq D^b(\bfA')$.

\subsubsection{Dg quiver}

Roughly speaking, the notion of a dg quiver is obtained by replacing $\BC[Q]/I$ by a $\BZ_{\leq 0}$-graded dg algebra. Precisely, a dg quiver $\dgQ$ is a usual (finite and acyclic) quiver $\dgQ=(\dgQ_0,\dgQ_1,s,t)$ together with a $\BZ_{\leq 0}$-grading function $|\cdot|\colon \dgQ_1\to \BZ_{\leq 0}$ and a differential $\partial$ on $\BC[\dgQ]$ which we explain now. First, a grading function on $\dgQ_1$ is a decomposition into a disjoint union
$$\dgQ_1=\coprod_{n\leq 0}\dgQ_1^{(n)}
$$
which induces a $\BZ_{\leq 0}$-graded algebra structure on $\BC[\dgQ]=\bigoplus\limits_{n\leq 0}\BC[\dgQ]^{(n)}$. Second, a differential on $\BC[\dgQ]$ is a degree $1$ map
$$\partial:\BC[\dgQ]\rightarrow \BC[\dgQ]
$$
satisfying the graded Leibniz rule and preserving the source and target. By Leibniz rule, $\partial$ is uniquely determined by its value on the set of edges $\bfQ_1$. This makes $(\BC[\dgQ],\partial)$ a dg algebra, so all the notions from Section \ref{sec: dg algebra} apply to dg quivers.  Note that $\BC[\dgQ]$ is a compact dg algebra because $\bfQ$ is finite and acyclic. Denote by $\Rep_{\dgQ}^\dg$ the dg category of dg $\BC[\dgQ]$-modules. The associated (bounded) derived category $D^b(\dgQ)$ has dg enhancements induced from $\hproj^\dg_{\bfQ}$.

\begin{example}\label{ex: hprojectivequiver}
    For each $v\in \dgQ_0$, we have a projective dg module $P(v)\coloneqq\BC[\dgQ]\cdot {\bf 1}_v$ and a simple dg module $S(v)\coloneqq{\bf 1}_v\cdot\BC[\dgQ]\cdot {\bf 1}_v$. Given a left dg $\BC[\bfQ]$-module $M$, $M_v=\1_v\cdot M$ is a complex of vector spaces. It is easy to check that the map $\Hom_{\dgQ}^\gr(P(v), M[n])\to M_v^{(n)}$ defined by $f\mapsto (-1)^{\frac{n(n+1)}{2}}f(\1_v)$ produces an isomorphism of complexes of vector spaces 
    $$\Hom_{\dgQ}^\bullet(P(v), M)\simeq M_v\,.
    $$ 
    In particular, $P(v)$ is h-projective and perfect since clearly $M$ acyclic implies that $M_v$ is acyclic and the $(-)_v$ construction commutes with arbitrary direct sums. 
\end{example}

The underlying quiver of a dg quiver $\dgQ$ is defined as $Q\coloneqq(\dgQ_0, \dgQ_1^{(0)},s,t)$ consisting only of degree zero arrows. The underlying quiver is equipped with an ideal of relations 
$$I\coloneqq\textnormal{image}\big(\partial:\BC[\dgQ]^{(-1)}\rightarrow \BC[\dgQ]^{(0)}\big)\subseteq \BC[\dgQ]^{(0)}=\BC[Q].
$$
By definition, we have 
$$H^0(\BC[\dgQ],\partial)=\BC[Q]/I. 
$$

\begin{example}\label{ex: qsdgquiver}
Let $(Q,I)$ be a quiver with relations. Fix a choice of generating relations $I=(r_1,\dots,r_n)$. Given such a choice, we have a dg quiver $\dgQ$ such that 
$$\dgQ_0=Q_0,\quad \dgQ_1^{(0)}=Q_1,\quad \dgQ_1^{(-1)}=\{\tilde{r}_1,\dots, \tilde{r}_n\},\quad \dgQ_1^{(<-1)}=\emptyset
$$
with $s(\tilde{r}_i)=s(r_i)$ and $t(\tilde{r}_i)=t(r_i)$. Putting $\partial(\tilde{r}_i)=r_i$, this uniquely extends to a differential on $\BC[\dgQ]$ by the graded Leibniz rule. We clearly have $H^0(\BC[\dgQ])=\BC[Q]/I$. 
\end{example}

We call a dg quiver quasi-smooth if $\dgQ_1^{(n)}=\emptyset$ for $n<-1$. The above construction shows that a quasi-smooth dg quiver is equivalent to a quiver with a choice of generating relations $I=(r_1,\dots, r_n)$. We will use these two notions interchangeably. When $\bfQ$ is quasi-smooth, we will write $Q_2=\bfQ_1^{(-1)}$, so that $Q_0, Q_1, Q_2$ are the sets of vertices, edges and relations, respectively; this matches the notation in \cite[Definition 6.2]{Jo21}.

\subsubsection{Path algebra as a tensor algebra}
\label{subsubsec: tensoralgebra}
The description of the path algebra that we now give will later be used to describe the standard resolution on a dg quiver.

Let $Q$ be a (usual) quiver and let $S$ be the algebra
\[S=\bigoplus_{v\in Q_0}\BC\1_v\]
with multiplication given by ${\bf 1}_v^2={\bf 1}_v$ and $\1_v\1_w=0$ for $v\neq w$. Note that a left $S$-modules is the same as a $\BC$-vector space $T$ together with a decomposition $T=\oplus_{v\in Q_0}T(v, -)$, where $T(v, -)=\1_v \cdot T$. Similarly, a right $S$-module is a vector space together with a decomposition with summands $T(-, v)=T\cdot \1_v$ and a bimodule comes with a decomposition with summands $T(v,w)=\1_v\cdot T\cdot \1_w$. If $T_1$ is a right $S$-module and $T_2$ is a left $S$-module then
\[T_1\otimes_S T_2=\bigoplus_{v\in Q_0}T_1(-, v)\otimes_\BC T_2(v, -)\,.\]

The source and target maps give $T:=\bigoplus_{e\in Q_1}\BC e$ the structure of $S$-bimodule where $T(v, w)$ is spanned by the arrows $w\mapsto v$. Then the path algebra of a (usual) quiver can be described as the tensor algebra of $T$ over $S$
\[\CT_S(T)=\bigoplus_{n\geq 0}\underbrace{T \otimes_S T\otimes_S\ldots \otimes_S T}_{n\textup{ times}}\,.\]

If we replace $Q$ by a dg quiver $\bfQ$, then $T$ becomes naturally a graded $S$-bimodule. Endowing the path algebra $\BC[\bfQ]$ with a differential is the same as giving a map of degree 1 between the graded $S$-bimodules $T\to \CT_S(T)=\BC[\dgQ]$.

\subsection{Dg replacement of quiver with relations}

A dg replacement of a quiver with relations $(Q,I)$ is a dg quiver $\dgQ$ with the following properties: 
\begin{enumerate}
    \item the underlying quiver of $\dgQ$ is $Q$,
    \item the zeroth cohomology is $H^0(\BC[\dgQ],\partial)=\BC[Q]/I$,
    \item $H^i(\BC[\dgQ],\partial)=0$ for all $i<0$. 
\end{enumerate}
By definition, we have a quasi-isomorphism $\BC[\dgQ]\rightarrow \BC[Q]/I$ where $\BC[Q]/I$ is given a trivial dg structure. This induces an equivalence between triangulated categories
\begin{center}
    \begin{tikzcd}
D^b(\dgQ) \arrow[r, "f", bend left] & D^b(Q,I) \arrow[l, "g", bend left]
\end{tikzcd}
\end{center}
where $f$ and $g$ are the extension and restriction of scalars, respectively. In particular, $\hproj_{\bfQ}^\dg$ induces a dg enhancement of $D^b(Q,I)$.

A dg replacement of $(Q,I)$ always exists, see \cite[Lemma 2.2]{kalckyang} or \cite[Construction 2.6]{oppermann}. Moreover, it was observed in \cite[Remark 2.9]{oppermann} that Koszul duality provides a canonical (up to choice of basis) dg replacement $\bfQ$ with number of arrows specified by the dimensions of Ext groups of the simple modules in $\Rep_{Q,I}$. For the convenience of the reader, we state this and provide a proof below.

\begin{theorem}\label{thm: koszuldgquiver}
    Let $(Q,I)$ be an (acyclic, finite) quiver with relations and set $A\coloneqq\BC[Q]/I$. Then there is a dg replacement $\bfQ$ of $(Q,I)$ such that the number of arrows from $v\in Q_0=\bfQ_0$ to $w\in Q_0=\bfQ_0$ of degree $-k$ is
    \[\dim \Ext^{k+1}_{A}(S(v), S(w))\,.\]

    In particular, if $\Rep_{Q,I}$ has homological dimension 2 then $(Q, I)$ admits a quasi-smooth dg replacement, i.e., there is a choice of generators of $I$ such that the dg quiver in Example \ref{ex: qsdgquiver} is a dg replacement.
\end{theorem}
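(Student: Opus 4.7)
The plan is to build $\bfQ$ inductively by degree, using the tensor algebra presentation from Section \ref{subsubsec: tensoralgebra}, and then identify the dimension count via a minimal resolution argument. Writing $\BC[\bfQ]=\CT_S(T)$ with $S=\bigoplus_{v\in Q_0}\BC\1_v$ and $T=\bigoplus_{n\leq 0}T^{(n)}$ a graded $S$-bimodule, specifying the dg quiver amounts to specifying $T$ together with a degree $1$ map $\partial\colon T\to \CT_S(T)$, extended by the Leibniz rule. I will add new arrows in each degree just enough to kill the next piece of cohomology; the minimality of this choice will force the dimension count predicted by the theorem.

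In degree $0$, take $T^{(0)}$ to be the arrows of $Q$, so that $\BC[\bfQ^{\geq 0}]=\BC[Q]$ surjects onto $A$; this agrees with the count $\dim\Ext^1_A(S(v),S(w))$ of arrows $v\to w$ in $Q$. In degree $-1$, let $T^{(-1)}$ be the free $S$-bimodule on a minimal generating set for the kernel $I$, i.e.\ a lift of a basis of $I/(JI+IJ)$ with $J$ the arrow ideal, and define $\partial$ on $T^{(-1)}$ to send each generator to the corresponding relation in $I$. Inductively, if $T^{\geq -k}$ and $\partial|_{T^{\geq -k}}$ have been defined so that $H^0(\BC[\bfQ^{\geq -k}])=A$ and $H^{-i}=0$ for $0<i<k$, take $T^{(-(k+1))}$ to be a lift of a minimal $S$-bimodule generating set of $H^{-k}(\BC[\bfQ^{\geq -k}])$ and extend $\partial$ by sending each new generator to its chosen cocycle representative. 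The resulting $\bfQ$ is by construction a dg replacement of $(Q,I)$.

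The heart of the argument is to show that this construction produces exactly $\dim\Ext^{k+1}_A(S(v),S(w))$ arrows from $v$ to $w$ in degree $-k$. By construction $\partial$ always lands inside the square of the augmentation ideal of $\CT_S(T)$, because the new generators map to a \emph{basis} of a cohomology group, so none of them can be absorbed by a change of generators; this is the usual minimality condition for an augmented dg algebra. Using this, I form a projective resolution of $S(v)$ over $\BC[\bfQ]$ analogous to \eqref{eq: standard resolution}, with one copy of $P(t(\tilde e))$ for each arrow $\tilde e$ of $\bfQ$ starting at $v$ in the appropriate homological degree; minimality forces the induced differentials on $\Hom^\bullet_\bfQ(-,S(w))$ to vanish, which yields
\[\dim\Ext^{k+1}_A(S(v),S(w))=\#\{\text{arrows }v\to w\text{ of degree }-k\}\,.\]
Equivalently, $\BC[\bfQ]$ can be identified with the cobar construction on the $A_\infty$-coalgebra dual to the Ext algebra $E=\Ext^*_A(S,S)$, whose generators in degree $-k$ are precisely $(E^{k+1})^*$.

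For the final assertion, if $\Rep_{Q,I}$ has homological dimension at most $2$ then $\Ext^k_A(S(v),S(w))=0$ for all $k\geq 3$, so $T^{(-k)}=0$ for $k\geq 2$ and the inductive procedure stops, producing a quasi-smooth $\bfQ$; its degree $-1$ arrows then provide the desired choice of generators for $I$ as in Example \ref{ex: qsdgquiver}. The main obstacle is the dimension matching in the previous paragraph: propagating minimality inductively and reading it off from the standard resolution requires care, and the cleanest route is through the $A_\infty$/Koszul duality interpretation of the minimal model.
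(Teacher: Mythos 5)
Your overall strategy --- build a minimal model of $A$ by inductively adjoining generators that kill cohomology, then read off the arrow count from the standard resolution --- is a genuinely different route from the paper's, which invokes Koszul duality directly ($A\simeq E(E(A))=\CT_S\big(D(\Ext^{\geq 1}_A(S,S)[1])\big)$, with differential coming from Kadeishvili's $A_\infty$-structure on the Ext algebra) and never needs an induction. The second half of your argument is sound and is a nice complement to the paper: once one knows that $\partial(T)$ lies in $\bigoplus_{n\geq 2}T^{\otimes_S n}$, the internal differential of the edge term of the standard resolution (Theorem \ref{thm: dgstandardresolution}) vanishes after applying $\Hom^\bullet_{\bfQ}(-,S(w))$ and $-\otimes_S S(v)$, because every monomial of $\nabla(\partial e)$ then carries a nontrivial path acting on a simple module; hence $\Ext^{k+1}_{\bfQ}(S(v),S(w))$ is exactly the span of the degree $-k$ arrows $v\to w$, and the quasi-isomorphism $\BC[\bfQ]\to A$ transports this to $A$.

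The gap is in the construction itself, in two places. First, your inductive prescription is wrong as stated: a ``minimal $S$-bimodule generating set of $H^{-k}$'' is, since $S$ is semisimple, just a vector-space basis, and adjoining one generator per basis vector over-counts --- adding $u$ with $\partial u=z$ already kills every class $p\cdot[z]\cdot q$ for paths $p,q$, so one must take a minimal generating set of $H^{-k}$ as a bimodule over $A=H^0$ (you do this correctly in degree $-1$, taking $I/(JI+IJ)$ rather than a basis of $I$, but not at the inductive step). With the literal prescription the output is still a dg replacement but is not minimal, and the arrow counts strictly exceed $\dim\Ext^{k+1}$. Second, and more seriously, the assertion that ``$\partial$ always lands inside the square of the augmentation ideal because the new generators map to a basis of a cohomology group'' does not address the actual issue, which is whether every class to be killed admits a \emph{decomposable} cocycle representative. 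This is not automatic: coboundaries in degree $-k$ of the stage-$k$ algebra have weight $\geq 3$, so the weight-one component (in $T^{(-k)}$) of a cocycle is an invariant of its cohomology class, and one must argue --- using that $T^{(-k)}$ maps isomorphically onto $H^{-(k-1)}/(JH^{-(k-1)}+H^{-(k-1)}J)$ --- that this invariant vanishes. You flag this difficulty yourself and defer to the ``$A_\infty$/Koszul duality interpretation of the minimal model'', but that is precisely the paper's proof; as written, your argument does not close the gap.
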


\begin{proof}
This is a consequence of Koszul duality for augmented $A_\infty$-algebras over $S$, see \cite[Theorem A]{LPWZ} (over a field) and \cite[Lemma 3.6, Lemma 3.8]{su}. An $A_\infty$-algebra over $S$ is a graded $S$-bimodule together with a family of higher multiplications $m_n\colon A^{\otimes_S n}\to A$ satisfying certain compatibilities; in particular $m_1$ is a derivation and any dg algebra is a $A_\infty$ algebra with $m_n=0$ for $n>2$. Given an augmented $A_\infty$-algebra $A$ over $S$ with augmentation ideal $J$, the authors in \cite{LPWZ, su} define the Koszul dual of $A$, which is an augmented dg algebra over $S$ (note that any $A_\infty$-algebra is quasi-isomorphic to a dg algebra). As a graded $S$-bimodule, it is the dual of the bar construction
\[E(A)=\CT_S\big(D(J[1])\big)\,,\]
where $D(-)=\Hom_S(-, S)$ and $J[1]$ denotes the suspension of $J$. 

We apply \cite[Theorem A]{LPWZ}, \cite[Lemma 3.8]{su} to the algebra $A=\BC[Q]/I$ (regarded as a $A_\infty$-algebra with $m_n=0$ for $n\neq 2$) with augmentation ideal $J$ given by paths of length $\geq 1$; it says that $E(E(A))$ is a dg algebra quasi-isomorphic to $A$. By a theorem of Kadeishvili \cite{kadeishvili}, for any $A_\infty$-algebra $E$ there is a $A_\infty$-algebra structure on the cohomology $H^\ast(E)$ (with differential $m_1=0$) that makes $H^\ast(E)$ quasi-isomorphic to $E$. In particular, since the dual of the bar construction computes $\Ext$ groups, there is a $A_\infty$-algebra structure on the $\Ext$ algebra 
\[\Ext^*_A(S, S)\,\]
that makes is quasi-isomorphic to $E(A)$; the augmentation ideal is $\Ext^{\geq 1}_A(S, S)$. It follows that $A$ is quasi-isomorphic to the dg algebra
\[E(E(A))\simeq \CT_S\big(D\big(\Ext^{\geq 1}_A(S,S)[1]\big)\big)\]
where the differential is induced by the $A_\infty$-algebra structure on the $\Ext$ algebra. After choosing a basis of $D\big(\Ext^{\geq 1}_A(S,S)[1]\big)$ we get the required dg quiver $\bfQ$, with arrows in bijection with the elements of the aforementioned basis.
\end{proof}

\begin{example}\label{ex: beilinsondgquiver}
 Consider the Beilinson quiver with relations $(Q,I)$ from Example~\ref{ex: beilinson}. Denote by $\dgQ$ the quasi-smooth dg quiver according to the choice of the 6 generators of $I$ in Example \ref{ex: beilinson} (recall Example \ref{ex: qsdgquiver}). This dg quiver is the canonical dg replacement of $(Q, I)$ given by Theorem \ref{thm: koszuldgquiver}, as we now proceed to explain. Recall \cite{beil} that there is an isomorphism between $D^b(Q,I)$ and $D^b(\BP^2)$ sending the simple modules $S(1), S(2), S(3)$ to
\[E_1=\CO_{\BP^2}(-1)[2]\,,\,E_2=\CO_{\BP^2}[1]\,,\,
E_3=\CO_{\BP^2}(1)\,,\]
respectively. The $\Ext$-algebra appearing in the proof of Theorem \ref{thm: koszuldgquiver} is
    \[S\oplus \Ext^1(E_1, E_2)\oplus \Ext^1(E_2, E_3)\oplus \Ext^2(E_1, E_3)\,.\]
    Its $A_\infty$ structure, in this case, is just of a usual graded associative algebra, i.e. $m_n=0$ for $n>2$ since there are no paths of length $> 2$ in $Q$. Its augmentation ideal is $\Ext^{\geq 1}$. Therefore, the path algebra of the canonical dg replacement is
     \[\CT_S\big(\hspace{-2pt}\underbrace{D\Ext^1(E_1, E_2)\oplus D\Ext^1(E_2, E_3)}_{\deg\, 0}\oplus \underbrace{D\Ext^2(E_1, E_3)}_{\deg\, -1}\hspace{-2pt}\big)\,.\]
     The differential is the dual of the multiplication map
     \[\Ext^1(E_1, E_2)\otimes \Ext^1(E_2, E_3)\to \Ext^2(E_1, E_3)\,.\]
     Note now that 
     \[\Ext^1(E_1, E_2)\simeq H^0(\CO_{\BP^2}(1))\simeq \Ext^1(E_2, E_3)\textup{ and }\Ext^2(E_1, E_3)\simeq H^0(\CO_{\BP^2}(2))\,,\]
     so the multiplication map is identified with 
     \[H^0(\CO_{\BP^2}(1))\otimes H^0(\CO_{\BP^2}(1))\to \Sym^2(H^0(\CO_{\BP^2}(1)))\simeq H^0(\CO_{\BP^2}(2))\,.\]
     Thus, the differential is
     \[\delta\colon \Sym^2(DH^0(\CO_{\BP^2}(1)))\to DH^0(\CO_{\BP^2}(1))\otimes DH^0(\CO_{\BP^2}(1))\,,\]
     given by the symmetrization map $xy\mapsto x\otimes y+y\otimes x$. By picking a basis $\{x_1, x_2, x_3\}$ of $DH^0(\CO_{\BP^2}(1))$ and corresponding basis $\{a_1, a_2, a_3\}$ and $\{b_1, b_2, b_3\}$ of $D\Ext^1(E_1, E_2)$ and $D\Ext^1(E_2, E_3)$, respectively, we get a basis of $D\Ext^2(E_1, E_3)\simeq \Sym^2(DH^0(\CO_{\BP^2}(1)))$ given by $\{x_ix_j\}_{1\leq i\leq j\leq 3}$. These choices of basis produce exactly the quasi-smooth dg quiver $\dgQ$ from Example \ref{ex: beilinson}.
\end{example}

\subsection{Standard resolution of dg modules}\,

We now recall some statements about standard resolutions of representations in the context of dg quivers and quivers with relations; see Section \ref{sec: standard resolution without relations} for the case of a quiver with no relations. 

We start with the dg quiver case and then use dg replacements to deduce the results we want in the case of quivers with relations. Let $\bfQ$ be a dg quiver and denote by $\bfA=\BC[\bfQ]$ its path dg algebra. Recall from \ref{subsubsec: tensoralgebra} that $\BC[\bfQ]$ is described as a tensor algebra $\bfA=\CT_S(T)$ with differential induced by a degree 1 map of $S$-bimodules $\partial\colon T\to \bfA$.

\begin{theorem}[{\cite[Proposition 3.7]{kellerdefcy}}]\label{thm: dgstandardresolution}
Let $M$ be a perfect left dg $\bfA$-module. Then we have the following distinguished triangle in $D^b(\bfA)$:
\[\bfA\otimes_S T\otimes_S M\rightarrow \bfA\otimes_S M\rightarrow M\,.\]
Hence the cone $C(\bfA\otimes_S T\otimes_S M\rightarrow \bfA\otimes_S M)$ is an h-projective resolution of $M$. 
\end{theorem}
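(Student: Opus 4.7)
The plan is to derive the triangle from the standard bimodule resolution of the tensor dg algebra $\bfA=\CT_S(T)$ and then specialize by tensoring over $\bfA$ with $M$.

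First I would construct, at the level of $\bfA$-bimodules, the short complex
\[
0 \;\longrightarrow\; \bfA \otimes_S T \otimes_S \bfA \;\xrightarrow{\,\mu_T\,}\; \bfA \otimes_S \bfA \;\xrightarrow{\,\mu\,}\; \bfA \;\longrightarrow\; 0,
\]
where $\mu$ is the multiplication and $\mu_T(a\otimes t\otimes b)=at\otimes b-(-1)^{|a|}a\otimes tb$, with the grading and differentials inherited from those on $\bfA$ and $T$. This is the canonical bimodule (bar-type) resolution for a free associative algebra over $S$; because $\bfA$ is the tensor algebra $\CT_S(T)$ the usual bar complex collapses in this length-one form. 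The key verification is that this is acyclic as a complex of $\bfA$-bimodules: in the purely graded picture, the splitting $\bfA=S\oplus\bigoplus_{n\geq 1}T^{\otimes_S n}$ lets one exhibit an explicit contracting homotopy, and the dg differential on $\bfA$ (induced by $\partial\colon T\to \bfA$) is compatible with the $S$-bimodule structure, so the same homotopy works after tracking signs.

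Next I would check h-projectivity of the first two terms as \emph{left} dg $\bfA$-modules. Since $\BC$ is a field and $S=\bigoplus_{v\in\bfQ_0}\BC\1_v$ is semisimple, every dg $S$-module is h-projective, and a free dg $\bfA$-module $\bfA\otimes_S N$ on an h-projective dg $S$-module $N$ is h-projective—both $\bfA\otimes_S\bfA$ and $\bfA\otimes_S T\otimes_S\bfA$ have this form (the right $\bfA$-factor being regarded as the free module). The same argument applies to right module structures, so these bimodules are also h-projective from the right; in particular the functor $(-)\otimes_\bfA M$ preserves the distinguished triangle associated to the complex above.

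The third step is to apply $(-)\otimes_\bfA M$. The outer tensor factors collapse via $\bfA\otimes_\bfA M\simeq M$ and $(\bfA\otimes_S T\otimes_S\bfA)\otimes_\bfA M\simeq \bfA\otimes_S T\otimes_S M$, yielding the distinguished triangle
\[
\bfA\otimes_S T\otimes_S M \;\longrightarrow\; \bfA\otimes_S M \;\longrightarrow\; M
\]
in $D(\bfA)$. The hypothesis that $M$ is perfect (in particular, bounded) ensures that after forgetting to $S$ the objects $M$ and $T\otimes_S M$ are again h-projective over $S$, hence $\bfA\otimes_S M$ and $\bfA\otimes_S T\otimes_S M$ are h-projective dg $\bfA$-modules by the same free-extension argument as before. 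The cone of a map between h-projective dg $\bfA$-modules is h-projective, so $C(\bfA\otimes_S T\otimes_S M\to \bfA\otimes_S M)$ is an h-projective object quasi-isomorphic to $M$, i.e. an h-projective resolution.

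The main obstacle I expect is the careful sign bookkeeping when verifying that $\mu_T$ and $\mu$ are compatible with the differentials induced from $\partial$ on $T$ and on the tensor factors, and in producing the contracting homotopy that proves acyclicity of the bimodule complex once the differential on $\bfA$ is turned on. Everything else is formal once the bimodule resolution and the semisimplicity of $S$ are in hand.
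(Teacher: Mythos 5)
Your overall strategy is the same as the paper's: establish the two-term bimodule resolution of $\bfA=\CT_S(T)$ and then apply $-\otimes_\bfA M$; the paper simply outsources the first step to Keller's Proposition 3.7, whereas you propose to prove it directly. The reductions you perform afterwards (h-projectivity of modules induced from the semisimple base $S$, collapse of the outer tensor factors under $-\otimes_\bfA M$, h-projectivity of the cone) are all correct and consistent with the remarks in the paper's proof.

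The gap is in the first step, and it is exactly the point you defer to ``sign bookkeeping.'' You equip $\bfA\otimes_S T\otimes_S\bfA$ with ``the differentials inherited from those on $\bfA$ and $T$,'' but $T$ carries no internal differential: $\partial$ is a degree-one map $T\to\bfA=\CT_S(T)$ whose image consists of linear combinations of paths of arbitrary length, so $\partial(T)\not\subseteq T$ and the naive tensor-product differential $\partial\otimes 1\otimes 1\pm 1\otimes\partial\otimes 1\pm 1\otimes 1\otimes\partial$ is not even defined on $\bfA\otimes_S T\otimes_S\bfA$. The correct differential on the middle factor is the composite $T\xrightarrow{\partial}\bfA\xrightarrow{\nabla}\bfA\otimes_S T\otimes_S\bfA$, where $\nabla(e_1\cdots e_n)=\sum_i e_1\cdots e_{i-1}\otimes e_i\otimes e_{i+1}\cdots e_n$; this is recorded in the paper's proof and is the genuinely non-formal content of Keller's construction beyond the purely graded (classical tensor-algebra) short exact sequence. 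Without specifying this differential you cannot verify that $\mu_T$ is a chain map, nor that the contracting homotopy from the graded picture is compatible with the dg structure; once it is in place, both verifications go through and the remainder of your argument is fine.
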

\begin{proof}
    This is shown in loc. cit. when $M=\bfA$ in the category of $\bfA$-bimodules, so the statement here just follows from tensoring on the right with $M$.

    We just make a few remarks about the theorem. The two maps in the distinguished triangle are $a\otimes e\otimes x\mapsto a e\otimes x-a\otimes e x$ and $a\otimes x\mapsto ax$, respectively. Identifications from Section \ref{subsubsec: tensoralgebra} give

    \begin{equation}
        \label{eq: termsdgresolution}
    \bfA\otimes_S M=\bigoplus_{v\in \bfQ_0} P(v)\otimes_\BC M_v
    \quad\textup{and}\quad
    \bfA\otimes_S T\otimes_S M=\bigoplus_{e\in \bfQ_1} P(t(e))\otimes_\BC M_{s(e)}[-|e|]\,.\end{equation}
    Note that $M_v\simeq \RHom_\bfA(P(v),M)$ is a bounded complex since $\bfA$ is compact and $P(v), M$ are perfect. Therefore, the two terms in \eqref{eq: termsdgresolution} are h-projective and perfect. The differential $\delta$ on $\bfA\otimes_S T\otimes_S \bfA$ (making it an $\bfA$-bimodule, and hence making $\bfA\otimes_S T\otimes_S M$ a left $\bfA$-module) is defined by the composition 
    \[T\xrightarrow{\partial} \bfA\xrightarrow{\nabla} \bfA\otimes_S T\otimes_S \bfA\]
    where $\nabla$ is defined on paths by
    \[\nabla(e_1\ldots e_k)=\sum_{i=1}^n e_1\ldots e_{i-1}\otimes e_i\otimes e_{i+1}\ldots e_n\,.\qedhere\]
\end{proof}

This h-projective resolution gives an easy computation of $\Ext$ groups and, in particular, of the Euler pairing
\[\chi_{\bfQ}(M, N)=\sum_{n\in \BZ}(-1)^n \dim \Ext^n_\dgQ(M, N)\,.\]
As usual, the Euler pairing is well-defined in $K(\dgQ)=\BZ^{\bfQ_0}$.

\begin{proposition}\label{prop: extdga} 
    Let $M, N$ be perfect left dg $\bfA$-modules. Then $\RHom_{\bfA}(M, N)$ is isomorphic in $D^b(\BC)$ to the cone of
    \[\oplus_{e\in \bfQ_1}\RHom_\BC(M_{s(e)}, N_{t(e)})[\,|e|\,]\to \oplus_{v\in \bfQ_0}\RHom_\BC(M_v, N_v)\,.\]
    In particular, we have
    \[\chi_{\bfQ}(d, d')=\sum_{v\in \bfQ_0}d_v\cdot d'_v-\sum_{e\in \bfQ_1}(-1)^{|e|}d_{s(e)}\cdot d'_{t(e)}\,.\]
\end{proposition}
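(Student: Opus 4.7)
The plan is to apply the contravariant functor $\RHom_\bfA(-,N)$ to the distinguished triangle provided by the standard resolution in Theorem \ref{thm: dgstandardresolution} and identify each resulting term concretely. Starting from the triangle
\[\bfA\otimes_S T\otimes_S M\to \bfA\otimes_S M\to M\xrightarrow{[1]}\]
in $D^b(\bfA)$ and applying $\RHom_\bfA(-,N)$, one gets a distinguished triangle in $D^b(\BC)$ one of whose terms is $\RHom_\bfA(M,N)$. After a suitable rotation, this exhibits $\RHom_\bfA(M,N)$ as the cone of a map between the other two terms; it then suffices to identify those two terms with the expressions in the statement.

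Using the decompositions recorded in \eqref{eq: termsdgresolution}, the tensor-hom adjunction, and the identification $\RHom_\bfA(P(v),N)\simeq N_v$ from Example \ref{ex: hprojectivequiver}, I would compute
\begin{align*}
\RHom_\bfA(\bfA\otimes_S M,N)&\simeq \bigoplus_{v\in \bfQ_0}\RHom_\BC(M_v,N_v)\,,\\
\RHom_\bfA(\bfA\otimes_S T\otimes_S M,N)&\simeq \bigoplus_{e\in\bfQ_1}\RHom_\BC(M_{s(e)},N_{t(e)})[\,|e|\,]\,,
\end{align*}
where the shift by $[|e|]$ in the second line arises because shifting the source argument by $[-|e|]$ translates into a shift of the Hom complex by $[|e|]$. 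Substituting these identifications into the triangle obtained above gives the first claim of the proposition.

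For the formula for the Euler pairing, I would take Euler characteristics in the resulting triangle, using that $\chi$ is additive in distinguished triangles and that $\chi(X[k])=(-1)^k\chi(X)$. With $d_v=\chi(M_v)$ and $d'_v=\chi(N_v)$ (which define the $K$-theoretic classes in $K(\bfQ)=\BZ^{\bfQ_0}$) the stated formula follows immediately. A small preliminary observation is that each $M_v$ is a bounded complex of vector spaces, so that its Euler characteristic is well defined: this follows from $M_v\simeq \RHom_\bfA(P(v),M)$ combined with the compactness of $\bfA$ and perfectness of both $P(v)$ and $M$ via \cite[Lemma 3.2]{Shk}.

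The argument is essentially formal once the standard resolution is in place; the main point requiring care is tracking the degree shifts through the adjunction, so that $[|e|]$ ends up on the correct side with the correct sign, producing the expected factor $-(-1)^{|e|}$ in front of the arrow contribution to $\chi_\bfQ(d,d')$.
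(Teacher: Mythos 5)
Your proposal is correct and follows essentially the same route as the paper: apply $\RHom_\bfA(-,N)$ to the standard resolution of Theorem \ref{thm: dgstandardresolution}, identify the two terms via \eqref{eq: termsdgresolution}, the Hom--tensor adjunction and Example \ref{ex: hprojectivequiver}, and then take Euler characteristics. The degree bookkeeping ($[-|e|]$ in the source becoming $[\,|e|\,]$ on the Hom complex, hence the sign $(-1)^{|e|}$) is handled exactly as in the paper's argument.
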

\begin{proof}
    This follows from applying $\RHom_{\bfA}(-, N)$ to the standard resolution in Theorem \ref{thm: dgstandardresolution} and using the Hom--tensor adjunction, as well as the remark in Example~\ref{ex: hprojectivequiver}. Recalling equation \eqref{eq: termsdgresolution}, we have
    \begin{align*}\RHom_{\bfA}\big(\bfA\otimes_S T\otimes_S M, N\big)&=\bigoplus_{e\in \bfQ_1}\RHom_{\bfA}\big(P(t(e))\otimes_\BC M_{s(e)}[-|e|], N\big)\\
    &=\bigoplus_{e\in \bfQ_1}\RHom_{\BC}\big(M_{s(e)}, \RHom_\bfA(P(t(e)), N)\big)[\,|e|\,]\\
    &=\bigoplus_{e\in \bfQ_1}\RHom_{\BC}\big(M_{s(e)}, N_{t(e)}\big)[\,|e|\,]\,.
    \end{align*}
    A similar analysis holds for $\bfA\otimes_S M$.
\end{proof}

The next proposition uses the standard resolution for dg quivers to give a standard resolution for quivers with relations. The result, without the injectivity part, appears in \cite[Proposition 2.1]{bardzell}.

\begin{proposition}\label{prop: standardquasismooth}
Let $(Q, I)$ be a quiver with relations and let $A=\BC[Q]/I$. Let $M$ be a left $A$-module. Fix a set of generators of $I$ and corresponding quasi-smooth quiver $\bfQ$ with set of arrows $Q_2=\bfQ_1^{(-1)}$ of degree $-1$ corresponding to these generators. Then we have an exact sequence of $A$-modules
\[\bigoplus_{\tilde{r}\in Q_2}P(t(\tilde{r}))\otimes_\BC M_{s(\tilde{r})}\to \bigoplus_{e\in Q_1}P(t(e))\otimes_\BC M_{s(e)}\to \bigoplus_{v\in Q_0}P(v)\otimes_\BC M_v\to M\to 0\,.\]
Furthermore, if $\bfQ$ is a dg replacement of $Q$ then the left most arrow is injective and $\Rep_{Q,I}$ has homological dimension at most 2. 
\end{proposition}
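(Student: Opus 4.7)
The plan is to construct the three-term complex of $A$-modules explicitly and then derive exactness from the standard dg resolution of Theorem~\ref{thm: dgstandardresolution}. I define $\phi_0(p\otimes m)=pm$, $\phi_1(p\otimes m)=pe\otimes m-p\otimes em$ for $e\in Q_1,\ p\in P(t(e)),\ m\in M_{s(e)}$, and, for $\tilde r\in Q_2$ with $r=\partial\tilde r=\sum_i\lambda_i \gamma_i$ where $\gamma_i=e_{i,k_i}\cdots e_{i,1}$ is a path of length $k_i$,
\[\phi_2(p\otimes m)=\sum_i\lambda_i\sum_{j=1}^{k_i}p\,(e_{i,k_i}\cdots e_{i,j+1})\otimes (e_{i,j-1}\cdots e_{i,1})\,m\,,\]
with the $(i,j)$-term lying in the $e_{i,j}$-summand of the middle term. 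Surjectivity of $\phi_0$ and $\phi_0\circ\phi_1=0$ are immediate; the identity $\phi_1\circ\phi_2=0$ follows from a telescoping cancellation within each $i$, whose residual terms collapse to $p\cdot r\otimes m-p\otimes r\,m$, both vanishing because $r\in I$ and $M$ is an $A$-module.

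To prove exactness, I regard $M$ as a dg $\bfA$-module concentrated in cohomological degree~$0$ via the canonical augmentation $\bfA\twoheadrightarrow A$, and apply the bimodule bar construction underlying Theorem~\ref{thm: dgstandardresolution} (which yields the distinguished triangle below for arbitrary $M$, not just perfect $M$):
\[\bfA\otimes_S T\otimes_S M\xrightarrow{f}\bfA\otimes_S M\xrightarrow{g}M\,.\]
Quasi-smoothness gives $T=T^{(0)}\oplus T^{(-1)}$, and equation~\eqref{eq: termsdgresolution} unpacks the first term as
\[\bfA\otimes_S T\otimes_S M=\bigoplus_{e\in Q_1}(\bfA\cdot\1_{t(e)})\otimes_\BC M_{s(e)}\ \oplus\ \bigoplus_{\tilde r\in Q_2}(\bfA\cdot\1_{t(\tilde r)})\otimes_\BC M_{s(\tilde r)}[1]\,.\]
On the degree-$0$ summand, the chain map $f$ induces $\phi_1$ on cohomology; the coupling differential $\tilde r\mapsto\nabla(\partial\tilde r)$ present in the dg structure on the bar complex (cf.\ the proof of Theorem~\ref{thm: dgstandardresolution}) induces $\phi_2$, as seen by direct expansion using $\nabla(\gamma_i)=\sum_{j}(e_{i,k_i}\cdots e_{i,j+1})\otimes e_{i,j}\otimes(e_{i,j-1}\cdots e_{i,1})$.

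Assume now that $\bfQ$ is a dg replacement, so $\bfA\to A$ is a quasi-isomorphism and $\bfA\cdot\1_v$ is quasi-isomorphic to $P(v)$ in degree $0$. Then $H^\ast(\bfA\otimes_S M)=F_0$ in degree $0$, while the short exact sequence of complexes $0\to\bfA\otimes_S T^{(0)}\otimes_S M\to\bfA\otimes_S T\otimes_S M\to\bfA\otimes_S T^{(-1)}\otimes_S M\to 0$ yields, through its long exact sequence, $H^{-1}(\bfA\otimes_S T\otimes_S M)=\ker\phi_2$ and $H^0(\bfA\otimes_S T\otimes_S M)=\coker\phi_2$, all other cohomologies being zero. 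The long exact sequence of the distinguished triangle then collapses to $0\to\ker\phi_2\to 0\to 0\to\coker\phi_2\to F_0\to M\to 0$, forcing $\phi_2$ injective and $\ker\phi_1=\textup{image}(\phi_2)$. The homological dimension bound follows immediately since each $F_i$ is a projective $A$-module. The main obstacle is the case when $\bfQ$ is merely quasi-smooth (not a dg replacement), for then $H^{<0}(\bfA)$ may be nonzero and the cohomology computation requires modification; there I would instead deduce exactness at $F_0$ and $F_1$ by applying the exact functor $A\otimes_{\BC[Q]}-$ to the $\BC[Q]$-projective resolution of $M$ from Section~\ref{sec: standard resolution without relations}, identifying $\textup{image}(\phi_2)$ with $\mathrm{Tor}_1^{\BC[Q]}(A,M)=I\otimes_{\BC[Q]}M$ via a Leibniz-type property of the telescoping formula together with the fact that the $r_i$ generate $I$ as a two-sided ideal.
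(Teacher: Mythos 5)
Your explicit construction of $\phi_2$ and the telescoping check that $\phi_1\circ\phi_2=0$ are correct (the paper leaves these maps implicit), and your treatment of the dg-replacement case is essentially the paper's argument: both run the long exact cohomology sequence of the standard triangle from Theorem~\ref{thm: dgstandardresolution} applied to $M_{\bfA}$. The only cosmetic difference is that the paper first tensors the triangle with $A\otimes^{L}_{\bfA}-$ and phrases the two relevant vanishings as $\mathrm{Tor}_1^{\bfA}(A,M_{\bfA})=\mathrm{Tor}_2^{\bfA}(A,M_{\bfA})=0$, while you take cohomology over $\bfA$ directly.

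The weak point is the general quasi-smooth case. The fallback you sketch is not a proof as written: $A\otimes_{\BC[Q]}-$ is only right exact (if it were exact there would be nothing to prove), and the identification of $\mathrm{image}(\phi_2)$ with $\mathrm{Tor}_1^{\BC[Q]}(A,M)\cong I\otimes_{\BC[Q]}M$ inside $\ker\phi_1$ is precisely the content of exactness at the middle term; invoking ``a Leibniz-type property of the telescoping formula'' does not discharge the required chase through the connecting map of $0\to I\to\BC[Q]\to A\to 0$. The paper closes this case differently: it tensors the truncation triangle $\tau^{\leq-1}\bfA\to\bfA\to A$ with $M_{\bfA}$ and uses that $\tau^{\leq-1}\bfA$ lives in degrees $\leq -1$ while $M_{\bfA}$ sits in degree $0$, forcing $\mathrm{Tor}_1^{\bfA}(A,M_{\bfA})=0$ for every quasi-smooth $\bfQ$. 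In fact your own long exact sequence already covers the general case, had you looked at it once more: the segment you need is $0=H^{-1}(M_{\bfA})\to H^{0}(\bfA\otimes_S T\otimes_S M)\to H^{0}(\bfA\otimes_S M)\to H^{0}(M_{\bfA})\to 0$, and the identifications $H^{0}(\bfA\otimes_S M)=F_0$ and $H^{0}(\bfA\otimes_S T\otimes_S M)=\coker\phi_2$ use only $H^{0}(\bfA)=\BC[Q]/I=A$, which holds for any quasi-smooth $\bfQ$; the dg-replacement hypothesis enters only through $H^{-1}(\bfA\otimes_S M)=0$ and $H^{-1}(\bfA\otimes_S T\otimes_S M)=\ker\phi_2$, i.e.\ only for the injectivity of $\phi_2$, exactly as the statement requires.
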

\begin{proof}
Let $\bfA=\BC[\bfQ]$. Then $A=H^0(\bfA)$ and there is a map $\bfA\to A$ of dg algebras (where we regard $A$ as a dg algebra supported in degree 0). Applying Theorem~\ref{thm: dgstandardresolution} to the module $M_{\bfA}$ obtained by restriction of scalars and tensoring the exact triangle by $A\otimes_{\bfA}^L-$ we obtain the following exact triangle in $D^b(A)$:
\[P\to A\otimes_S M\to A\otimes_{\bfA}^L M_{\bfA}\]
where $P$ is the complex 
\[A\otimes_S T^{(-1)}\otimes_S M\xrightarrow{\delta} A\otimes_S T^{(0)}\otimes_S M\,.\]
The exact triangle gives a long exact sequence on cohomology. Since $H^0(A\otimes_\bfA^L M_\bfA)=M$ we obtain
\begin{equation}\label{eq: lescohomology}\mathrm{Tor}_1^{\bfA}(A, M_{\bfA})\rightarrow H^0(P)\rightarrow A\otimes_S M\rightarrow M\rightarrow 0\,.\end{equation}
We claim that $\mathrm{Tor}_1^{\bfA}(A, M_\bfA)=0$. Let $\tau^{\leq -1}\bfA$ be the dg $\bfA$-bimodule obtained by truncating $\bfA$:
\[\ldots\to A^{(-3)}\to A^{(-2)}\to \ker(\partial_{-1})\]
where $\ker(\partial_{-1})$ is in degree $-1$. We have a distinguished triangle 
\[\tau^{\leq -1}\bfA\to \bfA\to A\,.\]
Applying $-\otimes_\bfA^L M_\bfA$ to this triangle and looking at the long exact sequence on cohomology gives an exact sequence
\[0=\mathrm{Tor}_1^{\bfA}(\bfA, M_\bfA)\to \mathrm{Tor}_1^{\bfA}(A, M_\bfA)\to \mathrm{Tor}_0^{\bfA}(\tau^{\leq -1}\bfA, M_\bfA)=0\]
where the last equality holds due to the fact that $\tau^{\leq -1}\bfA$ is supported in $(-\infty, -1]$ and $M_\bfA$ is supported in degree $0$.

By definition, $H^0(P)$ is $\coker(\delta)$, so we obtain from \eqref{eq: lescohomology} an exact sequence 
\[A\otimes_S T^{(-1)}\otimes_S M\xrightarrow{\delta} A\otimes_S T^{(0)}\otimes_S M\rightarrow A\otimes_S M\rightarrow M\rightarrow 0\,,\]
which is exactly the claim. 

Suppose now that $\bfQ$ is a dg replacement of $Q$, i.e. $\bfA\to A$ is a quasi-isomorphism. Then $A\otimes_\bfA^L M_\bfA$ is isomorphic to $M$; in particular, $\mathrm{Tor}_i^\bfA(A, M_{\bfA})=H^{-i}(A\otimes_\bfA^L M_\bfA)=0$ for $i\neq 0$. It follows that
\[\ker(\delta)=H^{-1}(P)\simeq \mathrm{Tor}_2^\bfA(A, M_{\bfA})=0\,.\qedhere\]
\end{proof}

\begin{proposition}\label{prop: extquasismooth}
    Let $(Q, I)$ be a quiver with relations such that $A=\BC [Q]/I$ has homological dimension $\leq 2$ and let $\bfQ$ be a quasi-smooth dg replacement of $(Q, I)$. Given two left $A$-modules $M$ and $N$, $\RHom_A(M, N)$ is represented by the complex
    \[\bigoplus_{\tilde{r}\in Q_2}\Hom_\BC(M_{s(\tilde{r})}, N_{t(\tilde{r})})\to \bigoplus_{e\in Q_1}\Hom_\BC(M_{s(e)}, N_{t(e)})\to \bigoplus_{v\in Q_0}\Hom_\BC(M_v, N_v)\,.\]
    In particular,
    \[\chi_{Q, I}(d, d')=\sum_{v\in Q_0}d_v\cdot d_v'-\sum_{e\in Q_1}d_{s(e)}\cdot d'_{t(e)}+\sum_{\tilde r\in Q_2}d_{s(\tilde r)}\cdot d_{t(\tilde r)}'\,.\]
\end{proposition}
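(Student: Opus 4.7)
The plan is to deduce this proposition as a direct consequence of Proposition \ref{prop: standardquasismooth}. Since $\bfQ$ is a dg replacement of $(Q,I)$, that proposition gives an exact sequence
\[0\to \bigoplus_{\tilde r \in Q_2} P(t(\tilde r))\otimes_\BC M_{s(\tilde r)} \to \bigoplus_{e\in Q_1} P(t(e))\otimes_\BC M_{s(e)} \to \bigoplus_{v\in Q_0} P(v)\otimes_\BC M_v \to M \to 0\,,\]
which is a length-2 projective resolution of $M$ as an $A$-module. (This incidentally reproves that the homological dimension of $A$ is $\leq 2$, matching the hypothesis.)

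Next, I would apply $\Hom_A(-, N)$ to this resolution. Using the identification $\Hom_A(P(v), N) = N_v$ from Example \ref{ex: simpleprojective} together with the tensor–Hom adjunction
\[\Hom_A\bigl(P(v)\otimes_\BC V,\, N\bigr) = \Hom_\BC\bigl(V,\, \Hom_A(P(v), N)\bigr) = \Hom_\BC(V, N_v)\,,\]
the resulting cochain complex is exactly
\[\bigoplus_{v\in Q_0}\Hom_\BC(M_v, N_v)\to \bigoplus_{e\in Q_1}\Hom_\BC(M_{s(e)}, N_{t(e)})\to \bigoplus_{\tilde r\in Q_2}\Hom_\BC(M_{s(\tilde r)}, N_{t(\tilde r)})\,.\]
Since each summand $P(v)\otimes_\BC V$ is a projective $A$-module, this complex represents $\RHom_A(M, N)$ in $D^b(\BC)$, proving the first assertion.

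For the Euler form, I would invoke the Euler–Poincaré principle: the alternating sum of dimensions of the cohomology of a bounded complex of finite-dimensional vector spaces equals the alternating sum of dimensions of its terms. With $d = \dim M$ and $d' = \dim N$, the dimensions of the three terms are $\sum_v d_v d_v'$, $\sum_e d_{s(e)}d'_{t(e)}$, and $\sum_{\tilde r}d_{s(\tilde r)}d'_{t(\tilde r)}$, respectively, which upon taking the alternating sum yields the desired formula for $\chi_{Q,I}(d, d')$. There is no real obstacle here—the proposition is a clean corollary of Proposition \ref{prop: standardquasismooth}; alternatively, one could derive it from Proposition \ref{prop: extdga} applied to $\bfQ$ once one checks that $M, N$ are perfect as dg $\bfA$-modules (which follows from the finite projective resolution just constructed), but the direct route via the standard resolution seems cleaner.
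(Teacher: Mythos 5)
Your proof is correct and is essentially the paper's own argument: the paper proves this proposition by applying $\RHom_A(-,N)$ to the standard resolution of Proposition~\ref{prop: standardquasismooth} and using the Hom--tensor adjunction together with $\Hom_A(P(v),N)=N_v$, exactly as you do (the paper phrases this as ``analogous to Proposition~\ref{prop: extdga} but with the resolution of Proposition~\ref{prop: standardquasismooth}''). The only cosmetic difference is the direction in which you display the resulting three-term complex.
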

\begin{proof}
The proof is analogous to that of Proposition \ref{prop: extdga}, but now we use the standard resolution in Proposition \ref{prop: standardquasismooth} instead of the one in Theorem \ref{thm: dgstandardresolution}.
\end{proof}

As an example, the Euler pairing $\chi_{Q, I}$ for the Beilinson quiver with relations (cf. Examples \ref{ex: beilinson}, \ref{ex: beilinsondgquiver}) is
    \[{\chi}_{\dgQ}\big((d_1, d_2, d_3), (d_1', d_2', d_3')\big)=d_1d'_1+d_2 d'_2+d_3d'_3-3 d_1 d'_2-3d_2 d'_3+6 d_1 d'_3\,.\]

\section{Virasoro constraints for quasi-smooth quivers}\label{sec: Virasoro}

\subsection{Derived enhancements and virtual classes} \label{sec: explicit virtual class}

In Section \ref{sec: quiver, moduli}, we considered various moduli stacks and spaces corresponding to a quiver with relations $(Q,I)$. In the same section, it is explained that if we choose generating relations $I=(r_1, \dots, r_n)$, or equivalently the corresponding quasi-smooth dg quiver $\dgQ$ (cf. Example \ref{ex: qsdgquiver}), we have an explicit description of the moduli stack of representations of $(Q,I)$ as a zero locus 
\begin{center}
\begin{tikzcd}
                  & E_d \arrow[d]                   \\
\CM_d=\textnormal{Zero}(s_d) \arrow[r,"i_d", hook] & \CA_d \arrow[u, "s_d"', bend right]
\end{tikzcd}
\end{center}
inside the smooth stack $\CA_d$. By instead taking the derived zero locus in the same diagram, we obtain a derived stack 
$$\bm{\CM}_d\coloneqq\textbf{Zero}(s_d)\hookrightarrow \CA_d. 
$$
This is a quasi-smooth derived stack of (virtual) dimension 
\begin{align*}
    \dim(\bm{\CM}_d)
    &=\dim(\CA_d)-\rk(E_d)\\
    &=-\sum_{v\in Q_0} d_v^2+\sum_{e\in Q_1} d_{s(e)}\cdot d_{t(e)}-\sum_{\tilde{r}\in Q_2} d_{s(\tilde{r})}\cdot d_{t(\tilde{r})}\\
    &=-\chi_{\dgQ}(d,d). 
\end{align*}
The same construction works for the $\theta$-(semi)stable locus and for their good moduli spaces \cite[Theorem 2.12]{AHPS}. In particular, we have a derived good moduli scheme 
$$\bm{\CM}^{\theta-\textup{ss}}_d=[\bm{R}^{\theta-\textup{ss}}_d/\GL_d]\rightarrow \bm{M}^{\theta-\textup{ss}}_d
$$
and a $\BG_m$-gerbe
$$\bm{\CM}_d^{\theta-\textup{st}}=[\bm{R}^{\theta-\textup{st}}_d/\GL_d]\rightarrow  \bm{M}^{\theta-\textup{st}}_d
$$
whose classical truncations recover the constructions in Section \ref{sec: quiver, moduli}. Here, the derived enhancement of the $\theta$-stable locus can be described as a zero locus
\begin{center}
\begin{tikzcd}
                  & E_d^\pl \arrow[d]                   \\
\bm{M}_d^{\theta-\textup{st}}=\textbf{Zero}(s_d^\pl) \arrow[r,"\bm{i}_d", hook] & A_d^{\theta-\textup{st}}. \arrow[u, "s_d^\pl"', bend right]
\end{tikzcd}
\end{center}
where $E_d^{\pl}$ is the descent of $\tilde{E}_d$ from $A^{\theta-\textup{st}}_d\subseteq A_d$ by the $\textnormal{PGL}_d$-action. This defines a quasi-smooth derived enhancement of $M_d^{\theta-\textup{st}}$ which induces a virtual class 
$$\big[M^{\theta-\textup{st}}_d\big]^\vir\in A_{1-{\chi}_{\dgQ}(d,d)}(M^{\theta-\textup{st}}_d). 
$$
We emphasize again that even if the moduli space $M^{\theta-\textup{st}}_d$ is independent of the choice of generators $I=(r_1,\dots,r_n)$, its derived enhancement and virtual class, even the virtual dimension, depends on the choice of generators, or equivalently on the corresponding quasi-smooth dg quiver $\dgQ$. 

Derived structures in the presence of framing can also be defined analogously. Given a quasi-smooth dg quiver $\dgQ$ and a framing vector $f\in \BN^{\dgQ_0}\backslash \{0\}$, we can define an unframed dg quiver $\dgQ^f$ in the same way as in Section \ref{sec: f/uf corr} where all the added arrows from $\infty$ are assigned to be degree zero. Then the framed/unframed correspondence also holds in the derived sense. We leave the details to the reader.

\subsection{Descendent algebra and Virasoro operators}\,
\label{subsec: virasoroops}
\subsubsection{Descendent algebra}\label{sec: descendent algebra}
Let $\dgQ$ be a quasi-smooth dg quiver with underlying quiver with relations $(Q,I)$. We explain how to obtain natural cohomology classes on the moduli stack $\CM$ via the universal representation. The descendent algebra of $\dgQ$, denoted by $\BD^{\dgQ}$, is a free commutative $\BQ$-algebra generated by symbols
$$\Big\{\ch_k(v)\,\Big|\, k\in \BN,\ v\in Q_0\Big\}\,. 
$$
We will denote\footnote{In the analogy with the descendent algebra for moduli spaces of sheaves on a variety $X$ (cf. \cite[Section 2.2]{blm}), $H(\dgQ)$ plays the role of the cohomology $H^\ast(X)$.} by $H(\dgQ)=\BQ^{Q_0}$ the set of formal linear combinations of vertices. Given $a\in H(\dgQ)$ with component $a_v\in \BQ$ in the $v$-th entry, we define
\[\ch_k(a)=\sum_{v\in Q_0}a_v\ch_k(v)\in \BD^\dgQ\,.\]

We define a $\BQ$-algebra homomorphism, called realization homomorphism,
$$\xi:\BD^{\dgQ}\rightarrow H^*(\CM,\BQ),\quad \ch_k(v)\mapsto \ch_k(\CV_{v}). 
$$
If we restrict this homomorphism to $\CM_d$, or any of its open substacks, it factors through the quotient algebra
$$\xi_d=\xi:\BD^{\dgQ}_d:=\BD^\dgQ/\langle \ch_0(v)=d_v\rangle\rightarrow H^*(\CM_{d},\BQ)\,.
$$

\subsubsection{Virasoro operators}

We define the Virasoro operators $\{\bL_n\,|\,n\geq -1\}$ on the descendent algebra $\BD^{\dgQ}$. The operators naturally decompose into two parts $\bL_n=\bR_n+\bT_n$. First, $\bR_n$ is a derivation operator such that
$$\bR_n(\ch_k(v))=k\cdot(k+1)\cdots (k+n)\,\ch_{k+n}(v). 
$$
Second, $\bT_n$ is a multiplication operator by an element
$$\bT_n=\sum_{a+b=n}a!b!\left(
\sum_{v\in Q_0} \ch_a(v)\ch_b(v)
-\sum_{e\in Q_1} \ch_a(s(e))\ch_b(t(e))
+\sum_{\tilde{r}\in Q_2} \ch_a(s(\tilde{r}))\ch_b(t(\tilde{r}))
\right)
$$
for which we use the same notation. Note that this can be succinctly written using the Euler form as
\[\bT_n=\sum_{a+b=n}a!b!\sum_{v,w\in Q_0}{\chi}_{\dgQ}(v,w)\ch_a(v)\ch_b(w)\,.\]

One can check that these operators satisfy (the dual version of) the Virasoro bracket formula 
$$[\bL_n,\bL_m]=(m-n)\bL_{n+m}\in \End(\BD^{\dgQ}). 
$$
Since the $\bR_n$ operators annihilate $\ch_0(v)$ classes, the Virasoro operators also descend to $\BD^{\dgQ}_d$. To define cohomology classes on the projective linear stack, we use the weight zero descendent subalgebras defined as
$$\BD^{\dgQ}_{\textnormal{wt}_0}\coloneqq\ker(\bR_{-1})\subset \BD^{\dgQ},\quad \BD^{\dgQ}_{d,\textnormal{wt}_0}\coloneqq\ker(\bR_{-1})\subset \BD^{\dgQ}_d\,. 
$$

\begin{lemma}\label{lem: weignt zero realization}
    There is a well-defined homomorphism 
$$\xi:\BD^{\dgQ}_{d,\textnormal{wt}_0}\rightarrow H^*({\CM}^{\pl}_d,\BQ). 
$$
\end{lemma}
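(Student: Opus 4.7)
The plan is to factor $\xi$ through $H^*(\CM^\pl_d,\BQ)$ by showing (i) that $\Pi^{\pl*}_d\colon H^*(\CM^\pl_d,\BQ)\to H^*(\CM_d,\BQ)$ is injective and (ii) that $\xi(\BD^\dgQ_{d,\textnormal{wt}_0})$ lies in its image. Assuming $d\neq 0$ (the zero case is trivial), pick $v_0\in Q_0$ with $d_{v_0}>0$ and set
$$t\coloneqq \frac{1}{d_{v_0}}\,\ch_1(\CV_{v_0})\in H^2(\CM_d,\BQ).$$
Because $\Pi^\pl_d$ is a $\BG_m$-gerbe and $t$ restricts to the canonical generator of $H^2(B\BG_m,\BQ)$ on the gerbe fibres, a Leray--Hirsch argument yields an isomorphism of $H^*(\CM^\pl_d,\BQ)$-algebras
$$H^*(\CM_d,\BQ)\;\simeq\; H^*(\CM^\pl_d,\BQ)[t]\,,$$
identifying $\Pi^{\pl*}_d(H^*(\CM^\pl_d,\BQ))$ with the $t$-independent part, or equivalently with $\ker(\partial/\partial t)$. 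In particular $\Pi^{\pl*}_d$ is injective.

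The key step is to verify the identity
$$\frac{\partial}{\partial t}\circ \xi \;=\; \xi\circ \bR_{-1}\colon \BD^\dgQ\longrightarrow H^*(\CM_d,\BQ).$$
Since $\bR_{-1}$ and $\partial/\partial t$ are both derivations and $\xi$ is an algebra homomorphism, it suffices to check $\partial_t\ch_k(\CV_v)=\ch_{k-1}(\CV_v)$ on the generators $\ch_k(v)$. Applying the splitting principle to $\CV_v$, each Chern root decomposes as $x_i=t+y_i$ with $y_i$ of weight $0$ (as each line bundle in the splitting inherits the weight-$1$ behaviour of $\CV_v$ from the gerbe $\BG_m$), yielding the multiplicative decomposition $\ch(\CV_v)=e^t\cdot\alpha_v$ with $\alpha_v\in \Pi^{\pl*}_d(H^*(\CM^\pl_d,\BQ))$; differentiating in $t$ gives the identity. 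Conceptually, $\alpha_v$ is the Chern character of the fractional twist $\CV_v\otimes L^{-1}$ for the weight-$1$ line bundle $L=(\det\CV_{v_0})^{1/d_{v_0}}$; rigorously, one can avoid literal fractional line bundles by working with an integer power such as $\CV_v^{\otimes d_{v_0}}\otimes(\det\CV_{v_0})^{-1}$, which has weight $0$ and descends honestly to $\CM^\pl_d$.

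With these two inputs in place, the lemma follows: for $D\in \BD^\dgQ_{d,\textnormal{wt}_0}=\ker(\bR_{-1})$ one has
$$\frac{\partial}{\partial t}\xi(D)\;=\;\xi(\bR_{-1}(D))\;=\;0,$$
so $\xi(D)$ lies in $\Pi^{\pl*}_d(H^*(\CM^\pl_d,\BQ))$ and admits a unique preimage by injectivity; this defines the desired homomorphism, which is a ring map because $\xi$ is. The only delicate point is the multiplicative splitting $\ch(\CV_v)=e^t\cdot\alpha_v$ with $\alpha_v$ of weight $0$; once that is in hand, the rest of the argument is formal.
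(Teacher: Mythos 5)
Your proof is correct and is essentially the paper's argument in different packaging: the paper characterizes $H^*({\CM}^{\pl}_d,\BQ)$ as the invariants of the $B\BG_m$-coaction $\rho^*$ and invokes the identity $\rho^*\circ\xi=\xi\circ e^{\zeta \bR_{-1}}$ from \cite[Lemma 2.8]{blm}, which is exactly the exponentiated form of your derivation identity $\partial_t\circ\xi=\xi\circ\bR_{-1}$, while you phrase the descent via a Leray--Hirsch splitting $H^*(\CM_d,\BQ)\simeq H^*({\CM}^{\pl}_d,\BQ)[t]$. The one place where you do genuinely more work is in justifying the multiplicative splitting $\ch(\CV_v)=e^{t}\alpha_v$ directly; your integer-power twist $\CV_v^{\otimes d_{v_0}}\otimes(\det\CV_{v_0})^{-1}$ is the right fix, though to conclude that $\ch(\CV_v)e^{-t}$ itself is $t$-independent one should note that extracting the $d_{v_0}$-th root of the resulting $t$-independent class uses that its degree-zero part $d_v^{\,d_{v_0}}$ is invertible over $\BQ$ (the case $d_v=0$ being vacuous).
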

\begin{proof} We closely follow \cite[Lemma 4.10]{blm}. When $d=0$, the homomorphism is defined so that all $\ch_k(i)$ are mapped to zero. Now we assume that $d\neq 0$. Then the projective linearlization defines a $\BG_m$-gerbe
    $$\Pi^\pl_d:{\CM}_d\rightarrow {\CM}^\pl_d
    $$
    that comes from the $B\BG_m$-action 
    $$\rho:B\BG_m\times {\CM}_d\rightarrow {\CM}_d
    $$
    introduced in Section \ref{sec: Joyce's VA}. From this description, it follows that 
    $$H^*({\CM}^\pl_d,\BQ)=\big\{
    x\in H^*({\CM}_d,\BQ)\,\big|\,\rho^*(x)=1\boxtimes x
    \big\}\subset H^*({\CM}_d,\BQ).
    $$
    
On the other hand, we have a commuting diagram 
\begin{center}
        \begin{tikzcd}
\BD^{\dgQ}_d \arrow[r, "e^{\zeta \bR_{-1}}"] \arrow[d, "\xi"'] & \BD^{\dgQ}_d\llbracket\zeta\rrbracket \arrow[d, "\xi"] \\
H^*({\CM}_d,\BQ) \arrow[r, "\rho^*"']                & H^*(B\BG_m,\BQ)\otimes H^*({\CM}_d,\BQ)       
\end{tikzcd}
    \end{center}
    as in \cite[Lemma 2.8]{blm}. Here $\xi$ on the right column maps $\zeta$ to $c_1(\CQ)$ where $\CQ$ is the universal line bundle over $B\BG_m$. Therefore, if $D\in \BD^{\dgQ}_{d,\textnormal{wt}_0}$, then 
    $$\rho^*\circ\xi(D)=\xi\circ e^{\zeta \bR_{-1}}(D)=1\boxtimes \xi(D).
    $$
    This implies that $\xi(D)$ for $D\in \BD^{\dgQ}_{d,\textnormal{wt}_0}$ lies in the subspace $H^*({\CM}^\pl_d,\BQ)\subset H^*({\CM}_d,\BQ)$.\qedhere
\end{proof}

By restricting to an open subset ${M}_d^{\theta-\textup{st}}\subseteq {\CM}_d^{\pl}$ we can get an analogous realization homomorphism $\xi:\BD^{\dgQ}_{d,\textnormal{wt}_0}\rightarrow H^*({M}_d^{\theta-\textup{st}})$ for the moduli space.

\subsection{Virasoro constraints}
We now state the Virasoro constraints for a quasi-smooth dg quiver $\dgQ$. Define the weight zero Virasoro operator 
$$\bL_{\textnormal{wt}_0}\coloneqq\sum_{n\geq -1}\frac{(-1)^n}{(n+1)!}\bL_n\circ (\bR_{-1})^{n+1}:\BD^{\dgQ}\rightarrow \BD^{\dgQ}_{\textnormal{wt}_0}. 
$$

\begin{theorem}\label{thm: wt0virasoro}
    Let $\dgQ$ be a quasi-smooth dg quiver. If ${M}^{\theta-\textup{st}}_d={M}^{\theta-\textup{ss}}_d$, then we have 
\begin{equation}
    \label{eq: wt0virasoro}
\int_{[{M}^{\theta}_d]^\vir}\xi\circ \bL_{\textnormal{wt}_0}(D)=0\quad\textnormal{for all}\quad D\in \BD^{\dgQ}. 
\end{equation}
\end{theorem}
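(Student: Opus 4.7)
The plan is to prove this via wall-crossing in Joyce's vertex algebra, following the strategy of \cite{blm, bojko}. The starting observation is that Theorem \ref{thm: wt0virasoro} is equivalent to the restated form of Theorem \ref{main thm: quiver Virasoro}: the operator $\bL_{\textnormal{wt}_0}$ is nothing other than $\bL_\inv$, the projection of the $\bL_n$ onto $\BD^\dgQ_{\wt_0}$. The natural place for the proof is therefore Section \ref{sec: Virasoro and wall-crossing}, after Joyce's vertex algebra $V^\dgQ = H_*(\CN^\dgQ)$ and its identification with a lattice vertex algebra via Theorem \ref{main thm: natural iso} have been set up.

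First, I would reformulate the integral in \eqref{eq: wt0virasoro} in terms of Joyce's vertex algebra. The virtual class $[M_d^\theta]^\vir$, together with its derived enhancement $\bm{M}_d^\theta$ from Section \ref{sec: explicit virtual class}, defines via the canonical map to $\CN^\dgQ$ a homology class $[M_d^\theta]^\vir_\mathrm{inv}\in V^\dgQ$. Paired with descendent classes in $\BD^\dgQ_{d,\wt_0}$ through the realization and integration, the Virasoro constraints \eqref{eq: wt0virasoro} become equivalent to the statement that $[M_d^\theta]^\vir_\mathrm{inv}$ is a \emph{primary state of conformal weight one} for the Virasoro representation on $V^\dgQ$; that is, it is annihilated by the operators $L_n$ for $n\geq 0$ and is an eigenvector for $L_{-1}$ with the appropriate eigenvalue. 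This is the key dictionary between the combinatorial statement on descendents and a structural statement about vertex algebras.

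Second, the main step is to propagate the primary state property through wall-crossing. Joyce's wall-crossing formula expresses $[M_d^{\theta-\st}]^\vir_\mathrm{inv}$ as an iterated Lie bracket of simpler classes, built from the universal classes $[\CA_d]_\mathrm{inv}$ on the ambient smooth stacks (and the Behrend--Fantechi virtual pullback along $R_d\hookrightarrow A_d$). Crucially, primary states of conformal weight one form a Lie subalgebra of any vertex algebra under the vertex Lie bracket, so it suffices to verify the primary property for these building blocks. These are direct computations on the smooth stacks $\CA_d$, whose cohomology is tautologically generated; the Virasoro operators $\bR_n$ and $\bT_n$ match explicit formulas on the lattice vertex algebra side, making the verification a matter of identifying exponentials with Heisenberg vacuum vectors of appropriate charge. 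The ``quadratic'' part $\bT_n$ encodes exactly the Euler form $\chi_\dgQ$ appearing in the conformal vector on the lattice vertex algebra associated to $(K(\dgQ), \chi_\dgQ^\sym)$.

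The principal obstacle is the careful bookkeeping of signs and conventions in the passage from the descendent algebra formulation to the vertex algebra formulation: one must check that the weight-zero projection $\bL_{\wt_0}$ corresponds exactly to the vertex-algebra Virasoro lowering operators after taking the gerbe quotient $\Pi^\pl_d\colon \CM_d\to \CM_d^\pl$, and that Joyce's bracket truly preserves the primary-of-weight-one subspace after accounting for the signs in the symmetrization $\chi^\sym$. Once this dictionary is in place and the wall-crossing formula is available in Joyce's setting for quivers, the argument is formal. This is the place where working with the dg quiver $\dgQ$ (rather than just $(Q, I)$ with a choice of generators) pays off, since the lattice $K(\dgQ)$ together with $\chi_\dgQ$ is canonical and matches the conformal data on the vertex algebra side.
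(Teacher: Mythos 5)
Your proposal follows essentially the same route as the paper: reformulate the constraints as the statement that $[M_d^\theta]^\vir$ is a primary state in the Lie algebra $\widecheck V^{\bfQ}$ (via the duality between $\bL_n$ on $\BD^{\dgQ}$ and $L_n$ on the lattice vertex algebra, Proposition \ref{prop: duality of operators} and Corollary \ref{cor: primarywt0}), observe that primary states form a Lie subalgebra (Corollary \ref{cor: liesubalgebra}), and conclude by Joyce's wall-crossing formula. Two details need correcting, though. First, the primary-state condition is $L_n(v)=0$ for $n>0$ together with $L_0(v)=v$ (conformal weight one), taken modulo the image of $T=L_{-1}$; your phrasing ``annihilated by $L_n$ for $n\geq 0$ and an eigenvector for $L_{-1}$'' is not the right condition — one does not diagonalize the translation operator. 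Second, and more substantively, the building blocks of the wall-crossing are not classes $[\CA_d]_{\mathrm{inv}}$ of the ambient smooth stacks (these do not appear in Joyce's formula, and verifying primariness for them via ``Heisenberg vacuum vectors'' is not the step that is needed). Rather, one wall-crosses to an \emph{increasing} stability condition $\theta_1$, for which Proposition \ref{prop: increasingstability} says $[M_d^{\theta_1}]^\inva$ equals $\overline{e^{\delta_v}\otimes 1}$ when $d=\delta_v$ and vanishes otherwise; primariness of $e^{\delta_v}$ is then immediate from \eqref{eq: virlva1}, since $L_n(e^\alpha)=0$ for $n>0$ and $L_0(e^{\delta_v})=\tfrac12\chi^{\sym}_{\dgQ}(\delta_v,\delta_v)e^{\delta_v}=e^{\delta_v}$. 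With that base case substituted, your argument closes exactly as in the paper.
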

Proof of this theorem will be carried out in Section \ref{sec: Virasoro and wall-crossing}. In the same section, we also explain how this statement generalizes to the case with strictly $\theta$-semistable representations using the vertex algebra formalism and Joyce's invariant class.

\subsubsection{Framed analog}\label{subsubsec: framedvirasoro}

Let $\dgQ$ be a quasi-smooth dg quiver and $f\in \BN^{Q_0}\backslash\{0\}$ be a framing vector. We have the same realization homomorphism for ${\CM}_{f\shortrightarrow*}$
$$\xi:\BD^{\dgQ}\rightarrow H^*({\CM}_{f\shortrightarrow*}),\quad \ch_k(v)\mapsto \ch_k(\CV_{v})
$$
and for the open subset ${M}^{\theta}_{f\shortrightarrow d}$. Recall that, unlike in the unframed case, the projective scheme $M_{f\to d}^\theta$ admits a unique universal object and a map to the stack ${M}^{\theta}_{f\shortrightarrow d}$.

Define the framed Virasoro operators $\{\bL_n^{f\shortrightarrow *}\,|\,n\geq -1\}$ on the descendent algebra $\BD^{\dgQ}$ by $\bL_n^{f\shortrightarrow *}=\bR_n+\bT_n^{f\shortrightarrow *}$ where $\bR_n$ is the same as before and 
$$\bT_n^{f\shortrightarrow *}=\bT_n-n!\left(\sum_{v\in Q_0} f_v\cdot \ch_n(v)\right).
$$
One can check that these operators also satisfy the Virasoro bracket formula
$$[\bL_n^{f\shortrightarrow *},\bL_m^{f\shortrightarrow *}]=(m-n)\bL_{n+m}^{f\shortrightarrow *}\in \End(\BD^{\dgQ}).
$$

The framed Virasoro constraints are stated as follows. 
\begin{theorem}\label{thm: framedvirasoro}
Let $\dgQ$ be a quasi-smooth dg quiver. For any framing vector $f\in \BN^{Q_0}\backslash\{0\}$ and stability condition $\theta$, we have
\begin{equation}\label{eq: framedvirasoro}
    \int_{[{M}^{\theta}_{f\shortrightarrow d}]^\vir}\xi\circ \bL_{n}^{f\shortrightarrow d}(D)=0\quad\textnormal{for all}\ \ n\geq 0,\ D\in \BD^{\dgQ}.
\end{equation}
\end{theorem}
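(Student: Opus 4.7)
The strategy is to reduce the framed Virasoro constraints to the (already stated) unframed constraints of Theorem \ref{thm: wt0virasoro} via the framed/unframed correspondence from Section \ref{sec: f/uf corr}. Specifically, I apply Theorem \ref{thm: wt0virasoro} to the extended quasi-smooth dg quiver $\dgQ^f$ at dimension vector $(1,d)\in \BN^{\dgQ^f_0}$ with the auxiliary stability condition $\tilde\theta$. Since $\tilde\theta$-semistability coincides with $\tilde\theta$-stability, and the derived structures agree under the framed/unframed identification $M^{\tilde\theta}_{(1,d)}(\dgQ^f)\simeq M^\theta_{f\to d}(\dgQ)$ together with their virtual classes, this yields
\[\int_{[M^\theta_{f\to d}]^\vir} \xi\circ \bL^{\dgQ^f}_{\textnormal{wt}_0}(D^f)=0 \quad \textup{for every } D^f\in \BD^{\dgQ^f}\,.\]

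The second ingredient is that the canonical section $M^\theta_{f\to d}\to \CM^{\tilde\theta}_{(1,d)}(\dgQ^f)$ (provided by the universal framing, which trivializes $\CV_\infty\simeq \CO$) forces the realization $\xi(\ch_k(\infty))=\delta_{k,0}$ on $H^\ast(M^\theta_{f\to d})$. Thus the realization on $\BD^{\dgQ^f}$ factors through the $\infty$-normalization quotient by the ideal generated by $\ch_{k\geq 1}(\infty)$, which lands (as a $\BQ$-algebra map) in $\BD^{\dgQ}_d$.

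Third, I will apply the wt0 constraint to the test class $D^f_n\coloneqq \ch_{n+1}(\infty)\cdot D\in\BD^{\dgQ^f}$ for given $D\in \BD^\dgQ$ and $n\geq 0$. A direct computation using the Leibniz rule for $\bR_{-1}$, the identity $\bR_{-1}(\ch_k(\infty))=\ch_{k-1}(\infty)$, and the explicit form of $\bL_m^{\dgQ^f}$ relative to $\bL_m^\dgQ$ (the difference tracking the vertex $\infty$ and the $f_v$ arrows $\infty\to v$) should produce the identity
\[\xi\circ \bL^{\dgQ^f}_{\textnormal{wt}_0}\bigl(\ch_{n+1}(\infty)\cdot D\bigr)\;=\;\frac{(-1)^n}{(n+1)!}\sum_{k\geq 0}\frac{(-1)^k}{k!}\,\xi\circ \bL^{f\shortrightarrow d}_{n+k}\bigl((\bR_{-1})^k(D)\bigr)\]
in $H^\ast(M^\theta_{f\to d})$. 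The key miracle is the cancellation of an extraneous $\delta_{n,0}\cdot D$ term from the $m=-1$ summand of $\bL_{\textnormal{wt}_0}^{\dgQ^f}$ against the contribution of the $\ch_0(\infty)^2$ summand of $\bT_0^{\dgQ^f}$, which is what lets the two Virasoro systems match cleanly.

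Finally, since $(\bR_{-1})^k(D)=0$ for $k$ large, the above sum is finite; a downward induction on $n$---with base case the (trivially vanishing) constraints for $n$ larger than the virtual dimension, since $\bL^{f\shortrightarrow d}_n$ raises the descendent degree by $n$---extracts the individual framed constraint $\int \xi\circ \bL_n^{f\shortrightarrow d}(D)=0$ from the wt0 identity. The principal obstacle is the book-keeping in the third paragraph: carefully verifying the combinatorial identity and especially the cancellation of the $\delta_{n,0}$ correction, since all the weights, signs and factorials in the sum defining $\bL_{\textnormal{wt}_0}$ interact nontrivially with the Leibniz expansion of $(\bR_{-1})^{m+1}$ applied to $\ch_{n+1}(\infty)\cdot D$.
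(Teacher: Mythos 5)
Your proposal is correct and follows essentially the same route as the paper's proof (Theorem \ref{thm: framedvirasoro} is deduced from Theorem \ref{thm: wt0virasoro} via Proposition \ref{prop: f/uf Virasoro}): pass to $\dgQ^f$ at dimension vector $(1,d)$ with $\tilde\theta$, use the $\infty$-normalized universal object to set $\xi(\ch_{k\geq 1}(\infty))=0$, and match $\bL_{\inv}^{\dgQ^f}$ against the operators $\bL_n^{f\shortrightarrow d}$. The only difference is that you re-derive by hand the combinatorial identity --- including the cancellation of the $\delta_{n,0}$ term from the $m=-1$ summand against the $\ch_0(\infty)^2$ contribution to $\bT_0^{\dgQ^f}$, which does check out, as does the extraction of individual constraints by downward induction on $n$ --- whereas the paper imports this step from \cite[Proposition 2.16]{blm}.
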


We now show that the framed/unframed correspondence (cf. Section \ref{sec: f/uf corr}) preserves Virasoro constraints; thus Theorem \ref{thm: framedvirasoro} is implied by Theorem \ref{thm: wt0virasoro}. In the proposition below, $\dgQ$ is a quasi-smooth dg quiver with a framing vector $f\in \BN^{Q_0}\backslash\{0\}$ and $\dgQ^f$ is the corresponding unframed dg quiver. Also, let $\theta$ be any stability condition of $\dgQ$ and $\tilde{\theta}$ be the corresponding one for $\dgQ^f$ according to Section \ref{sec: f/uf corr}.

\begin{proposition}\label{prop: f/uf Virasoro}
Through the framed/unframed correspondence isomorphism $${M}^{\theta}_{f\shortrightarrow d}(Q,I)\simeq {M}_{(1, d)}^{\tilde \theta}(Q^f,I^f)\,,$$ the framed Virasoro constraints \eqref{eq: framedvirasoro} for ${M}^{\theta}_{f\shortrightarrow d}(Q,I)$ become equivalent to the unframed Virasoro constraints \eqref{eq: wt0virasoro} for ${M}_{(1, d)}^{\tilde \theta}(Q^f,I^f)$.

\end{proposition}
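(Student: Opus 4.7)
The strategy is to directly compare the realizations of the two families of Virasoro operators on the common moduli space $M^\theta_{f\shortrightarrow d}(Q, I)\simeq M^{\tilde\theta}_{(1, d)}(Q^f, I^f)$, reducing the equivalence to an explicit algebraic identity between the unframed operators on $\BD^{\dgQ^f}$ and the framed operators on $\BD^\dgQ$.

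I would begin by identifying the canonical universal $Q^f$-representation on the right-hand side. By the canonical section of the projective linearization $\Pi^{\mathrm{pl}}_{(1, d)}$ introduced in Section \ref{sec: f/uf corr}, we have $\CV_\infty = \CO$, while the data $(\CV_v)_{v\in Q_0}$ and the arrows $\infty\to v$ recover the universal framed representation. Consequently, the unframed realization $\xi\colon \BD^{\dgQ^f}\to H^*(M^{\tilde\theta}_{(1, d)})$ satisfies $\xi(\ch_k(\infty)) = \delta_{k, 0}$, and its restriction to $\BD^\dgQ\subseteq \BD^{\dgQ^f}$ coincides with the framed realization into $H^*(M^\theta_{f\shortrightarrow d})$. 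The derived enhancements also agree via the derived framed/unframed correspondence noted at the end of Section \ref{sec: explicit virtual class}, so the virtual classes match.

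Next I would compare the two families of Virasoro operators. Using Proposition \ref{prop: extquasismooth} applied to $\dgQ^f$, the Euler form splits into $\infty$-$\infty$, $\infty$-$Q$ and $Q$-$Q$ contributions, inducing a corresponding splitting $\bT_n^{Q^f} = \bT_n^{\infty\infty} + \bT_n^{\infty Q} + \bT_n$. Applying $\xi$ collapses the first two pieces via $\xi(\ch_a(\infty)) = \delta_{a, 0}$, which together with $\xi\circ \bR_n^{Q^f}|_{\BD^\dgQ} = \xi\circ \bR_n$ produces the key identity
\[
\xi\bigl(\bL_n^{Q^f}(D)\bigr) \;=\; \xi\bigl(\bL_n^{f\shortrightarrow d}(D)\bigr) + \delta_{n, 0}\,\xi(D) \quad \text{for all } D\in \BD^\dgQ.
\]

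To deduce the forward direction ``framed $\Rightarrow$ unframed weight zero constraint for $D\in \BD^\dgQ$'', I substitute the identity above into the definition of $\bL_{\textnormal{wt}_0}$. The $n = -1$ contribution $-\xi(\bR_{-1}(D))$ cancels exactly against the $\delta_{n, 0}$ correction at $n = 0$, leaving $\xi(\bL_{\textnormal{wt}_0}(D))$ as a $\BQ$-linear combination of the terms $\xi\bigl(\bL_n^{f\shortrightarrow d}(\bR_{-1}^{n+1}(D))\bigr)$ for $n\geq 0$, each of which vanishes upon integration by the framed Virasoro. The converse direction, and the case of a general $D'\in \BD^{\dgQ^f}$, I would handle by testing against elements of the form $D' = \ch_1(\infty)^m D/m!$ for $D\in \BD^\dgQ$ and $m\geq 0$, using $\bR_{-1}(\ch_1(\infty)) = \ch_0(\infty) = 1$ in $\BD^{\dgQ^f}_{(1, d)}$ and the Leibniz rule. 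This produces an upper-triangular system (indexed by $m$) relating $\xi(\bL_{\textnormal{wt}_0}(D'))$ to the quantities $\xi\bigl(\bL_n^{f\shortrightarrow d}(\bR_{-1}^j(D))\bigr)$, whose inversion extracts each individual framed constraint $\int \xi\circ \bL_n^{f\shortrightarrow d}(D) = 0$ from the family of unframed weight zero constraints. The main technical hurdle is the combinatorial bookkeeping of this inversion, but the strict upper-triangularity in the $\ch_*(\infty)$-degree filtration makes it routine.
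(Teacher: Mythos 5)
Your proposal is correct in outline and follows the same strategy as the paper: normalize the universal representation so that $\CV_\infty=\CO$, split $\bT_n^{\dgQ^f}=\bT_n^{\dgQ}-\sum_v f_v\sum_{a+b=n}a!b!\ch_a(\infty)\ch_b(v)+\sum_{a+b=n}a!b!\ch_a(\infty)\ch_b(\infty)$, and collapse the $\infty$-terms using $\xi(\ch_a(\infty))=0$ for $a>0$; your key identity $\xi(\bL_n^{\dgQ^f}(D))=\xi(\bL_n^{f\shortrightarrow d}(D))+\delta_{n,0}\xi(D)$ for $D\in\BD^{\dgQ}$ is exactly what this computation gives. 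The one structural difference is that the paper does not invert the weight-zero operator by hand: it invokes \cite[Proposition 2.16]{blm}, which (for a $\delta$-normalized, here $\infty$-normalized, universal object) converts the single weight-zero constraint per $D$ into a family of per-$n$ constraints $\int\xi\big((\bL_n^{\dgQ^f}+\bS_n^\infty)(D)\big)=0$, after which the comparison with the framed operators is term-by-term. Your triangular system in $\ch_1(\infty)^m D/m!$ is essentially a re-derivation of that cited result, and the extraction of the individual constraints by descending induction on the $\bR_{-1}$-weight of $D$ does work.

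There is one genuine incompleteness: for the direction ``framed $\Rightarrow$ unframed'' you must verify $\int\xi(\bL_{\inv}(D'))=0$ for \emph{every} $D'\in\BD^{\dgQ^f}$, and a general $D'$ contains factors $\ch_k(\infty)$ with $k\geq 2$, not just powers of $\ch_1(\infty)$. These do not trivially vanish under the weight-zero operator, because $\bR_{-1}$ lowers the index and $\bR_{-1}^k(\ch_k(\infty))=k!\,\ch_0(\infty)=k!$ survives realization even though $\xi(\ch_j(\infty))=0$ for $j>0$. So the claim that the argument reduces to monomials $\ch_1(\infty)^mD$ is not sufficient as stated; one must run the same bookkeeping for arbitrary monomials $\prod_k\ch_k(\infty)^{m_k}\cdot D$ (or argue, as in the cited \cite[Proposition 2.16]{blm}, at the level of the normalized per-$n$ constraints, where such factors do die immediately since no $\bR_{-1}$ acts on them except inside $\bS_n^\infty$). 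This is a fixable omission rather than a wrong turn, but it is exactly the point the paper's citation is carrying.
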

\begin{proof}
    Let $\CV=(\CV_v)_{v\in {Q^f_0}}$ be the universal representation on $M_{(1,d)}^{\tilde \theta}(Q^f,I^f)$ normalized so that $\CV_\infty$ is the trivial line bundle. In particular, we have $\xi_{\CV}(\ch_1(\infty))=0$. This is analogue to the notion of $\delta$-normalized universal sheaf in \cite[Definition 2.13]{blm}. In \cite[Proposition 2.16]{blm} it is proven (in the context of sheaves, but the same proof applies verbatim) that the Virasoro constraints for $M_{(1,d)}^{\tilde \theta}(Q^f,I^f)$ are equivalent to
    \begin{equation}
        \label{eq: virasoroinftynormalized}
    \int_{[M_{(1,d)}^{\tilde \theta}(Q^f,I^f)]^\vir}\xi_\CV\big(\big(\bL_n^{\dgQ^f}+\bS_n^\infty\big)(D)\big)=0\quad\textup{ for all }n\geq -1, D\in\BD^{\dgQ^f}\,, \end{equation}
    where $\bL_n^{\bfQ^f}=\bR_n+\bT_n^{\bfQ^f}$ are the Virasoro operators defined in $\BD^{\bfQ^f}$ (see Section \ref{subsec: virasoroops}) and 
    \[\bS_n^\infty=-(n+1)!\ch_n(\infty)-(n+1)!\ch_{n+1}(\infty)\circ \bR_{-1}\,.\]
    By identifying $\BD^{\dgQ^f}=\BD^{\dgQ}\otimes \BQ[\ch_0(\infty), \ch_1(\infty), \ldots]$ we can write
    \[\bT^{\dgQ^f}_n=\bT^{\dgQ}_n-\sum_{v\in Q_0}f_v\sum_{a+b=n}a!b!\ch_a(\infty)\ch_b(v)+\sum_{a+b=n}a!b!\ch_a(\infty)\ch_b(\infty)\,.\]
    Since $\xi_{\CV}(\ch_a(\infty))=0$ for all $a>0$, equation \eqref{eq: virasoroinftynormalized} for $n>0$ (note that for $n=-1$ the equation is trivial and for $n=0$ it is the dimensional constraint) is equivalent to
    \[\int_{[M_{(1,d)}^{\tilde \theta}(Q^f,I^f)]^\vir}\xi_\CV\Big(\Big(\bR_n+\bT_n^{\dgQ}-\sum_{v\in Q_0}f_v n!\ch_n(v)\Big)(D)\Big)=0\quad\textup{ for }D\in \BD^{\dgQ^f}\,.\]
Note that the universal representation on $M^{\theta}_{f\shortrightarrow d}(Q,I)$ is $(\CV_v)_{v\in {Q_0}}$, so when $D\in \BD^{\dgQ}\subseteq \BD^{\dgQ^f}$ these are precisely the framed Virasoro constraints \eqref{eq: framedvirasoro} for $\dgQ$, showing that the unframed Virasoro constraints imply the framed Virasoro constraints. Conversely, the framed Virasoro constraints imply the unframed Virasoro constraints since equation \eqref{eq: virasoroinftynormalized} holds for any $D$ of the form $D=\ch_k(\infty)D'$ with $k>0$ as all the terms appearing trivially vanish due to  $\xi_{\CV}(\ch_{k}(\infty))=\xi_{\CV}(\ch_{k+n}(\infty))=0$.
\end{proof}

\subsection{Geometricity of Virasoro operators}

A simple but fairly interesting observation that the authors learned from A. Mellit is that the framed Virasoro constraints imply that the $\bR_n$ operators descend to cohomology of moduli spaces of framed representations when those are smooth. We also prove the analogous statement for the smooth moduli stack of unframed representations by approximating its cohomology by the cohomology of framed moduli spaces. We call this phenomena the geometricity of the Virasoro operators. Let $
\dgQ$ be a quasi-smooth dg quiver and $(Q,I)$ be the underlying quiver with relations below.

\begin{definition}
    Let $M$ be a moduli space or a moduli stack admitting a universal representation of $(Q,I)$, and hence a realization homomorphism $\xi\colon \BD^\dgQ\to H^\ast(M)$. We say that the Virasoro operator $\bR_n$ is geometric on $M$ if $\bR_n$ descends via $\xi$, i.e. there is a dashed arrow completing the diagram
    \begin{center}
        \begin{tikzcd}
            \BD^\dgQ\arrow[r, "\bR_n"]\arrow[d, "\xi"]& \BD^\dgQ\arrow[d, "\xi"]\\
            H^\ast(M)\arrow[r, dashed, "\bR_n"]& H^\ast(M)\,.
        \end{tikzcd}
    \end{center}
\end{definition}
\begin{remark} (i) Note that $\bT_n$ part of the Virasoro operator descends for a trivial reason since it is just multiplication by an element. So asking for $\bR_n$ to descend is the same as asking for $\bL_n$ to descend. (ii) When $M$ is the moduli stack of representations $\CM^{\theta-\textup{ss}}_d$, the operator $\bR_{-1}$ is always geometric because it comes from the $B\BG_m$-action on the moduli stack.
\end{remark}

This notion is more natural to consider when the realization homomorphism $\xi$ is surjective. In this case, the geometricity of the Virasoro operators is equivalent to the ideal of relations $\ker(\xi)$ being closed under the action of $\bR_n$. We make the following assumptions to guarantee smoothness of the involved moduli stacks and spaces, and we prove surjectivity of $\xi$ under these assumptions.
\begin{assumption}\label{ass: smoothness}
Let $(Q,I)$ be a quiver with relations together with $d$ and $\theta$. 
    \begin{enumerate} 
        \item $(Q,I)$ has homological dimension at most two. 

        \item For any $V_1, V_2\in{\CM}^{\theta-\textup{ss}}_d$, we have $\Ext^2_{Q,I}(V_1,V_2)=0$.

    \end{enumerate}
\end{assumption}

Under these assumptions, we write $\dgQ$ for the canonical quasi-smooth dg quiver associated to $(Q,I)$. Note that this assumption is automatic if $I=0$. It is also satisfied when the quiver $(Q, I)$ is obtained from certain exceptional collections on del Pezzo surfaces and $\theta$-stability is identified with stability of sheaves on the del Pezzo; we will further explore this in Section \ref{sec: applicaton}.

\begin{remark}
    Under the above assumptions, the moduli stack ${\CM}^{\theta-\textup{ss}}_d$ is smooth by deformation theory. For any framing vector $f\in \BN^{Q_0}\backslash\{0\}$, the moduli space ${M}^\theta_{f\rightarrow d}$ of limit $\theta$-stable framed representations is also smooth by the diagram \eqref{eq: framed approximation}.
\end{remark}

\begin{proposition}\label{prop: surjectivityrealization}
  Under Assumption \ref{ass: smoothness}, the realization homomorphisms 
    $$\BD_d^{\dgQ}\rightarrow H^*({\CM}^{\theta-\textup{ss}}_{d}),\quad \BD_d^{\dgQ}\rightarrow H^*({M}^\theta_{f\rightarrow d})
    $$
    are surjective.
\end{proposition}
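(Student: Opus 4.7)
The plan is to first establish surjectivity for the framed moduli $M^\theta_{f\to d}$ for arbitrary framing vectors $f$, and then deduce the stack case via the cohomological approximation in Proposition~\ref{prop: approximation}.

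For the framed case, I would first observe that under Assumption~\ref{ass: smoothness} the $\GL_d$-equivariant section $s_{f\to d}$ cutting out $R_{f\to d}\subseteq A_{f\to d}$ is regular at every limit-$\theta$-stable point: its derivative there is a map of $\GL_d$-representations whose cokernel is a subquotient of $\Ext^2_{Q,I}(V,V)=0$. Consequently $R^\theta_{f\to d}$ is smooth of the expected dimension, $\GL_d$ acts freely on it, and $M^\theta_{f\to d}=R^\theta_{f\to d}/\GL_d$ is smooth projective. The realization map factors through equivariant cohomology as
\[\BD^\dgQ_d \twoheadrightarrow H^*(B\GL_d)=H^*_{\GL_d}(A_{f\to d})\longrightarrow H^*_{\GL_d}(R^\theta_{f\to d})=H^*(M^\theta_{f\to d})\,,\]
and the first arrow is surjective since $H^*(B\GL_d)$ is a polynomial algebra in Chern classes of the tautological bundles $\CV_v$. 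I would factor the second arrow further as $H^*_{\GL_d}(A_{f\to d})\to H^*_{\GL_d}(A^\theta_{f\to d})\to H^*_{\GL_d}(R^\theta_{f\to d})$: the first piece is Kirwan surjectivity for the $\tilde\theta$-semistable locus of the framed quiver $Q^f$ (as in \cite{franzen} for the relations-free case, applied via the framed/unframed correspondence), while the second is restriction along a regular closed embedding of smooth varieties whose normal bundle is a sum of tautological pieces $\HHom(\CV_{s(\tilde r)},\CV_{t(\tilde r)})$. Surjectivity on equivariant cohomology should then follow from a Koszul or Leray--Hirsch style argument exploiting this tautological structure.

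For the stack case, I would invoke Proposition~\ref{prop: approximation}. For any cohomological degree $2k$, choose a framing $f$ with $\min_v(f_v-d_v)>k$; then the forgetful morphism $\pi\colon M^\theta_{f\to d}\to \CM^{\theta-\text{ss}}_d$ induces an isomorphism $\pi^*\colon H^{\leq 2k}(\CM^{\theta-\text{ss}}_d)\xrightarrow{\sim}H^{\leq 2k}(M^\theta_{f\to d})$. Since $\pi^*$ identifies the universal representation on the stack with the underlying representation of the universal framed representation, the two realization maps commute with $\pi^*$, so surjectivity for $M^\theta_{f\to d}$ in degrees $\leq 2k$ yields surjectivity for $\CM^{\theta-\text{ss}}_d$ in the same range. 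Letting $k\to\infty$ completes the argument.

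The main obstacle is proving surjectivity along the closed embedding $R^\theta_{f\to d}\hookrightarrow A^\theta_{f\to d}$: restriction on rational cohomology along an arbitrary regular embedding of smooth varieties need not be surjective (e.g.\ a smooth quadric in $\BP^3$ has richer $H^2$ than $\BP^3$). The argument must leverage that the conormal bundle is built from tautological summands, the ambient equivariant cohomology is tautologically generated, and the $\GL_d$-action is compatible; a suitable equivariant Thom or Koszul-type argument, or alternatively a direct application of Kirwan surjectivity to the smooth quasi-affine $\GL_d$-variety $R^\theta_{f\to d}$ itself, should deliver the required surjectivity.
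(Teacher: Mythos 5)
Your reduction of the stack case to the framed case via Proposition \ref{prop: approximation} is exactly the paper's argument and is fine. The framed case, however, contains a genuine gap, and it is precisely the one you flag yourself: surjectivity of the restriction $H^*_{\GL_d}(A^\theta_{f\to d})\to H^*_{\GL_d}(R^\theta_{f\to d})$ along the locus cut out by the relations. None of the repairs you sketch actually closes it. A Koszul resolution of $\CO_{R^\theta_{f\to d}}$ by exterior powers of $\tilde E_d^\vee$ gives the self-intersection formula $i_*i^*=\,\cdot\,e(E_d)$ and hence controls the image of the Gysin pushforward $i_*$, not the surjectivity of $i^*$; tautological normal bundles do not help here, as your own quadric example shows. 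Kirwan surjectivity applied ``to $R^\theta_{f\to d}$ itself'' is not a statement (Kirwan surjectivity compares a $G$-space with its semistable locus), and applying it to $R_{f\to d}\subseteq A_{f\to d}$ would require $R_{f\to d}$ to be smooth, whereas Assumption \ref{ass: smoothness} only guarantees smoothness on the (semi)stable locus; away from it the zero locus of $\tilde s_{f\to d}$ can be badly singular. So the chain $H^*(B\GL_d)\twoheadrightarrow H^*_{\GL_d}(A^\theta_{f\to d})\to H^*_{\GL_d}(R^\theta_{f\to d})$ is not known to be surjective at the last arrow, and this is the heart of the matter.

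The paper takes a different route that avoids this problem entirely: it invokes \cite[Theorem 1]{King-Walter} on Chow rings of fine moduli spaces of modules. The hypotheses of that theorem are verified using Assumption \ref{ass: smoothness}: homological dimension $\leq 2$ and $\Ext^2$-vanishing for $(Q,I)$ are transported to the framed quiver $(Q^f,I^f)$ by comparing the standard resolutions of Propositions \ref{prop: standardquasismooth} and \ref{prop: extquasismooth}, and the required finite resolution of the universal module $\CV^f$ by sheaves of the form $P(v)\otimes\CV_v$ is supplied by the global version \eqref{eq: globalstandardresolution} of the standard resolution. The conclusion of King--Walter is exactly tautological generation of $H^*(M^\theta_{f\to d})$ by the Chern classes of the $\CV_v$. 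If you want to keep an equivariant-cohomology flavour, you would still need some substitute for this input; the standard resolution of the diagonal is what makes the argument go through, and it is the ingredient your proposal is missing.
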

\begin{proof} 
The surjectivity of the realization homomorphism for ${M}^{\theta}_{f\rightarrow d}$ can be deduced from \cite[Theorem 1]{King-Walter}, as we now explain. The moduli $M={M}^{\theta}_{f\rightarrow d}$ parametrizes $\BC[Q^f]/I^f$-modules and admits a universal representation 
\[\CV^f=\bigoplus_{v\in Q_0^f}\CV_v=\CO_M\oplus \bigoplus_{v\in Q_0}\CV_v\,,\] 
which is naturally a $\big(\BC[Q^f]/I^f\big)\otimes \CO_M$-module.
First, we observe that for a $\BC[Q^f]/I^f$-module $V^f$ (i.e. a framed representation) we have by Proposition \ref{prop: standardquasismooth} a resolution
\begin{equation}\label{eq: resolutionframed}
0\to \bigoplus_{\tilde{r}\in Q_2}P(t(\tilde{r}))\otimes V_{s(\tilde{r})}\to \bigoplus_{e\in Q_1^f}P(t(e))\otimes V_{s(e)}\to \bigoplus_{v\in Q_0^f}P(v)\otimes V_v\to V^f\to 0\,.\end{equation}
Note that injectivity on the left follows from the corresponding injectivity for the the resolution of the underlying $\BC[Q]/I$ module, which in turn is guaranteed by Assumption \ref{ass: smoothness} (1). In particular, $(Q^f, I^f)$ also has homological dimension 2. By comparing the leftmost arrows in the resolutions given by Proposition \ref{prop: extquasismooth} for $(Q, I)$ and $(Q^f, I^f)$ it is also clear that $\Ext^{2}(V^f, W^f)=0$ if $\Ext^2(V, W)=0$, where $V, W$ are the unframed representations underlying $V^f, W^f$. Hence, by Assumption~\ref{ass: smoothness}~(2) $\Ext^2(V^f, W^f)=0$ for $V^f, W^f$ in ${M}^{\theta}_{f\rightarrow d}$. In particular ${M}^{\theta}_{f\rightarrow d}$ is smooth and we have $[{M}^{\theta}_{f\shortrightarrow d}]^\vir=[{M}^{\theta}_{f\shortrightarrow d}]$. This also verifies the conditions (i) and (ii) in \cite[Theorem 1]{King-Walter}. From \eqref{eq: resolutionframed} we get a resolution of the $\big(\BC[Q^f]/I^f\big) \otimes \CO_M$-module
\begin{equation}\label{eq: globalstandardresolution}
0\to \bigoplus_{\tilde{r}\in Q_2}P(t(\tilde{r}))\otimes \CV_{s(\tilde{r})}\to \bigoplus_{e\in Q_1^f}P(t(e))\otimes \CV_{s(e)}\to \bigoplus_{v\in Q_0^f}P(v)\otimes \CV_v\to \CV^f\to 0\,,\end{equation}
which is condition (iii) in loc. cit., so we conclude that $H^\ast({M}^{\theta}_{f\rightarrow d})$ is generated as an algebra by the Chern classes (or, equivalently, Chern characters) of the vector bundles $\CV_v$; note that $\CV_\infty$ is the trivial vector bundle, so it is enough to use $\CV_v$ for $v\neq \infty$ to generate, which proves the surjectivity of the realization map to the cohomology.

We prove the corresponding statement for the moduli stack of representations by using the approximation result in Proposition \ref{prop: approximation}. Let $\pi:M^\theta_{f\rightarrow d}\rightarrow \CM_d^{\theta-\textup{ss}}$ be the forgetful morphism. This defines a commuting diagram
\begin{center}
    \begin{tikzcd}
                       &   H^\ast(M^\theta_{f\rightarrow d}  )             \\
\BD^{\dgQ}_d \arrow[r] \arrow[ru] & H^\ast(\CM_d^{\theta-\textup{ss}}) \arrow[u, "\pi^*"']
\end{tikzcd}
\end{center}
because $\CV$ in the universal framed representation $(\CV,\phi)$ on $M^\theta_{f\rightarrow d}$ is pulled back from $\CM_d^{\theta-\textup{ss}}$. We have already proven for the framed moduli space $M^\theta_{f\rightarrow d}$ that the realization homomorphism is surjective. The surjectivity for $\CM_d^{\theta-\textup{ss}}$ follows from Proposition \ref{prop: approximation} because we can choose $f$ arbitrarily large according to the cohomological degree.
\end{proof}

\begin{theorem}\label{thm: virasoro rep}
    Under Assumption \ref{ass: smoothness}, the Virasoro operators are geometric on ${\CM}^{\theta-\textup{ss}}_{d}$ and ${M}^\theta_{f\rightarrow d}$, i.e. $\bR_n$ descends via the surjective realization homomorphisms 
    $$\BD_d^{\dgQ}\rightarrow H^*({\CM}^{\theta-\textup{ss}}_{d}),\quad \BD_d^{\dgQ}\rightarrow H^*({M}^\theta_{f\rightarrow d})\,.
    $$
\end{theorem}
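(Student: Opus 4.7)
The plan is to first establish geometricity on the framed moduli space $M^\theta_{f\rightarrow d}$ by combining the framed Virasoro constraints (Theorem \ref{thm: framedvirasoro}) with Poincar\'e duality, and then transfer the result to the stack $\CM^{\theta-\textup{ss}}_d$ via the approximation of Proposition \ref{prop: approximation}. Since $\xi$ is a ring homomorphism and both $\bT_n$ and $\bT_n^{f\rightarrow d}$ act by multiplication in $\BD^{\dgQ}$, their multiplicative contributions descend automatically; the only thing to verify is that the derivation $\bR_n$ stabilizes $\ker\xi$.

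For the framed case, fix $n\geq 0$ and $D_0\in \ker\xi$. For every $D'\in \BD^{\dgQ}_d$, expanding $\bL_n^{f\rightarrow d} = \bR_n + \bT_n^{f\rightarrow d}$ and using that $\bR_n$ is a derivation, $\xi$ is a ring homomorphism, and $\xi(D_0)=0$, a short computation gives
\[\xi\big(\bL_n^{f\rightarrow d}(D_0\cdot D')\big) \,=\, \xi(\bR_n(D_0))\cdot \xi(D')\,.\]
Theorem \ref{thm: framedvirasoro} applied to $D_0\cdot D'$ then yields $\int_{M^\theta_{f\rightarrow d}} \xi(\bR_n(D_0))\cdot \xi(D') = 0$ for all $D'\in \BD^{\dgQ}_d$. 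By Proposition \ref{prop: surjectivityrealization}, $\xi$ surjects onto $H^*(M^\theta_{f\rightarrow d})$, so the integral against every cohomology class vanishes. Since $M^\theta_{f\rightarrow d}$ is smooth and projective (cf.\ Section \ref{sec: f/uf corr}), the Poincar\'e pairing is non-degenerate, and therefore $\xi(\bR_n(D_0)) = 0$.

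For the stack, the case $n=-1$ follows from the $B\BG_m$-gerbe structure on $\CM^{\theta-\textup{ss}}_d$ as already noted. For $n\geq 0$ and $D_0\in \ker(\xi_{\CM})$ of fixed cohomological weight $w$, choose a framing $f$ satisfying $\min_{v\in Q_0}(f_v - d_v) > w + n$. Since the universal representation on $M^\theta_{f\rightarrow d}$ is pulled back from $\CM^{\theta-\textup{ss}}_d$ along the forgetful map $\pi$, the realizations satisfy $\xi_{M^\theta_{f\rightarrow d}} = \pi^*\circ \xi_{\CM}$, whence $D_0\in \ker(\xi_{M^\theta_{f\rightarrow d}})$. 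By the framed case, $\bR_n(D_0)\in \ker(\xi_{M^\theta_{f\rightarrow d}})$; since Proposition \ref{prop: approximation} makes $\pi^*$ injective in degrees $\leq 2(w+n)$ and $\bR_n(D_0)$ lies in that range, we conclude $\xi_{\CM}(\bR_n(D_0))=0$. The main technical step is the Poincar\'e duality vanishing for the framed moduli, which uses all of smoothness, projectivity, and surjectivity of $\xi$ in an essential way. Once $\bR_n$ descends, the Virasoro bracket $[\bL_n,\bL_m]=(m-n)\bL_{n+m}$ is inherited from $\BD^{\dgQ}$, yielding the claimed representations of $\Vir_{\geq -1}$ and $\Vir_{\geq 0}$.
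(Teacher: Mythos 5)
Your proposal is correct and follows essentially the same route as the paper: the framed case is handled by applying the framed Virasoro constraints to products $D_0\cdot D'$, using that $\bR_n$ is a derivation so that $\xi(\bL_n^{f\to d}(D_0D'))=\xi(\bR_n(D_0))\xi(D')$, and then invoking surjectivity of $\xi$ together with Poincar\'e duality on the smooth projective framed moduli space; the stack case is then deduced from the framed case via the approximation of Proposition \ref{prop: approximation}. Your explicit degree bookkeeping (choosing $\min_v(f_v-d_v)>w+n$) correctly fills in the step the paper leaves implicit by referring back to the proof of Proposition \ref{prop: surjectivityrealization}.
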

\begin{proof}
We use the framed Virasoro constraints to show that the $\bR_n$ operators descend via $\xi:\BD_d^{\dgQ}\rightarrow H^*({M}^\theta_{f\rightarrow d})$. Since $\xi$ is surjective by Proposition \ref{prop: surjectivityrealization}, it is enough to show that $\bR_n$ preserves the kernel of $\xi$. Let $D\in \BD^{\dgQ}_d$ be such that $\xi(D)=0$ and let $E\in \BD^{\dgQ}_d$ be arbitrary. By Theorem \ref{thm: framedvirasoro} we have the following:
\begin{equation*}
    0=\int_{M^\theta_{f\to d}}\xi\big(\bL_{n}^{f\to d}(DE)\big)=\int_{M^\theta_{f\to d}}\xi(\bR_n(D)E)+
\int_{M^\theta_{f\to d}}\xi\big(D\bL_{n}^{f\to d}(E)\big)\,.
\end{equation*}
    Since we assume that $\xi(D)=0$ the last integral vanishes, so \[\int_{M_{f\to d}^\theta}\xi(\bR_n(D))\xi(E)=0\] for any $E$. Since $\xi$ is surjective, by Poincaré duality it follows that $\xi(\bR_n(D))=0$.

The statement for the stack $\CM_d^{\theta-\textup{ss}}$ can be deduced from the framed statement and the approximation result in Proposition \ref{prop: approximation}, as in the proof of Proposition~\ref{prop: surjectivityrealization}.    
\end{proof}

\begin{remark}
The notion of geometricity of the Virasoro operators also makes sense for moduli spaces of sheaves. Geometricity of the Virasoro operators on moduli stacks of semistable bundles on curves or torsion-free sheaves on del Pezzo surfaces can be deduced from the Virasoro constraints shown in \cite{blm} by using arguments similar to the ones here, replacing moduli of limit stable framed representations by moduli of Joyce--Song pairs.
\end{remark}

\section{Vertex algebras from dg categories}
\label{sec: vertexalgebra}

As discovered in \cite{blm}, vertex algebras, in particular the ones constructed by Joyce in \cite{Jo17, grossjoycetanaka, Jo21}, are deeply connected to the Virasoro constraints for moduli spaces of sheaves. In this section, we introduce lattice vertex algebras and Joyce's vertex algebras from dg categories. The main result of this section is a construction of a natural isomorphism from Joyce's vertex algebra to lattice vertex algebra under the assumptions that are satisfied for our applications.

We now briefly recall the necessary basic notions of vertex algebras; the reader can find a much more detailed exposition in \cite{Ka98}. A vertex algebra is a vector space $V$ over a field (which, in this paper, will always be $\BQ$) equipped with a vacuum vector $\ket{0}\in V$, a translation operator $T\colon V\to V$ and a state-field correspondence $Y\colon V\to \End(V)\llbracket z^{-1}, z\rrbracket$. This data has to satisfy certain axioms called vacuum axiom, translation covariance and locality, see \cite[Section 1.3]{Ka98}.\footnote{Vertex algebras in \cite{Ka98} are super-vertex algebras. The vertex algebras considered in this paper are entirely even. Moreover, even though every vertex algebra in this paper admits a $\BZ$-grading, it will not be considered.} The state-field correspondence can be encoded as a $\BZ$-collection of bilinear products $-_{(n)}-\colon V\otimes V\to V$ defined by
\[Y(u, z)v=\sum_{n\in \BZ}u_{(n)} v\, z^{-1-n}\,.\]

Given a vertex algebra $V$, there is a standard way, due to Borcherds \cite{Borcherds}, to construct a Lie algebra associated to it. Define
\[\widecheck V\coloneqq V/T(V)\]
and define a Lie bracket on $\widecheck V$ by
\[[\overline u, \overline v]=\overline{u_{(0)} v}\,,\]
where we denote by $\overline u\in \widecheck V$ the image of $u\in V$ in the quotient. It can be shown that this does not depend on the representatives $u, v$ chosen and that it satisfies the axioms of a Lie algebra. 

Vertex algebras often come with a conformal element $\omega$, see \cite[Definition 4.10]{Ka98}; when this is the case we say that $(V, \omega)$ is a vertex operator algebra. The element $\omega$ induces a representation of the Virasoro Lie algebra on $V$. More precisely, the operators $L_n\coloneqq\omega_{(n+1)}\in \End(V)$ satisfy the Virasoro bracket relations
     \[\big[L_n,L_m\big] = (n-m)L_{m+n} +\frac{n^3-n}{12}\delta_{n+m,0}\, c\cdot \id
     \,,\]
     where $c\in \BQ$ is some constant called the central charge of $(V, \omega)$.

\subsection{Lattice vertex algebras} \label{sec: lattice VA}

In this paper, a lattice is a finitely generated free abelian group $\Lambda$ together with a symmetric bilinear form $B:\Lambda\times \Lambda\rightarrow \BZ$. We do not assume that the bilinear form is non-degenerate. The main example is the lattice of a quasi-smooth dg quiver $\dgQ$, namely $\Lambda=\BZ^{\dgQ_0}$ with the symmetrized Euler form $\chi_{\dgQ}^{\sym}(-,-)$. 

We recall how the lattice $\Lambda$ gives arise to a lattice vertex algebra
$$\textup{VA}(\Lambda,B)\coloneqq\Big(V_\Lambda,\ |0\rangle\in V_\Lambda,\ T:V_\Lambda\rightarrow V_\Lambda,\ Y:V_\Lambda\otimes V_\Lambda\rightarrow V_\Lambda(\hspace{-2pt}(z)\hspace{-2pt})\Big). 
$$
The underlying vector space is defined as a tensor product
$$V_\Lambda=\BQ[\Lambda]\otimes \BD_\Lambda
$$
of a group algebra $\BQ[\Lambda]$ and a free algebra $\BD_\Lambda\coloneqq\Sym(\Lambda\otimes t^{-1}\BQ[t^{-1}])$. Recall that the group algebra has a basis $\{e^\alpha\}_{\alpha\in \Lambda}$ with a multiplication rule $e^{\alpha}\cdot e^{\beta}=e^{\alpha+\beta}$. We denote the element $v\cdot t^{-k}\in \BD_\Lambda$ for $v\in \Lambda$ and $k>0$ simply by $v_{-k}$. Therefore, general elements in $V_\Lambda$ are linear combinations of elements of the form 
$$e^\alpha\otimes v^1_{-k_1}\cdots v^\ell_{-k_\ell} 
$$
where $\alpha,v^1,\dots, v^\ell\in \Lambda$ and $k_1,\dots, k_\ell>0.$ In this notation, the vacuum vector is defined as $\ket{0}\coloneqq e^0\otimes 1\in V_\Lambda$. We will abbreviate $e^\alpha\otimes 1$ to $e^\alpha$ and $e^0\otimes v^1_{-k_1}\cdots v^\ell_{-k_\ell}$ to $v^1_{-k_1}\cdots v^\ell_{-k_\ell}$.

For each $v\in \Lambda$ and $k>0$, we define the creation operation $v_{(-k)}$ as a left multiplication by $v_{-k}$. This defines a free $\BD_\Lambda$-module structure on $V_\Lambda$ with a basis $\{e^\alpha\}_{\alpha\in \Lambda}$. Therefore, to define an operator $A:V_\Lambda\rightarrow V_\Lambda$ it suffices to specify the commutator $[A,v_{(-k)}]$ for all $v\in \Lambda$, $k>0$ and its values on the basis $A(e^\alpha)$ for all $\alpha\in \Lambda$. We define a translation operator $T:V_\Lambda\rightarrow V_\Lambda$ in this way:
$$[T,v_{(-k)}]= k\cdot v_{(-k-1)},\quad T(e^\alpha)=e^\alpha\otimes \alpha_{-1}. 
$$
The definition of the state-field correspondence uses annihilation operators $v_{(k)}$ for $k\geq 0$. These are defined by
\begin{align*}&v_{(0)}(e^\alpha)=B(v, \alpha)e^\alpha\,,\quad v_{(k)}(e^\alpha)=0\quad\textup{ for }k>0\,,\quad \\
&[v_{(k)}, w_{(-l)}]=k\cdot \delta_{k, l}\cdot B(v,w)\cdot \id\quad\textup{ for }k\geq 0, l>0\,.
\end{align*}
Combining the creation and annihilation operators, we define
\begin{equation}\label{eq: statefield1}Y(v_{-1}, z)=\sum_{n\in \BZ}v_{(k)}z^{-1-k}.\end{equation}
We also define
\begin{equation}\label{eq: statefield2}Y(e^\alpha, z)=\epsilon_{\alpha, \beta}z^{B(\alpha, \beta)}e^\alpha\exp\left(-\sum_{k<0}\frac{\alpha_{(k)}}{k}z^{-k}\right)\exp\left(-\sum_{k>0}\frac{\alpha_{(k)}}{k}z^{-k}\right)\end{equation}
when restricted to $e^\beta\otimes \BD_\Lambda\subseteq V_\Lambda$. Here $e^\alpha$ is the operator sending $e^\beta\otimes w$ to $e^{\alpha+\beta}\otimes w$ and $\epsilon_{\alpha, \beta}=\pm 1$ are signs satisfying some compatibility, see \cite[5.4.14]{Ka98}; the vertex algebra is independent from the choice of signs, and when $B$ is obtained as the symmetrization of a bilinear form $b\colon \Lambda\times \Lambda\to \BZ$ there is a canonical choice given by $\epsilon_{\alpha, \beta}=(-1)^{b(\alpha, \beta)}$. The state-field correspondence is entirely determined by \eqref{eq: statefield1}, \eqref{eq: statefield2} together with the reconstruction theorem \cite[Theorem 4.5]{Ka98}.

\subsubsection{Virasoro operators and primary states}
\label{subsubsec: virasorolattice}

When $B$ is a non-degenerate pairing, the lattice vertex algebra $V_\Lambda$ is well known to carry a natural conformal element $\omega\in V_\Lambda$ \cite[Proposition 5.5]{Ka98}. Given a basis $\{v\}$ of $\Lambda\otimes \BQ$, let $\{\hat v\}$ be its dual basis with respect to $B$; then the conformal element is given by 
\begin{equation}\label{eq: conformallattice}\omega=\frac{1}{2}\cdot e^0\otimes \sum_{v}\hat{v}_{-1}v_{-1}\,.\end{equation}
As explained in the beginning of Section \ref{sec: vertexalgebra}, a conformal element induces Virasoro operators $L_n=\omega_{(n+1)}$ for all $n\in \BZ$. The central charge of $\omega$ is equal to the rank of~$\Lambda$.

Even when $B$ is degenerate, it is possible to define a representation of half of the Virasoro Lie algebra. Define operators $L_n\colon V_\Lambda\to V_\Lambda$ for $n\geq -1$ by 
\begin{equation}\label{eq: virlva1}
    L_n(e^\alpha)=\begin{cases}e^\alpha\otimes \alpha_{-1} &\textup{ if }n=-1\\
\frac{1}{2}B(\alpha, \alpha)\big(e^\alpha\otimes 1\big) &\textup{ if }n=0\\
0 &\textup{ if }n>0
    \end{cases}
\end{equation}
and
\begin{equation}\label{eq: virlva2}[L_n, v_{(-k)}]=k\cdot v_{(-k+n)}\,.\end{equation}
Note in particular that $L_{-1}=T$.

\begin{remark}
    We remark that in the case of non-degenerate lattice $(\Lambda,B)$, $L_n\coloneqq\omega_{(n+1)}$ indeed satisfies the above properties; Equation \eqref{eq: virlva1} is $(5.5.21-23)$ in \cite{Ka98} and Equation \eqref{eq: virlva2} follows from \cite[Corollary 4.10]{Ka98}. 
\end{remark}

The operators $L_n$ defined by \eqref{eq: virlva1} and \eqref{eq: virlva2} are functorial in the following sense. 

\begin{proposition}\label{prop: functorial}
    Let $f:(\Lambda,B)\rightarrow (\widetilde \Lambda, \widetilde B)$ be a lattice embedding. Then the induced vertex algebra homomorphism $\phi_f:V_\Lambda\rightarrow V_{\widetilde\Lambda}$ intertwines the operators $L_n$ and $\widetilde L_n$ defined by \eqref{eq: virlva1} and \eqref{eq: virlva2}, i.e., $\phi_f\circ L_n=\widetilde L_n\circ \phi_f$.  
\end{proposition}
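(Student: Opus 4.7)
The plan is to reduce the intertwining to a check on the generators $e^\alpha$ plus a compatibility of $\phi_f$ with the individual mode operators of the Heisenberg part. Since $V_\Lambda$ is spanned as a $\BQ$-vector space by elements of the form $e^\alpha\otimes v^1_{-k_1}\cdots v^\ell_{-k_\ell}$, and $\phi_f$ sends such an element to $e^{f(\alpha)}\otimes f(v^1)_{-k_1}\cdots f(v^\ell)_{-k_\ell}$, the key auxiliary statement I would prove first is that for every $v\in \Lambda$ and every $k\in \BZ$ one has $\phi_f\circ v_{(k)}=f(v)_{(k)}\circ \phi_f$ as operators $V_\Lambda\to V_{\widetilde{\Lambda}}$. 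For $k<0$ this is immediate from the definitions, since $v_{(k)}$ is left multiplication by $v_{-k}$. For $k\geq 0$ the annihilation operators are pinned down by the relations $v_{(0)}(e^\alpha)=B(v,\alpha)e^\alpha$ and $[v_{(k)},w_{(-l)}]=k\,\delta_{k,l}\,B(v,w)\,\id$; both are preserved by $\phi_f$ precisely because $f$ is isometric, so $\widetilde{B}(f(v),f(\alpha))=B(v,\alpha)$ and $\widetilde{B}(f(v),f(w))=B(v,w)$.

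With this claim in hand, I would next verify the Virasoro intertwining on the generators $e^\alpha$ using only \eqref{eq: virlva1} and the isometry of $f$: for $n>0$ both sides vanish, for $n=0$ both sides equal $\tfrac12 B(\alpha,\alpha)\,e^{f(\alpha)}$, and for $n=-1$ both sides equal $e^{f(\alpha)}\otimes f(\alpha)_{-1}=f(\alpha)_{(-1)}(e^{f(\alpha)})$.

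The last step extends the intertwining to arbitrary elements by induction on the length $\ell$ of the creation string. Assuming $\widetilde{L}_n\phi_f(x)=\phi_f L_n(x)$, one computes, using \eqref{eq: virlva2} for $\widetilde{L}_n$,
\[
\widetilde{L}_n\bigl(f(v)_{(-k)}\phi_f(x)\bigr)=f(v)_{(-k)}\widetilde{L}_n\phi_f(x)+k\cdot f(v)_{(-k+n)}\phi_f(x).
\]
The induction hypothesis turns the first summand into $f(v)_{(-k)}\phi_f(L_n x)=\phi_f(v_{(-k)}L_n x)$, and the auxiliary claim turns the second summand into $k\cdot \phi_f(v_{(-k+n)} x)$. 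Combining and using \eqref{eq: virlva2} in $V_\Lambda$, the result is $\phi_f(L_n v_{(-k)}x)$, as desired.

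The main subtlety worth flagging is that the mode index $-k+n$ in the commutator $[L_n,v_{(-k)}]=k\,v_{(-k+n)}$ may be negative, zero, or positive depending on $k$ and $n$, so the operator $v_{(-k+n)}$ on the right-hand side may be either a creation or an annihilation mode. This is precisely why the auxiliary claim must be proved for all $k\in \BZ$ rather than only for $k<0$; treating creation and annihilation modes uniformly under $\phi_f$ is what lets the induction step go through in one clean line, and avoids a case-by-case analysis of the sign of $-k+n$.
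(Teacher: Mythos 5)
Your proof is correct and follows essentially the same route as the paper's (much terser) argument: the paper likewise reduces everything to $\phi_f(e^\alpha)=e^{f(\alpha)}$ together with compatibility of $\phi_f$ with the mode operators $v_{(k)}$, and your write-up simply makes the induction on creation strings explicit. The only nitpick is a harmless indexing slip in the auxiliary claim ($v_{(k)}$ for $k<0$ is multiplication by $v_{k}$, not $v_{-k}$, in the paper's conventions).
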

\begin{proof}
    The homomorphism $\phi_f$ is defined so that $\phi_f(e^\alpha)=e^{f(\alpha)}$ for $\alpha\in \Lambda$ and the operator $v_{(k)}$ on $V_\Lambda$ and $V_{\widetilde \Lambda}$ for $v\in \Lambda$ are compatible. This implies the proposition.   
\end{proof}
We can study the half of the Virasoro operators by embedding the lattice $\Lambda$ into a non-degenerate lattice. This in particular shows that they indeed satisfy the Virasoro brackets. 

\begin{corollary}\label{cor: virbracket}
    The operators $L_n$ defined by \eqref{eq: virlva1} and \eqref{eq: virlva2} satisfy the Virasoro Lie bracket
    \[[L_n, L_m]=(n-m)L_{n+m}\quad\textup{for }n,m\geq -1\,.\]
\end{corollary}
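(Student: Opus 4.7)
The plan is to reduce to the non-degenerate case via the functoriality asserted in Proposition~\ref{prop: functorial}. First, I would construct an isometric embedding of $(\Lambda, B)$ into a non-degenerate lattice. A standard choice is the hyperbolic extension $\widetilde \Lambda \coloneqq \Lambda \oplus \Lambda^\vee$ with $\Lambda^\vee = \Hom_\BZ(\Lambda, \BZ)$ and pairing
\[\widetilde B\big((\alpha, f), (\beta, g)\big) \coloneqq B(\alpha, \beta) + f(\beta) + g(\alpha)\,.\]
A direct check shows that $\widetilde B$ is non-degenerate and that the inclusion $\iota\colon \Lambda \hookrightarrow \widetilde \Lambda$, $\alpha \mapsto (\alpha, 0)$, is a lattice embedding.

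Next, on the non-degenerate lattice vertex algebra $V_{\widetilde \Lambda}$ the conformal element $\omega$ of \eqref{eq: conformallattice} produces operators $\widetilde L_n \coloneqq \omega_{(n+1)}$ satisfying the full Virasoro bracket with central charge $c=\rk \widetilde \Lambda$, by \cite[Proposition~5.5]{Ka98}. By the remark preceding the statement, these $\widetilde L_n$ coincide with the operators defined by \eqref{eq: virlva1}--\eqref{eq: virlva2} on $V_{\widetilde \Lambda}$. For $n, m \geq -1$ the central extension term $\tfrac{n^3 - n}{12} \delta_{n+m,0}\, c$ vanishes, since the only cases with $n+m=0$ are $(n,m) \in \{(-1,1), (0,0), (1,-1)\}$ and $n^3 - n = 0$ at $n \in \{-1, 0, 1\}$; moreover $n+m \geq -1$ unless $n = m = -1$, in which case the bracket is zero for degree reasons. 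Thus $[\widetilde L_n, \widetilde L_m] = (n-m)\widetilde L_{n+m}$ holds on $V_{\widetilde \Lambda}$ for all $n, m \geq -1$.

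Finally, Proposition~\ref{prop: functorial} applied to $\iota$ yields a vertex algebra homomorphism $\phi_\iota \colon V_\Lambda \to V_{\widetilde \Lambda}$ intertwining $L_n$ with $\widetilde L_n$. This homomorphism is injective: the induced maps on the group algebra factor $\BQ[\Lambda] \hookrightarrow \BQ[\widetilde \Lambda]$ and on the Heisenberg factor $\Sym(\Lambda \otimes t^{-1}\BQ[t^{-1}]) \hookrightarrow \Sym(\widetilde \Lambda \otimes t^{-1}\BQ[t^{-1}])$ are both manifestly injective. Hence the Virasoro bracket relations on $V_{\widetilde \Lambda}$ pull back to the same identity for $L_n$ on $V_\Lambda$.

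The only non-routine point is identifying the operators $\omega_{(n+1)}$ on $V_{\widetilde \Lambda}$ with those from \eqref{eq: virlva1}--\eqref{eq: virlva2}, which rests on the standard computation in \cite{Ka98} already recorded in the remark; everything else amounts to functoriality and the elementary injectivity check.
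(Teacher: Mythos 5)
Your proof is correct and follows essentially the same route as the paper: embed $(\Lambda,B)$ into a non-degenerate lattice, invoke the conformal element there via \cite[Proposition 5.5]{Ka98}, and transport the bracket back through the functoriality of Proposition~\ref{prop: functorial} (the paper uses $\Lambda\oplus\Lambda$ with an auxiliary non-degenerate block $C$ rather than your canonical $\Lambda\oplus\Lambda^\vee$, a purely cosmetic difference). Your explicit injectivity check of $\phi_\iota$ is a welcome addition, since it is genuinely needed to pull the identity back and the paper leaves it implicit.
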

\begin{proof}
    In the non-degenerate case, this follows from the fact that $L_n=\omega_{(n+1)}$ where $\omega$ is a conformal element. By Proposition \ref{prop: functorial}, it suffices to find an embedding of a lattice into a non-degenerate one which can always be done as follows. Let $C$ be any non-degenerate pairing on $\Lambda$ and consider the pairing
    \[\widetilde B=\begin{bmatrix}
        B & C\\
        C^t & 0
    \end{bmatrix}\]
    on $\Lambda\oplus \Lambda$. Then $(\widetilde \Lambda, \widetilde B)$ is a non-degenerate symmetric lattice, and the inclusion $\Lambda\hookrightarrow \Lambda\oplus \Lambda$ onto the first component is an inclusion of lattices.
\end{proof}
\begin{remark}
    In \cite{blm} the authors worked with the pair vertex algebra to ensure that there is an isomorphism with a non-degenerate lattice vertex algebra (see Lemma 4.6 in loc. cit.), and hence a conformal element. The construction of a non-degenerate lattice in the proof of Corollary \ref{cor: virbracket} resembles the construction of the pair vertex algebra from the sheaf vertex algebra.
\end{remark}
The notion of primary states, usually defined for vertex algebras admiting a conformal element, extends to vertex algebras of degenerate lattices as well.
\begin{definition}\label{def: primarystates}
    Let $V_\Lambda$ be a (possibly degenerate) lattice vertex algebra. Define the spaces of primary states 
    \begin{align*}P_i&=\{v\in V_\Lambda\colon L_0(v)=iv\textup{ and }L_n(v)=0\textup{ for }n>0\}\subseteq V_\Lambda\\
    \widecheck P_0&=P_1/T(P_0)\subseteq \widecheck V_\Lambda\,.
    \end{align*}
\end{definition}
\begin{corollary}\label{cor: liesubalgebra}
The space of primary states $\widecheck P_0\subseteq \widecheck V_\Lambda$ is a Lie subalgebra.
\end{corollary}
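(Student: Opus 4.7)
The assertion to verify is that for any $u, v \in P_1$, the class $\overline{u_{(0)} v} \in \widecheck V_\Lambda$ representing the Lie bracket $[\bar u, \bar v]$ lies in $\widecheck P_0$. My plan is to reduce this to the stronger statement that $u_{(0)} v$ itself belongs to $P_1$. As a preliminary sanity check, note that $\widecheck P_0 = P_1/T(P_0)$ makes sense inside $\widecheck V_\Lambda$: using $[L_n, T] = [L_n, L_{-1}] = (n+1) L_{n-1}$ (a special case of Corollary~\ref{cor: virbracket}) one sees immediately that $T(P_0) \subseteq P_1$.

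The whole argument then rests on the commutator identity
\[[L_n, u_{(0)}] = 0 \in \End(V_\Lambda),\quad n \geq 0,\ u \in P_1\,.\]
Granting this, a direct computation gives $L_n(u_{(0)} v) = u_{(0)} L_n v$ for all $n \geq 0$, which vanishes for $n > 0$ and equals $u_{(0)} v$ for $n = 0$ (since $v \in P_1$). Hence $u_{(0)} v \in P_1$, and so $\overline{u_{(0)}v}$ lies in the image of $P_1$ in $\widecheck V_\Lambda$, i.e.\ in $\widecheck P_0$.

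To establish the commutator, I would first do the calculation in the presence of a conformal element $\omega$ with $L_n = \omega_{(n+1)}$, where the standard Borcherds commutator formula gives
\[[L_n, u_{(k)}] = \sum_{j \geq 0}\binom{n+1}{j}\,(\omega_{(j)} u)_{(n+k+1-j)}\,.\]
Since $u$ is primary of conformal weight $1$, only the terms $j = 0, 1$ survive, with $\omega_{(0)} u = Tu$ and $\omega_{(1)} u = u$; combined with the translation covariance identity $(Tu)_{(m)} = -m\, u_{(m-1)}$, the sum collapses to $[L_n, u_{(k)}] = -k\, u_{(n+k)}$, which vanishes at $k = 0$.

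The main obstacle is that in the possibly degenerate lattice setting there may be no conformal element, so Borcherds' formula is not directly at hand. I would bypass this exactly as in the proof of Corollary~\ref{cor: virbracket}: choose an embedding $f\colon (\Lambda, B) \hookrightarrow (\widetilde \Lambda, \widetilde B)$ into a non-degenerate lattice. By Proposition~\ref{prop: functorial} the induced vertex algebra homomorphism $\phi_f \colon V_\Lambda \hookrightarrow V_{\widetilde \Lambda}$ is injective (on the level of the underlying vector spaces, this is manifest from the formula $V_\Lambda = \BQ[\Lambda] \otimes \BD_\Lambda$) and intertwines the two Virasoro actions, so in particular $\phi_f(P_1^\Lambda) \subseteq P_1^{\widetilde \Lambda}$. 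Since $\phi_f$ respects the state-field correspondence we have $\phi_f(u_{(0)} v) = \phi_f(u)_{(0)} \phi_f(v)$, and the containment $u_{(0)} v \in P_1^\Lambda$ then follows by applying injectivity of $\phi_f$ to its already-established counterpart $\phi_f(u)_{(0)} \phi_f(v) \in P_1^{\widetilde \Lambda}$ in the non-degenerate vertex operator algebra $V_{\widetilde \Lambda}$.
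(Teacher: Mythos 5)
Your proof is correct and follows essentially the same route as the paper: reduce to the non-degenerate case via a lattice embedding (using Proposition~\ref{prop: functorial} and injectivity of $\phi_f$) and invoke the classical Borcherds argument there. The only difference is that you write out the standard commutator computation $[L_n, u_{(0)}]=0$ explicitly, whereas the paper simply cites Borcherds and \cite[Proposition~3.11]{blm} for the non-degenerate case.
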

\begin{proof}
    This follows again by embedding $\Lambda$ into a non-degenerate lattice and applying the well-known result in the non-degenerate case \cite{Borcherds} (see also \cite[Proposition~3.11]{blm}).\qedhere
\end{proof}

\begin{corollary}\label{cor: primarywt0}
Let $a\in e^\alpha\otimes \BD_\Lambda\subseteq V_\Lambda$ with $\alpha\neq 0$. Then $\overline a\in \widecheck P_0$ if and only if
\[\sum_{n\geq -1} \frac{(-1)^{n}}{(n+1)!}T^{n+1}\circ L_n(a)=0\,.\]
\end{corollary}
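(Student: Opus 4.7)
The proof has two directions. For the forward direction ($\overline a \in \widecheck P_0$ implies the vanishing), write $a = b + T(c)$ with $b \in P_1$ and verify the vanishing of the stated sum on each summand. On $b$, the conditions $L_0(b)=b$ and $L_n(b)=0$ for $n > 0$ collapse the sum to $-T(b) + T(L_0 b) = 0$. On $T(c)$, expanding $L_n(Tc) = T L_n(c) + (n+1) L_{n-1}(c)$ via the Virasoro bracket $[L_n, T] = (n+1) L_{n-1}$ (from $L_{-1} = T$) produces two sums that telescope to zero after reindexing.

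For the converse, the key identity is
\[\sum_{n \geq -1}\frac{(-1)^n}{(n+1)!}T^{n+1}L_n(a) = T\bigl(\Pi(a) - a\bigr)\,,\quad \Pi(a) \coloneqq \sum_{n \geq 0} \frac{(-1)^n}{(n+1)!} T^n L_n(a)\,.\]
I next show that $T$ acts injectively on $e^\alpha \otimes \BD_\Lambda$ when $\alpha \neq 0$: identifying this space with $\BD_\Lambda$, $T$ becomes multiplication by $\alpha_{-1}$ plus the derivation $T_0$ with $T_0(v_{-k}) = k v_{-k-1}$, and filtering by the number of monomial factors (strictly raised by $\alpha_{-1}\cdot$ and preserved by $T_0$) combined with analysis of leading terms yields injectivity. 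Hence $\Pi(a) = a$. Decomposing $a = \sum_h a_h$ by $L_0$-eigenspaces (preserved by $\Pi$), the identity $\Pi(a_h) = a_h$ rearranges to
\[(h-1)\, a_h = \sum_{n \geq 1}\frac{(-1)^{n+1}}{(n+1)!}T^n L_n(a_h) \in T(V_\Lambda)\,,\]
so $a_h \in T(V_\Lambda)$ whenever $h \neq 1$.

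For the remaining case $h = 1$, I reduce to the non-degenerate setting by embedding $(\Lambda, B) \hookrightarrow (\widetilde\Lambda, \widetilde B)$ as in the proof of Corollary~\ref{cor: virbracket}; Proposition~\ref{prop: functorial} transports the vanishing along the injective vertex algebra homomorphism $\phi\colon V_\Lambda \to V_{\widetilde\Lambda}$. Since $V_{\widetilde\Lambda}$ now carries a conformal vector $\widetilde\omega$, standard Virasoro representation theory on the Fock module $e^{\phi(\alpha)} \otimes \BD_{\widetilde\Lambda}$ (together with injectivity of $\widetilde T$) produces $\widetilde c$ such that $\phi(a_1) - \widetilde T(\widetilde c) \in \widetilde P_1$; descending back to $V_\Lambda$ uses that $\phi(V_\Lambda)$ is a Virasoro-stable sub vertex algebra. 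The main obstacle is this $h = 1$ case, which requires the non-degenerate embedding and Virasoro module theory rather than only formal algebraic identities.
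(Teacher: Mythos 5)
Your forward direction and the first half of your converse are correct, and they go beyond what the paper records: the paper's entire proof is the two-sentence reduction ``the non-degenerate case is \cite[Proposition 3.16]{blm}; the general case follows by embedding $\Lambda$.'' Your telescoping computation (splitting $a=b+T(c)$, using $[L_n,T]=(n+1)L_{n-1}$, and noting the sums are finite since $L_n$ eventually kills any fixed vector) is a clean self-contained proof of the forward implication. Likewise the identity $S(a)=T(\Pi(a)-a)$, the injectivity of $T$ on $e^\alpha\otimes \BD_\Lambda$ for $\alpha\neq 0$ (your filtration-by-number-of-factors argument is valid because $\BD_\Lambda$ is a polynomial ring, hence a domain), and the $L_0$-eigenspace decomposition correctly dispose of every eigenvalue $h\neq 1$ by purely formal means, with no non-degeneracy assumption. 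This is a genuinely sharper organization than the paper's: it isolates the only non-formal content of the statement in the single eigenvalue $h=1$.

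The gap is in how you close that last case. From the non-degenerate result you obtain $\phi(a_1)=\widetilde b+\widetilde T(\widetilde c)$ with $\widetilde b\in \widetilde P_1$, but this does not yield $a_1\in P_1+T(V_\Lambda)$, and ``$\phi(V_\Lambda)$ is Virasoro-stable'' is not a sufficient reason: $\widetilde b$ and $\widetilde c$ have no reason to lie in $\phi(V_\Lambda)$, and stability of the image does not produce a $\Vir_{\geq -1}$-equivariant projection of $V_{\widetilde\Lambda}$ onto $\phi(V_\Lambda)$. Indeed the obvious complement (monomials containing at least one factor from the second copy of $\Lambda$) is not $\widetilde L_n$-stable: for $\Lambda=\BZ$ with $B=0$, $\widetilde B=\left(\begin{smallmatrix}0&1\\1&0\end{smallmatrix}\right)$ and $w=(0,1)$, equations \eqref{eq: virlva1}--\eqref{eq: virlva2} give $\widetilde L_1\big(e^{(\alpha,0)}\otimes w_{-1}\big)=w_{(0)}e^{(\alpha,0)}=\alpha\, e^{(\alpha,0)}$, which lands in $\phi(V_\Lambda)$. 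So the descent requires an actual argument, e.g.\ a direct proof that $V_{\Lambda,\alpha}=\big(\bigcap_{n\geq 1}\ker L_n\big)+T(V_{\Lambda,\alpha})$ for $\alpha\neq 0$ in the degenerate case, or a version of the non-degenerate statement in which the decomposition $v=\widetilde b+\widetilde T(\widetilde c)$ is produced by operators built from the $\widetilde L_n$, $n\geq -1$, so that Proposition \ref{prop: functorial} transports it into $\phi(V_\Lambda)$. To be fair, the paper's own one-line reduction is open to the same objection; but since you make the descent mechanism explicit and the mechanism as stated does not close, this is the step you would need to repair.
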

\begin{proof}
 The non-degenerate case is \cite[Proposition 3.16]{blm}. Once again we can deduce the general case from the non-degenerate one by embedding $\Lambda$.\qedhere
\end{proof}

\subsection{Joyce's vertex algebras}\label{sec: Joyce's VA}

Joyce defined in \cite{Jo17, grossjoycetanaka, Jo21} a vertex algebra framework to study wall-crossing for moduli spaces of stable objects in certain abelian or triangulated categories. In this section, we explain his construction of vertex algebras associated to certain dg categories.

Let $\bfT$ be a saturated (= smooth, proper, triangulated) dg category over $\BC$ in the sense of \cite{TV07}. Examples include dg enhancements of the triangulated categories $D^b(\dgQ)$ and $D^b(X)$ where $\dgQ$ is a finite acyclic dg quiver and $X$ is a smooth projetive variety. In loc. cit., the authors construct a derived stack $\bfCN^{\,\bfT}$ whose $\BC$-points up to homotopy parametrize objects in the triangulated category $[\bfT]$ associated to $\bfT$. This is equipped with a canonical perfect complex ${\bf Ext_\bfT}$ on $\bfCN^{\,\bfT}\times \bfCN^{\,\bfT}$ whose fiber at $(E,F)$ is given by the complex $\Hom^\bullet_{\bfT}(E,F)$. Let $\CN^{\,\bfT}\coloneqq t_0(\bfCN^{\,\bfT})$ be the classical truncation which is a higher stack and $\Ext_\bfT$ be the restriction of ${\bf Ext_\bfT}$ to the truncation. The stack decomposes into connected component 
$$\CN^{\,\bfT}=\coprod_{\alpha}\CN_\alpha
$$
and rank of the $\Ext_{\bfT}$ complex on $\CN_\alpha\times \CN_\beta$ is given by the pairing $\chi_{\bfT}(\alpha,\beta)$. 

Joyce constructs a natural vertex algebra structure on the homology group of the (higher) moduli stack $\CN^{\,\bfT}$. The definition requires the following geometric data naturally coming from the dg category $\bfT$:

\begin{enumerate}
    \item A zero map $0:\pt\rightarrow \CN^{\,\bfT}$.
    \item A direct sum map $\Sigma\colon \CN^{\,\bfT}\times \CN^{\,\bfT}\to \CN^{\,\bfT}$. 
    \item A $B\BG_m$ action $\rho\colon B\BG_m\times \CN^{\,\bfT}\to \CN^{\,\bfT}$. 
    \item A $K$-theory class $\Theta$ on $\CN^{\,\bfT}\times \CN^{\,\bfT}$ defined by a symmetrization
    \[\Theta=\sigma^\ast \Ext_{\bfT}+\Ext^\vee_{\bfT}\]
    where $\sigma\colon \CN^{\,\bfT}\times \CN^{\,\bfT}\to \CN^{\,\bfT}\times \CN^{\,\bfT}$ is the permutation map.
\end{enumerate}
Note that rank of $\Theta$ on $\CN_\alpha\times \CN_\beta$ is given by the symmetrized pairing $$\chi_{\bfT}^{\sym}(\alpha,\beta)\coloneqq\chi_\bfT(\alpha,\beta)+\chi_{\bfT}(\beta,\alpha)\,.$$ 

The vertex algebra structure
\[V^{\bfT}=\Big(H_\ast(\CN^{\,\bfT})\,,\, \ket{0}\,,\, T\,,\, Y\Big)\] 
is defined as follows from this data:
\begin{enumerate}
    \item The underlying vector space is $H_\ast(\CN^{\,\bfT})$.
    \item The vacuum vector corresponds to the trivial representation, i.e., 
    \[\ket{0}=0_\ast[\pt]\in H_0(\CN^{\,\bfT})\,.\]
    \item The translation operator is defined by
    \[T(u)=\rho_\ast\big(t\boxtimes u\big)\]
    where $t$ is the generator of $H_2(B\BG_m)$.
    \item The state-field correspondence is
    \[Y(u, z)v= (-1)^{\chi_{\bfT}(\alpha,\beta)}z^{\chi_{\bfT}^{\sym}(\alpha,\beta)}\Sigma_\ast\Big[e^{zT}\boxtimes \text{id}\big(c_{z^{-1}}(\Theta)\cap (u\boxtimes v)\big)\Big] \]
for any $u\in H_\ast(\CN_\alpha)$ and $v\in H_\ast(\CN_\beta)$.
\end{enumerate}

\begin{remark}
    In \cite{Jo17, grossjoycetanaka, Jo21}, the abelian category version of the above construction is also introduced. There is often a vertex algebra homomorphism from the abelian category version to the dg category version. So every wall-crossing formulas proven in the abelian category version hold also in the dg category version. We work with dg category version because it often admits an explicit description in terms of the lattice vertex algebras as we address in the next section. 
\end{remark}

\begin{remark}\label{rmk: naturalva}
In Joyce's setup, the $K$-theory class $\Theta$ is often an extra choice. By defining $\Theta$ as the symmetrization of the Ext complex, the construction becomes canonically associated to the dg category $\bfT$. In particular, quasi-equivalent dg categories produce isomorphic vertex algebras.
\end{remark}

\subsection{Comparison of two vertex algebras}
\label{subsec: comparisonva}
In Section \ref{sec: lattice VA} and Section \ref{sec: Joyce's VA}, we explained two constructions of vertex algebras, one from a lattice and the other from a moduli stack. Note that a saturated dg category $\bfT$ is a common source of a lattice and a moduli stack. In this section, we show that the two constructions are naturally isomorphic under the assumptions below.

\begin{assumption}\label{ass: dg category}
    Let $\bfT$ be a saturated dg category.
    \begin{enumerate}
    \item The natural map 
    $K(\bfT)\rightarrow \pi_0(\CN^{\,\bfT})$ is an isomorphism.
    \item The realization homomorphism \eqref{eq: assumption iso} is an isomorphism for all $\alpha\in K(\bfT)$. 
    \item The class in $K$-theory of the diagonal bimodule $\Hom_{\bfT}^\bullet(-, -)$, regarded as a $\bfT\otimes \bfT^{\textup{op}}$-module (cf. \cite[Definition 2.4(4)]{TV07}), is in the image of the map
    \[K(\bfT)\otimes K(\bfT^{\textup{op}})\to K(\bfT\otimes \bfT^{\textup{op}})\to K(\widehat{\bfT\otimes \bfT^{\textup{op}}})\,.\]
\end{enumerate}
\end{assumption}
\begin{remark}Assumption \ref{ass: dg category} (3) has several implications on the structure of $K(\bfT)$, which we now explain. Concretely, the assumption means that the diagonal bimodule $\Hom_\bfT^\bullet(- ,-)$ can be written as successive extensions of finitely many bimodules of the form $\Hom^\bullet_\bfT(-, G_i^L)\otimes \Hom^\bullet_\bfT(G_i^R, -)$ indexed by $i\in I$. If we let $\alpha_i^L, \alpha_i^R\in K(\bfT)$ be the classes of $G_i^L$ and $G_i^R$, then
\[\Delta^K=\sum_{i\in I}\alpha_i^L\otimes \alpha_i^R\in K(\bfT)\otimes K(\bfT)\simeq K(\bfT)\otimes K(\bfT^{\textup{op}})\,\]
maps to the class of the diagonal bimodule. 

Let $\alpha\in K(\bfT)$ and choose its representative $F\in\bfT$. Using the construction from the previous paragraph, the $\bfT^\textup{op}$-module $\Hom^\bullet_\bfT(-, F)$ can be obtained by successive extensions of modules $\Hom^\bullet_\bfT(-, G_i^L)\otimes \Hom^\bullet_\bfT(G_i^R, F)$. Since $\bfT$ is assumed to be triangulated, meaning that the Yoneda embedding is a quasi-equivalence and in particular $K(\widehat \bfT_{\textup{pe}})\simeq K(\bfT)$, this implies an equality 
\[\alpha=\sum_{i\in I}\alpha_i^L\cdot \chi_\bfT(\alpha_i^R, \alpha)\in K(\bfT)\quad\textup{for any }\alpha\in K(\bfT).\]
Thus, it follows from Assumption \ref{ass: dg category} (3) that $K(\bfT)$ is a finitely generated free abelian group, that $\{\alpha_i^L\}_{i\in I}$ generates $K(\bfT)$ over $\BZ$, that $\chi_\bfT$ is a perfect pairing and that $\Delta^K$ is the $K$-theoretic diagonal with respect to $\chi_{\bfT}$ (recall the definition of diagonal before Example~\ref{ex: K theoretic diagonal}).
\end{remark}

\begin{example}
    In Section \ref{sec: assumptiondgquivers}, we will see that all these properties are satisfied for the dg category associated to a finite acyclic dg quiver $\bfQ$. This is also satisfied for the dg enhancement of $D^b(X)$ where $X$ is a smooth projective variety admitting a full exceptional collection. This follows from the fact that such $D^b(X)$ is equivalent to the bounded derived category of a finite acyclic dg quiver \cite{Bod}. 
\end{example}

\begin{theorem}\label{thm: natural iso}
    Let $\bfT$ be a saturated dg category satisfying Assumption \ref{ass: dg category}. Then there is a natural isomorphism of vertex algebras
    $$V^\bfT\rightarrow \textup{VA}(K(\bfT),\chi_{\bfT}^{\sym}). 
$$
\end{theorem}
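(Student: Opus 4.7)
The strategy is to identify the underlying vector spaces of the two vertex algebras and then invoke Kac's reconstruction theorem (\cite[Thm.~4.5]{Ka98}) to reduce the check of the vertex algebra structure to the vacuum, the translation operator, and a small family of generating fields.

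\emph{Step 1: underlying vector spaces.} Assumption~\ref{ass: dg category}(1) gives the decomposition
\[H_\ast(\CN^{\,\bfT}) = \bigoplus_{\alpha \in K(\bfT)} H_\ast(\CN_\alpha),\]
matching the decomposition $V_{K(\bfT)} = \bigoplus_{\alpha} e^\alpha \otimes \BD_{K(\bfT)}$ on the lattice side. Assumption~\ref{ass: dg category}(2) identifies $H^\ast(\CN_\alpha)$ with the quotient descendent algebra $\BD^\bfT_\alpha$ (a polynomial algebra on the $\ch_k(\beta)$ for $k\geq 1$, $\beta\in K(\bfT)$, with $\ch_0(\beta)$ specialized to $\chi_\bfT(\beta,\alpha)$). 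The appropriate duality between homology and cohomology for these higher stacks, used throughout Joyce's setup, upgrades this to an isomorphism $\Phi_\alpha\colon H_\ast(\CN_\alpha)\xrightarrow{\sim} e^\alpha\otimes\BD_{K(\bfT)}$ in which the descendent $\ch_k(\beta)$ corresponds, up to a combinatorial factor, to the free generator $\beta_{-k}$, and the label $\alpha$ is recorded by the group-algebra factor $e^\alpha$.

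\emph{Step 2: vacuum and translation.} The vacuum $\ket{0}=0_\ast[\pt]\in H_0(\CN_0)$ is sent to $e^0\otimes 1$ by construction. The translation operator $T(u)=\rho_\ast(t\boxtimes u)$ is induced by the $B\BG_m$-action, which scales the universal Ext complex on $\CN_\alpha$ by the component weight $\alpha$; under $\Phi_\alpha$ this becomes precisely the lattice-VA translation operator satisfying $T(e^\alpha)=e^\alpha\otimes\alpha_{-1}$ and $[T,v_{(-k)}]=k\cdot v_{(-k-1)}$.

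\emph{Step 3: state-field correspondence.} By the reconstruction theorem, it suffices to match the fields $Y(v_{-1},z)$ for $v\in K(\bfT)$ and $Y(e^\alpha,z)$ for $\alpha\in K(\bfT)$. Unpacking Joyce's formula
\[Y(u,z)v=(-1)^{\chi_\bfT(\alpha,\beta)} z^{\chi^\sym_\bfT(\alpha,\beta)} \Sigma_\ast\!\Big[(e^{zT}\boxtimes\id)\big(c_{z^{-1}}(\Theta)\cap(u\boxtimes v)\big)\Big],\]
the power $z^{\chi^\sym_\bfT(\alpha,\beta)}$ matches $z^{B(\alpha,\beta)}$ since $B=\chi^\sym_\bfT$, and the sign $(-1)^{\chi_\bfT(\alpha,\beta)}$ is exactly the canonical cocycle $\epsilon_{\alpha,\beta}$ attached to the bilinear form $\chi_\bfT$. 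The exponential factors in the lattice VA formula arise from expanding $\ch\bigl(c_{z^{-1}}(\Theta)\bigr)$ with $\Theta=\sigma^\ast\Ext_\bfT+\Ext_\bfT^\vee$. Crucially, Assumption~\ref{ass: dg category}(3) lets us write the $K$-theoretic class $[\Ext_\bfT]=\sum_i \mathcal{F}_i^L\boxtimes (\mathcal{F}_i^R)^\vee$ corresponding to the $K$-theoretic diagonal $\Delta^K=\sum_i\alpha_i^L\otimes\alpha_i^R$, so that the total Chern class of $\Theta$ splits as a product of tautological classes pulled back from each factor. Pushing forward along $\Sigma_\ast$ then produces precisely the normal-ordered product of exponentials in creation/annihilation operators acting on $e^{\alpha+\beta}$ prescribed by the lattice vertex algebra.

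\emph{Main obstacle and naturality.} The principal difficulty is the bookkeeping in Step~3: verifying that the expansion of $c_{z^{-1}}(\Theta)$, combined with the translation factor $e^{zT}$ and the pushforward $\Sigma_\ast$, reproduces term-by-term the normal-ordered product on the lattice side, including the signs, powers of $z$, and the interaction with the group-algebra shifts $e^\alpha$. This is exactly where Assumption~\ref{ass: dg category}(3) is indispensable: without it, the class $[\Theta]\in K(\CN_\alpha\times\CN_\beta)$ cannot be expressed purely in terms of descendent classes on the two factors, and the required factorization of $Y$ cannot be achieved. Finally, the naturality claim for a quasi-equivalence is automatic: the induced equivalence of higher moduli stacks intertwines $B\BG_m$-actions, direct sums and universal Ext complexes, and the $K$-theory isomorphism identifies the descendent algebras and the bilinear forms, so all ingredients of the construction commute with the induced maps.
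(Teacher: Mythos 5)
Your proposal follows essentially the same route as the paper: the linear isomorphism is built componentwise by dualizing the realization isomorphism $\BD^\bfT_\alpha\xrightarrow{\sim}H^*(\CN_\alpha)$ against the pairing induced by $\chi_\bfT$ (Assumptions (1)--(2)), the vacuum and translation are matched directly, and the state-field correspondence is matched by using Assumption (3) to split $[\Ext_\bfT]$ along the $K$-theoretic diagonal so that $c_{z^{-1}}(\Theta)$ factors into descendent classes on the two factors, with the paper likewise deferring the resulting term-by-term bookkeeping to the analogous computation in \cite{blm} and concluding with the same naturality argument. The proposal is correct.
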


\begin{remark}\label{rem: natural} 
We explain the naturality in Theorem \ref{thm: natural iso}. Let $\bf f:\bfT_1\rightarrow \bfT_2$ be a quasi-functor between dg categories satisfying Assumption \ref{ass: dg category} such that\footnote{Many examples of quasi-functors satisfying (i) and (ii) can be constructed from admissible subcategories  with the induced dg enhancements, for instance Kuznetsov components.}
\begin{enumerate}
    \item [(i)] the induced functor $f:[\bfT_1]\rightarrow [\bfT_2]$ is fully faithful,
    \item [(ii)] and there exists a quasi-functor $\bf g: \bfT_2\rightarrow \bfT_1$ such that $g:[\bfT_2]\rightarrow [\bfT_1]$ is the left adjoint of $f$. 
\end{enumerate}
Since $f$ is fully faithful, $K(\bfT_1)\rightarrow K(\bfT_2)$ is a lattice embedding, hence defining a vertex algebra embedding $f_*:\textup{VA}(K(\bfT_1),\chi^\sym_{\bfT_1})\rightarrow \textup{VA}(K(\bfT_2),\chi^\sym_{\bfT_2})$. On the other hand, we have a morphism (in the homotopy category) between the higher moduli stacks $\CN^{\,\bfT_1}\rightarrow \CN^{\,\bfT_2}$ by applying the contravariance of the construction $\bfT\mapsto \bfCN^{\,\bfT}$ in \cite{TV07} to the quasi-functor $\bf g$. Note that $\CN^{\,\bfT_1}\rightarrow \CN^{\,\bfT_2}$ sends its $\BC$-point $F\in [\bfT_1]$, regarded as $\Hom^\bullet_{\bfT_1}(-,F)$ via the Yoneda embedding, to $f(F)\in [\bfT_2]$ by the adjunction $\Hom_{\bfT_1}^\bullet({\bf g}(-),F)\simeq \Hom_{\bfT_2}^\bullet(-,{\bf f}(F))$. This morphism $\CN^{\,\bfT_1}\rightarrow \CN^{\,\bfT_2}$ preserves the zero map, direct sum map and $B\BG_m$-action. Most importantly, fully faithfulness of $f$ implies that $\Ext_{\bfT_2}$ pulls back to $\Ext_{\bfT_1}$, hence defining a vertex algebra homomorphism $f_*:V^{\bfT_1}\rightarrow V^{\bfT_2}$. Having this understood, naturality in Theorem \ref{thm: natural iso} means that the diagram below commutes:
    \begin{center}
    \begin{tikzcd}
V^{\bfT_1} \arrow[r, "f_*"] \arrow[d, "\sim" {rotate=90, anchor=north}] & V^{\bfT_2} \arrow[d, "\sim" {rotate=90, anchor=north}] \\
\textup{VA}(K(\bfT_1),\chi^\sym_{\bfT_1}) \arrow[r, "f_*"]                  & \textup{VA}(K(\bfT_2),\chi^\sym_{\bfT_2}).                 
\end{tikzcd}
    \end{center}
\end{remark}

\begin{corollary}\label{cor: intertwining Virasoro}
    Let ${\bf f}:\bfT_1\rightarrow \bfT_2$ be a quasi-functor as in Remark \ref{rem: natural}. Then the induced vertex algebra homomorphism 
    $$f_*:V^{\bfT_1}\rightarrow V^{\bfT_2}
$$
intertwines the Virasoro operators $L_n$ for all $n\geq -1$. 
\end{corollary}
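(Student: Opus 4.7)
The proof is a direct consequence of the results already established in this section, so I would organize it as a short chain of implications rather than a new computation.

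First, by Theorem \ref{thm: natural iso} and the naturality explained in Remark \ref{rem: natural}, we have a commutative square
\begin{equation*}
\begin{tikzcd}
V^{\bfT_1} \arrow[r, "f_*"] \arrow[d, "\sim"'] & V^{\bfT_2} \arrow[d, "\sim"] \\
\textup{VA}(K(\bfT_1),\chi^\sym_{\bfT_1}) \arrow[r, "f_*"] & \textup{VA}(K(\bfT_2),\chi^\sym_{\bfT_2})
\end{tikzcd}
\end{equation*}
in which the vertical arrows are vertex algebra isomorphisms. By Theorem \ref{main thm: natural iso}(i), the $\Vir_{\geq -1}$-representation on each $V^{\bfT_i}$ is \emph{defined} through the corresponding vertical isomorphism, i.e.\ it is transported from the Virasoro operators on the lattice vertex algebras described in Section~\ref{subsubsec: virasorolattice}. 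Therefore, to show that $f_*$ intertwines $L_n$, it suffices to show that the bottom horizontal map intertwines the $L_n$ operators of the two lattice vertex algebras.

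The key observation is that, since the functor $f\colon[\bfT_1]\to [\bfT_2]$ is fully faithful, the induced map on Grothendieck groups $K(\bfT_1) \to K(\bfT_2)$ is injective and preserves the Euler pairing, hence also the symmetrized Euler pairing $\chi^\sym$. In other words, it defines an embedding of lattices
\[f_*\colon\bigl(K(\bfT_1),\chi^\sym_{\bfT_1}\bigr)\hookrightarrow \bigl(K(\bfT_2),\chi^\sym_{\bfT_2}\bigr)\,,\]
and the bottom horizontal map in the diagram is precisely the vertex algebra homomorphism $\phi_{f_*}$ associated to this lattice embedding.

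Now we can directly apply Proposition \ref{prop: functorial}, which says exactly that the vertex algebra homomorphism induced by a lattice embedding intertwines the Virasoro operators $L_n$ defined by equations \eqref{eq: virlva1} and \eqref{eq: virlva2} for all $n\geq -1$. Combining this with the commutative square above yields $f_* \circ L_n = L_n \circ f_*$ on $V^{\bfT_1}$, as desired. There is no real obstacle here: the content of the corollary was essentially front-loaded into the naturality statement of Theorem \ref{thm: natural iso} and the functoriality of Virasoro operators on lattice vertex algebras established in Proposition \ref{prop: functorial}.
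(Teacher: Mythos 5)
Your proof is correct and follows exactly the same route as the paper: combine the naturality square from Theorem \ref{thm: natural iso} (Remark \ref{rem: natural}) with the functoriality of the lattice Virasoro operators under lattice embeddings from Proposition \ref{prop: functorial}. You have merely written out the details that the paper's two-line proof leaves implicit.
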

\begin{proof}
    By naturality of the vertex algebra isomorphism in Theorem \ref{thm: natural iso}, the statement follows from that of lattice vertex algebras as in Proposition \ref{prop: functorial}. 
\end{proof}

The proof of Theorem \ref{thm: natural iso} will be given in the next two subsections. 

\subsubsection{Ext descendent}\label{sec: Ext descendent}

Let $\bfT$ be a saturated dg category satisfying Assumption \ref{ass: dg category} (1) throughout this section. The moduli stack $\CN^{\,\bfT}$ carries natural cohomology classes, called tautological classes. The descendent algebra and realization homomorphism, such as in Section \ref{sec: descendent algebra} or in \cite{blm}, keep track of tautological classes on the moduli stack $\CN^{\,\bfT}$. In this section, we introduce a new construction of descendent algebra and realization homomorphism that uses the canonical perfect complex $\Ext_\bfT$. This unifies the treatment of descendent algebra and Virasoro operators in different contexts, such as smooth projective varieties and dg quivers, using a purely dg category language.

\begin{remark}
Let $\bfT$ be a saturated dg category. By \cite[Definition 2.4, Corollary 2.13]{TV07}, the associated triangulated category $[\bfT]$ is equivalent to a perfect derived category $D^b(B)$ of a smooth proper dg algebra $B$. In \cite[Theorem 4.2]{Shk}, it is proven that $D^b(B)$ is equipped with a Serre functor. Moreover, such a Serre functor lifts to the dg enhancement by its explicit description in loc. cit. Therefore, $[\bfT]$ is equipped with a Serre functor $S_{\bfT}$ and it admits a quasi-functor (=morhpism in the homotopy category of dg categories) lift for which we use the same notation. We also denote by $S_\bfT$ the induced map on $K(\bfT)$. 
\end{remark}

\begin{definition}\label{def: ext descendent}
    The Ext descendent algebra $\BD^{\bfT}$ is the unital commutative $\BQ$-algebra generated by symbols\footnote{The superscript $L$ and $R$ stands for left and right which will be clear from the definition of the Ext realization homomorphism. }
    $$\ch^L_k(\alpha),\ \ch^R_k(\alpha)\quad\textnormal{for each}\quad k\in \BN,\ \alpha\in K(\bfT)
    $$
    with linearity relations 
    $$
    \begin{cases}
        \ch^L_k(\lambda_1\alpha_1+\lambda_2\alpha_2)=\lambda_1\ch_k^L(\alpha_1)+\lambda_2\ch_k^L(\alpha_2)\\
        \ch^R_k(\lambda_1\alpha_1+\lambda_2\alpha_2)=\lambda_1\ch_k^R(\alpha_1)+\lambda_2\ch_k^R(\alpha_2)
\end{cases} \lambda_1,\lambda_2\in \BQ,\ \alpha_1,\alpha_2\in K(\bfT)
    $$
    and Serre duality relations 
    $$\ch^L_k(\alpha)=(-1)^k\ch_k^R(S_\bfT(\alpha))\,.
    $$
\end{definition}

Recall that we have a canonical perfect complex $\Ext_\bfT$ over $\CN^{\,\bfT}\times \CN^{\,\bfT}$. For any $F\in \bfT$, we can consider a restriction $\Ext_\bfT\big|_{\CN^{\,\bfT}\times \{F\}}\eqqcolon\Ext_\bfT(-,F)$ as a perfect complex over $\CN^{\,\bfT}$; this construction is essentially the counit $\bfT\to L_{\textup{pe}}(\CN^\bfT)$ of the adjuction \cite[Proposition 3.4]{TV07}. This construction descends to a homomorphism between $K$-theories
$$K(\bfT)\rightarrow K^0(\CN^{\,\bfT}),\quad \alpha\mapsto \Ext_\bfT(-,\alpha)\,.
$$
We extend this homomorphism by tensoring $-\otimes_\BZ\BQ$ and denote the image of $\alpha\in K(\bfT)_\BQ$ again by $\Ext_\bfT(-,\alpha)$. Similarly, one can define $\Ext_\bfT(\alpha,-)$ using the restriction $\Ext_\bfT\big|_{\CN^{\,\bfT}\times \{F\}}$ in the other way; this can again be thought as the counit of the adjuction, but now applied to $\bfT^{\textup{op}}$. 

\begin{definition}
    The Ext realization homomorphism is defined by 
    $$\xi:\BD^{\bfT}\rightarrow H^*(\CN^{\,\bfT}),\quad \ch^L_k(\alpha)\mapsto \ch_k(\Ext_\bfT(\alpha,-)),\ \ \ch^R_k(\alpha)\mapsto \ch_k(\Ext_\bfT(-,\alpha))\,.
    $$
\end{definition}

\begin{remark}
    For the above definition to be well-defined, we need to show that the assignment preserves the relations. Linearity relations are trivial and Serre duality relation follows from the defining property of the Serre functor\footnote{Here we use a quasi-functor lift of the Serre functor. }
$$\Ext_\bfT(\alpha,-)\simeq \Ext_\bfT(-,S_\bfT(\alpha))^\vee\,. 
$$
Since the Serre duality relations express the left symbols in terms of the right symbols and via versa, we could have kept only the half of the generators. It is however natural to keep both sets of the generators in some cases such as in the definition of the Ext Virasoro operators. 
\end{remark}

Consider a connected component $\CN_\alpha$ corresponding to $\alpha\in K(\bfT)$. Let
$$\BD^{\bfT}_\alpha\coloneqq\BD^{\bfT}/\langle\ch_0^L(\beta)-\chi_\bfT(\beta,\alpha)\cdot 1,\ \ch_0^R(\beta)-\chi_\bfT(\alpha,\beta)\cdot 1\,|\,\beta\in K(\bfT)\rangle\,.
$$
Since $\ch_0(-)$ computes the rank of a perfect complex and $\Ext$ has rank $\chi_\bfT(\alpha,\beta)$ over $\CN_\alpha\times \CN_\beta$, the realization homomorphism factors through the quotient
\begin{equation}\label{eq: assumption iso}
    \xi_\alpha:\BD^{\bfT}_\alpha\rightarrow H^*(\CN_\alpha)\,.
\end{equation}

In many geometric examples, the homomorphism \eqref{eq: assumption iso} is an isomorphism, see \cite{Gr19, blm} for the case of smooth projective varieties of type D.\footnote{In \cite{Gr19} and \cite{blm}, the isomorphism is proven in cohomological setting. The cohomological and the Ext descendent algebras are identified by setting $\ch^L_k(\alpha)=\ch^H_k(\ch^\vee(\alpha)\cdot \td(X))$.} We prove the isomorphism in the setting of dg quivers in Proposition \ref{prop: realization iso}.

\subsubsection{Construction of vertex algebra isomorphism}

In this section, we construct a natural isomorphism from Joyce's vertex algebra to the lattice vertex algebra. On the level of vector space, the map is defined by constructing the diagram below:
\begin{equation}\label{eq: construction diagram}
    \begin{tikzcd}
  H^*(\CN_\alpha)   \arrow[r,phantom,"\otimes" description]       &[-2em] H_*(\CN_\alpha) \arrow[rr,"{\langle-,-\rangle}"] \arrow[d, "{\xi_\alpha^\dagger}"] & \ & \BQ \arrow[d, equal] \\
 \BD^{\bfT}_\alpha  \arrow[u,"{\xi_\alpha}"]\arrow[r,phantom,"\otimes" description]    &[-2em]   e^\alpha\otimes \BD_{K(\bfT)} \arrow[rr,"{\widetilde\chi_\bfT(-,-)}"]   &    \   & \BQ\,.                      
\end{tikzcd}
\end{equation}
The first row is the topological pairing which is (graded) perfect in the sense that it induces an isomorphism between cohomology and graded dual of the homology. By Assumption \ref{ass: dg category} (2), $\xi_\alpha$ is an isomorphism. If we define the perfect pairing in the second row, then we obtain a dual isomorphism $\xi_\alpha^\dagger$. The perfect pairing $\widetilde\chi_\bfT(-,-)$ in the second row is induced from the perfect pairing $\chi_\bfT(-,-)$ on $K(\bfT)$ as we describe next. 

Recall that $\BD^{\bfT}_\alpha$ is the symmetric algebra generated by symbols $\ch_n^L(\beta)$ with $n\geq 1$ and $\beta\in K(\bfT)$ which is $\BQ$-linear with respect to $\beta$.\footnote{We use here the Serre duality relations to disregard all the right symbols $\ch^R_k(\alpha)$. } Similarly, $\BD_{K(\bfT)}$ is the symmetric algebra generated by symbols $\beta_{-n}$ with $n\geq 1$ and $\beta\in K(\bfT)$ which is $\BQ$-linear with respect to $\beta$. We define $\widetilde\chi_\bfT(-,-)$ on the generators
$$\widetilde\chi_\bfT(\ch^L_n(\beta),\beta'_{-n'})
\coloneqq\frac{\delta_{n,n'}}{(n-1)!}\,\chi_\bfT(\beta,\beta'),\quad n,n'\geq 1,\ \beta,\beta'\in K(\bfT)
$$
and extend it using the symmetric algebra structures as in \cite[Section 2.1]{blm}.

By taking the direct sum of the isomorphism $\xi_\alpha^\dagger$ on each connected component, we obtain an isomorphism 
\begin{equation}\label{eq: vector space iso}
    H_*(\CN^{\,\bfT})=\bigoplus_{\alpha\in K(\bfT)}H_*(\CN_\alpha)\xrightarrow{\oplus\, \xi_\alpha^\dagger} \bigoplus_{\alpha\in K(\bfT)}e^\alpha \otimes \BD_{K(\bfT)}= V_{K(\bfT)}\,. 
\end{equation}
between the underlying vector spaces of the two vertex algebras. 
\begin{proposition}\label{prop: vertex iso}
  Let $\bfT$ be a saturated dg category satisfying Assumption \ref{ass: dg category}. Then the linear isomorphism \eqref{eq: vector space iso} is a vertex algebra homomorphism. 
\end{proposition}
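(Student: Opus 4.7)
The plan is to verify that the linear isomorphism $\Phi := \bigoplus_\alpha \xi_\alpha^\dagger$ satisfies the three axioms of a vertex algebra homomorphism---preservation of the vacuum, intertwining of the translation operators, and compatibility with the state-field correspondence. Since $\Phi$ is defined componentwise using the identification $K(\bfT)=\pi_0(\CN^{\,\bfT})$ from Assumption~\ref{ass: dg category}(1), it automatically respects the $K(\bfT)$-grading on both sides. For the state-field axiom, I will invoke the reconstruction theorem \cite[Theorem~4.5]{Ka98} to reduce the check to a generating family of states.

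For the vacuum, $\ket{0}=0_*[\pt]\in H_0(\CN_0)$ represents under the topological pairing the functional $D\mapsto 0^*\xi_0(D)$ on $H^*(\CN_0)$. Since $\Ext_\bfT(\beta,0)=0$, every generator $\ch_n^L(\beta)$ with $n\geq 1$ pulls back to zero along the zero map, so this functional extracts the constant term of $D\in \BD_0^\bfT$. Under $\widetilde\chi_\bfT$ the same functional is represented by $e^0\otimes 1$, which gives $\Phi(\ket{0})=e^0\otimes 1$. For translation, I will check $\Phi\circ T_{V^\bfT}=T_{\textup{VA}}\circ \Phi$ by dualizing: the cohomological adjoint of $T_{V^\bfT}=\rho_*(t\boxtimes-)$ acts on tautological classes as the weight-one component of the $B\BG_m$-grading induced by $\rho$, which on $\ch_k(\Ext_\bfT(\beta,-))$ produces a shift $\ch_k\mapsto \ch_{k-1}$ coming from $\ch_k(\CV\otimes L)$ expansion. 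Matching this with the lattice relations $[T_{\textup{VA}},\beta_{(-k)}]=k\beta_{(-k-1)}$ and $T_{\textup{VA}}(e^\alpha)=e^\alpha\otimes\alpha_{-1}$ through $\widetilde\chi_\bfT$ is then a direct algebraic check.

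For the state-field correspondence, by the reconstruction theorem it suffices to verify compatibility on the generating family $\{e^\alpha\}_{\alpha\in K(\bfT)}\cup\{\beta_{-1}\}_{\beta\in K(\bfT)}$. The Heisenberg generators $\beta_{-1}$ reduce to identifying the modes $\beta_{(k)}$ with Chern-character insertions from $\Ext_\bfT$, which follows straightforwardly from the normalization of $\widetilde\chi_\bfT$ and from the Bogomolov-type splitting $\Sigma^*\Ext_\bfT\equiv \Ext_\bfT\boxplus\Ext_\bfT$ restricted to the zero section. The main computation is for $e^\alpha$, where I will expand Joyce's geometric formula using the splitting-principle identity
\[c_{z^{-1}}(\Theta)=\exp\Big(\sum_{k\geq 1}(-1)^{k-1}(k-1)!\,\ch_k(\Theta)\,z^{-k}\Big)\]
together with the K\"unneth additivity $\ch(\Sigma^*\Ext_\bfT)=\ch(\Ext_\bfT)\boxplus\ch(\Ext_\bfT)$ under the direct-sum map. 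Splitting $\Theta=\sigma^*\Ext_\bfT+\Ext_\bfT^\vee$ into its two summands produces the two normal-ordered exponentials in \eqref{eq: statefield2}; the rank of $\Theta$ over $\CN_\alpha\times\CN_\beta$ yields the prefactor $z^{\chi_\bfT^\sym(\alpha,\beta)}$; and the canonical choice $\epsilon_{\alpha,\beta}=(-1)^{\chi_\bfT(\alpha,\beta)}$ matches the sign in Joyce's formula.

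The main obstacle will be the bookkeeping in this last computation: tracking Koszul signs, the $z$-dependence induced by the rank of $\Theta$, and the factorization of the Chern classes of $\Theta$ into the creation and annihilation exponentials, while dualizing throughout via $\widetilde\chi_\bfT$. This is a dg-categorical refinement of the analogous sheaf-theoretic computation in \cite[Section~4]{blm} and in Joyce's original work. Once the isomorphism is established on generators, the naturality asserted in Remark~\ref{rem: natural} follows by functoriality: a quasi-functor ${\bf f}:\bfT_1\rightarrow \bfT_2$ as in (i)--(ii) induces a $K$-theoretic embedding $K(\bfT_1)\hookrightarrow K(\bfT_2)$ preserving $\chi^{\sym}$, together with a compatible stack morphism $\CN^{\,\bfT_1}\rightarrow \CN^{\,\bfT_2}$ along which $\Ext_{\bfT_2}$ pulls back to $\Ext_{\bfT_1}$, so both $\xi$ and $\widetilde\chi_\bfT$ are intertwined through the construction.
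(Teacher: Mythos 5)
Your outline has the right overall shape (it mirrors the strategy of \cite[Section 4]{blm}: check vacuum, translation, and the fields of the generators $e^\alpha$ and $\beta_{-1}$ via the reconstruction theorem), but it is missing the one step that is specific to this proposition and that constitutes essentially the entire content of the paper's proof. To factor $c_{z^{-1}}(\Theta)\cap(u\boxtimes v)$ into the two normal-ordered exponentials of \eqref{eq: statefield2}, you need each $\ch_k(\Theta)$ on $\CN_\alpha\times\CN_\beta$ to be a sum of external products of tautological classes living on the \emph{separate} factors — otherwise the capping operation cannot be rewritten in terms of the modes $\alpha_{(k)}$, which act on one tensor factor at a time. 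This is exactly where Assumption \ref{ass: dg category}(3) enters: the paper uses the counit of the adjunction of \cite[Proposition 3.4]{TV07} applied to $\widehat{\bfT\otimes\bfT^{\textup{op}}}$ to show that the splitting of the diagonal bimodule in $K$-theory yields
\[\Ext_\bfT=\sum_{i\in I}\Ext_\bfT(-,\alpha_i^L)\boxtimes \Ext_\bfT(\alpha_i^R,-)\in K^0(\CN^{\,\bfT}\times\CN^{\,\bfT})\,,\]
hence $\ch_k(\Ext_\bfT)=\sum_{a+b=k}\sum_{i\in I}\ch_a^R(\alpha_i^L)\boxtimes\ch_b^L(\alpha_i^R)$, the analogue of equation (45) in \cite{blm}; after this, the remaining bookkeeping is the "straightforward adaptation" you describe.

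The identities you invoke in its place — $\ch(\Sigma^*\Ext_\bfT)=\ch(\Ext_\bfT)\boxplus\ch(\Ext_\bfT)$ and the "Bogomolov-type splitting restricted to the zero section" — express bilinearity of $\Ext_\bfT$ under the direct sum map $\Sigma$ in each argument. That is true and needed elsewhere, but it is a different statement from the K\"unneth decomposition of $\Ext_\bfT$ on the product stack and does not substitute for it: splitting $\Theta=\sigma^*\Ext_\bfT+\Ext_\bfT^\vee$ into two summands only separates the "creation" and "annihilation" halves \emph{after} each summand has been decomposed along the diagonal. As written, your argument never uses Assumption \ref{ass: dg category}(3), which is a sign that the key step is absent rather than implicit.
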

\begin{proof}
We have a commutative diagram as follows:

\begin{center}
    \begin{tikzcd}
        K(\bfT)\otimes K(\bfT)\arrow[r, equal]&K(\bfT)\otimes K(\bfT^{\textup{op}})\arrow[r]\arrow[d]&K^0(\CN^{\,\bfT})\otimes K^0(\CN^{\,\bfT})\arrow[d]\\
         &K(\widehat{\bfT\otimes \bfT^{\textup{op}}})\arrow[r] & K^0(\CN^{\,\bfT}\times \CN^{\,\bfT})\,.
    \end{tikzcd}
\end{center}
The map on the bottom can be obtained from the counit of the adjunction \cite[Proposition 3.4]{TV07} applied to $(\widehat{\bfT\otimes \bfT^{\textup{op}}})_{\textup{pe}}$; note that the stack associated to this dg category is $\CN^{\,\bfT}\times \CN^{\,\bfT}$ by \cite[Lemma 3.3]{TV07} and the fact that $\CN^{(-)}$ preserves limits.

The class of $\bfT$ in $K(\widehat{\bfT\otimes \bfT^{\textup{op}}})$ is sent to $\Ext_\bfT$ via the bottom map. On the other hand, the image of $\alpha^L\otimes \alpha^R\in  K(\bfT)\otimes K(\bfT)$ via the map on top is $\Ext_\bfT(-, \alpha^L)\otimes \Ext_\bfT(\alpha^R, -)$. Thus, by  Assumption \ref{ass: dg category} (3) there is
\[\Delta^K=\sum_{i\in I}\alpha_i^L\otimes \alpha_i^R\in K(\bfT)\otimes K(\bfT)\,\]
for which we have the following equality in the $K$-theory of $\CN^{\,\bfT}\times \CN^{\,\bfT}$:
\begin{equation}\label{eq: Ktheorysplitting}
\Ext_\bfT=\sum_{i\in I} \Ext_\bfT(-,\alpha_i^L)\boxtimes\Ext_\bfT(\alpha_i^R,-)\,.
\end{equation}

Applying the Chern character to \eqref{eq: Ktheorysplitting}, we can write in the language of the Ext descendent algebra
$$\ch_k(\Ext)=\sum_{a+b=k}\sum_{i\in I} \ch^R_a(\alpha_i^L)\boxtimes \ch_b^L(\alpha_i^R)\,.
$$
This is the analogue of equation (45) in \cite{blm}. The rest of the argument is a straightforward adaptation of the proof in loc. cit.
\end{proof}

\begin{remark}
    We remark that it is proven along the way that the operators
    $$\ch_n^L(\alpha)\cap -: H_\ast(\CN^{\,\bfT})\rightarrow H_\ast(\CN^{\,\bfT})
    $$
    satisfy the following bracket with the creation/annihilation operators: 
    \begin{equation}\label{eq: pldagger}
        [n!\ch_n^L(\alpha)\cap-,\beta_{(-m)}]=n\cdot\delta_{n,m}\cdot\chi_\bfT(\alpha,\beta)\,.
    \end{equation}
\end{remark}

\begin{proof}[Proof of Theorem \ref{thm: natural iso}]
By Proposition \ref{prop: vertex iso}, it suffices to prove that the vector space isomorphism \eqref{eq: vector space iso} is natural in the sense of Remark \ref{rem: natural}. Since the isomorphism is constructed by the realization homomorphism $\xi_\alpha$ and a perfect pairing $\widetilde\chi_\bfT(-,-)$ in the diagram \eqref{eq: construction diagram}, it suffices to show the naturality of $\xi_\alpha$ and $\widetilde\chi_\bfT(-,-)$.

Let ${\bf f}:\bfT_1\rightarrow \bfT_2$ be a quasi-functor with the properties described in Remark \ref{rem: natural}. Naturality of the pairing $\widetilde\chi_\bfT(-,-)$ directly follows from the fact that it is induced from $\chi_{\bfT}$ and we have a lattice embedding $K(\bfT_1)\rightarrow K(\bfT_2)$. To show the naturality of $\xi_\alpha$, let $\alpha_1\in K(\bfT_1)$ and $\alpha_2\coloneqq f(\alpha_1)\in K(\bfT_2)$. Denote the induced morphism between the moduli stacks by $f:\CN^{\,\bfT_1}_{\alpha_1}\rightarrow\CN^{\,\bfT_2}_{\alpha_2}$. Note that the construction of the Ext descendent algebra is a contravariant functor in the sense that we have
$$f^*:\BD^{\bfT_2}_{\alpha_2}\rightarrow \BD^{\bfT_1}_{\alpha_1},\quad \ch^L_k(\beta)\mapsto \ch^L_k(g(\beta))
$$
where $g$ is the left adjoint of $f$. Naturality of the realization homomorphism then refers to the commutativity of the diagram 
    \begin{center}
\begin{tikzcd}
H^*(\CN^{\,\bfT_1}_{\alpha_1})                 & H^*(\CN^{\,\bfT_2}_{\alpha_2}) \arrow[l, "f^*"']                  \\
\BD^{\bfT_1}_{\alpha_1} \arrow[u, "\xi_{\alpha_1}"] & \BD^{\bfT_2}_{\alpha_2} \arrow[u, "\xi_{\alpha_2}"'] \arrow[l, "f^*"']
\end{tikzcd}
    \end{center}
which follows from the adjunction quasi-isomorphism
$$f^*\Ext_{\bfT_2}(\beta,-)\simeq \Ext_{\bfT_1}(g(\beta),-)\,.\qedhere
$$\end{proof}

\subsubsection{Virasoro operators}
We finish the section by introducing Virasoro operators on the Ext descendent algebra and comparing them to the operators acting on the lattice vertex algebra. 

\begin{definition}\label{def: K Virasoro}
    For each $n\geq -1$, we define the Ext Virasoro operator $\bL_n$ acting on $\BD^\bfT$ as a summation $\bL_n=\bR_n+\bT_n$ of a derivation operator $\bR_n$ such that
    \begin{align*}
    \bR_n(\ch^L_k(\alpha))&=k\cdot(k+1)\cdots(k+n)\,\ch_{k+n}^L(\alpha)\,,\\
    \bR_n(\ch^R_k(\alpha))&=k\cdot(k+1)\cdots(k+n)\,\ch_{k+n}^R(\alpha)\,,
    \end{align*}
    and a multiplication operator 
$$\bT_n\coloneqq\sum_{a+b=n}a!b!(-1)^a\ch^R_a\ch^L_b(\Delta^K)=\sum_{a+b=n}\sum_{i\in I}a!b!(-1)^a \sum_{i\in I}\ch^R_a(\alpha_i^L)\ch_b^L(\alpha_i^R)\,.
$$
\end{definition}

\begin{proposition}\label{prop: duality of operators}
    Virasoro operators $\bL_n$ and $L_n$ acting on $\BD^\bfT_\alpha$ and $e^\alpha\otimes \BD_{K(\bfT)}$, respectively, are dual to each other with respect to the pairing $\widetilde\chi_\bfT(-,-)$ in \eqref{eq: construction diagram}. 
\end{proposition}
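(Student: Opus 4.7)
The identity $\widetilde\chi_\bfT(\bL_n D, v) = \widetilde\chi_\bfT(D, L_n v)$ will be verified by decomposing $\bL_n = \bR_n + \bT_n$ and, correspondingly, splitting $L_n$ acting on $e^\alpha\otimes \BD_{K(\bfT)}$ into a first-order "creation" part $L_n^{\mathrm{cr}}$, induced by the bracket $[L_n, \beta_{(-k)}] = k\,\beta_{(-k+n)}$ in the range $n<k$, and a remainder $L_n^{\mathrm{zero}}$ collecting the annihilation residues $\beta_{(j)}(e^\alpha)$ for $j\geq 0$ together with the explicit vacuum contributions $L_n(e^\alpha)$ (nonzero only for $n\in\{-1,0\}$). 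Using the Serre duality relation $\ch^R_k(\gamma) = (-1)^k\ch^L_k(S_\bfT^{-1}(\gamma))$ and the identity $\chi_\bfT(\beta,\gamma) = \chi_\bfT(\gamma, S_\bfT(\beta))$, every element of $\BD^\bfT_\alpha$ can be rewritten as a polynomial in the $\ch^L$-generators alone.

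The plan is then to show that $\bR_n$ is dual to $L_n^{\mathrm{cr}}$ and that $\bT_n$ is dual to $L_n^{\mathrm{zero}}$. Since both $\bR_n$ and $L_n^{\mathrm{cr}}$ act by the Leibniz rule on products of generators, the Hopf/symmetric-algebra structure of $\widetilde\chi_\bfT$ described in \cite[Section 2.1]{blm} reduces the first duality to the single-generator identity
\[\widetilde\chi_\bfT\bigl(\bR_n\ch^L_k(\beta),\beta'_{-m}\bigr) = \frac{k+n}{(k-1)!}\,\delta_{m,k+n}\,\chi_\bfT(\beta,\beta') = \widetilde\chi_\bfT\bigl(\ch^L_k(\beta), L_n^{\mathrm{cr}}\beta'_{-m}\bigr),\]
an elementary consequence of $\bR_n\ch^L_k(\beta) = \tfrac{(k+n)!}{(k-1)!}\ch^L_{k+n}(\beta)$ and $L_n^{\mathrm{cr}}\beta'_{-m} = m\,\beta'_{-m+n}$.

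For the second duality I would restrict $\bT_n$ to $\BD^\bfT_\alpha$, where $\ch^L_0(\gamma) = \chi_\bfT(\gamma,\alpha)$ and $\ch^R_0(\gamma) = \chi_\bfT(\alpha,\gamma)$. Applying the $K$-theoretic diagonal identities $\sum_i \chi_\bfT(\alpha,\alpha_i^L)\alpha_i^R = \alpha = \sum_i \chi_\bfT(\alpha_i^R,\alpha)\alpha_i^L$, the summands of $\bT_n$ with $a = 0$ or $b = 0$ collapse into expressions in $\ch^L_k(\alpha)$ and $\ch^R_k(\alpha)$. After converting the $\ch^R$ contribution via Serre duality and pairing against $\beta'_{-k}$, the coefficients assemble into $\chi_\bfT(\alpha,\beta') + \chi_\bfT(\beta',\alpha) = \chi^\sym_\bfT(\alpha,\beta')$, exactly matching the lattice-side scalar actions $\beta'_{(0)}(e^\alpha) = \chi^\sym_\bfT(\beta',\alpha)e^\alpha$, $L_0(e^\alpha) = \tfrac12 \chi^\sym_\bfT(\alpha,\alpha)e^\alpha$ and $L_{-1}(e^\alpha) = e^\alpha\otimes\alpha_{-1}$. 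The genuine degree-two terms of $\bT_n$ (with $a,b\geq 1$) match analogously with the second-order residues of $L_n^{\mathrm{zero}}$ produced by $\beta_{(j)}\beta'_{(-k')}(e^\alpha)$ with $j = k'-n \geq 1$.

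The main obstacle will be the combinatorial bookkeeping of the signs $(-1)^a$, the factorials $a!b!$ and $1/(k-1)!$, and the Serre-duality twists $S_\bfT^{\pm 1}$, which must conspire so that the asymmetric pairing $\chi_\bfT$ underlying $\widetilde\chi_\bfT$ reassembles into the symmetric form $\chi^\sym_\bfT$ driving the lattice-side Virasoro operators. Once duality is established on pairs of generators, the derivation and Leibniz properties of $\bR_n$, $L_n^{\mathrm{cr}}$ together with the multiplicativity of $\bT_n$ and the second-order structure of $L_n^{\mathrm{zero}}$ propagate the identity to all of $\BD^\bfT_\alpha$ and $e^\alpha\otimes \BD_{K(\bfT)}$.
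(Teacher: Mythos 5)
Your decomposition is, in substance, the paper's own: the paper's Claims 2 and 4 state precisely that $\bR_n^\dagger$ realizes the creation part of $L_n$ (the commutators $[L_n,\beta_{(-k)}]=k\,\beta_{(-k+n)}$ in the range $k>n$) and that $\bT_n^\dagger$ realizes the annihilation part (the range $n\geq k$, plus the action on $e^\alpha$). The difference is only in how the matching is verified: the paper shows that the dual operators satisfy the characterizing relations \eqref{eq: virlva1}--\eqref{eq: virlva2} of $L_n$, by checking agreement on a cyclic vector and computing commutators with the multiplication operators $p_l(\gamma)$ (with the auxiliary identity of Claim 3 converting annihilation operators $\beta_{(l)}$ into duals of multiplications), whereas you pair generators directly and propagate by the Leibniz/second-order structure of the Fock pairing. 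For $n\geq 1$ both routes work, your single-generator identity for $\bR_n$ is correct, and your bookkeeping of the $a=0,\,b=0$ terms of $\bT_n$ collapsing via the diagonal and Serre duality to $n!\,\ch_n^L(\alpha)+n!\,\ch_n^L(S_\bfT^{-1}\alpha)$, hence to $\chi^\sym_\bfT$, is exactly right. The case $n=0$ also comes out correctly (or can be deduced from $n=\pm 1$ and the bracket, as the paper does).

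The one place your stated correspondence genuinely breaks is $n=-1$. There $\bT_{-1}=0$, yet you assign the vacuum contribution $L_{-1}(e^\alpha)=e^\alpha\otimes\alpha_{-1}$ to $L_{-1}^{\mathrm{zero}}$ and claim it is dual to $\bT_{-1}$; that cannot hold. The term $e^\alpha\otimes\alpha_{-1}$ is in fact dual to the \emph{constant-producing} part of $\bR_{-1}$: in $\BD^{\bfT}_\alpha$ one has $\bR_{-1}\ch_1^L(\beta)=\ch_0^L(\beta)=\chi_\bfT(\beta,\alpha)\cdot 1$, and the dual of a derivation sending a generator to a scalar is multiplication by the corresponding creation element, here $\alpha_{-1}$. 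So for $n=-1$ all of $T=L_{-1}$ is dual to $\bR_{-1}$ alone; your reduction of the $\bR_n$-duality to single generators also silently assumes $\bR_n$ never produces constants, which fails exactly at $n=-1$. The fix is what the paper does: treat $n=-1$ separately (the argument of \cite[Lemma 4.9]{blm}) and restrict the creation/annihilation split to $n>0$, or amend your decomposition so that for $n=-1$ the vacuum term is charged to $\bR_{-1}$ rather than to $\bT_{-1}$.
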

This is the dg category version of \cite[Theorem 4.12]{blm}. We will give here a new proof that is cleaner and does not assume the existence of a conformal element. Given an operator $P$ we will denote by $P^\dagger$ its dual with respect to the perfect pairing $\widetilde\chi_\bfT(-,-)$.

\begin{proof}
    We first note that when $n=-1$ the claim can be proven exactly in the same way as \cite[Lemma 4.9]{blm}. We focus on the case $n>0$; note that the statement for $n=0$ is easy to check directly, but it also follows from $n=-1$ and $n=1$ together with the Virasoro bracket.
    
    We need to show that the operators $\bL_n^\dagger$, $n>0$ satisfy \eqref{eq: virlva1} and \eqref{eq: virlva2}.
    \begin{claim}
        We have $\bL_n^\dagger(e^\alpha)=0$ for $n>0$, i.e. equation \eqref{eq: virlva1} holds.
    \end{claim}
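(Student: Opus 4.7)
The plan is to unpack the duality directly and reduce to a weight-counting argument. By definition, the statement $\bL_n^\dagger(e^\alpha)=0$ is equivalent to
\[\widetilde\chi_\bfT(\bL_n(D),\, e^\alpha) = 0 \quad \text{for every } D \in \BD^\bfT_\alpha\,.\]
The main observation I will use is that the perfect pairing $\widetilde\chi_\bfT$ respects a natural grading, which I will call \emph{weight}: declare $\ch^L_k(\beta)$ and $\ch^R_k(\beta)$ to have weight $k$ on the descendent side, and $\beta_{-k}$ to have weight $k$ on the lattice-vertex side. Compatibility is immediate from the defining formula $\widetilde\chi_\bfT(\ch^L_n(\beta),\beta'_{-n'}) = \delta_{n,n'}\chi_\bfT(\beta,\beta')/(n-1)!$ on generators and the standard Wick-like extension to the symmetric algebras. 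In this grading, $e^\alpha$ is the weight-$0$ vacuum, so $\widetilde\chi_\bfT(-,\, e^\alpha)$ sees only the weight-$0$ part of its argument. After imposing the relations $\ch^L_0(\beta) = \chi_\bfT(\beta,\alpha)$ and $\ch^R_0(\beta) = \chi_\bfT(\alpha,\beta)$ in $\BD^\bfT_\alpha$, the weight-$0$ subspace is just the scalars.

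Next I will check that $\bL_n = \bR_n + \bT_n$ strictly raises weight by $n$. The derivation $\bR_n$ sends each generator $\ch^L_k(\beta)$ to a scalar multiple of $\ch^L_{k+n}(\beta)$ (and similarly for the right symbols), and the coefficient $k(k+1)\cdots(k+n)$ vanishes at $k=0$, so $\bR_n$ descends to the quotient and acts as a weight-$n$ derivation. For the multiplication part $\bT_n$, I will show that every summand has weight exactly $n$ after descending to $\BD^\bfT_\alpha$: the interior terms $0<a<n$ are products of two ch-factors of total weight $n$, and the boundary terms $a=0$ or $b=0$ collapse, via the quotient relations above, to a scalar multiple of a single $\ch^L_n$ or $\ch^R_n$, again of weight $n$. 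Consequently $\bL_n(D)$ has weight $\mathrm{wt}(D)+n \geq n > 0$, and hence pairs trivially with the weight-$0$ vector $e^\alpha$.

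I do not anticipate any genuine obstacle for this particular sub-claim; it is essentially bookkeeping about how the quotient relations in $\BD^\bfT_\alpha$ interact with the explicit formula for $\bT_n$. The substantive work of the proposition will lie instead in the companion claim establishing the commutator relation $[\bL_n^\dagger, \beta_{(-k)}] = k \cdot \beta_{(-k+n)}$, i.e.\ \eqref{eq: virlva2}, which is where one must invoke the $K$-theoretic diagonal splitting \eqref{eq: Ktheorysplitting} from Assumption \ref{ass: dg category} (3) together with the bracket \eqref{eq: pldagger} that was extracted along the way to the vertex algebra isomorphism.
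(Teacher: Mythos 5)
Your argument is correct and is essentially the paper's own proof: the paper disposes of this claim in one line by noting that $\bL_n$ raises cohomological degree by $2n$, so $\bL_n^\dagger$ lowers homological degree by $2n$ and must kill the degree-zero vector $e^\alpha$; your ``weight'' grading is exactly half the cohomological degree, and your bookkeeping about $\bR_n$, $\bT_n$, and the quotient relations is just an expanded version of that same degree count.
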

    \begin{proof}
     The operator $\bL_n$ increases cohomological degree by $2n$, so $\bL^\dagger_n$ decreases homological degree by $2n$.\qedhere
    \end{proof}
We denote by $p_l(\beta)\colon \BD^{\bfT}_\alpha\to \BD^{\bfT}_\alpha$ the operator of multiplication by $l!\ch_l^L(\beta)$. Its dual $p_l^\dagger(\beta)$ is the operator of capping with $l!\ch_l^L(\beta)$ and its bracket with the creation/annihilation operators is given by \eqref{eq: pldagger}.
    \begin{claim}\label{claim: 2}
        Let $\beta\in K(\bfT)$. We have 
        \[[\bR_n^\dagger, \beta_{(-k)}]=\begin{cases}
            k\cdot \beta_{(-k+n)}&\textup{ if }k>n> 0\\
            0 &\textup{ if  }n\geq k>0\,.
        \end{cases}\]
    \end{claim}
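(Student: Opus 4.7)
The strategy for Claim 2 is a direct computation: render $\bR_n$ as an explicit derivation on the symmetric algebra $\BD^\bfT_\alpha$, pass to its dual under the pairing $\widetilde\chi_\bfT$ using the symmetric-algebra duality formalism, and then take the commutator with a creation operator. The bracket \eqref{eq: pldagger} already gives us a Heisenberg-type relation; everything here is the natural extension of that pattern.

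First, introduce the normalization $p_k(\gamma)\coloneqq k!\,\ch_k^L(\gamma)$ for $k\geq 1$ and $\gamma\in K(\bfT)$. A short calculation from the definition of $\bR_n$ shows
\[\bR_n\big(p_k(\gamma)\big)=k\cdot p_{k+n}(\gamma)\qquad (k\geq 1,\, n\geq 1).\]
Note that $\bR_n$ annihilates $\ch_0^L(\gamma)$, so it descends to $\BD^\bfT_\alpha$. Choose a basis $\{\beta_a\}$ of $K(\bfT)$ with $\chi_\bfT$-dual basis $\{\hat\beta_a\}$ and set $X_{k,a}=p_k(\hat\beta_a)$, so that $\{X_{k,a}\}_{k\geq 1}$ are polynomial generators of $\BD^\bfT_\alpha$. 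On the lattice side, set $Y_{k,a}=\tfrac{1}{k}(\beta_a)_{-k}$, so that $\{Y_{k,a}\}_{k\geq 1}$ are polynomial generators of $e^\alpha\otimes \BD_{K(\bfT)}$. The pairing formula for $\widetilde\chi_\bfT$ together with the matching-sum extension to symmetric algebras yields
\[\widetilde\chi_\bfT(X_{k,a},Y_{k',a'})=\delta_{k,k'}\delta_{a,a'},\]
so $\{X_{k,a}\}$ and $\{Y_{k,a}\}$ are dual bases in the standard symmetric-algebra sense.

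As a derivation, $\bR_n = \sum_{k\geq 1,\,a} k\cdot X_{k+n,a}\, D_{X_{k,a}}$, where $D_{X_{k,a}}$ is the derivation sending $X_{k',a'}$ to $\delta_{k,k'}\delta_{a,a'}$. Under standard symmetric-algebra duality, multiplication by $X_{k,a}$ is dual to $\partial_{Y_{k,a}}$ and $D_{X_{k,a}}$ is dual to multiplication by $Y_{k,a}$; since for fixed $n\geq 1$ the operators $X_{k+n,a}\cdot$ and $D_{X_{k,a}}$ involve disjoint generators, the order does not matter and we obtain
\[\bR_n^\dagger=\sum_{k\geq 1,\,a} k\cdot Y_{k,a}\,\partial_{Y_{k+n,a}}.\]

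Now take any $\beta=\sum_b c_b\beta_b\in K(\bfT)$. Since $\beta_{(-k)}$ is multiplication by $\beta_{-k}=\sum_b c_b\cdot k\,Y_{k,b}$, and $\partial_{Y_{k+n,a}}$ commutes with multiplication by $Y_{k',b}$ unless $(k',b)=(k+n,a)$, one computes
\[[\bR_n^\dagger,\,\beta_{(-k)}]=\sum_{k'\geq 1,a,b} k' c_b\, Y_{k',a}\cdot k\,\delta_{k'+n,k}\delta_{a,b}.\]
The Kronecker delta $\delta_{k'+n,k}$ forces $k'=k-n$, which requires $k-n\geq 1$, i.e.\ $k>n$, for a nonzero contribution. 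When $k>n$, the sum collapses to $k(k-n)\sum_b c_b Y_{k-n,b} = k\cdot \beta_{(-k+n)}$. When $1\leq k\leq n$, the constraint $k'\geq 1$ fails and the commutator vanishes. This is exactly the statement of Claim 2. No genuine obstacle arises; the only care needed is bookkeeping of the symmetric-algebra duality and choosing the dual basis $\{\hat\beta_a\}$ so that the pairing is clean.
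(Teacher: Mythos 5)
Your proof is correct, but it takes a different route from the paper's. You make $\bR_n^\dagger$ completely explicit: you choose a $\chi_\bfT$-dual basis of $K(\bfT)$, check that the normalized generators $X_{k,a}=k!\,\ch_k^L(\hat\beta_a)$ and $Y_{k,a}=\tfrac1k(\beta_a)_{-k}$ are dual under $\widetilde\chi_\bfT$, write $\bR_n=\sum_{k,a}k\,X_{k+n,a}D_{X_{k,a}}$, and dualize term by term using the standard Fock-space adjunction (multiplication by a generator is adjoint to the partial derivative in the dual generator); the commutator with $\beta_{(-k)}$ then drops out of a one-line Leibniz computation, and the degree bound $k-n\geq 1$ produces the case split. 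The paper instead never writes $\bR_n^\dagger$ explicitly: it observes that an operator on $e^\alpha\otimes\BD_{K(\bfT)}$ is determined by its value on $e^\alpha$ and its commutators with the creation operators (equivalently, dually, by its value on $1$ and its commutators with the multiplication operators $p_l(\gamma)$), and then verifies that both sides of the claim have vanishing action on the vacuum (by degree reasons) and the same double commutator with $p_l(\gamma)$, using the Jacobi identity together with the already-established bracket \eqref{eq: pldagger}. Your argument is more concrete and self-contained, at the cost of fixing a basis and invoking the non-degeneracy of $\chi_\bfT$ to form $\{\hat\beta_a\}$ (which is indeed guaranteed under Assumption \ref{ass: dg category}(3), so this is not a gap); the paper's argument is basis-free, reuses \eqref{eq: pldagger} directly, and fits the uniform template it also applies to Claims \ref{claim: 3} and \ref{claim: 4}. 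I verified your key identities $\bR_n(p_k(\gamma))=k\,p_{k+n}(\gamma)$, the normalization $\widetilde\chi_\bfT(X_{k,a},Y_{k',a'})=\delta_{k,k'}\delta_{a,a'}$, and the final collapse $k(k-n)\sum_b c_bY_{k-n,b}=k\cdot\beta_{(-k+n)}$; all are right, including the order of composition after dualizing.
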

    \begin{proof}
We prove the claim by showing that the duals of both sides agree when applied to $1\in \BD^{\bfT}_\alpha$ and have the same commutator with the operators $p_l(\gamma)$ of multiplication by descendents. Indeed we have $\bR_n(1)=\beta_{(-k)}^\dagger (1)=0$ and also $\beta_{(-k+n)}^\dagger (1)=0$ when $k>n$; this follows from the fact that $\beta_{(-k)}^\dagger$ has cohomological degree $-2k<0$. Hence the duals of both sides annihilate $1$.

We now use \eqref{eq: pldagger} to calculate 
\begin{align*}[[\bR_n^\dagger, \beta_{(-k)}]^\dagger, p_{l}(\gamma)]&=[[\beta_{(-k)}^\dagger, \bR_n],p_l(\gamma)]=-[[\beta_{(-k)}^\dagger, p_l(\gamma)], \bR_n]+[\beta_{(-k)}^\dagger, [\bR_n, p_l(\gamma)]]\\
&=[\beta_{(-k)}^\dagger, l \cdot p_{l+n}(\gamma)]=l\cdot k\cdot\delta_{l+n, k}\cdot\chi_{\bfT}(\beta,\gamma)\cdot\id\,.
\end{align*}
If $n\geq k$ then $\delta_{l+n, k}=0$ for every $l> 0$. If $n<k$ then this agrees with
\[[k\cdot \beta_{(-k+n)}^\dagger, p_l(\gamma)]=k[p_l(\gamma)^\dagger, \beta_{(-k+n)}]^\dagger=l\cdot k\cdot\delta_{l+n, k}\cdot\chi_{\bfT}(\beta,\gamma)\cdot\id\,.\qedhere\]
    \end{proof}

To deal with $T_n$ we prove the following auxiliary claim.

    \begin{claim}\label{claim: 3}
For any $\beta\in K(\bfT)$ and $l\geq 0$ we have
\[p_l^\dagger(\beta)+\sum_{i\in I}\chi_\bfT(\alpha_i^R,\beta)\,p_l^\dagger(S_{\bfT}^{-1}(\alpha_i^L))=\beta_{(l)}\,.\]
    \end{claim}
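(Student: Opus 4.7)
The plan is to verify the operator identity on $e^\alpha\otimes \BD_{K(\bfT)}$ by checking two things: (i) the action on the vacuum $e^\alpha$, and (ii) the commutators with the creation operators $\beta'_{(-m)}$ for all $m>0$. Since $e^\alpha$ generates $e^\alpha\otimes \BD_{K(\bfT)}$ under the action of the creation operators, these two pieces of data pin down the operator uniquely. This mirrors the strategy already used in the proof of Claim~\ref{claim: 2}.

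For step (ii), equation \eqref{eq: pldagger} gives $[p_l^\dagger(\beta),\beta'_{(-m)}] = l\delta_{l,m}\chi_\bfT(\beta,\beta')\,\id$. Applying this to both summands on the left-hand side, and using the defining identity of the Serre functor $\chi_\bfT(a,b)=\chi_\bfT(b,S_\bfT(a))$ to rewrite $\chi_\bfT(S_\bfT^{-1}(\alpha_i^L),\beta') = \chi_\bfT(\beta',\alpha_i^L)$, the full commutator becomes
\[l\delta_{l,m}\Bigl(\chi_\bfT(\beta,\beta') + \sum_{i\in I} \chi_\bfT(\alpha_i^R,\beta)\,\chi_\bfT(\beta',\alpha_i^L)\Bigr)\id.\]
The $K$-theoretic diagonal relation $\sum_i \alpha_i^L\,\chi_\bfT(\alpha_i^R,\beta) = \beta$, which holds because $\Delta^K$ is the diagonal for the perfect pairing $\chi_\bfT$ supplied by Assumption~\ref{ass: dg category}, collapses the inner sum to $\chi_\bfT(\beta',\beta)$. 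The total commutator therefore equals $l\delta_{l,m}\chi_\bfT^{\sym}(\beta,\beta')\,\id$, matching $[\beta_{(l)},\beta'_{(-m)}]$ on the lattice side.

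For step (i), the symmetric-algebra grading on $\BD^\bfT_\alpha$ forces $p_l^\dagger(\beta)$ to lower particle number by one whenever $l>0$, so it annihilates $e^\alpha$, matching $\beta_{(l)}(e^\alpha)=0$. For $l=0$, the defining relation $\ch_0^L(\beta) = \chi_\bfT(\beta,\alpha)$ in the quotient $\BD^\bfT_\alpha$ makes $p_0^\dagger(\beta)$ act as the scalar $\chi_\bfT(\beta,\alpha)\,\id$ on the entire space; running the same Serre duality and diagonal manipulation on the second summand with $\beta'$ replaced by $\alpha$ then produces
\[\bigl(\chi_\bfT(\beta,\alpha) + \chi_\bfT(\alpha,\beta)\bigr)\,e^\alpha = \chi_\bfT^{\sym}(\beta,\alpha)\,e^\alpha = \beta_{(0)}(e^\alpha).\]

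The argument is a direct verification whose only substantive inputs are Serre duality for $\chi_\bfT$ and the $K$-theoretic diagonal identity, both available from Assumption~\ref{ass: dg category}. There is no genuine obstacle beyond careful bookkeeping.
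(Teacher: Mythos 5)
Your proof is correct and follows essentially the same route as the paper: both verify the identity by checking the action on $e^\alpha$ (with the $l=0$ case handled via Serre duality and the $K$-theoretic diagonal, yielding $\chi_\bfT^{\sym}(\beta,\alpha)$) and by matching commutators with the creation operators using \eqref{eq: pldagger}. The only difference is the order of the two verification steps, which is immaterial.
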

    \begin{proof}
If $l>0$ then both sides annihilate $e^\alpha$. If $l=0$ then both sides applied to $e^\alpha$ give $\chi^\sym_\bfT(\beta, \alpha)\cdot e^\alpha$ since it is so defined on the right hand side and for the left hand side we have
\begin{align*}
    \chi_\bfT(\beta,\alpha)+\sum_{i\in I}\chi_\bfT(\alpha_i^R,\beta)\chi(S_{\bfT}^{-1}(\alpha_i^L),\alpha)
    &=\chi_\bfT(\beta,\alpha)+\sum_{i\in I}\chi_{\bfT}(\alpha,\alpha_i^L)\chi_\bfT(\alpha_i^R,\beta)\\
    &=\chi^\sym_{\bfT}(\beta,\alpha)\,.
\end{align*}
Hence, it is enough to show that both sides have the same commutator with creation operators $\gamma_{(-s)}$ for $s>0$ and $\gamma\in K(\bfT)$. Indeed,
\begin{align*}
&\Big[ p_l^\dagger(\beta)+\sum_{i\in I}\chi_\bfT(\alpha_i^R,\beta)\,p_l^\dagger(S_{\bfT}^{-1}(\alpha_i^L)), \gamma_{(-s)}\Big]\\
=\,&\left(l\cdot\delta_{l,s}\cdot \chi_{\bfT}(\beta,\gamma)+l\cdot\delta_{l,s}\sum_{i\in I}\chi_\bfT(\alpha_i^R,\beta)\chi_{\bfT}(S_{\bfT}^{-1}(\alpha_i^L),\gamma)\right)\cdot \id\\
=\,&l\cdot\delta_{l,s}\cdot \chi^\sym_{\bfT}(\gamma,\beta)\cdot \id=[\beta_{(l)},\gamma_{(-s)}]\,.\qedhere
\end{align*}
    \end{proof}
    
    \begin{claim}\label{claim: 4}
        Let $\beta\in K(\bfT)$. We have 
        \[[\bT_n^\dagger, \beta_{(-k)}]=\begin{cases}
            k\cdot \beta_{(n-k)}&\textup{ if }n\geq k> 0\\
            0 &\textup{ if  }k> n>0\,.
        \end{cases}\]
    \end{claim}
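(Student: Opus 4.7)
The plan is to reduce the computation to the known bracket formula \eqref{eq: pldagger} by rewriting $\bT_n$ entirely in terms of the commuting multiplication operators $p_l(\gamma)$. Using the Serre duality relations, $\ch_a^R(\gamma)=(-1)^a\ch_a^L(S_\bfT^{-1}(\gamma))$, the signs $(-1)^a$ cancel and one obtains
\[\bT_n=\sum_{a+b=n}\sum_{i\in I}p_a(S_\bfT^{-1}(\alpha_i^L))\,p_b(\alpha_i^R)\,.\]
I would then separate the sum into the two ``boundary'' terms ($a=0$ or $b=0$) and the ``middle'' terms ($a,b>0$). On $\BD^\bfT_\alpha$ we have $\ch_0^L(\gamma)=\chi_\bfT(\gamma,\alpha)$, so the two boundary terms are scalar multiples of $p_n(\text{something})$. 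Applying the defining property of the $K$-theoretic diagonal $\Delta^K=\sum_i\alpha_i^L\otimes\alpha_i^R$, together with the Serre duality identity $\chi_\bfT(S_\bfT^{-1}(\gamma),\delta)=\chi_\bfT(\delta,\gamma)$, these boundary terms collapse to
\[p_n(\alpha)+p_n(S_\bfT^{-1}(\alpha))\,.\]

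Dualizing, $\bT_n^\dagger$ becomes a sum of products of $p_l^\dagger$'s, and I would then apply the Leibniz rule in conjunction with \eqref{eq: pldagger}. For the middle piece, each commutator $[p_b^\dagger(\alpha_i^R)p_a^\dagger(S_\bfT^{-1}(\alpha_i^L)),\beta_{(-k)}]$ produces two contributions, selected by $\delta_{a,k}$ and $\delta_{b,k}$ respectively; both survive only when $0<k<n$. Summing over $i$ and again using the diagonal property, these contributions collapse to
\[k\,p_{n-k}^\dagger(\beta)+k\,p_{n-k}^\dagger(S_\bfT^{-1}(\beta))\,,\]
which by Claim~\ref{claim: 3} equals exactly $k\cdot\beta_{(n-k)}$. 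For the boundary piece, $[p_n^\dagger(\alpha)+p_n^\dagger(S_\bfT^{-1}(\alpha)),\beta_{(-k)}]=n\,\delta_{n,k}\,\chi_\bfT^{\sym}(\beta,\alpha)\cdot\id$, which matches $k\cdot\beta_{(0)}$ on $e^\alpha\otimes\BD_{K(\bfT)}$ when $k=n$ and vanishes otherwise.

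Finally, I would assemble the case analysis: when $k>n$, both the boundary contribution (since $\delta_{n,k}=0$) and the middle contribution (since $a,b<n<k$ forces $\delta_{a,k}=\delta_{b,k}=0$) vanish; when $0<k<n$, only the middle piece contributes, giving $k\cdot\beta_{(n-k)}$; when $k=n$, only the boundary piece contributes, giving $n\cdot\beta_{(0)}=k\cdot\beta_{(n-k)}$. The main technical point to keep straight is the bookkeeping of signs and of the Serre-twisted arguments, and the crux is recognizing that the symmetrized combination $p_l^\dagger(\beta)+p_l^\dagger(S_\bfT^{-1}(\beta))$ provided by Claim~\ref{claim: 3} is exactly what the diagonal splitting of $\bT_n$ produces, which is what turns a sum of Euler-pairing contributions into a single symmetrized annihilation operator.
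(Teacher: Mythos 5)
Your proof is correct and follows essentially the same route as the paper's: rewrite $\bT_n$ via Serre duality as $\sum_{a+b=n}\sum_i p_a(S_\bfT^{-1}(\alpha_i^L))p_b(\alpha_i^R)$, apply the Leibniz rule together with the bracket \eqref{eq: pldagger}, and use the diagonal property plus Claim~\ref{claim: 3} to recognize the symmetrized combination as $\beta_{(n-k)}$. The only difference is cosmetic bookkeeping — you split off the $a=0$, $b=0$ terms and treat $k=n$ separately, whereas the paper handles all of $n\geq k>0$ uniformly by invoking Claim~\ref{claim: 3} with $l=n-k\geq 0$.
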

    \begin{proof}
Recall that 
$$\bT_n=\sum_{a+b=n}\sum_{i\in I}a!\ch_a^L(S^{-1}_{\bfT}(\alpha_i^L))\cdot b!\ch_b^L(\alpha_i^R)\,.
$$
Therefore, we compute the bracket as 
\begin{align*}
    [\bT_n^\dagger, \beta_{(-k)}]
    &=\sum_{a+b=n}\sum_{i\in I}\Big[p_b^\dagger(\alpha_i^R)p_a^\dagger(S^{-1}_{\bfT}(\alpha_i^L)),\beta_{(-k)}\Big]\\
    &=\sum_{a+b=n}\sum_{i\in I}\Big(
    \Big[p_b^\dagger(\alpha_i^R),\beta_{(-k)}\Big]p_a^\dagger(S^{-1}_{\bfT}(\alpha_i^L))+
    p_b^\dagger(\alpha_i^R)\Big[p_a^\dagger(S^{-1}_{\bfT}(\alpha_i^L)),\beta_{(-k)}\Big]\Big)\,.
\end{align*}
If $k>n\geq a,b$ then every commutator in the last line vanishes and we have $[\bT_n^\dagger, \beta_{(-k)}]=0$. Otherwise, 
\begin{align*}[\bT_n^\dagger, \beta_{(-k)}]
&=k\sum_{i\in I}\Big(
    \chi_\bfT(\alpha_i^R,\beta)p_{n-k}^\dagger(S^{-1}_{\bfT}(\alpha_i^L))+
    p_{n-k}^\dagger(\alpha_i^R)\chi_\bfT(S^{-1}_{\bfT}(\alpha_i^L),\beta)\Big)\\
    &=k\sum_{i\in I}\Big(
    \chi_\bfT(\alpha_i^R,\beta)p_{n-k}^\dagger(S^{-1}_{\bfT}(\alpha_i^L))+
    p_{n-k}^\dagger(\alpha_i^R)\chi_{\bfT}(\beta,\alpha_i^L)\Big)\\
&=k\sum_{i\in I}
    \chi_\bfT(\alpha_i^R,\beta)p_{n-k}^\dagger(S^{-1}_{\bfT}(\alpha_i^L))+kp_{n-k}^\dagger(\beta)=k\cdot \beta_{(n-k)}
\end{align*}
by Claim \ref{claim: 3}.\qedhere
    \end{proof}

Now Claims \ref{claim: 2} and \ref{claim: 4} together show that $\bL_n^\dagger$ satisfy equation \eqref{eq: virlva2}, so $\bL_n^\dagger=L_n$. \qedhere\end{proof}

\section{Virasoro constraints via wall-crossing}\label{sec: Virasoro and wall-crossing}

In this section, we prove the Virasoro constraints for quasi-smooth dg quivers as stated in Theorem \ref{thm: wt0virasoro}. In fact, we prove its generalization in terms Joyce's invariant classes as in Theorem \ref{thm: quivervirasoro}, allowing strictly semistable representations. Proof strategy is comparable to that of \cite{blm}. We first wall-cross to simpler moduli spaces where we can verify the Virasoro constraints directly and then show the compatibility between wall-crossing and Virasoro constraints using vertex algebras.

\subsection{Wall-crossing}\label{sec: wall-crossing}

Joyce's vertex algebra was introduced with the purpose of writing down and proving wall-crossing formulas for descendent integrals. In the case of quivers, this gives a complete algorithm to calculate descendent integrals in any moduli space of stable representations of an acyclic quiver with relations. 

Let $\dgQ$ be a quasi-smooth dg quiver and denote the associated quiver with relations by $(Q,I)$. We denote by $V^\bfQ=H_\ast(\CN^\dgQ)$ the vertex algebra associated to $\dgQ$. As explained in the beginning of Section \ref{sec: vertexalgebra}, the quotient $\widecheck V^\bfQ=V^\bfQ/T(V^\bfQ)$ is naturally a Lie algebra.

If $d\in \BN^{\dgQ_0}$ and $\theta$ is a stability condition such that $ M_{d}^{\theta-\textup{ss}}= M_{d}^{\theta-\textup{st}}$, then the moduli space admits a virtual fundamental class
\[[ M_{d}^{\theta}]^\vir\in H_{2-2{\chi}_\dgQ(d,d)}( M_{d}^{\theta})\,.\]

This virtual fundamental class can be push-forwarded to $\widecheck V$. Indeed, it can be shown \cite[Proposition 3.24]{Jo17} that if $d\neq 0$ then
\[H_\ast(\CN^{\dgQ}_d)/T\big(H_\ast(\CN^{\dgQ}_d)\big) \cong H_\ast\Big(\big(\CN^\dgQ_d\big)^{\textup{pl}}\Big)\]
is isomorphic to the homology of the rigidication of the stack $\CN^\dgQ_d$. There is a map $\iota\colon M_{d}^\theta\to \big(\CN^\dgQ_d\big)^{\textup{pl}}$, so we can pushforward the virtual fundamental class and regard it as an element of the Lie algebra
\[[ M_{d}^{\theta}]^\vir\in H_\ast\Big(\big(\CN^\dgQ_d\big)^{\textup{pl}}\Big)\subseteq \widecheck V^\bfQ\,.\]
Alternatively, we can think of the class $[M_d^\theta]^\vir\in \widecheck V^{\bfQ}$ as follows: by Proposition \ref{prop: realization iso}, $V_d^{\bfQ}=H_\ast(\CN_d^\dgQ)$ is the space of functionals on $\BD^{\dgQ}_d$. Since $T$ is dual to $\bR_{-1}$, $\widecheck V_d^{\bfQ}$ is the space of functionals on $\BD^{\dgQ}_{d, \inv}=\ker\big(\bR_{-1}\colon \BD^{\dgQ}_{d}\to \BD^{\dgQ}_{d}\big)$. Recall that by Lemma \ref{lem: weignt zero realization} there is a realization morphism from $\BD^{\dgQ}_{d, \inv}$ to $H^\ast(M_d^\theta)$; composing this realization morphism with integration against the virtual fundamental class defines the required functional. Wall-crossing formulas are written using the Lie bracket on $\widecheck V^{\bfQ}$. 

\begin{theorem}[{\cite[Theorems 5.7-5.9, 6.16]{Jo21}}]\label{thm: joycewc}
For any $d$ and $\theta$ there are uniquely defined classes $[M_d^{\theta}]^\inva\in \widecheck V^{\bfQ}$ that agree with the virtual fundamental classes when it is defined, and satisfy the wall-crossing formula. More precisely,
\[[M_d^{\theta}]^\inva=[M_d^{\theta}]^\vir\,,\quad\textup{when }M_{d}^{\theta-\textup{ss}}=M_{d}^{\theta-\textup{st}}\,,\]
and for any two stability conditions $\theta_1, \theta_2$ we have 
\[[M_d^{\theta_2}]^\inva=\sum_{d_1+\ldots+d_\ell=d}U(d_1, \ldots, d_\ell; \theta_1, \theta_2)\Big[\Big[\ldots\Big[[M_{d_1}^{\theta_1}]^\inva,[M_{d_2}^{\theta_1}]^\inva\Big],\ldots \Big],[M_{d_\ell}^{\theta_1}]^\inva\Big]\]
in $\widecheck V^{\bfQ}$, where $U(d_1, \ldots, d_\ell; \theta_1, \theta_2)\in \BQ$ are explicitly computable coefficients.
\end{theorem}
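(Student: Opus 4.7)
The plan is to follow Joyce's strategy in \cite{Jo21}, reducing the theorem to two independent ingredients: a definition of the invariant classes $[M_d^\theta]^{\inva} \in \widecheck V^{\bfQ}$ that makes sense even in the presence of strictly semistable representations, and a master space argument establishing the wall-crossing formula. Both steps rely crucially on the fact that the vertex algebra $V^\bfQ$, and its Lie algebra quotient $\widecheck V^\bfQ$, encode the combinatorics of extensions of quiver representations.

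For the definition, when $M_d^{\theta-\textup{ss}} = M_d^{\theta-\textup{st}}$ one sets $[M_d^\theta]^{\inva}$ to be the pushforward of $[M_d^\theta]^{\vir}$ to $\widecheck V^\bfQ$ via $\iota\colon M_d^\theta \to (\CN_d^\dgQ)^{\textup{pl}}$; this directly secures condition (i) of the theorem. To handle the strictly semistable case, I would use the framed construction of Section \ref{sec: f/uf corr}: for a sufficiently generic framing vector $f$ and adjusted stability $\widetilde\theta$, the framed moduli space $M_{f\to d}^\theta(Q,I) \simeq M_{(1,d)}^{\widetilde\theta}(Q^f, I^f)$ has no strictly semistables and hence carries an honest virtual fundamental class. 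Working in the auxiliary vertex algebra $V^{\bfQ^f}$, Joyce's M\"obius-type inversion formula expresses the framed class as a polynomial in the unframed invariants $[M_{d'}^\theta]^{\inva}$ with $d' \leq d$ whose leading term is the desired $[M_d^\theta]^{\inva}$; inverting this relation defines $[M_d^\theta]^{\inva}$ uniquely, by induction on $d$. Independence of the auxiliary framing is verified by comparing two choices of $f$ through a further wall-crossing inside $V^{\bfQ^f}$.

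For the wall-crossing formula, I would invoke a master space construction: given two generic stability conditions $\theta_1, \theta_2$, one builds a proper Deligne--Mumford stack carrying a $\BC^\ast$-action whose GIT quotients interpolate between $M_d^{\theta_1}$ and $M_d^{\theta_2}$ and whose $\BC^\ast$-fixed components are indexed by ordered decompositions $d = d_1 + \dots + d_\ell$ with $\theta_1$-semistable summands. Applying $\BC^\ast$-virtual localization identifies the difference between $[M_d^{\theta_2}]^{\inva}$ and $[M_d^{\theta_1}]^{\inva}$ with a weighted sum of contributions from these fixed loci. The decisive algebraic input is that the virtual normal bundle contributions are precisely encoded by the state-field product on $V^\bfQ$, whose defining $K$-theory class $\Theta$ is the symmetrized Ext complex; after passing to $\widecheck V^\bfQ$ these contributions become iterated Lie brackets, and the combinatorial weights $U(d_1, \dots, d_\ell; \theta_1, \theta_2)$ depend only on the slopes $\mu^{\theta_i}(d_j)$.

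The main obstacle is matching the geometric output of virtual localization on the master space with the precise algebraic structure of the Lie bracket on $\widecheck V^\bfQ$. This requires careful bookkeeping of signs, shifts, and permutation conventions to align the equivariant Euler classes of virtual normal bundles with the pairing $(-1)^{\chi_\bfQ(\alpha,\beta)} z^{\chi_\bfQ^{\sym}(\alpha,\beta)} c_{z^{-1}}(\Theta)$ appearing in the state-field correspondence of Section \ref{sec: Joyce's VA}. In the acyclic quiver setting this is more tractable than for sheaves because every moduli stack is a global GIT quotient $[R_d^{\theta-\textup{ss}}/\GL_d]$ and the obstruction theory arises from the explicit standard resolution \eqref{eq: standard resolution} on the dg quiver $\dgQ$; but the combinatorial identification of normal bundle contributions with iterated Lie brackets remains the technical heart of \cite{Jo21}.
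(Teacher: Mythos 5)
This theorem is not proved in the paper at all: it is imported wholesale from \cite{Jo21} (Theorems 5.7--5.9 and 6.16 there), so there is no internal proof to measure your proposal against. Your sketch is a reasonable high-level account of Joyce's strategy --- define $[M_d^{\theta}]^\inva$ by inverting a relation that expresses honest virtual classes of framed (pair) moduli spaces, which have no strictly semistables under limit stability, as universal polynomials in the unframed invariants, and then prove the wall-crossing identity by a geometric argument whose normal-bundle contributions are packaged by the state-field correspondence. But at this level of detail it is a description of the shape of Joyce's proof rather than a proof; the genuinely hard content (the inductive definition being well-posed and independent of auxiliary choices, and the identification of the localization output with iterated Lie brackets with the specific coefficients $U(d_1,\ldots,d_\ell;\theta_1,\theta_2)$) is exactly what occupies most of \cite{Jo21}, and your last paragraph concedes as much. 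One caveat: your master-space picture presumes a proper Deligne--Mumford interpolating space, which does not exist when intermediate stability conditions have strictly semistables; Joyce's actual argument has to work around this, which is why the invariant classes have to be defined first and the wall-crossing proven for them rather than for virtual classes directly.

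The one concrete step the paper does supply, and which your proposal omits, is Remark \ref{rem: abelian and triangulated}: Joyce's theorems are stated and proved in the vertex algebra built from the abelian-category moduli stack $\CM^{Q,I}$ with an Ext complex depending on the chosen generators of $I$, whereas the statement here lives in $\widecheck V^{\bfQ}=\widecheck{H}_\ast(\CN^{\dgQ})$. Transporting the result requires the morphism of higher stacks $i\colon \CM^{Q,I}\to \CN^{\dgQ}$ given by restriction of scalars, the check that it intertwines the zero map, direct sum, $B\BG_m$-action, and crucially that $(i\times i)^\ast \Ext_{\dgQ}\simeq \Ext^{\textnormal{Joy}}$ --- which holds precisely because the quasi-smooth dg quiver $\dgQ$ is constructed from the same choice of generators --- so that $i$ induces a Lie algebra homomorphism $\widecheck V^{Q,I}\to \widecheck V^{\dgQ}$ carrying Joyce's classes and his wall-crossing identity to the ones asserted in the theorem. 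Without this comparison the statement as written in $\widecheck V^{\bfQ}$ does not follow from the cited results.
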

\begin{remark}\label{rem: abelian and triangulated}
    The previous theorem in \cite{Jo21} is proved using a vertex algebra whose underlying vector space is given by the homology group of the moduli stack $\CM^{Q,I}$ of representations of $(Q,I)$. Since any representation of $\BC[Q]/I$ induces a dg module of $\BC[\dgQ]$ by restriction of scalars, we have a morphism of (higher) stacks
    $$i:\CM^{Q,I}\rightarrow \CN^{\dgQ}\,. 
$$
This morphism clearly intertwines the zero maps, the direct sum maps and the $B\BG_m$-actions which were part of the geometric input for defining the vertex algebra structure. Most importantly, the perfect complex $\Ext^{\textnormal{Joy}}$ used in \cite{Jo21} depends on a choice of generators $I=(r_1,\dots, r_n)$. Since we use a quasi-smooth dg quiver $\dgQ$ according to this choice of generators, it follows that the morphism $i$ intertwines the perfect complex, i.e.,
$$(i\times i)^*\Ext_{\dgQ} \simeq \Ext^{\textnormal{Joy}}\,.
$$
Therefore, $i$ induces a vertex algebra homomorphism $V^{Q,I}\rightarrow V^\dgQ$ and a Lie algebra homomorphism $\widecheck V^{Q,I}\rightarrow \widecheck V^\dgQ$\,. This allows to transport the wall-crossing formula of \cite{Jo21} to the Lie algebra associated to $\dgQ$. 

\end{remark}

Theorem \ref{thm: joycewc} allows us to determine the invariant classes $[M_d^\theta]^\inva$ completely by writing everything in terms of an increasing stability condition. We say that $\theta$ is increasing if $\theta_{t(e)}>\theta_{s(e)}$ for any $e\in Q_1$. The following proposition says that for an increasing stability condition, the invariant classes of moduli spaces  are essentially trivial.

\begin{proposition}[{\cite[Theorem 6.19]{Jo21}}]\label{prop: increasingstability}
    Let $\theta$ be an increasing stability condition. We have
    \[
    [M_d^\theta]^\inva=\begin{cases}\overline{e^d\otimes 1}&\textup{ if }d=\delta_v \textup{ for some }v\in Q_0\\
    0 & \textup{ otherwise.}
        \end{cases}\]
\end{proposition}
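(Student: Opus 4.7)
The plan is to analyze $\theta$-semistable representations directly and compute Joyce's invariant class using the identification of $V^\bfQ$ with a lattice vertex algebra provided by Theorem \ref{thm: natural iso}. I would assume without loss of generality that $\theta$ is generic, i.e., all values $\theta_v$ are distinct; the reduction from an arbitrary increasing $\theta_0$ to such a generic $\theta$ via the wall-crossing formula of Theorem \ref{thm: joycewc} will itself be justified by the bracket vanishing established below.

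Under the genericity assumption, the only $\theta$-semistable representations are of the form $S(v)^n$. Indeed, given a representation $V$, pick $v_\ast \in \mathrm{supp}(V)$ maximizing $\theta_{v_\ast}$. Since $\theta$ is strictly increasing along arrows, every $e \in Q_1$ with $s(e) = v_\ast$ has $t(e) \notin \mathrm{supp}(V)$, so $\rho_e$ vanishes on $V_{v_\ast}$ and $V_{v_\ast}$ (concentrated at $v_\ast$) forms a subrepresentation of slope $\theta_{v_\ast}$. Semistability combined with the bound $\mu^\theta(V) \leq \max_{w\in\mathrm{supp}(V)} \theta_w = \theta_{v_\ast}$ forces equality throughout, so by distinctness $\mathrm{supp}(V) = \{v_\ast\}$ and $V \cong S(v_\ast)^n$ with $d = n\delta_{v_\ast}$.

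Two cases follow immediately. If $d$ is not of the form $n\delta_v$, then $M_d^{\theta-\mathrm{ss}} = \emptyset$ and $[M_d^\theta]^\inva = 0$. If $d = \delta_v$, then $M_{\delta_v}^\theta = \{S(v)\}$ is a stable reduced point whose fundamental class pushes forward, under the identification in Theorem \ref{thm: natural iso}, to $\overline{e^{\delta_v}\otimes 1} \in \widecheck V^\bfQ$.

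The remaining case $d = n\delta_v$ with $n \geq 2$ is the crux, since $S(v)^n$ is strictly semistable. Joyce's invariant class here depends only on the single-vertex subquiver at $v$ (acyclicity rules out both self-loops and self-relations of any degree at $v$), and by naturality of Theorem \ref{thm: natural iso} the computation reduces to the lattice vertex subalgebra generated by $e^{\delta_v}$. Proposition \ref{prop: extdga} combined with acyclicity gives $\chi_\bfQ(\delta_v, \delta_v) = 1$, so $\chi_\bfQ^{\mathrm{sym}}(\delta_v, \delta_v) = 2$; the state-field formula \eqref{eq: statefield2} then shows $Y(e^{\delta_v}, z)e^{\delta_v} \in z^2 V_{K(\bfQ)}\llbracket z \rrbracket$, whence $e^{\delta_v}_{(0)} e^{\delta_v} = 0$ and $[\overline{e^{\delta_v}}, \overline{e^{\delta_v}}] = 0$ in $\widecheck V^\bfQ$. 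An inductive application of Joyce's formula expressing $[M_{n\delta_v}^\theta]^\inva$ via iterated Lie brackets of $[M_{k\delta_v}^\theta]^\inva$ for $k < n$, combined with this vanishing, then forces $[M_{n\delta_v}^\theta]^\inva = 0$. The same bracket vanishing, together with the analogous computation $[\overline{e^{\delta_v}}, \overline{e^{\delta_w}}] = 0$ for $v, w$ unconnected by any path in $Q$ (which is the relevant structure appearing when $\theta$ is non-generic within the increasing cone), justifies the initial reduction to generic $\theta$. The principal obstacle is the recursive bookkeeping of Joyce's invariant classes in the strictly semistable setting; the essential structural input is the vanishing of the Lie bracket, itself a consequence of the positivity $\chi_\bfQ^{\mathrm{sym}}(\delta_v, \delta_v) = 2 > 0$.
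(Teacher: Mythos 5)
The paper offers no proof of this statement; it is imported wholesale from \cite[Theorem 6.19]{Jo21}. Your classification of $\theta$-semistable representations for an increasing (generic) $\theta$ is correct, and the cases $d\neq n\delta_v$ (empty moduli) and $d=\delta_v$ (a single unobstructed stable point, using $\chi_{\dgQ}(\delta_v,\delta_v)=1$ from acyclicity) are handled properly. The problem is the crux case $d=n\delta_v$, $n\geq 2$, which is exactly where the content of Joyce's theorem lies.

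There is no formula expressing $[M_{n\delta_v}^\theta]^\inva$ as iterated Lie brackets of the classes $[M_{k\delta_v}^\theta]^\inva$ with $k<n$. The wall-crossing formula of Theorem \ref{thm: joycewc} relates classes at two \emph{different} stability conditions, and for $d=n\delta_v$ every decomposition into multiples of $\delta_v$ has equal slope under every $\theta$, so that formula degenerates to the tautology $[M_{n\delta_v}^{\theta_2}]^\inva=[M_{n\delta_v}^{\theta_1}]^\inva$ and determines nothing. What actually pins down $[M_{n\delta_v}^\theta]^\inva$ in the strictly semistable regime is Joyce's auxiliary definition via framed (pair) moduli spaces: an identity of the shape
\[[M^\theta_{f\to n\delta_v}]^\vir=\big[e^{(f,0)},[M^\theta_{n\delta_v}]^\inva\big]+\sum_{\ell\geq 2}(\textup{coefficients})\big[\big[\ldots\big[e^{(f,0)},[M^\theta_{d_1}]^\inva\big],\ldots\big],[M^\theta_{d_\ell}]^\inva\big]\,,\]
which one solves for the unknown class using injectivity of $[e^{(f,0)},-]$. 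With the inductive hypothesis, proving $[M_{n\delta_v}^\theta]^\inva=0$ is therefore \emph{equivalent} to the nontrivial identity $[M^\theta_{f\to n\delta_v}]^\vir=[\Gr(f_v,n)]=\tfrac{1}{n!}[e^{\delta_v},\ldots[e^{\delta_v},e^{(f,0)}]\ldots]$ (cf. Example \ref{ex: wcgrassmannian} and Section \ref{sec: sym Heke}), and those nested brackets do \emph{not} vanish, since $\chi^{\sym}$ pairs $\delta_v$ nontrivially against the framing class. Your appeal to $[\overline{e^{\delta_v}},\overline{e^{\delta_v}}]=0$ carries no information here --- that identity holds for any element of any Lie algebra over $\BQ$ by antisymmetry, and it is not the bracket that appears in the recursion. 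The same gap infects your reduction from non-generic to generic increasing $\theta$, since that reduction presupposes the vanishing of the classes $[M_{k\delta_v}^\theta]^\inva$ for $k\geq 2$. To close the argument you would need to either reproduce Joyce's pair computation or invoke the Grassmannian wall-crossing identity as an input.
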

Together with the wall-crossing formula, this determines the classes $[M_d^\theta]^\inva$ for every $\theta$.

\begin{example}\label{ex: wcgrassmannian}
We consider the Grassmannian $\Gr(N,k)$ and express its class using the wall-crossing formula. Recall the notations from Example \ref{ex: Grassmannian}. Let $\theta_2$ be a decreasing stability condition of the Kronecker quiver $K_N$ which corresponds to the Grassmannian and $\theta_1$ be an increasing stability condition. By Theorem \ref{thm: joycewc}, we have a wall-crossing formula
\[[M_{(1, k)}^{\theta_2}]=\sum_{d_1+\ldots+d_\ell=(1,k)}U(d_1, \ldots, d_\ell; \theta_1, \theta_2)\Big[\Big[\ldots\Big[[M_{d_1}^{\theta_1}]^\inva,[M_{d_2}^{\theta_1}]^\inva\Big],\ldots \Big],[M_{d_\ell}^{\theta_1}]^\inva\Big]\]
in the Lie algebra $\widecheck V^{K_N}$. By Proposition \ref{prop: increasingstability}, it suffices to consider the contributions where all $d_i$'s are either $\delta_1$ or $\delta_\infty$ and compute the wall-crossing coefficients. The computation of the coefficients can be done as in \cite[Proposition 13.10]{JS12} and we find
$$[\Gr(N,k)]=\frac{1}{k!}[e^{\delta_1},\ldots [e^{\delta_1},[e^{\delta_1},e^{\delta_\infty}]]\ldots ]\,.$$
In Section \ref{sec: sym Heke}, we further simplify this expression using the symmetrized Hecke operators and relate this formula to Schubert calculus.
\end{example}

\subsubsection{Vertex algebra and wall-crossing for framed moduli}

We briefly explain here the natural adaptation of these ideas to moduli of framed representations. For each $f\in \BN^{\dgQ_0}$ and $d\in \BZ^{\dgQ_0}$, we let $\CN^{\dgQ}_{f\rightarrow d}=\CN^{\dgQ}_d$ and define 
$$\CN^{\dgQ, \textup{fr}}\coloneqq \bigsqcup_{(f,d)\in \BN^{\dgQ_0}\times \BZ^{\dgQ_0}}\CN^{\dgQ}_{f\rightarrow d}\,. 
$$
Define the framed version of the Ext $K$-theory class on $\CN^{\dgQ, \textup{fr}}\times \CN^{\dgQ, \textup{fr}}$ as 
$$\Ext_\dgQ^{\textup{fr}}:=\Ext_\dgQ-\sum_{v\in \dgQ_0}\textup{R}\CH om(\CO^{(f_1)_v},\CV_v^{(2)})
$$
where $f_1$ is the framing vector of the first factor and $\CV^{(2)}_v$ is the universal complex pulled back from the second factor. The rank of this $K$-theory class restricted to $\CN^{\dgQ}_{f_1\rightarrow d_2}\times \CN^{\dgQ}_{f_2\rightarrow d_2}$ is given by
\[\chi_{\dgQ, \textup{fr}}\big((f_1, d_1), (f_2, d_2)\big):=\chi_{\dgQ}(d_1, d_2)-f_1\cdot d_2\,.\]

The stack $\CN^{\dgQ, \textup{fr}}$ is equipped with the following geometric data:
\begin{enumerate}
    \item A zero map $0:\pt\rightarrow \CN^{\dgQ, \textup{fr}}$. 
    \item A direct sum map $\Sigma:\CN^{\dgQ}_{f_1\rightarrow d_2}\times \CN^{\dgQ}_{f_2\rightarrow d_2}\rightarrow \CN^{\dgQ}_{(f_1+f_2)\rightarrow (d_1+d_2)}$. 
    \item A $B\BG_m$-action $\rho:B\BG_m\times \CN^{\dgQ, \textup{fr}}\rightarrow \CN^{\dgQ, \textup{fr}}$. 
    \item A $K$-theory class $\Theta^{\textup{fr}}=\sigma^*\Ext_\dgQ^{\textup{fr}}+(\Ext_\dgQ^{\textup{fr}})^\vee$.
\end{enumerate}
The first three items come directly from the corresponding maps of the unframed stack $\CN^{\dgQ}$. This data defines a framed vertex algebra 
$$V^{\dgQ, \textup{fr}}\coloneqq \Big(H_\ast(\CN^{\dgQ, \textup{fr}})\,,\, \ket{0}\,,\, T\,,\, Y\Big)\,. 
$$
We note that this construction is similar to the one in \cite[Section 5.2]{Jo21}, where the author considers representations of a quiver for which some subset of the vertices $\Ddot{Q}_0\subseteq Q_0$ are framed.

Structure of the framed vertex algebra can be described precisely. The same proof of Theorem \ref{thm: natural iso} shows that we have a vertex algebra embedding 
$$V^{\dgQ,\textup{fr}}\hookrightarrow \textup{VA}\Big(K(\dgQ)\times K(\dgQ), \chi_{\dgQ, \textup{fr}}^\sym\Big)\,.
$$
On the level of underlying vector spaces, this embedding at the connected component $\CN^{\dgQ}_{f\rightarrow d}$ is given by the composition
$$H_*(\CN^{\dgQ}_{f\rightarrow d})=H_*(\CN^{\dgQ}_{d})=e^{(f,d)}\cdot\BD_{K(\dgQ)}\hookrightarrow e^{(f,d)}\cdot \BD_{K(\dgQ)\times K(\dgQ)}
$$
where the last embedding is induced from $K(\dgQ)=\{0\}\times K(\dgQ)\subseteq K(\dgQ)\times K(\dgQ)$.

The framed vertex algebra is a natural place to write down the wall-crossing formulas for framed moduli spaces. Let $\dgQ$ be a quasi-smooth quiver and consider a framed moduli space $M^{\theta}_{f\rightarrow d}$. The universal framed representation of the moduli space induces a morphism $M^{\theta}_{f\rightarrow d}\rightarrow \CN^{\dgQ}_{f\rightarrow d}$, so we can pushforward its virtual class to the framed vertex algebra
$$[M^{\theta}_{f\rightarrow d}]^\vir\in V^{\dgQ, \textup{fr}}\,. 
$$
Note that this is different from the case of unframed representations, where moduli spaces only define a class on the homology of the rigidification or, equivalently, on the Lie algebra.

Finally, we explain how to obtain the wall-crossing formula in $V^{\dgQ,\textup{fr}}$ from the usual wall-crossing formula via framed/unframed correspondence. Given a fixed framing vector $f$, consider the auxiliary quiver $\bfQ^f$ as explained in Section \ref{sec: f/uf corr}. Then framed/unframed correspondence reads $M^{\theta}_{f\rightarrow d}=M^{\tilde{\theta}}_{(1,d)}$ for some $\tilde{\theta}$. On the other hand, if $\theta'$ is any stability condition of $\dgQ^f$, then wall-crossing formula reads 
\[[M_{(1,d)}^{\tilde\theta}]^\inva=\sum_{\tilde d_1+\ldots+\tilde d_\ell=(1,d)}U(\tilde d_1, \ldots, \tilde d_\ell; \theta', \tilde \theta)\Big[\Big[\ldots\Big[[M_{\tilde d_1}^{\theta'}]^\inva,[M_{\tilde d_2}^{\theta'}]^\inva\Big],\ldots \Big],[M_{\tilde d_\ell}^{\theta'}]^\inva\Big]\]
where $\tilde d_i=(1, d_i)$ for exactly one $1\leq i \leq \ell$ and $\tilde d_j=(0, d_j)$ for $j\neq i$. This identity holds in the Lie algebra $\widecheck V^{\dgQ^f}$. Since we have
\begin{align*}
    \chi_{\dgQ, \textup{fr}}((0, d_1), (0, d_2))&=\chi_{\bfQ^f}((0,d_1), (0,d_2))
    ,\\
    \chi_{\dgQ, \textup{fr}}((f, d_1), (0, d_2))&=\chi_{\bfQ^f}((0,d_1), (1,d_2))
    ,\\
    \chi_{\dgQ, \textup{fr}}((0, d_1), (f, d_2))&=\chi_{\bfQ^f}((1,d_1), (0,d_2))
    ,
\end{align*}
the above wall-crossing formula in $\widecheck V^{\dgQ^f}$ can instead be written as an identity in $\widecheck V^{\dgQ, \textup{fr}}$ after we replace $M_{(0,d)}^{\theta'}(Q^f, I^f)$ by $M_{0\to d}^{\theta'}(Q,I)$ and  $M_{(1,d)}^{\theta'}(Q^f, I^f)$ by $M_{f\to d}^{\theta'}(Q,I)$.\footnote{This requires a version of framed/unframed correspondence beyond limit stability condition which can be obtained by straightforward adaptation of Section \ref{sec: f/uf corr}.} Moreover, the same argument as explained in \cite[Lemma 5.11]{blm} shows that this formula can actually be lifted to the vertex algebra $V^{\dgQ, \textup{fr}}$.

\begin{example}
    \label{ex: grassva}
    We return to the example of the Grassmannian and the quiver with one vertex $Q=A_1$. We denote by $V^\Gr$ the vertex algebra 
    \[V^\Gr=V^{A_1, \textup{fr}}=H_\ast(\CN^{A_1, \textup{fr}})= H_\ast(BU)[Q, q^{\pm 1}]\]
    where we write $Q^N q^k=e^{(N, k)}$ for $(N, k)\in \BN\times \BZ$ corresponding to the connected component of framed representations $N\to k$. This is a vertex subalgebra of
    \[\textup{VA}(\BZ\times \BZ, \chi_{\Gr}^\sym)\]
    where
    \[\chi_{\Gr}((N_1, k_1), (N_2, k_2))=k_2(k_1-N_1)\,.\]
    The wall-crossing formula from Example \ref{ex: wcgrassmannian} becomes
    \[[\Gr(N, k)]=\frac{1}{k!}[q, \ldots [q, [q, Q^N]]\ldots ]\in V^\Gr\,,\]
    where $[u, v]=u_0 v$ is the partial lift of the Borcherds' Lie bracket discussed in \cite[Lemma 3.12]{blm}. 
\end{example}

\subsection{Vertex algebra comparison for dg quivers}\label{sec: assumptiondgquivers}

In this section, we show that Assumption \ref{ass: dg category} is satisfied for the dg category $\bfT$ associated to the dg quiver $\bfQ$. Thus, the general results from Section \ref{subsec: comparisonva} apply to $\bfQ$. 

We first connect the language specific to dg quivers and the general language for any dg category. Recall that we had two types of descendent theory, one using cohomology $H(\bfQ)$ as in Section \ref{sec: descendent algebra} and the other using the perfect complex $\Ext_{\bfQ}$ as in Section \ref{sec: Ext descendent}. We remark that the realization homomorphism in \ref{sec: descendent algebra} can be lifted, using the universal complex $\CV_v$ for each $v\in \dgQ_0$, to the stack of a dg quiver
\begin{center}
\begin{tikzcd}
\BD^{\dgQ}_d \arrow[r] \arrow[rd, dashed] & H^*(\CM_{d}^{Q,I})                 \\
                               & H^*(\CN_{d}^{\dgQ}) \arrow[u, "i^*"']
\end{tikzcd}
\end{center}
where $i:\CM_d^{Q,I}\rightarrow \CN_d^\dgQ$ is the morphism explained in Remark \ref{rem: abelian and triangulated}. 

As a first step toward the proof, we show that the two types of descendent theories, and respective Virasoro operators, are compatible.

\begin{proposition}\label{prop: comparison of two descendents}
    Let $\dgQ$ be a quasi-smooth dg quiver and $\bfT$ be the natural dg enhancement of $D^b(\bfQ)$. Then we have a commutative diagram 
\begin{center}
    \begin{tikzcd}[row sep=small]
\BD^\bfT_d \arrow[rd, ""] \arrow[dd, "\sim"  {rotate=90, anchor=north}, "\phi"'] &   \\
                                   & H^*(\CN_d^{\dgQ}) \\
\BD^\dgQ_d \arrow[ru, ""']                 &  
\end{tikzcd}
\end{center}
where $\phi$ maps the Ext descendent $\ch^L_k([P(v)])$ to the cohomological descendent $\ch_k(v)$. Furthermore, $\phi$ intertwines the Virasoro operators on the two descendent algebras. 
\end{proposition}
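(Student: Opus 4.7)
My approach is to identify both descendent algebras with the same polynomial $\BQ$-algebra on matched generators, and then verify compatibility on those generators. Since $K(\bfT)=K(\bfQ)$ is freely generated over $\BZ$ by the classes $\{[P(v)]\}_{v\in \bfQ_0}$ and the Serre duality relations in $\BD^\bfT$ let one eliminate the right symbols $\ch^R_k$, the algebra $\BD^\bfT$ is freely generated by the symbols $\ch^L_k([P(v)])$. I will therefore define $\phi\colon \BD^\bfT_d\to \BD^\dgQ_d$ by sending $\ch^L_k([P(v)])\mapsto \ch_k(v)$. The degree-zero relations in the $d$-quotients match: in $\BD^\bfT_d$ one imposes $\ch^L_0([P(v)])=\chi_\bfQ([P(v)],d)$, which equals $d_v$ because $\RHom_\bfQ(P(v),M)\simeq M_v$ by Example \ref{ex: hprojectivequiver}, and in $\BD^\dgQ_d$ one imposes $\ch_0(v)=d_v$. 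Consequently, $\phi$ is a well-defined algebra isomorphism. The triangle commutes on generators: the Ext realization sends $\ch^L_k([P(v)])$ to $\ch_k(\Ext_\bfT([P(v)],-))=\ch_k(\CV_v)$ by the same example applied to the universal family, which matches the image of $\ch_k(v)$ under the cohomological realization.

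The intertwining with $\bR_n$ is immediate: both operators act as derivations by the rule $\ch_k\mapsto k(k+1)\cdots(k+n)\ch_{k+n}$ on the matched generators. The substantive step is the intertwining of $\bT_n$. I would first compute the $K$-theoretic diagonal $\Delta^K\in K(\bfT)\otimes K(\bfT)$ by applying the standard resolution of the diagonal bimodule (Theorem \ref{thm: dgstandardresolution}) and using the Serre duality isomorphism $\Hom_\bfQ(P(v),-)^\vee\simeq \Hom_\bfQ(-,S_\bfT P(v))$ to rewrite each summand $\Hom(M_u,N_w)=M_u^\vee\otimes N_w$ in the form $\Hom(-,\alpha^L)\otimes \Hom(\alpha^R,-)$ required by the remark after Assumption \ref{ass: dg category}. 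In the quasi-smooth case this yields
\[\Delta^K=\sum_{v\in \bfQ_0}\! S_\bfT[P(v)]\otimes [P(v)]-\sum_{e\in Q_1}\! S_\bfT[P(s(e))]\otimes [P(t(e))]+\sum_{\tilde r\in Q_2}\! S_\bfT[P(s(\tilde r))]\otimes [P(t(\tilde r))]\,.\]
Substituting into the formula $\bT_n^\bfT=\sum_{a+b=n}a!b!(-1)^a\ch^R_a\ch^L_b(\Delta^K)$ and applying the Serre duality identity $\ch^R_a(S_\bfT\alpha)=(-1)^a\ch^L_a(\alpha)$ causes the $(-1)^a$ factors arising from $S_\bfT$ to cancel the $(-1)^a$ already present. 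The resulting expression, transported through $\phi$, matches $\bT_n^\bfQ$ as defined in Section \ref{subsec: virasoroops} term by term.

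The main obstacle is the explicit identification of $\Delta^K$: one must pass from the bimodule-level decomposition of $\Hom^\bullet_\bfQ(-,-)$ (which naturally defines a class in $K(\bfT^{\textup{op}})\otimes K(\bfT)$) to a class in $K(\bfT)\otimes K(\bfT)$ of the required form, and carefully track the signs coming from the cohomological shifts $[|e|]$ associated to relation edges as well as the appearances of $S_\bfT$. Once this bookkeeping is carried out, the remaining verification reduces to elementary manipulations among the Chern character symbols.
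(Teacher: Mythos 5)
Your proposal is correct and takes essentially the same route as the paper's proof: commutativity of the triangle via $\Ext_{\dgQ}(P(v),-)\simeq\CV_v$, trivial matching of the derivation parts, and matching of the multiplication parts via the $K$-theoretic diagonal combined with the Serre duality relations so that the $(-1)^a$ signs cancel. Your explicit formula for $\Delta^K$ agrees with the form $\sum_{v}[S(v)]\otimes[P(v)]$ used in the paper (since $[S(v)]=\sum_{u}\chi_{\dgQ}(S(u),S(v))\,S_{\bfT}[P(u)]$), the paper merely organizing the same calculation by expanding $S_{\bfT}^{-1}[S(v)]$ in the basis of projectives rather than reading off the diagonal termwise from the standard resolution.
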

\begin{proof}
    For each $v\in\dgQ_0$ and a dg module $M$ of $\dgQ$, we have a quasi-isomorphism $\Hom^\bullet_\dgQ(P(v),M)\simeq M_v$. This implies $\Ext_\dgQ(P(v),-)\simeq \CV_v$ which justifies the definition of $\phi$ and commutativity. 
    
    Intertwining property of the derivation parts is clear. To show that the multiplication parts intertwines through $\phi$, we use the formula for the $K$-theoretic diagonal of a dg quiver
    $$\Delta^K=\sum_{v\in \dgQ_0}[S(v)]\otimes [P(v)]\in K(\dgQ)\otimes K(\dgQ)\,.
$$
Therefore, the multiplication part of the Ext Virasoro operator is
\begin{align}\label{eq: Ext Tn}
\nonumber    \bT_n
    &=\sum_{a+b=n}a!b!\,\sum_{v\in \dgQ_0} (-1)^a\ch_a^R(S(v))\ch_b^L(P(v))\\
    &=\sum_{a+b=n}a!b!\,\sum_{v\in \dgQ_0} \ch_a^L(S_\bfT^{-1}(S(v)))\ch_b^L(P(v))\,.
\end{align}
For each $v$, we have rational numbers $a_{vw}$ such that 
$$S_\bfT^{-1}(S(v))=\sum_{w\in \dgQ_0} a_{vw} [P(w)]\in K(\dgQ)\,.
$$
Taking $\chi_\dgQ(-,S(u))$ on the both sides, we obtain 
$$\chi_\dgQ(S(u),S(v))=\chi_\dgQ(S_\bfT^{-1}(S(v)),S(u))=\sum_{w\in \dgQ_0}a_{vw}\chi_\dgQ(P(w),S(u))= a_{vu}
$$
In conclusion, $\phi$ maps the expression \eqref{eq: Ext Tn} to 
$$
\sum_{a+b=n}a!b!\sum_{v,w\in \dgQ_0} a_{vw}\,\ch_a(w)\ch_b(v)=\sum_{a+b=n}a!b!\sum_{v,w\in \dgQ_0} \chi_\dgQ(v,w)\ch_a(v)\ch_b(w)
$$
which is precisely $\bT_n$ on $\BD^\bfQ$. \qedhere
\end{proof}

By the commutative diagram in the above proposition, Assumption \ref{ass: dg category} (2) is satisfied if and only if the cohomological realization homomorphism is an isomorphism. We prove this for any finite acyclic dg quivers. 

\begin{proposition}\label{prop: realization iso}
    Let $\dgQ$ be a finite and acyclic dg quiver and $\bfT$ be the natural dg enhancement of $D^b(\bfQ)$. Then the realization homomorphism \eqref{eq: assumption iso} is an isomorphism. 
\end{proposition}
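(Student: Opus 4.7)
By Proposition \ref{prop: comparison of two descendents} it suffices to prove that the cohomological realization map $\xi\colon \BD^\dgQ_d\to H^*(\CN^\dgQ_d,\BQ)$, defined by $\ch_k(v)\mapsto \ch_k(\CV_v)$, is an isomorphism. The plan is to construct a forgetful morphism
\[\pi\colon \CN^\dgQ_d\longrightarrow \prod_{v\in\dgQ_0}\CN^\BC_{d_v}\,,\qquad M\longmapsto (\1_v\cdot M)_{v\in\dgQ_0}\,,\]
that retains only the underlying perfect complex of $\BC$-vector spaces at each vertex, and to show that $\pi^*$ is a rational cohomology isomorphism. Since $\CV_v$ is by construction the pullback along $\pi$ of the universal perfect complex on the $v$-th factor, it will then suffice to identify the cohomology of each factor $\CN^\BC_{d_v}$ with the free polynomial algebra on the Chern characters of the universal complex, after which K\"unneth concludes.

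The core input is the classical computation
\[H^*(\CN^\BC_d,\BQ)\simeq \BQ[\ch_1,\ch_2,\ldots]\,,\]
where $\CN^\BC$ denotes the higher moduli stack of perfect complexes of $\BC$-vector spaces. This is obtained by identifying $\CN^\BC_d$, after group completion and rationalization, with the classifying space $BU$ of connective algebraic $K$-theory, whose cohomology is well-known to be the free polynomial ring on the universal Chern characters.

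To establish that $\pi^*$ is a rational cohomology isomorphism, the strategy exploits the acyclicity of $\dgQ$. Fix a topological order on $\dgQ_0$ and a compatible order on the arrows, and add the arrows of $\dgQ$ one at a time, obtaining a chain of sub dg quivers. At each step, enhancing a dg $\BC[\dgQ']$-module structure to a dg $\BC[\dgQ]$-module structure (where $\dgQ=\dgQ'+\{e\}$) amounts to specifying a single degree-$|e|$ map $h\colon M_{s(e)}\to M_{t(e)}$ subject to the Maurer-Cartan compatibility $\partial h-(-1)^{|e|}h\partial=\Phi$, where $\Phi$ is a polynomial expression in the already specified structure maps of $\dgQ'$ determined by $\partial e$. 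Derived-ly, the space of admissible $h$ is the homotopy fiber of a morphism between perfect $\HHom$-complexes on $\CN^{\dgQ'}_d$, i.e.\ a derived affine bundle over $\CN^{\dgQ'}_d$. Since the pullback along any derived affine bundle preserves rational cohomology, iterating over all the arrows of $\dgQ$ yields that $\pi^*$ is a cohomology isomorphism, as desired.

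The main obstacle is rigorously formalizing this inductive step, especially for arrows of negative degree where the Maurer-Cartan constraint is non-trivial. For ordinary quivers (only degree-$0$ arrows with trivial differential) the fibers are literal affine bundles and the argument is transparent; for genuine dg quivers, the derived affine bundle structure requires careful use of the derived moduli framework of \cite{TV07}, together with the general fact that the derived total space of any bounded perfect complex over a base is cohomologically equivalent to its base.
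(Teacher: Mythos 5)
Your first reduction — pass to the forgetful map to a product of stacks of perfect complexes of vector spaces, use $H^*(\Perf_{d_v},\BQ)\simeq H^*(BU,\BQ)=\BQ[\ch_1,\ch_2,\ldots]$, and conclude by K\"unneth — is exactly the skeleton of the paper's proof. The gap is in how you propose to show that the forgetful map is a cohomology isomorphism. Your inductive step claims that adding one arrow $e$ realizes $\CN^{\dgQ}_d\to\CN^{\dgQ'}_d$ as a derived affine bundle, the fibre being ``the homotopy fiber of a morphism between perfect $\HHom$-complexes.'' This is false whenever $\partial e\neq 0$. Take the smallest quasi-smooth example: vertices $1,2,3$, degree-$0$ arrows $a\colon 1\to 2$, $b\colon 2\to 3$, and one degree-$(-1)$ arrow $\tilde r\colon 1\to 3$ with $\partial\tilde r=ba$. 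Over a point of $\CN^{\dgQ'}_d$ given by a representation $M$ concentrated in degree $0$ with $ba\neq 0$, an extension requires $h\in\HHom^{-1}(M_1,M_3)=0$ with $\partial h=ba\neq 0$: the fibre is empty. So the map is not surjective, the ``torsor'' over the base is empty exactly where the obstruction class $[\Phi]\in H^{|e|+1}(\RHom(M_{s(e)},M_{t(e)}))$ is nonzero, and the lemma ``total spaces of perfect complexes preserve rational cohomology'' does not apply. (What is true fibrewise is that the \emph{relative tangent complex} is $\RHom(M_{s(e)},M_{t(e)})[|e|+1]$, but globally the map is a quasi-smooth affine-linear zero locus, not a bundle, and such maps do not preserve cohomology in general.) Your argument is fine for arrows with $\partial e=0$ — in particular for ordinary quivers, where it is essentially \cite[Proposition 5.13]{Jo17} — but the case you flag as ``the main obstacle'' is not a formalization issue: the structure you want is simply absent.

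The paper avoids any fibre analysis. It exhibits the forgetful map $g\colon\CN^{\dgQ}\to\CN^{Q_0}$ (to the arrowless quiver) together with the extension-of-scalars section $f\colon M\mapsto\BC[\dgQ]\otimes_S M$, so $g\circ f=\id$, and then shows $f\circ g\simeq\id$ by an explicit $\BA^1$-homotopy: choose a weight function $w\colon\dgQ_0\to\BZ$ strictly increasing along arrows (this is where acyclicity enters, rather than through an ordering of the arrows) and rescale the action of a path $p$ by $t^{w(p)}$; at $t=1$ one has the identity and at $t=0$ all positive-length paths act by zero, i.e. $f\circ g$. This contracts $\CN^{\dgQ}$ onto $\prod_v\Perf_{d_v}$ in one step, with no case distinction on the degrees or differentials of the arrows. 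If you want to salvage an inductive argument, you would have to treat all arrows and all Maurer--Cartan equations simultaneously (the full MC space of the nilpotent dg Lie algebra of structure maps), at which point the global rescaling homotopy is the cleaner way to say the same thing.
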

\begin{proof}
When $\dgQ$ is a usual quiver $Q$, this statement is essentially \cite[Proposition 5.13]{Jo17}. Joyce remarks in the last paragraph of Section 6.2 in \cite{Jo21} that the same result in the abelian category setting for $Q$ with relations is true, as long as the relations are homogeneous (i.e. $r(q)$ is a linear combination of paths of the same length). Motivated by this, we give a proof for any dg quiver.

We abuse the notation by writing $Q_0$ for a quiver with the set of vertices given by $\dgQ_0$ but no arrows. Recall that $S=\BC[Q_0]$. We have a sequence of maps of higher stacks
$$\CN^{Q_0}\xrightarrow{f}\CN^{\bfQ}\xrightarrow{g} \CN^{Q_0}
$$
whose $B$-points for any commutative algebra $B$ is given as follows: $f$ sends a perfect $S\otimes B$-module $M$ to the $\BC[\bfQ]\otimes B$-module $\BC[\bfQ]\otimes_S M$  and $g$ sends a $\BC[\bfQ]\otimes B$-module $N$ perfect over $B$ to the $S\otimes B$-module $\BC[\bfQ]/J\otimes B$-module $\BC[\bfQ]/J\otimes_{\BC[\bfQ]} N$ . Since $\BC[\dgQ]/J\simeq S$, the composition $g\circ f$ is an identity morphism. 

Assume for a moment that $f\circ g:\CN^{\bfQ}\rightarrow\CN^{\bfQ}$ is homotopic to the identity map. Since $\CN^{Q_0}\simeq \prod_{v\in Q_0}\Perf^{(v)}$ is a product of $|Q_0|$ copies of the stack of perfect $\BC$-complexes $\Perf$, this gives an isomorphism
$$g^*:H^*(\CN^{Q_0})=\bigotimes_{v\in Q_0}H^*(\Perf^{(v)})\xrightarrow{\sim} H^*(\CN^{\bfQ}).
$$
The stack $\Perf_{d_v}$ is well-known to be homotopically equivalent to $BU$ (see \cite[Theorem 4.5]{B16} or \cite[Proposition 5.9]{Jo17}) and so
\[H^\ast(\Perf_{d_v})=\BQ[\ch_1(\CV_v), \ch_2(\CV_v), \ldots]\]
is the free polynomial ring in the Chern characters of the universal complex $\CV_v$. This proves that the realization homomorphism $\BD^{\dgQ}_d\rightarrow H^*(\CN_{d}^{\dgQ})$ is an isomorphism. 

We are left to construct a homotopy between $f\circ g$ and $\id$. Since $\bfQ$ is acyclic, we can pick an arbitrary function $w\colon Q_0\to \BZ$ that is increasing along arrows, i.e. such that $w(e)\coloneq w(t(e))-w(s(e))\in \BZ_{>0}$ for every $e\in \bfQ_1$. The weight extends to any path in $\BC[\bfQ]$ multiplicatively, i.e., $w(e_\ell\cdots e_1)\coloneqq\sum_{i=1}^\ell w(e_i)$. We define a map of higher stacks 
$$r\colon \BA^1\times \CN_{\bfQ}\rightarrow \CN_{\bfQ}
$$
whose action on $B$-points for any commutative algebra $B$ is described as follows. If $t\in B$ and $M$ is a $\BC[\bfQ]\otimes B$-module which is $B$-perfect, then $r(t,M)$ is a $\BC[\bfQ]\otimes B$-module whose underlying $B$-module is the same as $M$ with $\BC[\bfQ]$-module structure twisted by $w$ and $t$, i.e., 
$$a\cdot_w m\coloneqq t^{w(a)}(a\cdot m),\quad a\in \BC[\bfQ],\ m\in M
$$
when $a=e_\ell\cdots e_1$ and we extend the definition linearly.  
Since $B$ is a commutative algebra, this indeed defines a $\BC[\bfQ]\otimes B$-module. The morphism $r$ restricted to $1\in \BA^1$ is clearly the identity map. On the other hand, if we restrict $r$ to $0\in \BA^1$, then the $w$-twisted multiplication map by $\BC[\bfQ]$ becomes trivial away from $S=\BC[Q_0]$ because path of positive length have positive weight. In other words, $r$ restricted to $0\in \BA^1$ becomes the map $f\circ g$. This proves that $f\circ g$ and $\id$ are homotopic to each other after taking the topological realizations.
\end{proof}

\begin{proposition}\label{prop: checking assumptions}
    Let $\dgQ$ be a finite and acyclic dg quiver and $\bfT$ be the natural dg enhancement of $D^b(\bfQ)$. Then $\bfT$ satisfies Assumption \ref{ass: dg category}. 
\end{proposition}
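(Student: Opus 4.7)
The plan is to verify each of the three parts of Assumption~\ref{ass: dg category} separately.

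Part (2) is immediate from the two preceding propositions: by Proposition~\ref{prop: comparison of two descendents} we have an isomorphism $\phi\colon \BD^{\bfT}_d \to \BD^{\dgQ}_d$ identifying the Ext realization with the cohomological realization (up to this isomorphism), and by Proposition~\ref{prop: realization iso} the cohomological realization $\BD^{\dgQ}_d \to H^\ast(\CN_d^{\dgQ})$ is an isomorphism. Note that Proposition~\ref{prop: realization iso} is stated for a general finite acyclic dg quiver (not necessarily quasi-smooth), so this works here.

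Part (1) can be read off from the same homotopy argument used in the proof of Proposition~\ref{prop: realization iso}. There we constructed mutually inverse (up to homotopy) maps $f\colon \CN^{Q_0}\to \CN^{\bfQ}$ and $g\colon \CN^{\bfQ}\to \CN^{Q_0}$ via extension and restriction of scalars along $\BC[\bfQ]\to S$. Since $\CN^{Q_0}\simeq \prod_{v\in Q_0}\Perf^{(v)}$ and $\pi_0(\Perf)=\BZ$, this yields
\[\pi_0(\CN^{\bfQ})\simeq \pi_0(\CN^{Q_0})\simeq \BZ^{Q_0}\,,\]
where the component corresponding to $d\in \BZ^{Q_0}$ contains the module $\bigoplus_{v\in Q_0}P(v)^{\oplus d_v}$. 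On the other hand, $K(\bfT)=K(\bfQ)\simeq \BZ^{Q_0}$ with basis $\{[P(v)]\}_{v\in Q_0}$, and the natural map to $\pi_0(\CN^{\bfQ})$ sends $[P(v)]$ to the class of $P(v)$. Comparing the two descriptions gives the required isomorphism.

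Part (3) is the main content and follows from the standard resolution of the diagonal bimodule. Apply Theorem~\ref{thm: dgstandardresolution} with $M=\bfA$, viewed as the diagonal bimodule: this produces a distinguished triangle
\[\bfA\otimes_S T\otimes_S \bfA \longrightarrow \bfA\otimes_S \bfA \longrightarrow \bfA\]
in $D^b(\bfA\otimes \bfA^{\textup{op}})$. Using the identifications from Section~\ref{subsubsec: tensoralgebra}, the left two terms decompose as
\[\bfA\otimes_S \bfA=\bigoplus_{v\in \bfQ_0}P(v)\boxtimes P(v)^{\textup{op}}\,,\qquad \bfA\otimes_S T\otimes_S \bfA=\bigoplus_{e\in \bfQ_1}P(t(e))\boxtimes P(s(e))^{\textup{op}}[-|e|]\,,\]
where $P(v)^{\textup{op}}=\1_v\cdot\bfA$ is the indecomposable projective right $\bfA$-module (equivalently, a projective object in $\bfT^{\textup{op}}$). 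Hence in $K(\widehat{\bfT\otimes \bfT^{\textup{op}}})$ we have
\[[\bfA]=\sum_{v\in \bfQ_0}[P(v)]\otimes [P(v)^{\textup{op}}]-\sum_{e\in \bfQ_1}(-1)^{|e|}[P(t(e))]\otimes [P(s(e))^{\textup{op}}]\,,\]
which is manifestly in the image of $K(\bfT)\otimes K(\bfT^{\textup{op}})$. This verifies (3), and in fact recovers the $K$-theoretic diagonal formula used in the proof of Proposition~\ref{prop: comparison of two descendents}. The main (very mild) obstacle is just to make sure that the terms in the standard resolution are perfect bimodules so that they indeed represent classes in $K(\bfT\otimes \bfT^{\textup{op}})\to K(\widehat{\bfT\otimes \bfT^{\textup{op}}})$; this is clear since $P(v)$ and $P(v)^{\textup{op}}$ are perfect (h-projective and compact) by Example~\ref{ex: hprojectivequiver}.
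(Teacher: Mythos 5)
Your proof is correct and follows essentially the same route as the paper: part (1) from the homotopy equivalence $\CN^{\bfQ}\simeq \CN^{Q_0}$ established in the proof of Proposition~\ref{prop: realization iso}, part (2) from Propositions~\ref{prop: comparison of two descendents} and~\ref{prop: realization iso}, and part (3) from the standard resolution of the diagonal bimodule (Theorem~\ref{thm: dgstandardresolution}). The only difference is that the paper goes one step further in part (3), using the exact sequence $0\to \oplus_{t(e)=v}R(s(e))[-|e|]\to R(v)\to S(v)\to 0$ of right modules to rewrite your two-term expression as the explicit diagonal $\sum_{v}[P(v)]\boxtimes[S(v)]$ — a simplification not needed for Assumption~\ref{ass: dg category}~(3) itself but used elsewhere, and which your expression only ``recovers'' after that extra step.
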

\begin{proof}
Recall that $K(\bfQ)\simeq \BZ^{\dgQ_0}$ since its $K$-theory is generated by projective dg modules $P(v)$ and they satisfy $\chi(P(v),S(w))=\delta_{vw}$. Also, the proof of Proposition \ref{prop: realization iso} shows, in particular, that $\pi_0(\CN^\dgQ)$ is $\BZ^{\dgQ_0}$. Since Proposition \ref{prop: realization iso} precisely covers the Assumption \ref{ass: dg category} (2), we are left with Part (3). This follows from the standard resolution; indeed, the diagonal bimodule corresponds to $\BC[\bfQ]$ regarded as a bimodule over itself, and by Theorem \ref{thm: dgstandardresolution} its class in $K$-theory is
\[\sum_{v\in \bfQ_0}[P(v)]\boxtimes [R(v)]-\sum_{e\in \bfQ_1}(-1)^{|e|}[P(t(e))]\boxtimes [R(s(e))]=\sum_{v\in \bfQ_0}[P(v)]\boxtimes [S(v)]\]
where $R(v)\coloneqq {\bf 1}_{v}\cdot \BC[\bfQ]$. The equality comes from the following exact sequence of $\BC[\bfQ]^{\textup{op}}$-modules for each $v\in \dgQ_0$:
\[0\rightarrow \hspace{-10pt}\bigoplus_{e\in \dgQ_1,\ t(e)=v}\hspace{-10pt} R(s(e))[-|e|]\rightarrow R(v)\rightarrow S(v)\rightarrow 0\,. \qedhere
\]
\end{proof}

\begin{corollary}\label{cor: grossisoquiver}
   Let $\dgQ$ be a finite and acyclic dg quiver. Then there is a natural isomorphism of vertex algebras
       $$V^\bfQ\rightarrow \textup{VA}(K(\bfQ),\chi_{\bfQ}^{\sym}). 
$$
\end{corollary}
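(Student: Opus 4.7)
The plan is to deduce this directly from the general vertex algebra comparison theorem applied to the saturated dg enhancement $\bfT$ of $D^b(\bfQ)$. First, I would identify $V^\bfQ$ with $V^\bfT$: the higher moduli stack $\CN^\bfQ$ coincides with $\CN^\bfT$ by construction (both parametrize perfect dg $\BC[\bfQ]$-modules), and the geometric data (zero map, direct sum map, $B\BG_m$-action and symmetrized Ext class $\Theta$) used to define Joyce's vertex algebra agree under this identification. Hence $V^\bfQ=V^\bfT$ as vertex algebras, and similarly $K(\bfQ)=K(\bfT)$ with matching pairings $\chi_\bfQ^\sym=\chi_\bfT^\sym$.

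Next, I would invoke Proposition \ref{prop: checking assumptions}, which verifies that $\bfT$ meets all three parts of Assumption \ref{ass: dg category}. Given this, Theorem \ref{thm: natural iso} produces a natural isomorphism of vertex algebras
\[V^\bfT\xrightarrow{\ \sim\ } \textup{VA}\bigl(K(\bfT),\chi_\bfT^\sym\bigr)\,,\]
and transporting along the identifications above yields the desired isomorphism
\[V^\bfQ\xrightarrow{\ \sim\ } \textup{VA}\bigl(K(\bfQ),\chi_\bfQ^\sym\bigr)\,.\]

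Since the proof is essentially formal, there is no genuine obstacle; the only point requiring a bit of care is checking that the identification $\CN^\bfQ=\CN^\bfT$ really matches the four pieces of geometric data in Section \ref{sec: Joyce's VA}. This is immediate because the Yoneda embedding $\bfQ\hookrightarrow \bfT$ identifies the moduli of objects and because $\Theta$ is defined intrinsically from $\Ext_\bfT$, which under the identification restricts to the usual Ext-complex on $\CN^\bfQ\times\CN^\bfQ$. The word ``natural'' in the statement refers to Remark \ref{rem: natural}: for any quasi-equivalence (or, more generally, a quasi-functor satisfying the conditions in Remark \ref{rem: natural}) between dg enhancements of derived categories of dg quivers, the resulting square commutes by Theorem \ref{thm: natural iso}.
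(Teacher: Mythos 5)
Your proposal is correct and follows essentially the same route as the paper: the paper's proof is the one-liner "By Proposition \ref{prop: checking assumptions}, Theorem \ref{thm: natural iso} applies to $\bfQ$," with the identification $V^{\bfQ}=V^{\bfT}$ (and $K(\bfQ)=K(\bfT)$, $\chi_\bfQ^\sym=\chi_\bfT^\sym$) left implicit since $\CN^{\dgQ}$ is by definition the moduli stack of the dg enhancement $\bfT$ of $D^b(\bfQ)$. Your extra care in matching the four pieces of geometric data is a reasonable elaboration of what the paper takes for granted, not a different argument.
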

\begin{proof}
    By Proposition \ref{prop: checking assumptions}, Theorem \ref{thm: natural iso} applies to $\bfQ$.
\end{proof}

\subsection{Proof of Virasoro constraints for quivers}\label{sec: VA and Virasoro}

In this section, we apply the general theory developed up until here and show that the wall-crossing formula from Section \ref{sec: wall-crossing} implies the Virasoro constraints for quasi-smooth quivers, recoving the Virasoro constraints proven in \cite{bojko}. Recall Definition \ref{def: primarystates} of primary state.

\begin{corollary}
Suppose that $M_d^{\theta-\textup{ss}}=M_d^{\theta-\textup{st}}$. Then $M_d^{\theta}$ satisfies the Virasoro constraints (Theorem \ref{thm: wt0virasoro}) if and only if $[M_d^\theta]^\vir\in \widecheck V^{\bfQ}$ is a primary state.
\end{corollary}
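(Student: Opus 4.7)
The plan is to translate both sides of the equivalence into statements about a single element $\widetilde v \in e^d \otimes \BD_{K(\bfQ)}$ corresponding to $[M_d^\theta]^\vir$ under Corollary~\ref{cor: grossisoquiver}, and then compare them via Corollary~\ref{cor: primarywt0}. By Proposition~\ref{prop: checking assumptions}, Assumption~\ref{ass: dg category} is satisfied for $\bfQ$, so Theorem~\ref{thm: natural iso} gives a vertex algebra isomorphism $V^\bfQ \xrightarrow{\sim} \textup{VA}(K(\bfQ), \chi_\bfQ^{\sym})$ intertwining the $T$ operators, and hence an isomorphism of Lie algebras $\widecheck V^\bfQ \xrightarrow{\sim} \widecheck V_{K(\bfQ)}$. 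Thus $[M_d^\theta]^\vir$ being primary in $\widecheck V^\bfQ$ is equivalent to the image $\overline{\widetilde v}$ being primary in $\widecheck V_{K(\bfQ)}$. We may assume $d \neq 0$ (the case $d = 0$ being trivial), so Corollary~\ref{cor: primarywt0} characterizes this primariness as the equation
\[\sum_{n\geq -1} \frac{(-1)^n}{(n+1)!} T^{n+1}\circ L_n(\widetilde v) = 0\,.\]

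Next I would relate the Virasoro constraints to this equation via the duality from Proposition~\ref{prop: duality of operators}. Recall from the diagram \eqref{eq: construction diagram} that $\widetilde v \in e^d \otimes \BD_{K(\bfQ)}$ is characterized as the dual of the functional $D \mapsto \int_{[M_d^\theta]^\vir}\xi(D)$ on $\BD_d^\bfQ$ with respect to the perfect pairing $\widetilde\chi_\bfT(-,-)$; that is,
\[\int_{[M_d^\theta]^\vir}\xi(D) = \widetilde\chi_\bfT(D, \widetilde v)\quad\text{for all } D\in \BD^\bfQ_d\,.\]
By Proposition~\ref{prop: duality of operators} each $\bL_n$ on $\BD^\bfQ_d$ is dual to $L_n$ on $e^d \otimes \BD_{K(\bfQ)}$, and the $n=-1$ case gives that $\bR_{-1}$ is dual to $T=L_{-1}$. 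Consequently,
\[\int_{[M_d^\theta]^\vir}\xi\big(\bL_\inv(D)\big) = \widetilde\chi_\bfT\!\left(D,\,\sum_{n\geq -1}\frac{(-1)^n}{(n+1)!}T^{n+1}\circ L_n(\widetilde v)\right).\]

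Since the pairing $\widetilde\chi_\bfT$ is perfect, the Virasoro constraints of Theorem~\ref{thm: wt0virasoro}, i.e.~the vanishing of the left-hand side for every $D \in \BD^\bfQ$, are equivalent to the vanishing of $\sum_{n\geq -1}\frac{(-1)^n}{(n+1)!}T^{n+1}\circ L_n(\widetilde v)$, which by the preceding paragraph is exactly the condition that $[M_d^\theta]^\vir \in \widecheck V^\bfQ$ be a primary state. The only routine verification along the way is that $\bL_\inv$ does land in $\BD^\bfQ_{d,\inv}$, which follows from the Virasoro bracket $[\bR_{-1}, \bL_n] = (n+1)\bL_{n-1}$ (and $[\bR_{-1},\bL_{-1}]=0$); no serious obstacle is expected, as all the heavy lifting has already been done in Theorem~\ref{thm: natural iso}, Proposition~\ref{prop: duality of operators}, and Corollary~\ref{cor: primarywt0}.
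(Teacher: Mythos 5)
Your proposal is correct and follows essentially the same route as the paper: identify $V^{\bfQ}$ with the lattice vertex algebra, use the duality of Proposition~\ref{prop: duality of operators} to convert the integral constraints into the vanishing of $\sum_{n\geq -1}\frac{(-1)^n}{(n+1)!}T^{n+1}\circ L_n$ applied to the class, and conclude with Corollary~\ref{cor: primarywt0}. The only cosmetic difference is that the paper explicitly routes the duality through Proposition~\ref{prop: comparison of two descendents} to pass from the Ext descendent algebra $\BD^{\bfT}_d$ to the cohomological one $\BD^{\dgQ}_d$, a step you use implicitly.
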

\begin{proof}
By duality of Virasoro operators as in Proposition \ref{prop: duality of operators} and by Proposition \ref{prop: comparison of two descendents}, the Virasoro constraints of Section \ref{subsec: virasoroops} are equivalent to 
    \[0=\bL_\inv^\dagger([M_d^\theta]^\vir)=\sum_{n\geq -1}\frac{(-1)^n}{(n+1)!}L_{-1}^{n+1}\circ L_n([M_d^\theta]^\vir)\,.\]
The conclusion follows from Corollary \ref{cor: primarywt0}.
\end{proof}

In light of the previous corollary, it makes sense to talk about Virasoro constraints even when there are semistable representations, thanks to Joyce's classes from Theorem~\ref{thm: joycewc}. We say that $M_d^{\theta-\textup{ss}}$ satisfies Virasoro constraints if and only if $[M_d^{\theta}]^\inva \in \widecheck V^{\bfQ}$ is a primary state. 

\begin{theorem}\label{thm: quivervirasoro}
Let $\dgQ$ be any quasi-smooth dg quiver. For any $d\in \BN^{Q_0}$ and stability condition $\theta$, the class $[M_d^{\theta}]^\inva$ is a primary state. In other words, every moduli space of quiver representations satisfies the Virasoro constraints.    
\end{theorem}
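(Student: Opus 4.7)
The plan is to combine three ingredients already developed in the paper: Joyce's wall-crossing formula (Theorem \ref{thm: joycewc}), the explicit form of invariant classes for an increasing stability condition (Proposition \ref{prop: increasingstability}), and the fact that primary states form a Lie subalgebra (Corollary \ref{cor: liesubalgebra}). The main observation is that every invariant class can be written as an iterated Lie bracket of building blocks which are manifestly primary.

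First, since $Q$ is finite and acyclic, a topological ordering of $Q_0$ produces a stability condition $\theta_1$ which is increasing, i.e.\ $\theta_{t(e)}>\theta_{s(e)}$ for every $e\in Q_1$. For an arbitrary stability condition $\theta$, the wall-crossing formula in $\widecheck V^{\bfQ}$ expresses $[M_d^{\theta}]^\inva$ as a $\BQ$-linear combination of iterated Lie brackets of classes $[M_{d_i}^{\theta_1}]^\inva$. By Proposition \ref{prop: increasingstability}, each such class is either $0$ or $\overline{e^{\delta_v}\otimes 1}\in \widecheck V^{\bfQ}$ for some vertex $v\in Q_0$. Under the natural identification $V^{\bfQ}\simeq \textup{VA}(K(\bfQ),\chi_\bfQ^{\sym})$ from Corollary \ref{cor: grossisoquiver}, these are images of genuine elements of $V^{\bfQ}$.

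Next, I would verify that each building block $\overline{e^{\delta_v}\otimes 1}$ lies in $\widecheck P_0\subseteq \widecheck V^{\bfQ}$. By equations \eqref{eq: virlva1} and \eqref{eq: virlva2} in the degenerate lattice vertex algebra, we have $L_n(e^{\delta_v})=0$ for $n>0$ and
\[L_0(e^{\delta_v}) \;=\; \tfrac{1}{2}\,\chi_\bfQ^{\sym}(\delta_v,\delta_v)\, e^{\delta_v} \;=\; \chi_\bfQ(\delta_v,\delta_v)\,e^{\delta_v} \;=\; e^{\delta_v},\]
where the last equality uses that $S(v)$ is a simple module with $\End(S(v))=\BC$ and no higher self-extensions (this follows from acyclicity of $Q$ which forbids self-loops, combined with the Euler form formula in Proposition \ref{prop: extquasismooth} applied to the single vertex $v$). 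Hence $e^{\delta_v}\otimes 1\in P_1$, so its image in $\widecheck V^{\bfQ}$ lies in $\widecheck P_0$. The zero class trivially lies in $\widecheck P_0$ too.

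Finally, by Corollary \ref{cor: liesubalgebra}, $\widecheck P_0$ is a Lie subalgebra of $\widecheck V^{\bfQ}$, so any iterated Lie bracket of primary states is again a primary state. Assembling the three ingredients, $[M_d^{\theta}]^\inva$ is a $\BQ$-linear combination of elements in $\widecheck P_0$, hence itself a primary state. The final sentence of the theorem, which interprets this as Virasoro constraints, follows from the preceding corollary (duality of Virasoro operators via Proposition \ref{prop: duality of operators} together with Proposition \ref{prop: comparison of two descendents} identifying $\bfL_\inv^\dagger$ with the primary-state criterion in Corollary \ref{cor: primarywt0}). The only real content is the verification for the building blocks; every other step is a direct application of results already established, so no substantial obstacle remains.
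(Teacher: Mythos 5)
Your proposal is correct and follows essentially the same route as the paper: wall-cross from an increasing stability condition, observe via Proposition \ref{prop: increasingstability} that the building blocks are $0$ or $\overline{e^{\delta_v}\otimes 1}$, check these are primary using \eqref{eq: virlva1} (your explicit computation $\tfrac{1}{2}\chi_\bfQ^{\sym}(\delta_v,\delta_v)=\chi_\bfQ(\delta_v,\delta_v)=1$ by acyclicity is the detail the paper leaves implicit), and conclude with Corollary \ref{cor: liesubalgebra}. No gaps.
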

\begin{proof}
    When $\theta$ is an increasing stability condition, the claim follows trivially from Proposition \ref{prop: increasingstability} and the definition of the operators $L_n$, namely \eqref{eq: virlva1}. By Joyce's wall-crossing formula (Theorem \ref{thm: joycewc}) and the fact that $\widecheck P_0$ is a Lie subalgebra (Corollary \ref{cor: liesubalgebra}) the result follows for every $\theta$.\qedhere
\end{proof}

\section{Application to surfaces with exceptional collections}\label{sec: applicaton}

The goal of this section is to clarify how the quiver Virasoro constraints are related to the sheaf Virasoro constraints for surfaces admitting exceptional collections, especially for del Pezzo surfaces. In particular, we prove Theorem \ref{main thm: abch implies Virasoro} and \ref{main thm: three surfaces}. 

\subsection{Exceptional collection and derived equivalence}
\label{sec: beilinson}

We start by recalling the notion of exceptional collection and the induced equivalence between a triangulated category with a full and strong exceptional collection and the derived category of a quiver with relations. From now on let $\CD$ be a triangulated category (e.g. the derived category of coherent sheaves on $X$ or representations of a quiver with relations $(Q,I)$).

\begin{definition}
A sequence $\FE=(E_1,  \ldots, E_n)$ of objects of $\CD$ is called an exceptional collection if
\[\Hom_\CD(E_i, E_j[\ell])=\begin{cases}
\BC& \textup{ if }i=j\textup{ and }\ell=0\\
0& \textup{ if }i>j\textup{ or }[i=j\textup{ and }\ell\neq 0]\,.
\end{cases}\]
An exceptional collection is said to be strong if moreover for any $i,j$ we have
\[\Hom_\CD(E_i, E_j[\ell])=0\quad\textup{ for }\ell \neq 0\,.\]
An exceptional collection is said to be full if the smallest triangulated subcategory of $\CD$ containing $\FE$ is $\CD$ itself.
\end{definition}

If $\CD$ admits a full exceptional collection $\FE$ then
\[K(\CD)=\bigoplus_{i=1}^n \BZ\cdot [E_i]\]
and $\chi_\CD\colon K(\CD)\times K(\CD)\to \BZ$ is given in the basis above by a triangular matrix with $1$ along the diagonal. In particular $\chi_\CD$ is non-degenerate.

A full exceptional collection $\FE$ always admits a unique left dual exceptional collection $\widetilde \FE=(\widetilde E_n, \ldots, \widetilde E_1)$ such that  
\[\Hom_\CD(\widetilde E_i, E_j[\ell])=\begin{cases}\BC&\textup{ if }i=j\textup{ and }\ell=0\\
0&\textup{ otherwise}
\end{cases}\,.\]
In particular
\[\chi\big(\widetilde E_i, E_j\big)=\delta_{ij}\,.\]

\begin{example}
Let $(Q, I)$ be an acyclic quiver with relations; without loss of generality denote the nodes as $Q_0=\{1, 2, \ldots, n\}$ in a way that there are no arrows $i\to j$ for $i>j$. 
Recall from Example \ref{ex: simpleprojective} the simple and projective modules $S(i)$ and $P(i)$. Then 
\[E_i=S(i)\,,\quad \widetilde E_i=P(i)\]
are dual full exceptional collections on $D^b(Q, I)$. We call $(E_1,\dots, E_n)$ the standard exceptional collection of $(Q, I)$. The dual collection is strong by projectivity of $P(i)$. The same applies to a dg quiver, see Example \ref{ex: hprojectivequiver}.
\end{example}

\begin{example}\label{ex: exceptionalcollections}
    The derived category of any smooth projective toric variety admits a full exceptional collection \cite{kawamata}. For $D^b(\BP^2)$ we can take 
    \[\FE=\big(\CO_{\BP^2}(-1)[2], \CO_{\BP^2}[1], \CO_{\BP^2}(1)\big)\,,\quad \widetilde\FE=\big(\CO_{\BP^2}(1), T_{\BP^2}, \CO_{\BP^2}(2)\big)\,.\]
    Both $\FE$ and $\widetilde \FE$ are strong. For $D^b(\BP^1\times \BP^1)$ we can take 
      \begin{align*}\FE&=\big(\CO_{\BP^1\times \BP^1}(0,-1)[2], \CO_{\BP^1\times \BP^1}[1], \CO_{\BP^1\times \BP^1}(1,-1)[1], \CO_{\BP^1\times \BP^1}(1,0)\big)\,,\quad\\ \widetilde
      \FE&=\big(\CO_{\BP^1\times \BP^1}(1,0), \CO_{\BP^1\times \BP^1}(1,1), \CO_{\BP^1\times \BP^1}(2,0), \CO_{\BP^1\times \BP^1}(2,1)\big)\,.
      \end{align*}
      In this case, $\FE$ is not strong, but $\widetilde \FE$ is strong.
\end{example}
Given a triangulated category $\CD$ and an exceptional collection $\FE$ whose left dual $\widetilde \FE$ is full and strong, we associate a quiver $Q$ together with an ideal of relations $I\subseteq \BC [Q]$. The set of vertices of $Q$ is $\{1, \ldots, n\}$ and the number of arrows $i\to j$ is equal to $\dim \Ext^1(E_i, E_j)$ for $i\neq j$; since there are no arrows $i\to j$ if $i\geq j$, the quiver $Q$ is acyclic. Then there is an ideal of relations $I$ such that
\[\BC [Q]/I=\textup{End}_{\CD}\Big(\bigoplus_{i=1}^n \widetilde E_i\Big)\,.\]

In the next theorem we let $\CD$ be a sufficiently nice triangulated category such as $D^b(X)$ for some smooth projective variety $X$ or $D^b(Q, I)$.

\begin{theorem}[{\cite[Theorem 6.2]{bondal}}]\label{thm: beilinson}
Suppose that $\CD$ has an exceptional collection $\FE$ such that its left dual exceptional collection $\widetilde \FE$ is full and strong. Then there is an isomorphism of derived categories
\[B\colon \CD\to D^b(Q, I)\]
sending $\FE$ to the canonical exceptional collection of $(Q, I)$. The isomorphism sends an object $F$ of $\CD$ to the complex of representations of $(Q, I)$ which at vertex $i$ has complex $\RHom_\CD\big(\widetilde E_i, F\big)$. 
\end{theorem}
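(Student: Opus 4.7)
The plan is to realize $B$ as the derived functor $\RHom_{\CD}(G,-)$ associated to the tilting object
\[G=\bigoplus_{i=1}^n \widetilde{E}_i\,.\]
By definition of $(Q,I)$, the endomorphism algebra is $A=\End_{\CD}(G)=\BC[Q]/I$, and a left $A$-module is the same as a representation of $(Q,I)$. So the candidate equivalence takes the form
\[B\colon \CD\to D^b(\BC[Q]/I\textup{-mod})\simeq D^b(Q,I)\,,\quad F\mapsto \RHom_{\CD}(G,F)\,,\]
whose value at vertex $i$ is indeed $\RHom_{\CD}(\widetilde{E}_i,F)$ as stated.

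The first step is to verify that $G$ is a tilting generator. Strongness of $\widetilde{\FE}$ gives $\Ext_{\CD}^{\ell}(G,G)=0$ for $\ell\neq 0$, so $G$ is a tilting object. Fullness of $\widetilde{\FE}$ means that the smallest triangulated subcategory containing the $\widetilde{E}_i$ is $\CD$, so $G$ is a classical generator. Together with smoothness/saturation of $\CD$ (which we may take as part of the ambient hypotheses, as is satisfied by $D^b(X)$ for $X$ smooth projective or by $D^b(Q',I')$ for any acyclic quiver with relations), this puts us precisely in the setting of the tilting theorem of Bondal--Rickard--Keller: a tilting generator of a saturated triangulated category induces a derived equivalence
\[\RHom_{\CD}(G,-)\colon \CD\xrightarrow{\sim} D^b(A)\,,\]
with quasi-inverse $-\otimes_A^{\BL}G$. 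This gives the derived equivalence $B$.

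The second step is to verify that $B$ sends $\FE$ to the canonical exceptional collection $(S(1),\dots,S(n))$ of $(Q,I)$. By the defining property of the left dual collection,
\[\RHom_{\CD}(\widetilde{E}_j,E_i)=\begin{cases}\BC & \textup{if }i=j\\ 0 & \textup{otherwise,}\end{cases}\]
so the complex $B(E_i)=\RHom_{\CD}(G,E_i)$ is concentrated at vertex $i$ in cohomological degree $0$ with value $\BC$. This is exactly the simple representation $S(i)$, proving the last assertion.

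The only nontrivial point is the invocation of the tilting theorem in the form above; the required finiteness (for instance, that $A$ has finite global dimension, so that $D^b(A)$ equals the perfect derived category and the equivalence lands in the bounded derived category) can be handled either by appealing to general results about tilting in saturated dg categories, or by the more concrete argument that $\CD$ has a full exceptional collection and hence the natural map $K(\FE)\to K(\CD)$ is an isomorphism, giving a direct bound on the global dimension of $A$ via the lengths of paths in $Q$. The verification that $\FE\mapsto(S(i))$ is then purely formal from the duality defining $\widetilde{\FE}$.
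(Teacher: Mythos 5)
The paper gives no proof of this statement -- it is quoted directly from Bondal -- and your argument is precisely the standard tilting-object proof underlying that citation: $G=\bigoplus_i\widetilde E_i$ is a tilting generator with $\End_\CD(G)=\BC[Q]/I$ by construction, $\RHom_\CD(G,-)$ is an equivalence by the Bondal--Rickard--Keller tilting theorem, and the identification $B(E_i)=S(i)$ follows formally from $\RHom_\CD(\widetilde E_j,E_i)=\BC\,\delta_{ij}$. The argument is correct; the only point worth flagging is the usual left-versus-right module convention in identifying $\End_\CD(G)$ with $\BC[Q]/I$ rather than its opposite, which both you and the paper leave implicit.
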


By Theorem \ref{thm: koszuldgquiver}, there is a dg quiver $\bfQ$ with $\dim \Ext_{\CD}^{k+1}(E_i, E_j)$ arrows from $i$ to $j$ of degree $-k$ such that 
\[\CD\simeq D^b(Q, I)\simeq D^b(\bfQ)\,.\]
Up to choices of basis, this dg quiver is canonically associated to the pair $(\CD, \FE)$.

\begin{example}The theorem applied to the derived category of $D^b(\BP^2)$ with the full exceptional collection of Example \ref{ex: exceptionalcollections} establishes an isomorphism between $D^b(\BP^2)$ and the derived category of the Beilinson quiver with relations $B_3$, see Example \ref{ex: beilinson}. See also Example \ref{ex: beilinsondgquiver} where the associated dg quiver is discussed.

If we apply it to $D^b(\BP^1\times \BP^1)$ with the exceptional collection from Example \ref{ex: exceptionalcollections} we obtain an isomorphism with the derived category of the quiver
\begin{center}
\begin{tikzcd}  &  & \tikz \node[draw, circle]{2}; \arrow[rrdd, "c_2" description, bend right] \arrow[rrdd, "c_1" description, bend left] &  &   \\
             &  &                              &  &   \\
\tikz \node[draw, circle]{1};\arrow[rruu, "a_2" description, bend right] \arrow[rruu, "a_1" description, bend left] \arrow[rrdd, "b_2" description, bend right] \arrow[rrdd, "b_1" description, bend left] &  &                   &  & \tikz \node[draw, circle]{4}; \\ &  &   &   \\
&  & \tikz \node[draw, circle]{3};\arrow[rruu, "d_2" description, bend right] \arrow[rruu, "d_1" description, bend left] &  &  
\end{tikzcd}
\end{center}
subject to relations 
\[c_1a_1+ d_1b_1=c_2a_2+ d_2b_2=c_2 a_1+d_1 b_2=c_1 a_2+d_2 b_2=0\,.\]
\end{example}

\subsubsection{Comparing vertex algebras}
Let $X$ be a smooth projective variety and suppose that $\CD=D^b(X)$ satisfies the hypothesis of Theorem \ref{thm: beilinson}. The isomorphism of derived categories $D^b(X)\simeq D^b(Q, I)\simeq D^b(\bfQ)$ can be upgraded to an equivalence between the natural dg enhancements, so it induces a quasi-equivalence between the derived stacks $\mathbf{B}\colon \bfCN^X\to \bfCN^\bfQ$. As in Remark \ref{rem: natural}, the pushforward on homology 
\[B_\ast\colon H_\ast(\CN^{X})\to H_\ast(\CN^{\bfQ})\]
is an isomorphism between the vertex algebras canonically associated to (the derived enhancements of) $D^b(X)$ and $D^b(\bfQ)$. 

These vertex algebras are naturally isomorphic to the lattice vertex algebras $\textup{VA}(K(X), \chi_X^{\sym})$ and $\textup{VA}(K(Q), \chi_{\dgQ}^{\sym})$, see Corollary \ref{cor: grossisoquiver}. The isomorphism of derived categories induces an isomorphism $\phi\colon K(X)\to K(Q)$ that preserves the respective Euler pairings $\chi_X$ and $\chi_\bfQ$. The naturality of this isomorphism amounts to commutativity of the following diagram:
\begin{center}
\begin{tikzcd}
H_\ast(\CN^{X})\arrow[rr]\arrow[d, "B_\ast"]& &\textup{VA}(K(X), \chi_X^{\sym})\arrow[d, "\phi"]\\
H_\ast(\CN^{\dgQ})\arrow[rr]& &\textup{VA}(K(Q), \chi_{\dgQ}^{\sym})
\end{tikzcd}
\end{center}

\subsection{Bridgeland stability and the ABCH program}

Given a (sufficiently nice, for instance $D^b(X)$, $D^b(Q, I)$ or $D^b(\bfQ)$) triangulated category $\CD$, Bridgeland defined in \cite{Bri} a notion of (numerical) stability conditions on $\CD$ which we refer to as Bridgeland stability conditions. A Bridgeland stability consists in a pair $\sigma=(\CA, Z)$ where $\CA$ is the heart of a t-structure on $\CD$ and $Z\colon K_{\textnormal{num}}(\CD)\to \BC$ is a group homomorphism, called the central charge.\footnote{Here $K_{\textnormal{num}}(\CD)$ denotes the quotient of $K(\CD)$ by the kernel of the Euler paring. The numerical $K$-group is equal to the original $K$-group in all cases of our applications. } Among other things, it is required that $Z(F)\in \BH$ for $F\in \CA\backslash\{0\}$ where $\BH$ is the upper half plane with a negative real line
\[\BH=\{z\in \BC\colon \Im(z)>0\}\cup (-\infty, 0)\,.\]
A Bridgeland stability defines a slope function
\[\mu^\sigma(F)=-\frac{\Re(Z(F))}{\Im(Z(F))}\in \BR\cup\{+\infty\}\]
on $\CA\backslash\{0\}$. An object of $\CA$ is said to be $\sigma$-(semi)stable if every nonzero proper subobject has smaller (or equal) slope. Given $\v\in K(\CD)$ we denote by $\CM^{\sigma-\textup{st}}_\v\subseteq \CM^{\sigma-\textup{ss}}_\v$ the moduli stacks of $\sigma$-stable and $\sigma$-semistable objects in class $\v$. A stability condition defines a slicing of $\CD$, i.e. a collection of full aditive subcategories $\CP(\phi)\subseteq \CD$ for each $\phi\in\BR$: if $0<\phi\leq 1$ then $\CP(\phi)$ consists of the zero object and $\sigma$-semistable objects of $\CA\backslash\{0\}$ such that $Z(F)=m(F)e^{\pi \sqrt{-1} \phi}$ for some $m(F)\in \BR^+$; for the remaining $\phi$, we extend this notion by setting $\CP(\phi+1)=\CP(\phi)[1]$. 

Bridgeland shows that the space of stability conditions $\Stab(\CD)$ is a complex manifold; more precisely, the map that forgets $\CA$ and only remembers $Z$ defines a local homomorphism from $\Stab(\CD)$ to $\Hom_\BZ(K_{\textup{num}}(\CD), \BC)$. There is an $\BR$-action 
\[\BR\times \Stab(\CD)\ni(\phi, \sigma)\mapsto \sigma[\phi]\in \Stab(\CD)\] on $\Stab(\CD)$ which translates the associated slicing of $\sigma$ by $\phi$. In particular, for $0<\phi<1$ we have
\[\CM^{\sigma[\phi]-\textup{ss}}_\v=\begin{cases}
    \CM^{\sigma-\textup{ss}}_\v & \textup{if }\pi\geq \arg(Z(\v))>\phi \pi\\
    \CM^{\sigma-\textup{ss}}_\v[1] & \textup{if }0<\arg(Z(\v))\leq \phi \pi\,.\\
\end{cases}\]
By the notation $\CM^{\sigma-\textup{ss}}_\v[1]$, we mean the same moduli stack $\CM^{\sigma-\textup{ss}}_\v$ with a universal object shifted by $[1]$.

\subsubsection{Quiver stability conditions}

Suppose that $\FE$ is an exceptional collection of $\CD$ such that its dual $\widetilde \FE$ is full and strong. Then the extension closure of $E_1, \ldots, E_n$ is an abelian category $\CA_\FE$, which under the derived equivalence in Theorem \ref{thm: beilinson} corresponds to the abelian category $\Rep_{(Q,I)}$ of representations of a quiver with relations $(Q, I)$. For any choice of $\zeta=(\zeta_1, \ldots, \zeta_n)$ with $\zeta_i \in \BH\,\setminus (-\infty,0)$, we define a central charge $Z_{\zeta}$ by setting $Z_\zeta(E_i)=\zeta_i$ and extending it additively to $K(\CD)$. We say that a stability condition $\sigma=(\CA_\FE, Z_\zeta)$ constructed in this way is a quiver stability condition. 

If $\zeta_i=-\theta_i+\sqrt{-1}\in \BH$, the slope associated to $\sigma=(\CA_\FE, Z_\zeta)$ matches precisely the slope defined for representations of quivers in Section \ref{subsubsec: stabilityquivers}, so the derived equivalence $B:\CD\xrightarrow{\sim}D^b(Q,I)$ identifies $\CM^{\sigma-\textup{ss}}_\v$ with $\CM_{d}^{\theta-\textup{ss}}(Q, I)$ where $d$ is the image of $\v$ under the isomorphism $K(\CD)\simeq K(Q, I)$. It was observed in \cite[Proposition 8.1]{abch} that, more generally, for arbitrary $\zeta_i\in \BH\,\backslash(-\infty,0)$  the equivalence $B$ still identifies $\CM^{\sigma-\textup{ss}}_\v$ with $\CM_{d}^{\theta-\textup{ss}}(Q, I)$ where $\theta$ is defined by 
\[\theta_i=-\Re(\zeta_i)+\Im(\zeta_i)\frac{\sum_{j=1}^n \Re(\zeta_j)d_j}{\sum_{j=1}^n \Im(\zeta_j)d_j}\,.\]
Note that $\theta(d)=0$ for this choice of $\theta$.

\subsubsection{Geometric stability conditions} Suppose now that $S$ is a smooth projective surface and $\CD=D^b(S)$. Recall that given an ample divisor $H$ we may define the slope of a torsion-free sheaf $F$ by
\[\mu^H(F)=\frac{c_1(F)\cdot H}{\rk(F)}\,.\]
This defines the notion of $\mu^H$-(semi)stable torsion-free sheaves. The slope of pure one-dimensional sheaf $G$ is defined as
\[\mu^H(G)=\frac{\ch_2(G)}{c_1(G)\cdot H}\,.\]
This defines the notion of $\mu^H$-(semi)stability for pure one-dimensional sheaves.

In \cite{ab} the authors construct stability conditions $\sigma_{E, H}=(\CA_{E, H}, Z_{E, H})$ where $H, E$ are $\BR$-divisors with $H$ being ample. We call these geometric stability conditions. The heart of the $t$-structure is obtained as a tilt
\[\CA_{E, H}=\langle \Coh_{H, \leq E\cdot H}[1], \Coh_{H, > E\cdot H}\rangle\]
of $\Coh(S)$ with respect to $\mu^H$-stability. Here we write $\Coh_{H, >s}$ for the extension closure of torsion sheaves and $\mu^H$-semistable torsion-free sheaves $T$ with $\mu^H(T)>s$; similarly, $\Coh_{H, \leq s}$ is the extension closure of $\mu^H$-semistable torsion-free sheaves $F$ with $\mu^H(F)\leq s$. The central charge is given by
\[Z_{E, H}(F)=-\int_S e^{-E-\sqrt{-1}\, H}\ch(F)\in \BC\,. \]
Note that these stability conditions have the property that structure sheaves of points $\CO_x$ are stable and $Z(\CO_x)=-1$. By result of \cite{toda}, see also \cite{macri_schmidt}, geometric stability condition defines an Artin stack of finite type $\CM^{\sigma_{E,H}-\textup{ss}}_\v$. Furthermore, loc. cit. proves that $\CM^{\sigma_{E,H}-\textup{ss}}_\v$ is an open substack of all perfect complexes with trivial negative Ext groups constructed in \cite{lieblich}. Therefore $\CM^{\sigma_{E,H}-\textup{ss}}_\v$ is equipped with an obstruction theory whose associated virtual tangent complex at a point $F$ is given by $\RHom_S(F,F)[1]$.

Geometric stability is closely related to so called twisted Gieseker stability which we explain now. Recall from \cite{matsukiwentworth} that the $(H,E-\frac{K}{2})$-twisted Hilbert polynomial of $F\in \Coh(S)$ is defined as
$$P_{(H,E-\frac{K}{2})}(F,t)\coloneqq\int_X \ch(F)\cdot e^{-(E-\frac{K}{2})+tH}\cdot \td(S)\in \BR[t]\,. 
$$
The choice of $-K/2$, where $K=K_S$ denotes the canonical divisor, is made so that 
$$e^{\frac{K}{2}}\cdot \td(S)=(1,0,c\,[\pt]),\quad c=\chi(\CO_S)-\textstyle\frac{K^2}{8}\in \BQ
$$
has no codimension one part. Writing $\ch^E(-)\coloneqq\ch(-)\cdot e^{-E}$, we have
$$P_{(H,E-\frac{K}{2})}(F,t)=t^2\cdot \frac{\rk(F)H^2}{2}+t\cdot (H\cdot \ch^E_1(F))+(\ch^E_2(F)+c\cdot\rk(F))\,.
$$
If $F$ is torsion-free, we define the $(H,E-\frac{K}{2})$-twisted reduced Hilbert polynomial is 
$$p_{(H,E-\frac{K}{2})}(F,t)=t^2+t\cdot \frac{H\cdot \ch^E_1(F)}{\rk(F)H^2/2}
+\frac{\ch^E_2(F)}{\rk(F)H^2/2}+\frac{2c}{H^2}\,.
$$
We say that a torsion-free sheaf $F$ is $(H,E-\frac{K}{2})$-twisted Gieseker (semi)stable if for all $0\subsetneq F'\subsetneq F$ we have
$$p_{(H,E-\frac{K}{2})}(F',t)(\leq) p_{(H,E-\frac{K}{2})}(F,t)\quad \textnormal{for all}\quad t\gg0\,.
$$
In other words, $F$ is $(H,E-\frac{K}{2})$-twisted Gieseker (semi)stable if for all $0\subsetneq F'\subsetneq F$ we have one of the followings:
\begin{enumerate}
    \item [(i)]$\displaystyle \frac{H\cdot \ch^E_1(F')}{\rk(F')}
    <\frac{H\cdot \ch^E_1(F)}{\rk(F)}$\vspace{3pt}
    \item [(ii)]$\displaystyle \frac{H\cdot \ch^E_1(F')}{\rk(F')}
    =\frac{H\cdot \ch^E_1(F)}{\rk(F)}$ \ and \ $\displaystyle \frac{\ch_2^E(F')}{\rk(F')}
    (\leq)\frac{\ch_2^E(F)}{\rk(F)}$\,.
\end{enumerate}
For 1-dimensional sheaves, $(H,E-\frac{K}{2})$-twisted Gieseker stability is equivalent to $(H,E-\frac{K}{2})$-twisted slope stability, where the twisted slope of $F$ is given by
\[\frac{\ch_2^E(F)}{H\cdot \ch_1(F)}\,.\]

The geometric stability recovers $(H,E-\frac{K}{2})$-twisted Gieseker stability
in the large volume limit. Note that $\CA_{E,H}=\CA_{E,tH}$ for any $t>0$ in the following statement. 

\begin{lemma}[\cite{BriK3}]\label{lem: large volume}
    Let $H,E$ be $\BR$-divisors with $H$ being ample and $\v\in K(S)$. Let $t\gg 0$ be large enough according to $H$, $E$ and $\v$. Let $F\in \CA_{E,H}$ of type $\v$.
    \begin{enumerate}
        \item If $\dim(\v)=1$, then 
        \[F\textup{ is }\sigma_{E, tH}\textup{-(semi)stable}\Leftrightarrow F\textup{ is }(H,E-{\textstyle\frac{K}{2}})\textup{-twisted slope (semi)stable sheaf.}\]
        \item If $\dim(\v)=2$ and we assume further that $E\cdot H<\mu^H(\v)$, then
        \[F\textup{ is }\sigma_{E, tH}\textup{-(semi)stable}\Leftrightarrow F\textup{ is }(H,E-{\textstyle\frac{K}{2}})\textup{-twisted Gieseker (semi)stable sheaf.}\]
    \end{enumerate}
\end{lemma}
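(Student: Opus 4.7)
The plan is to compute the central charge $Z_{E,tH}$ explicitly, so that the $\sigma_{E,tH}$-slope becomes a rational function in $t$ whose leading behavior as $t\to\infty$ matches the lexicographic order defining twisted slope/Gieseker stability. Expanding $e^{-E-\sqrt{-1}\,tH}$ and pairing with $\mathrm{ch}(F)$, a direct computation gives
\[Z_{E,tH}(F) \;=\; -\ch_2^E(F) + \tfrac{t^2 H^2}{2}\rk(F) + \sqrt{-1}\, tH\cdot \ch_1^E(F)\,,\]
so whenever $F\in\CA_{E,H}$ has $\Im Z_{E,tH}(F)>0$ its $\sigma_{E,tH}$-slope is
\[\mu^{\sigma_{E,tH}}(F) \;=\; -\frac{tH^2\,\rk(F)}{2\, H\cdot \ch_1^E(F)} \;+\; \frac{\ch_2^E(F)}{tH\cdot \ch_1^E(F)}\,.\]

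For part (1), $F$ is pure 1-dimensional, so $\rk(F)=0$ and $F\in\Coh_{H,>E\cdot H}\subseteq \CA_{E,H}$ trivially. The formula above reduces to $\mu^{\sigma_{E,tH}}(F)=\ch_2^E(F)/(tH\cdot c_1(F))$, which is a positive multiple of the $(H,E-\tfrac{K}{2})$-twisted slope. I would then argue that relevant subobjects in $\CA_{E,H}$ are actually subsheaves: for $A\hookrightarrow F\to B$ in $\CA_{E,H}$, the long exact sequence of tilting cohomology places $H^{-1}(B)\in \Coh_{H,\leq E\cdot H}$ as a subsheaf of the torsion sheaf $F$, forcing $H^{-1}(B)=0$ since $\Coh_{H,\leq E\cdot H}$ is torsion-free. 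Zero-dimensional subsheaves never destabilize (they have infinite twisted slope), so the comparison for pure 1-dimensional subsheaves is immediate from the proportionality of the two slopes.

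For part (2), $F$ is torsion-free with $\mu^H(F)>E\cdot H$, so again $F\in \CT=\Coh_{H,>E\cdot H}\subseteq \CA_{E,H}$. The slope formula shows that as $t\to\infty$, the comparison of $\sigma_{E,tH}$-slopes of two objects of positive rank is governed to leading order by the twisted slope $H\cdot \ch_1^E/\rk$ and, in case of equality, by $\ch_2^E/\rk$ at the next order; this is exactly the lexicographic order defining $(H,E-\tfrac{K}{2})$-twisted Gieseker stability. As in part (1), subobjects $A\hookrightarrow F$ in $\CA_{E,H}$ must be reduced to honest subsheaves: the long exact sequence gives
\[0\to H^{-1}(B)\to H^0(A)\to F\to H^0(B)\to 0\]
with $H^{-1}(B)\in \Coh_{H,\leq E\cdot H}$, and a straightforward $t$-asymptotic computation shows that replacing $A$ by the subsheaf $H^0(A)/H^{-1}(B)\subseteq F$ does not weaken any potential destabilizing inequality once $t$ is large.

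The main obstacle is uniformity: a single $t_0=t_0(\v)$ must work simultaneously for all potentially destabilizing (sub)objects, in both directions of the equivalence. For this I would invoke the classical boundedness inputs from Bridgeland's K3 paper \cite{BriK3} and the Arcara--Bertram program \cite{ab}: Bogomolov-type inequalities bound the numerical classes of Harder--Narasimhan factors of $F$ with respect to $\sigma_{E,tH}$ (and with respect to twisted Gieseker) in a bounded family depending only on $\v$, while local finiteness of walls in $\Stab(D^b(S))$ ensures that only finitely many such classes can become destabilizing as $t$ varies. Choosing $t_0$ past the last such wall yields the equivalence in both directions and completes the proof.
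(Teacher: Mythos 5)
Your central-charge computation and the resulting slope asymptotics are correct, and your sketch of part (2) follows the standard large-volume-limit argument that the paper simply imports from \cite{BriK3}. The genuine gap is in part (1), in the step reducing subobjects in $\CA_{E,H}$ to subsheaves of $F$. For a short exact sequence $0\to A\to F\to B\to 0$ in $\CA_{E,H}$ with $F$ a sheaf, the long exact sequence of cohomology is
\[0\to H^{-1}(B)\to H^0(A)\to F\to H^0(B)\to 0\]
(using $H^{-1}(A)\hookrightarrow H^{-1}(F)=0$), so $H^{-1}(B)$ injects into $H^0(A)$, \emph{not} into $F$; your claim that $H^{-1}(B)$ is a subsheaf of the torsion sheaf $F$, and hence vanishes, is false, and nothing forces $H^{-1}(B)=0$. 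What is true is that $A=H^0(A)$ is a sheaf, but it can have positive rank (precisely when $H^{-1}(B)\neq 0$), and such positive-rank subobjects are exactly the dangerous potential destabilizers that your argument never addresses: the remark that zero-dimensional subsheaves never destabilize does not cover them. The paper handles this case directly: if $\rk(G)>0$ then $\mu^{\sigma_{E,t'H}}(G)\to-\infty$ while $\mu^{\sigma_{E,t'H}}(F)\to 0$ as $t'\to\infty$, so a destabilizer at time $t$ would by continuity create a wall at some $t'>t$, contradicting the finiteness of walls (\cite[Lemma 6.24]{macri_schmidt}) used to choose $t$; in the remaining case $\rk(G)=0$, the subobject $G$ is an honest torsion subsheaf of $F$ because torsion sheaves form an abelian subcategory of both $\CA_{E,H}$ and $\Coh(S)$, and then the proportionality of the two slopes finishes the argument as you say.

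There is a second, smaller gap in the forward direction of (1): you assume from the outset that $F$ is a pure one-dimensional sheaf, but the lemma only assumes $F\in\CA_{E,H}$ of one-dimensional type $\v$, and a priori such an object can have $H^{-1}(F)\neq 0$. The paper rules this out for $\sigma_{E,tH}$-semistable objects at large $t$ via \cite[Lemma 6.18]{macri_schmidt}. Your concluding appeal to boundedness and local finiteness of walls is the right kind of input for uniformity in $t$ and is essentially what the paper uses, but it does not substitute for the two missing case analyses above.
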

\begin{proof}

    The $\dim(\v)=2$ case is essentially Proposition 14.2 of loc. cit. Even though the cited reference considers only $K3$ surfaces, the same proof applies to arbitrary surfaces once we observe the characterization of the $(H,E-{\textstyle\frac{K}{2}})$-twisted Gieseker (semi)stability in the preceding paragraphs. See also \cite[Lemma 3.5]{rekuski} where this was used. 

    Even though the $\dim(\v)=1$ case is also standard, for completeness we explain its proof by adapting the one for the $\dim(\v)=2$ case. For fixed $H, E$ and $\v$, there are only finitely many walls for the parameter $t>0$ according to \cite[Lemma 6.24]{macri_schmidt}. Therefore, we can choose $t$ large enough so that there are no higher walls. Suppose that $F$ is $\sigma_{E,tH}$-(semi)stable. By the choice of $t$, $F$ is $\sigma_{E,t'H}$-(semi)stable for all $t'\gg 0$. Thus $F$ satisfies the assumption of \cite[Lemma 6.18]{macri_schmidt}, so we conclude that $F$ is a torsion sheaf supported in degree zero\footnote{This is because the other two conclusions in \cite[Lemma 6.18]{macri_schmidt} occur only if $\dim(\v)=2$.}. If $F$ is not $(H, E-\frac{K}{2})$-twisted slope (semi)stable, then we get a destabilizing subsheaf $0\subsetneq G\subsetneq F$. In other words, 
    $$\frac{\ch^E_2(G)}{H\cdot \ch_1(G)}(\nleq)\frac{\ch^E_2(F)}{H\cdot \ch_1(F)}\,.
    $$
    Since $G$ is a torsion sheaf, it is inside the tilted heart $\CA_{E,tH}=\CA_{E,H}$. This contradicts the $\sigma_{E,t'}$-(semi)stability of $F$ because for torsion sheaves we have
    \begin{equation}\label{eq: torsion comparison}
        \mu^{\sigma_{E,t'H}}(G)=\frac{1}{t'}\cdot\frac{\ch_2^E(G)}{H\cdot \ch_1(G)},\quad \mu^{\sigma_{E,t'H}}(F)=\frac{1}{t'}\cdot\frac{\ch_2^E(F)}{H\cdot \ch_1(F)}\,.
    \end{equation}
    Thus $F$ is $(H,E-\frac{K}{2})$-twisted slope (semi)stable. 

    Conversely, assume that $F$ is $(H,E-\frac{K}{2})$-twisted slope (semi)stable. Let $t$ be chosen as before so that there are no higher walls. Suppose that $F$ is not $\sigma_{E,tH}$-(semi)stable. Let $G$ be a $\sigma_{H,tE}$-destabilizing subobject, i.e.,
    $$\mu^{\sigma_{E,tH}}(G)(\nleq) \mu^{\sigma_{E,tH}}(F)\,.
$$
Note that the cohomology sheaf $H^{-1}(G)$ is zero because $H^{-1}(F)=0$. We divide into two cases; $\rk(G)>0$ and $\rk(G)=0$. If $\rk(G)>0$, then we have 
$$\lim_{t'\rightarrow \infty} \mu^{\sigma_{E,t'H}}(G)=-\infty < 0 =\lim_{t'\rightarrow \infty}\mu^{\sigma_{E,t'H}}(F)\,.
$$
By continuity of the slope formula, there exists some $t'>t$ such that $\mu^{\sigma_{E,t'H}}(G)=\mu^{\sigma_{E,t'H}}(F)$, contradicting the fact that there are no walls above $t$. Therefore $\rk(G)=0$, i.e., $G$ is a torsion sheaf. Since torsion sheaves on $S$ form an abelian subcategory of both $\CA_{E,H}$ and $\Coh(S)$, $G$ being a subobject of $F$ in $\CA_{E, H}$ implies that $0\subsetneq G\subsetneq F$ is a subobject in $\Coh(S)$. By the computation in \eqref{eq: torsion comparison}, $G$ destabilizes $F$ with respect to the $(H,E-\frac{K}{2})$-twisted slope (semi)stability, contradicting the assumption. 
\end{proof}

\begin{remark}\label{rem: Gieseker is geometric}
Consider the moduli stack $\CM^{H-\textup{ss}}_\v$ of Gieseker $H$-semistable sheaves of type $\v$. By Lemma \ref{lem: large volume}, we can identify this moduli stack as
$$\CM^{H-\textup{ss}}_\v=\CM^{\sigma-\textup{ss}}_\v
$$
for some geometric stability condition $\sigma$. When $\dim(\v)=1$, $\mu^H$-(semi)stability is equivalent to $\sigma_{(\frac{K}{2},tH)}$-(semi)stability for $t\gg0$. Now let $\dim(\v)=2$. Pick some small enough $n$ such that $(nH+\frac{K}{2})\cdot H<\mu^H(\v)$. Since $H$ and $nH$ are linearly dependent, $(H,nH)$-twisted Gieseker (semi)stability is equivalent to the usual Gieseker (semi)stability with respect to $H$. On the other hand, the lemma says that $(H,nH)$-twisted Gieseker (semi)stability is equivalent to $\sigma_{nH+\frac{K}{2},tH}$~-~(semi)stability for some $t\gg 0$. 
\end{remark}

The following Lemma will be useful later.

\begin{lemma} \label{lem: dimension 2}
Let $S$ be a surface with nef anticanonical surface. Let $\sigma_{E, H}$ be a geometric stability condition and $\phi\in \BR$. Then the abelian category $\CA_{E, H}[\phi]$ has homological dimension 2.
\end{lemma}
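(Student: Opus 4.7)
Since shifting a heart by an integer does not change its homological dimension, I may assume $\phi\in[0,1)$. The goal is to show $\Hom_{D^b(S)}(A,B[k])=0$ for all $A,B\in \CA_{E,H}[\phi]$ and all $k\geq 3$.

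For the base case $\phi=0$, I would exploit the definition of $\CA_{E,H}$ as a tilt of $\Coh(S)$: each $A\in\CA_{E,H}$ sits in a short exact sequence $0\to H^{-1}(A)[1]\to A\to H^0(A)\to 0$ with $H^{-1}(A)\in\Coh_{H,\leq E\cdot H}$ torsion-free and $H^0(A)\in\Coh_{H,>E\cdot H}$, and similarly for $B$. The usual long exact sequences, together with the vanishing $\Ext^{>2}_{\Coh(S)}=0$ for coherent sheaves on a surface, reduce the computation of $\Hom_{D^b(S)}(A,B[k])$ for $k\geq 3$ to showing that $\Ext^2_S(H^{-1}(A),H^0(B))=0$. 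By Serre duality this equals $\Hom_S(H^0(B),H^{-1}(A)\otimes K_S)^\vee$. Since $-K_S$ is nef, every HN factor of the torsion-free sheaf $H^{-1}(A)\otimes K_S$ has $\mu^H$-slope $\leq E\cdot H+K_S\cdot H\leq E\cdot H$. On the other side, $H^0(B)$ is generated by torsion sheaves and by $\mu^H$-semistable torsion-free sheaves of slope $>E\cdot H$. Torsion maps to torsion-free sheaves vanish, and the standard slope comparison kills the remaining contributions. This proves the $\phi=0$ case.

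For $\phi\in(0,1)$, the heart $\CA_{E,H}[\phi]$ is itself a tilt of $\CA_{E,H}$ at the torsion pair $(\CT,\CF)=\big(\CP((\phi,1]),\CP((0,\phi])\big)$, where $\CP$ denotes the slicing of $\sigma_{E,H}$. Each $A\in\CA_{E,H}[\phi]$ fits into a triangle $F_A[1]\to A\to T_A$ with $T_A\in\CT$ and $F_A\in\CF$. Combining these decompositions for $A$ and $B$ with the fact (just proved) that $\CA_{E,H}$ has homological dimension $2$, the only possibly nonzero contribution to $\Hom_{D^b(S)}(A,B[3])$ is $\Ext^2_{\CA_{E,H}}(F_A,T_B)$, and degrees $k\geq 4$ vanish immediately. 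Serre duality rewrites this as $\Hom_{D^b(S)}(T_B,F_A\otimes K_S[-1])^\vee$. I would then use the phase-compatibility between the slicing of $\sigma_{E,H}$ and the Serre functor which, thanks to $K_S\cdot H\leq 0$, forces $F_A\otimes K_S$ to sit in $\CP((-\infty,\phi])$ (for the positive rank case slopes decrease; for objects with trivial rank the argument is easier since tensoring with a line bundle preserves slicing). Consequently $F_A\otimes K_S[-1]\in\CP((-\infty,\phi-1])$, and Hom-vanishing between objects of the slicing of strictly decreasing phase gives the desired result.

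The technical heart of the argument — and the main obstacle — is Step~3: checking precisely that $-\otimes K_S$ is phase-nonincreasing for $\sigma_{E,H}$-semistable objects. For positive rank, this reduces to $\mu^H(F\otimes K_S)=\mu^H(F)+K_S\cdot H\leq \mu^H(F)$ via $K_S\cdot H\leq 0$ and a careful reading of $Z_{E,H}$; for pure one-dimensional sheaves the real part of $Z_{E,H}$ shifts by a term involving $K_S\cdot c_1$ that must be controlled via the support geometry; for skyscrapers the Serre functor acts trivially. Packaging these cases into a single phase-comparison statement is where the nef hypothesis on $-K_S$ is essential.
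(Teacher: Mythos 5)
Your proposal takes a genuinely different route from the paper: the paper's proof is four lines long and rests entirely on the external input \cite[Theorem 7.7]{mozgovoy}, namely $\Hom(\CP(\phi_1),\CP(\phi_2))=0$ whenever $\phi_2-\phi_1>2$, applied to $F_1\in\CP((\phi,\phi+1])$ and $F_2[k]\in\CP((\phi+k,\phi+k+1])$. Your base case $\phi=0$ is correct and complete: the long-exact-sequence bookkeeping does isolate $\Ext^2_S(H^{-1}(A),H^0(B))$ as the only surviving term for $k=3$, and the Serre duality plus $\mu^H$-slope comparison (using $K_S\cdot H\le 0$ and torsion-freeness of $H^{-1}(A)$) kills it. However, for $\phi\in(0,1)$ there is a genuine gap exactly where you flag it. The claim that $-\otimes K_S$ is phase-nonincreasing for $\sigma_{E,H}$-semistable objects is, via Serre duality, essentially equivalent to the Mozgovoy vanishing the paper cites — so you have correctly identified the key statement but not proved it. Your proposed reduction to $\mu^H(F\otimes K_S)=\mu^H(F)+K_S\cdot H\le\mu^H(F)$ does not suffice: the Bridgeland phase of a positive-rank semistable object is $\arg Z_{E,H}$, whose real part involves $\ch_2$, and tensoring by $K_S$ replaces $Z_{E,H}(F)$ by $Z_{E-K_S,H}(F)$, shifting both the real and imaginary parts in ways not controlled by $K_S\cdot H$ alone (two semistable objects with equal $\mu^H$ can have different phases). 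The one-dimensional case is likewise deferred to "support geometry" without an argument. So the argument is not closed; it reduces the lemma to an unproved statement of comparable depth to the cited theorem. (Minor points: your Serre duality in the tilted step should read $\Ext^2(F_A,T_B)\cong\Hom(T_B,F_A\otimes K_S)^\vee$ with no shift $[-1]$, though the conclusion is unaffected; and you only address the upper bound, whereas the statement asserts the homological dimension is exactly $2$ — the paper also exhibits the lower bound via $\Ext^2(\CO_x[m],\CO_x[m])\neq 0$ for a suitably shifted skyscraper in the heart.)
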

\begin{proof}
    Suppose that $F_1, F_2$ are objects in $\CA_{E, H}[\phi]=\CP((\phi, \phi+1])$. It was proven in \cite[Theorem 7.7]{mozgovoy} that $\Hom(\CP(\phi_1), \CP(\phi_2))=0$ if $\phi_2-\phi_1>2$. Since $F_2[k]$ is an object in $\CP((\phi+k, \phi+k+1])$ it follows that $\Ext^k(F_1, F_2)=\Hom(F_1, F_2[k])=0$ for $k\geq 3$, showing that the homological dimension is at most 2. Since $\CO_x[m]$ belongs to $\CA_{E, H}[\phi]$ for some $m\in \BZ$ and $\Ext^2(\CO_x[m],\CO_x[m])\neq 0$, homological dimension is precisely $2$.
\end{proof}

\subsubsection{The ABCH program} Let $S$ be a surface which admits (possibly many) full and strong exceptional collections, for instance any del Pezzo surfaces \cite[Example 8.6]{Bri-St}.\footnote{Having geometric helix implies that we have a full and strong exceptional collection.} It was proposed in \cite{abch, am} that one could try to use the exceptional collections on $S$ and corresponding quiver stability conditions to study moduli spaces with respect to geometric stability conditions. Note that the quiver stability conditions that we will consider are typically not geometric; for example $\CO_{\BP^2}(-1)[2]$ appears in the exceptional collection in Example \ref{ex: exceptionalcollections} but it is not in the heart of any geometric stability condition. However, in some cases the $\BR$-action on the space of stability conditions can be used to relate geometric and quiver stability conditions.

\begin{definition}\label{def: quiver description}
    Let $\sigma$ be a geometric stability condition and $\v\in K(S)$. We say that $\CM_\v^{\sigma-\textup{ss}}$ admits a quiver description if there is another geometric stability condition $\sigma'$ and $\phi\in \BR$ such that $\sigma'[\phi]$ is a quiver stability condition and, for $F$ in class $\v$,
    \[F\textup{ is }\sigma\textup{-(semi)stable}\,\Leftrightarrow\, F\textup{ is }\sigma'\textup{-(semi)stable}\,.\]
\end{definition}
In particular, when this happens the moduli stack $\CM_\v^{\sigma-\textup{ss}}$ can be identified with a moduli stack of representations $\CM^{\theta-\textup{ss}}_d(Q, I)$ of some quiver with relations via the equivalence $D^b(S)\simeq D^b(Q,I)$ composed with a twist $[k]$. 

A careful study of the wall and chamber structure on the space of stability conditions and an identification of ``quiver regions'' associated to certain choices of exceptional collections allowed the authors to prove the following:

\begin{theorem}[\cite{abch, am}]\label{thm: ABCH for three surfaces}
    Let $S=\BP^2$, $S=\BP^1\times \BP^1$ or $S=\textup{Bl}_\pt(\BP^2)$. Then, for any geometric stability condition $\sigma$ and $\v\in K(S)$, $\CM_\v^{\sigma-\textup{ss}}$ admits a quiver description.
\end{theorem}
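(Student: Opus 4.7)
The plan is to follow the surface-by-surface strategy of \cite{abch, am}, reducing the statement to an explicit analysis of the space of Bridgeland stability conditions. For each of the three surfaces $S$, I would first choose a combinatorial family of full exceptional collections $\widetilde{\FE}^{(\underline{k})}$ of shifted line bundles, indexed by $\underline{k} \in \BZ$ for $\BP^2$ and $\underline{k} \in \BZ^2$ for $\BP^1 \times \BP^1$ and $\textup{Bl}_\pt(\BP^2)$. The dual collections $\FE^{(\underline{k})}$ are full and strong by direct cohomology vanishing for line bundles on the del Pezzo surface, so by Theorem \ref{thm: beilinson} the extension closure $\CA_{\FE^{(\underline{k})}} \subset D^b(S)$ is the heart of a bounded $t$-structure, equivalent under the Beilinson functor $B$ to $\Rep_{(Q^{(\underline{k})}, I^{(\underline{k})})}$ for some quiver with relations. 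Choosing central charges sending each $E_i^{(\underline{k})}$ into $\BH \setminus (-\infty, 0)$ defines the quiver region $\mathcal{R}^{(\underline{k})} \subset \Stab(D^b(S))$.

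Next, I would exploit the wall-and-chamber structure on $\Stab(D^b(S))$ associated to the fixed class $\v$. By \cite{macri_schmidt, toda}, the walls for $\v$ form a locally finite union of real codimension $1$ subvarieties, so the geometric region $\{\sigma_{E, H}\}$ decomposes into finitely many open chambers in any bounded subset of the $(E, H)$-parameter space. Within each chamber the moduli stack $\CM_{\v}^{\sigma_{E, H}-\textup{ss}}$ is constant.

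The core technical step, carried out in \cite{abch} for $\BP^2$ and in \cite{am} for $\BP^1 \times \BP^1$ and $\textup{Bl}_\pt(\BP^2)$, is to produce a numerical description of the quiver regions $\mathcal{R}^{(\underline{k})}$ in the $(E, H)$-parameterization (after applying an appropriate rotation $[\phi]$) and to verify that every chamber for $\v$ in the geometric region contains a point whose $\BR$-rotation lies in some $\mathcal{R}^{(\underline{k})}$. Concretely, for $\BP^2$ one uses Bertram's nested semicircular wall structure to relate the geometric chambers along a vertical ray in the $(E, H)$-half-plane to successive collections $(\CO(k-1), \CO(k), \CO(k+1))$ as $k$ varies. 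For the other two surfaces the analogous nested structure was proven in \cite{am}, using four-term exceptional collections. Given any geometric $\sigma$ in the chamber of $\v$, one then picks a nearby geometric $\sigma'$ inside the same chamber such that $\sigma'[\phi]$ is a quiver stability condition, which provides the quiver description in the sense of Definition~\ref{def: quiver description}.

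The main obstacle is the third step: proving that the $\BR$-rotated quiver regions cover the whole geometric region. This is a surface-specific combinatorial statement about stability walls, and for each of the three surfaces it requires identifying the \emph{correct} infinite family of exceptional collections, computing the boundaries of the associated quiver regions (which correspond to walls where some $E_i^{(\underline{k})}$ ceases to be stable), and matching them with the walls in the geometric region. This coverage is special to these three surfaces because each admits an explicit infinite family of exceptional collections of line bundles whose quiver regions interlace nicely with the geometric region; extending the result to other del Pezzo surfaces is conjectured in \cite{am} but not known in general.
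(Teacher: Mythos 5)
This theorem is imported into the paper by citation to \cite{abch, am} with no proof given beyond the one-line description of the strategy (``a careful study of the wall and chamber structure\ldots and an identification of quiver regions''), and your proposal is an accurate reconstruction of exactly that strategy: quiver regions from families of exceptional collections of line bundles, the $\BR$-rotation relating them to geometric stability conditions, and the surface-specific coverage argument. Your sketch matches the cited approach and correctly flags that the coverage of the geometric region by rotated quiver regions is the genuinely hard, surface-specific step carried out in the references, so there is nothing to correct here.
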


The authors in \cite[Conjecture 1]{am} conjecture that the same is true for any del Pezzo surfaces. We remark that in the case of $S=\BP^2$ the only exceptional collections used are the ones obtained from tensoring by a line bundle Example \ref{ex: exceptionalcollections}.

\subsection{Application to sheaf Virasoro constraints}

Let $X$ be a smooth projective variety admitting a full exceptional collection. Virasoro constraints for moduli spaces of sheaves on $X$ are formulated in \cite{blm} using a descendent algebra $\BD^X$ generated by symbols of the form $\ch_k(\gamma)$ for a cohomology class $\gamma\in H^\ast(X)$ and Virasoro operators $\bL_n\colon \BD^X\to \BD^X$; the descendent algebra admits a realization map to $H^\ast(\CN^X)$, see Definition 2.5 in \cite{blm}. On the other hand, we defined in Definition~\ref{def: ext descendent} the Ext descendent algebra $\BD^\bfT$ to be generated by symbols of the form $\ch_k^L(\alpha)$ for $\alpha\in K(X)$, and a realization map to $H^\ast(\CN^X)$. We have an isomorphism between the two descendent algebras
$$\psi:\BD^{\bfT}\rightarrow \BD^X,\quad \ch^L_k(\alpha)\mapsto \ch_k\big(\ch(\alpha^\vee)\td(X)\big).
$$

As we did for quivers in Proposition \ref{prop: comparison of two descendents}, we can show that this isomorphism commutes with realization homomorphisms and intertwines the Virasoro operators on $\BD^\bfT$ (cf. Definition \ref{def: K Virasoro}) and on $\BD^X$ (cf. \cite[Section 2.3]{blm}).\footnote{Since $X$ admits a full exceptional collections, we have $H^{p,q}(X)=0$ for all $p\neq q$ by \cite[Proposition 1.9]{marcollitabuada}. In the language of \cite{blm} this means that $\ch_k(\gamma)=\ch_k^H(\gamma)$.} Indeed, the operators both on $\BD^{\bfT}$ and $\BD^X$ are shown to be dual to the canonical Virasoro operators on the lattice vertex algebra $V^X\coloneqq H_\ast(\CN^X)=H_\ast(\CN^{\,\bfT})$, see Proposition \ref{prop: duality of operators} and \cite[Theorem 4.12]{blm}. 

Suppose we are given a coarse moduli space $M_\v^\sigma$ parametrizing sheaves or complexes on $X$ with respect to some stability condition $\sigma$ such that there are no strictly $\sigma$-semistable objects of type $\v$. Suppose further that the natural perfect obstruction theory on $M_\v^\sigma$ is 2-term, so we have a natural virtual fundamental class $[M_\v^\sigma]^\vir$. The sheaf Virasoro constraints conjectures in \cite{blm} are the statement that the class $[M_\v^\sigma]^\vir$ in $\widecheck V^X$ is a primary state.

\begin{theorem}\label{thm: ABCH implies Virasoro constraints}
    Let $S$ be a del Pezzo surface, $\sigma$ be a geometric stability condition and $\v\in K(S)$. Suppose that $\CM_\v^{\sigma-\textup{ss}}$ admits a quiver description. 
    \begin{enumerate}
        \item [(i)]  If there are no strictly $\sigma$-semistable objects of type $\v$, then the coarse moduli space $M^\sigma_\v$ together with its natural virtual class satisfies the Virasoro constraints. 
        \item [(ii)]  If $\Ext^2_S(F,F')=0$ for all $\sigma$-semistable objects $F$ and $F'$ of type $\v$, then the moduli stack $\CM_\v^{\sigma-\textup{ss}}$ is smooth and tautologically generated. Furthermore, the Virasoro operators are geometric, i.e. they descend to the cohomology ring via the realization homomorphism $\BD^S\to H^\ast(\CM_\v^{\sigma-\textup{ss}})$. 
    \end{enumerate}
\end{theorem}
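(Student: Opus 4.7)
Unpacking Definition \ref{def: quiver description}, pick a geometric stability condition $\sigma'$ and $\phi \in \BR$ such that $\sigma'[\phi]$ is a quiver stability with heart $\CA_\FE$ for some exceptional collection $\FE$ on $S$ whose dual is full and strong, and such that $\sigma$- and $\sigma'$-(semi)stability coincide on class $\v$. Let $(Q, I)$ be the quiver with relations associated to $\FE$ via Theorem \ref{thm: beilinson}, and $\bfQ$ its canonical dg replacement from Theorem \ref{thm: koszuldgquiver}. Since $S$ is del Pezzo, $-K_S$ is nef, so Lemma \ref{lem: dimension 2} shows that $\CA_\FE$ has homological dimension $2$; hence $(Q,I)$ does as well and $\bfQ$ is quasi-smooth. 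The derived equivalence $D^b(S) \simeq D^b(\bfQ)$ lifts to a quasi-equivalence $\mathbf B$ of dg enhancements (composition of Theorems \ref{thm: beilinson} and \ref{thm: koszuldgquiver} at the dg level). Composing $\mathbf B$ with the shift $[k]$ corresponding to $\phi$ identifies
\[\CM_\v^{\sigma-\textup{ss}} \simeq \CM_d^{\theta-\textup{ss}}(\bfQ)\]
for the corresponding $d$ and $\theta$. The natural perfect obstruction theories on both sides are intrinsic to the dg category (arising from $\RHom(F,F)[1]$), so they match; in particular the virtual fundamental classes correspond under the identification.

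\textbf{Proof of (i).} By Theorem \ref{main thm: natural iso}(ii) and Corollary \ref{cor: intertwining Virasoro}, $\mathbf B$ together with the shift $[k]$ (itself a quasi-equivalence) induces an isomorphism of vertex algebras $V^S \xrightarrow{\sim} V^\bfQ$ intertwining the half-Virasoro operators, and hence preserving the subspace of primary states. By Theorem \ref{thm: quivervirasoro} applied to the quasi-smooth dg quiver $\bfQ$, the class $[\CM_d^{\theta}(\bfQ)]^\inva \in \widecheck V^\bfQ$ is a primary state; in the absence of strictly semistables this coincides with the standard virtual class. Pulling back via the isomorphism, $[M_\v^\sigma]^\vir$ is a primary state in $\widecheck V^S$, which is precisely the formulation of the sheaf Virasoro constraints of \cite{blm}.

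\textbf{Proof of (ii).} The vanishing $\Ext^2_S(F, F') = 0$ is intrinsic to the dg category and hence transports via $\mathbf B$ to the analogous vanishing on the quiver side. Combined with the homological dimension bound established above, this verifies both parts of Assumption \ref{ass: smoothness} for $\CM_d^{\theta-\textup{ss}}(\bfQ)$. Proposition \ref{prop: surjectivityrealization} and Theorem \ref{thm: virasoro rep} then give smoothness, tautological generation, and geometricity of the operators $\bR_n$ on the quiver side. All three properties transfer to $\CM_\v^{\sigma-\textup{ss}}$: smoothness is preserved since the tangent complex is intrinsic, while tautological generation and geometricity follow from the fact that $\mathbf B$ matches the descendent realization homomorphisms and intertwines the Virasoro operators (using Proposition \ref{prop: comparison of two descendents} on the quiver side together with its sheaf-side analogue relating $\BD^S$ and $\BD^\bfT$).

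\textbf{Main obstacle.} The key technical subtlety is that the quiver description combines a derived equivalence and a shift, so one must track carefully that virtual classes, realization homomorphisms, and the primary-state condition are all preserved simultaneously through $\mathbf B \circ [k]$. The ingredient that makes this routine is the naturality of the vertex algebra construction with respect to quasi-equivalences of dg categories (Theorem \ref{main thm: natural iso}(ii)): this simultaneously encodes preservation of the vertex algebra structure, of the half-Virasoro representation, and --- via the intrinsic description of the obstruction theory --- of the virtual fundamental class, reducing (i) and (ii) to the quiver statements already proven in Sections \ref{sec: Virasoro} and \ref{sec: Virasoro and wall-crossing}.
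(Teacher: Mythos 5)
Your proposal follows essentially the same route as the paper: identify $\CM_\v^{\sigma-\textup{ss}}$ with a quiver moduli stack via the dg-lifted derived equivalence (plus shift), use the naturality of the vertex algebra isomorphism and Corollary \ref{cor: intertwining Virasoro} to transport the primary-state property from Theorem \ref{thm: quivervirasoro} for part (i), and transport Assumption \ref{ass: smoothness} to invoke Proposition \ref{prop: surjectivityrealization} and Theorem \ref{thm: virasoro rep} for part (ii).

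One step is elided, however. You assert that the virtual classes "correspond under the identification" because the obstruction theory is intrinsic to the dg category. But Theorem \ref{thm: quivervirasoro} is proved for the virtual class of $M_d^\theta$ constructed explicitly as a derived zero locus in Section \ref{sec: explicit virtual class}, whereas the class that matches the sheaf side is the one induced by the derived enhancement of $\CN^\bfQ$ (with virtual tangent complex $\RHom_\bfQ(V,V)[1]$). These are a priori two different obstruction theories on the same space, and one must check they give the same class. The paper does this by comparing the $K$-theory classes of the two virtual tangent complexes — using the standard resolution of Proposition \ref{prop: extquasismooth} to show both equal $\CO-\sum_{v}\CH\textup{om}(\CV_v,\CV_v)+\sum_{e}\CH\textup{om}(\CV_{s(e)},\CV_{t(e)})-\sum_{\tilde r}\CH\textup{om}(\CV_{s(\tilde r)},\CV_{t(\tilde r)})$ — and then invoking Siebert's result that the virtual class depends only on the $K$-theory class of the virtual tangent complex. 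This is a genuine (if short) argument that your write-up should include rather than fold into the word "intrinsic."
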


\begin{proof}
Let $\CM_\v^{\sigma-\textup{ss}}$ be a moduli stack admitting a quiver description. By definition, this means that we have an exceptional collection $\FE=(E_1,\dots, E_n)$ of $D^b(S)$ and a stability condition $\theta$ for a corresponding $(Q,I)$ such that 
$$B:\CM_\v^{\sigma-\textup{ss}}\xrightarrow{\sim}{\CM}_d^{\theta-\textup{ss}}\,.
$$
This isomorphism is obtained by the equivalence $B:D^b(S)\rightarrow D^b(Q,I)$, possibly composed by a shift $[k]$. By definition of $B$, it lifts to a quasi-equivalence between the natural dg enhancements which in turn defines an quasi-isomorphism between derived stacks ${\bf B}:\bfCN^S\rightarrow \bfCN^\dgQ$. Here $\dgQ$ is the canonical dg quiver associated to $(Q,I)$ which is quasi-smooth by Lemma \ref{lem: dimension 2}.  Then we have a homotopy commutative diagram 
\begin{equation}\label{eq: comm 1}
\begin{tikzcd}
\CM_\v^{\sigma-\textup{ss}} \arrow[r, "B"] \arrow[d, hook] & {\CM}_d^{\theta-\textup{ss}} \arrow[d, hook] \\
\CN^S \arrow[r, "B"]                 & \CN^\dgQ                
\end{tikzcd}
\end{equation}
where the vertical morphisms are open embeddings. Note that this induces an obstruction theory for both $\CM_\v^{\sigma-\textup{ss}}$ and $\CM_d^{\theta-\textup{ss}}$ whose virtual tangent complexes at points $F\in \CM_\v^{\sigma-\textup{ss}}$ and $V\in \CM_d^{\theta-\textup{ss}}$ are given by 
$$\RHom_S(F, F)[1],\quad \RHom_{\dgQ}(V,V)[1]
$$
respectively. By commutativity of \eqref{eq: comm 1}, $B:\CM_\v^{\sigma-\textup{ss}}\rightarrow \CM_d^{\theta-\textup{ss}}$ also identifies the obstruction theories. By Lemma \ref{lem: dimension 2}, the complex $\RHom_S(F, F)[1]$ lies in the perfect amplitude $[-1,1]$, hence so is $\RHom_{\dgQ}(V,V)[1]$. 

To show the statement (i), assume that there are no strictly $\sigma$-semistable objects of type $\v$. Then the good moduli spaces are simply the $\BG_m$-rigidification of the moduli stacks and we have a homotopy commutative diagram  
\begin{equation}\label{eq: comm 2}
\begin{tikzcd}
M_\v^{\sigma} \arrow[r, "B"] \arrow[d, hook] & {M}_d^{\theta} \arrow[d, hook] \\
(\CN^S)^{\textup{pl}} \arrow[r, "B"]                 & (\CN^\dgQ)^{\textup{pl}}                
\end{tikzcd}
\end{equation}
with the vertical morphisms being open embeddings. The good moduli spaces are equipped with the induced truncated obstruction theory whose virtual tangent complex lies in the amplitude $[0,1]$. Pushforward of the virtual classes define the elements in the Lie algebras associated to vertex algebras
$$[M^\sigma_\v]^\vir\in \widecheck V^S,\quad [{M}^\theta_d]^\vir \in \widecheck V^\dgQ\,.
$$
By Corollary \ref{cor: intertwining Virasoro}, $B_*:V^S\rightarrow V^\dgQ$ is a vertex algebra isomorphism intertwining the Virasoro operators; hence the induced isomorphism of Lie algebras $B_\ast\colon \widecheck V^S\to \widecheck V^\bfQ$ preserves the subspaces of primary states. Since $B_*([M^\sigma_\v]^\vir)=[{M}^\theta_d]^\vir$ by commutativity of \eqref{eq: comm 2}, $M^\sigma_\v$ satisfies the Virasoro constraints with its natural virtual class if and only if ${M}^\theta_d$ does. By Theorem \ref{thm: quivervirasoro}, we know that ${M}^\theta_d$ satisfies the Virasoro constraints with respect to the virtual class defined in Section \ref{sec: explicit virtual class}. To show that this virtual class is equal to the one coming from the derived enhancement of $\CN^\dgQ$, it suffices to compare the $K$-theory class of the virtual tangent complex by the result of \cite{Sie}. By Proposition \ref{prop: extquasismooth}, the $K$-theory class coming from the derived enhancement of $\CN^\dgQ$ is
$$\CO-\sum_{i\in Q_0}\CH\textup{om}(\CV_i, \CV_i)
    +\sum_{e\in Q_1}\CH\textup{om}(\CV_{s(e)}, \CV_{t(e)})
    -\sum_{\tilde r\in Q_2}\CH\textup{om}(\CV_{s(\tilde r)}, \CV_{t(\tilde r)})\,,
$$
which matches precisely the $K$-theory class of the virtual tangent complex given by the explicit description in Section \ref{sec: explicit virtual class}, proving the statement (i).

Now we consider the statement (ii). Assume that $\Ext^2_S(F,F')=0$ for all $F,F'$ in $\CM_\v^{\sigma-\textup{ss}}$. Via the isomorphism $B:\CM_\v^{\sigma-\textup{ss}}\xrightarrow{\sim}{\CM}_d^{\theta-\textup{ss}}$ this implies the analogous statement for $(Q,I)$. Therefore $(Q,I)$, $d$ and $\theta$ satisfy Assumption \ref{ass: smoothness}. By Theorem~\ref{thm: virasoro rep}, we have a surjective realization homomorphism 
$$\xi_d:\BD^{\dgQ}_d\rightarrow H^*({\CM}_d^{\theta-\textup{ss}})
$$
through which the operators $\bR_n^{\dgQ}$ descend. On the other hand, by taking cohomology of the diagram \eqref{eq: comm 1}, we obtain a commutative diagram 
\begin{center}
\begin{tikzcd}
H^*(\CM_\v^{\sigma-\textup{ss}}) & H^*({\CM}_d^{\theta-\textup{ss}}) \arrow[l, "B^*"]  \\
\BD_\v^{S}      \arrow[u]      & \BD_d^{\dgQ} \arrow[l, "B^*"]    \arrow[u] 
\end{tikzcd}
\end{center}
where the isomorphism $B^*:\BD^{\dgQ}_d\to \BD^S_\v$ is defined as in the proof of Theorem \ref{thm: natural iso}; explicitly, it is given by
\[B^\ast \ch_k(i)=\ch_k\big(\ch(\widetilde E_i^\vee)\td(X)\big)\,,\quad i\in Q_0.\] Note that this isomorphism $B^*$ intertwines the Virasoro operators $\bR_n^S$ and $\bR_n^\dgQ$ which can be checked directly from the definition of $B^*$.\footnote{More conceptually, this stems from the fact that the descendent algebras $\BD^S, \BD^\bfQ$ and respective Virasoro operators are identified with the naturally defined Ext descendent algebra and Ext Virasoro operators, see Definitions \ref{def: ext descendent} and \ref{def: K Virasoro}.} Therefore, statements about surjectivity of the realization homomorphism and descent of $\bR_n^S$ follows from the corresponding statements for $(Q,I)$. \end{proof}

By applying the above theorem to del Pezzo surfaces where the ABCH program is proven, we obtain the following result concerning sheaf moduli spaces. 

\begin{corollary}\label{cor: three cases}
    Let $\CM_\v^{H-\textup{ss}}$ be a moduli stack of nonzero-dimensional $H$-semistable sheaves on $\BP^2$, $\BP^1\times \BP^1$ or $\textup{Bl}_\pt(\BP^2)$. 
    \begin{enumerate}
        \item [(i)] If there are no strictly $H$-semistable sheaves of type $\v$, then the coarse moduli space $M_\v^H$ satisfies the Virasoro constraints with smooth fundamental class. 
        \item [(ii)] The moduli stack $\CM^{H-\textup{ss}}_\v$ is a smooth and tautologically generated. Furthermore, the Virasoro operators descends to the cohomology ring via the realization homomorphism. 
    \end{enumerate}

\end{corollary}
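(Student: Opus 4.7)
The plan is to deduce both parts from Theorem \ref{thm: ABCH implies Virasoro constraints} by first identifying $\CM^{H-\textup{ss}}_\v$ with a moduli of semistable objects for a geometric Bridgeland stability condition, and then invoking the ABCH-type quiver description available on these three surfaces. By Lemma \ref{lem: large volume} and Remark \ref{rem: Gieseker is geometric}, there is a geometric stability condition $\sigma_{E, tH}$ (with $E=K/2$ in the one-dimensional case and $E = nH + K/2$ with $n$ sufficiently negative in the torsion-free case, and $t\gg 0$ in both cases) such that $\CM^{H-\textup{ss}}_\v = \CM^{\sigma-\textup{ss}}_\v$. Theorem \ref{thm: ABCH for three surfaces} then provides the quiver description, so both hypotheses of Theorem \ref{thm: ABCH implies Virasoro constraints} reduce to checking vanishing of $\Ext^2_S(F, F')$ for $H$-semistable sheaves $F, F'$ of type $\v$.

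The key calculation is this $\Ext^2$ vanishing, which exploits that $-K_S$ is ample on each of the three del Pezzo surfaces. By Serre duality $\Ext^2_S(F, F') \simeq \Hom_S(F', F \otimes K_S)^\vee$. In the torsion-free case, Gieseker semistability implies $\mu^H$-semistability and $\mu^H(F \otimes K_S) = \mu^H(F) + H\cdot K_S < \mu^H(F')$ since $H\cdot K_S < 0$; the Hom therefore vanishes because $F'$ is $\mu^H$-semistable and $F \otimes K_S$ is $\mu^H$-semistable of strictly smaller slope. In the pure one-dimensional case, Gieseker and $\mu^H$-semistability coincide, and one computes
\[\mu^H(F \otimes K_S) = \mu^H(F) + \frac{c_1(F)\cdot K_S}{c_1(F)\cdot H}\,,\]
which is strictly smaller than $\mu^H(F')$ because $c_1(F)$ is effective and $-K_S$ is ample; the same semistability argument gives the desired vanishing.

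Given this vanishing, part (ii) is an immediate application of Theorem \ref{thm: ABCH implies Virasoro constraints}(ii). For part (i), the $\Ext^2$ vanishing applied to stable sheaves $F$ shows that the moduli space $M^H_\v$ is smooth of the expected dimension, so the natural virtual class coming from the derived enhancement coincides with the smooth fundamental class, and the Virasoro constraints follow from Theorem \ref{thm: ABCH implies Virasoro constraints}(i). The main obstacle I anticipate is the uniform verification of $\Ext^2$ vanishing across the torsion-free and one-dimensional regimes; however, since Serre duality reduces both cases to a $\Hom$ between semistable objects of strictly decreasing slope, the slope argument handles both simultaneously, and the rest of the corollary is a clean unpacking of the preceding theorems.
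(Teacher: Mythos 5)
Your proposal is correct and follows essentially the same route as the paper: identify $\CM^{H-\textup{ss}}_\v$ with a Bridgeland moduli stack via Lemma \ref{lem: large volume} and Remark \ref{rem: Gieseker is geometric}, invoke Theorem \ref{thm: ABCH for three surfaces} for the quiver description, verify $\Ext^2_S(F,F')=\Hom_S(F',F\otimes K_S)^\vee=0$ by the slope comparison forced by negativity of $K_S$, and conclude from Theorem \ref{thm: ABCH implies Virasoro constraints}. The only difference is that you spell out the torsion-free and one-dimensional slope computations separately, whereas the paper treats them in one line; both are correct.
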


\begin{proof}
Let $\CM_\v^{H-\textup{ss}}$ be a moduli stack as in the statement. By Remark \ref{rem: Gieseker is geometric} and Theorem \ref{thm: ABCH for three surfaces}, it admits a quiver description. For any positive dimensional $H$-semistable sheaves $F$ and $F'$ of type $\v$, we have 
$$\Ext^2_S(F,F')\simeq \Hom_S(F',F\otimes K_S)^\vee=0
$$
since $F'$ and $F\otimes K_S$ are $\mu^H$-semistable sheaves with $\mu^H(F')>\mu(F\otimes K_S)$ by negativity of $K_S$. This implies that the moduli stack is smooth by deformation theory. Therefore (i) and (ii) follows from the corresponding statements from Theorem~\ref{thm: ABCH implies Virasoro constraints}. 
\end{proof}
Note that part (i) of the above Theorem is also shown in \cite{blm, bojko} when $\dim(\v)=2$. The case $\dim(\v)=1$ was previously only known conditionally on some technical conditions necessary for the application of a wall-crossing formula \cite[Assumption 5.8]{blm}.

The statement about the Virasoro operators descending for the moduli spaces of one-dimensional sheaves on $\BP^2$ is applied to the study of the cohomology ring of such moduli spaces in terms of generators and relations in \cite{KLMP}.

\section{Grassmannian and symmetric polynomials}\label{sec: Grass}

In this section, we study the Virasoro constraints of Grassmannians from the perspective of symmetric functions. Previously, notation $\Gr(N,k)$ was used to denote the Grassmannian parametrizing $k$-dimensional {\it quotients} of a fixed vector space $\BC^N$. In this section, we change the notation and use $\Gr(k,N)$ to denote the Grassmannian parametrizing $k$-dimensional {\it subspaces} of $\BC^N$. We shift to this convention because Grassmannians and symmetric functions are discussed often in this way.

\subsection{Algebra of symmetric functions}

We will now recall some basic definitions regarding the algebra of symmetric functions, following \cite{macdonald}. Let 
\[\Lambda=\varprojlim_{n} \BQ[x_1, \ldots, x_n]^{S_n}\]
be the ring of symmetric functions in infinitely many variables, where the inverse limit is understood in the category of graded rings. The ring $\Lambda$ can be described as the polynomial ring in infinitely many variables in a few different ways:
\begin{enumerate}
    \item $\Lambda=\BQ[e_1, e_2, \ldots]$ where $e_j$ is the $j$-th elementary symmetric function.
    \item $\Lambda=\BQ[p_1, p_2, \ldots]$ where $p_j$ is the $j$-th power sum.
    \item $\Lambda=\BQ[h_1, h_2, \ldots]$ where $h_j$ is the $j$-th complete symmetric function.
\end{enumerate}
The different sets of generators can be written in terms of each other by Newton's identities:
\begin{align*}\sum_{j\geq 0} e_j &=\exp\left(\sum_{k\geq 1}\frac{(-1)^{k+1}}{k}p_k\right)\\
\sum_{j\geq 0} h_j &=\exp\left(\sum_{k\geq 1}\frac{1}{k}p_k\right)=\left(\sum_{j\geq 0} (-1)^j e_j\right)^{-1}\,.\end{align*}

Above, we set $e_0=h_0=1$; the identities are understood in the completion of $\Lambda$ with respect to the degree. Each set of generators defines a basis of $\Lambda$ indexed by partitions $\lambda=(\lambda_1\geq \lambda_2\geq \cdots\geq \lambda_{\ell(\lambda)})$\,:
\[e_\lambda=\prod_{i=1}^{\ell(\lambda)}e_{\lambda_i}\,, \quad p_\lambda=\prod_{i=1}^{\ell(\lambda)}p_{\lambda_i}\,, \quad h_\lambda=\prod_{i=1}^{\ell(\lambda)}h_{\lambda_i}\,.\]
Another natural basis indexed by partitions is $\{m_\lambda\}$ where 
\[m_\lambda(x_1, \ldots, x_n)=\sum_{\sigma} \prod_{i=1}^n x_i^{\sigma_i}\]
where the sum is over all the distinct permutations of $\lambda$.

The ring $\Lambda$ admits a non-degenerate bilinear pairing $\langle -,-\rangle$, called the Hall inner product, that plays a particularly important role. This pairing is defined by setting
\[\langle p_\lambda, p_\mu\rangle=\delta_{\lambda \mu}z_\lambda\,.\]
In the formula above
\[z_\lambda=\prod_{i\geq 1}i^{m_i}m_i!\]
where $m_i$ is the number of times that $i$ appears in $\lambda$. It can be checked that for every $n>0$ the multiplication operator $p_n$ and the annihilation operator $p_{-n}:=n\frac{\partial}{\partial p_n}$ are adjoint with respect to the Hall pairing, i.e., 
\[\langle p_n f, g\rangle=\langle f, p_{-n}\,g\rangle\,.\]

Schur polynomials $\{s_\lambda\}$ form yet another natural basis of $\Lambda$ indexed by partitions. They can be defined by the following property:
\begin{definition}
The Schur polynomials $\{s_\lambda\}$ are the unique orthonormal basis of $\Lambda$ with respect to $\langle -,- \rangle$  such that 
\[s_\lambda=m_\lambda+\sum_{\mu<\lambda}a_{\lambda\mu}m_\mu\]
for some $a_{\lambda\mu}\in \BQ$ where the sum runs over partitions $\mu$ which are smaller than $\lambda$ in the lexicographic order.
\end{definition}
There are several different ways to define Schur polynomials more explicitly. For example they can be written in terms of the complete symmetric functions $h_i$ by a determinant formula:
\[s_\lambda=\det\big(h_{\lambda_i-i+j}\big)_{1\leq i,j\leq \ell(\lambda)}\,.\]
Another possible definition of Schur polynomials is presented in Proposition \ref{prop: heckeproperties}.

The ring $\Lambda$ admits an involution $\sigma\colon \Lambda\to \Lambda$ defined by $\sigma(p_j)=(-1)^{j-1}p_j$. This involution sends $s_\lambda$ to $s_{\lambda^t}$ where $\lambda^t$ is the conjugate partition.

\begin{example}\label{ex: schur}
Elementary symmetric functions and complete symmetric functions are both easily written as Schur functions:
\[e_j=s_{(1)^j}\,,\quad h_j=s_{(j)}\,.\]
We also have
\[s_{(2,2)}=\frac{1}{12}p_1^4 + \frac{1}{4}p_2^2 - \frac 13 p_1 p_3\,.\]
\end{example}

\subsection{Vertex algebra from symmetric functions}\label{sec: vertex, symmetric}

Let $\Lambda[Q,q^{\pm 1}]$ be the extension of $\Lambda$ by the algebra $\BQ[Q,q^{\pm 1}]$ associated to the monoid $\BN\times \BZ$. The Hall pairing extends to this vector space
$$\Lambda^*
[ Q,q^{\pm 1}]\otimes \Lambda_*[Q,q^{\pm 1}]\xrightarrow{\langle-,-\rangle}\BQ\,.
$$
The additional subscript and superscript of $*$ are just to indicate their relation to the cohomology and homology as we discuss next.

Recall the Grassmannian vertex algebra $V^\Gr=H_*(\CN^{A_1,\textup{fr}})$ from Example \ref{ex: grassva}.\footnote{We need to use the dual convention for the framed vertex algebra since we changed the convention of Grassmannian in this section.} Then, we have a  diagram similar to \eqref{eq: construction diagram}
\begin{equation}\label{eq: grassva and symmetric functions}
    \begin{tikzcd}
  H^*(\CN_{k\rightarrow N})   \arrow[r,phantom,"\otimes" description]       &[-2em] H_*(\CN_{k\rightarrow N}) \arrow[rr,""] \arrow[d, "\xi^\dagger"] & \ & \BQ \arrow[d, equal] \\
 Q^Nq^k\cdot \Lambda^*  \arrow[u,"\xi"]\arrow[r,phantom,"\otimes" description]    &[-2em]   Q^Nq^k\cdot \Lambda_* \arrow[rr,""]   &    \   & \BQ              
\end{tikzcd}
\end{equation}
where $\xi$ is a ring isomorphism sending $p_n$ to $n!\ch_n(\CV)$ and $\xi^\dagger$ is induced from the perfect pairings defined by topological pairing on the top and Hall pairing on the bottom. The resulting isomorphism $\xi^\dagger:V^{\Gr}\xrightarrow{\sim}\Lambda_*[Q,q^{\pm 1}]$ endows $\Lambda_*[Q,q^{\pm 1}]$ with a vertex algebra structure such that it becomes a vertex subalgebra of $\textup{VA}(\BZ\times\BZ,\chi^\sym_{\Gr})$ where 
$$\chi^\sym_{\Gr}\big((k_1,N_1),(k_2,N_2)\big)=2k_1k_2 - k_1N_2-k_2N_1\,. 
$$

Using the ambient lattice vertex algebra $\textup{VA}(\BZ\times\BZ,\chi^\sym_{\Gr})$, we can explicitly write down fields of $\Lambda_*[Q,q^{\pm 1}]$. Here, we record two of the fields acting on the component $Q^Nq^k\cdot \Lambda_*$ that are relevant to us:
\begin{align*}
    Y(p_1, z)&\coloneqq\sum_{n>0}p_{n}z^{-1+n}+2kz^{-1}+\sum_{n>0}2 p_{-n}z^{-1-n}\,,\\
    Y(q,z)&\coloneqq e^q\cdot \exp\left(\sum_{j>0}\frac{p_{j}}{j}z^j\right)\exp\left(-\sum_{j>0}\frac{2 p_{-j}}{j}z^{-j}\right)\,.
\end{align*}
We warn the reader that the annihilation operators in this vertex algebra are $2p_{-n}$ rather than $p_{-n}$ for $n>0$ since $\chi^\sym_\Gr((1,0),(1,0))=2$.

\subsection{Schubert calculus}

The descendent algebra $\BD^{A_1}_k$ of the quiver with only one node $A_1$ can be naturally identified with the algebra of symmetric functions $\Lambda^\ast$:
\begin{equation}\label{eq: symmetric functions and descendent}
\begin{tikzcd}[row sep=0cm]
Q^Nq^k\cdot \Lambda^\ast \arrow[r] & \BD^{A_1}_k \arrow[r] & H^\ast(\CN_{k\rightarrow N})\\
p_n \arrow[r, mapsto]& n! \ch_n \arrow[r, mapsto]& n!\ch_n(\CV)
\end{tikzcd}
\end{equation}
where $\CV$ is the universal rank $k$ complex over $\CN_{k\rightarrow N}=\CN_k$. Note that the composition of these maps is precisely $\xi$ in \eqref{eq: grassva and symmetric functions}. Similarly, the elementary symmetric functions $e_j$ are sent to the Chern classes of $\CV$ and the signed complete symmetric functions $(-1)^j h_j$ are sent to the Segre classes of $\CV$. 

Let $\BF$ be the rank $k$ universal subbundle of $\Gr(k,N)$. The geometric realization map for the Grassmannian
\[\xi_\BF\colon \Lambda^\ast\to H^\ast(\Gr(k,N))\]
sends by definition $e_j$ to $c_j(\BF)$. We recall that the cohomology ring of the Grassmannian is 
\[H^\ast(\Gr(k,N))\cong \BQ[c_1, \ldots, c_k]/I\]
where $I$ is the ideal generated by
\[\big[(1+c_1+c_2+\ldots+c_k)^{-1}\big]_j\,,\quad \textup{for }j>N-k\]
where $[-]_j$ denotes the degree $j$ part. The (Poincaré duals of) Schubert cycles
\[S_\lambda\in H^{\ast}(\Gr(k,N))\,,\quad \lambda\subseteq (N-k)^k\]
form a linear basis of the cohomology $H^\ast(\Gr(k,N))$. By Pieri's formulas, the map $\Lambda^\ast\to H^\ast(\Gr(k,N))$ sends the Schur polynomial $s_\lambda$ to the corresponding Schubert cycle $(-1)^{|\lambda|}S_\lambda$. In particular, $s_\lambda$ is mapped to 0 whenever $\lambda$ is not contained in the rectangular shape $(N-k)^k$. As an easy consequence of this fact, we can identify the class of the Grassmannian in the Grassmannian vertex algebra:

\begin{proposition}\label{prop: grassschur}
The class of the Grassmannian $\Gr(k,N)$ is given by
\[[\Gr(k,N)]=Q^N q^k \otimes (-1)^{k(N-k)}s_{(N-k)^k}\in \Lambda_\ast[Q, q^{\pm 1}]\,.\]
\end{proposition}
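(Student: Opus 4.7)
The strategy is to use the perfect pairing in diagram \eqref{eq: grassva and symmetric functions}: the element $\xi^\dagger([\Gr(k,N)]) \in Q^N q^k \cdot \Lambda_\ast$ is uniquely determined by its Hall inner products against all elements of $Q^N q^k \cdot \Lambda^\ast$. Since Schur polynomials $\{s_\lambda\}$ form an orthonormal basis of $\Lambda$, it suffices to compute $\langle s_\lambda, \xi^\dagger([\Gr(k,N)])\rangle_{\textup{Hall}}$ for each partition $\lambda$, and conclude by orthonormality.

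By the definition of $\xi^\dagger$ as the adjoint of $\xi$ with respect to the topological and Hall pairings, these products transform into topological pairings
\[\langle s_\lambda, \xi^\dagger([\Gr(k,N)])\rangle_{\textup{Hall}} = \langle \xi(s_\lambda), [\Gr(k,N)]\rangle_{\textup{top}}\,.\]
The right-hand side is the pairing of a cohomology class on $\CN_{k\to N}$ with the pushforward of $[\Gr(k,N)]$ along the classifying map $\iota\colon \Gr(k,N)\to \CN_{k\to N}$. By the projection formula this becomes an integral over the Grassmannian, and since $\iota^\ast\CV \cong \BF$ the two realization homomorphisms are compatible, giving
\[\langle \xi(s_\lambda), [\Gr(k,N)]\rangle_{\textup{top}} = \int_{\Gr(k,N)} \iota^\ast \xi(s_\lambda) = \int_{\Gr(k,N)} \xi_\BF(s_\lambda)\,.\]

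Now I would invoke the Schubert-calculus facts already recorded in Section \ref{sec: Grass}: $\xi_\BF$ sends $s_\lambda$ to $(-1)^{|\lambda|} S_\lambda$, and $\int_{\Gr(k,N)} S_\lambda = \delta_{\lambda,(N-k)^k}$ since only the top Schubert class pairs nontrivially with the fundamental class. Therefore
\[\langle s_\lambda, \xi^\dagger([\Gr(k,N)])\rangle_{\textup{Hall}} = (-1)^{|\lambda|}\delta_{\lambda,(N-k)^k} = (-1)^{k(N-k)}\delta_{\lambda,(N-k)^k}\,.\]
Since $\{s_\lambda\}$ is an orthonormal basis of $\Lambda$ under the Hall pairing, the element of $\Lambda_\ast$ whose pairing with $s_\lambda$ vanishes except at $\lambda=(N-k)^k$, where it equals $(-1)^{k(N-k)}$, must be $(-1)^{k(N-k)} s_{(N-k)^k}$. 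This yields the claimed formula.

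The only substantive point that needs care is the identification $\iota^\ast \CV \cong \BF$, i.e.\ that the tautological rank-$k$ complex on the moduli stack $\CN_{k\to N}$ pulls back to the rank-$k$ universal subbundle on $\Gr(k,N)$ under the convention used in Section \ref{sec: Grass}; once this is pinned down, everything else is a formal consequence of duality in \eqref{eq: grassva and symmetric functions}, the projection formula, and orthonormality of Schur polynomials. No real obstacle arises beyond bookkeeping conventions.
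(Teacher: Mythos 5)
Your proposal is correct and follows essentially the same route as the paper's proof: pair $[\Gr(k,N)]$ against the orthonormal basis of Schur polynomials via the Hall/topological duality, evaluate the resulting integrals by Schubert calculus (only $S_{(N-k)^k}$ survives), and conclude by orthonormality. The paper states this more tersely, while you make explicit the adjunction defining $\xi^\dagger$ and the compatibility $\iota^\ast\CV\cong\BF$, but the argument is the same; the paper also records an alternative second proof via Hecke operators, which you do not need.
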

\begin{proof}
Write $[\Gr(k,N)]=Q^N q^k \otimes g_{N, k}$ for some $g_{N, k}\in \Lambda_\ast$. Let $\lambda$ be a partition and consider the associated Schur polynomial $s_\lambda \in \Lambda^\ast$. By the Schubert calculus considerations above we have
\[\langle g_{N, k}, s_\lambda\rangle=\int_{\Gr(k,N)} (-1)^{|\lambda|}S_\lambda=\begin{cases}(-1)^{k(N-k)}&\textup{if }\lambda=(N-k)^k\\
0&\textup{otherwise}\end{cases}\]
since $S_\lambda=0$ for any other partition of size at least $k(N-k)$. Since $\{s_\lambda\}_\lambda$ form an orthonormal basis for the Hall inner product it follows that 
\[g_{N, k}=(-1)^{k(N-k)}s_{(N-k)^k}\,. \qedhere\]
\end{proof}

\subsection{Virasoro constraints for the Grassmannian}

In this section we will write down the Virasoro constraints for the Grassmannian using the language of symmetric functions explained in the previous section. 

Recall from Section \ref{sec: vertex, symmetric} that $\Lambda_*[Q,q^{\pm 1}]$ is a vertex subalgebra of $\textup{VA}(\BZ\times\BZ,\chi^\sym_\Gr)$. Since the latter is a lattice vertex algebra associated to the nondegenerate pairing $\chi^\sym_\Gr$, it is equipped with the natural conformal element and the corresponding Virasoro operators. It is easy to check that the half of the Virasoro operators $\{L_n\}_{\geq -1}$ preserve the vertex subalgebra $\Lambda_*[Q,q^{\pm 1}]$ since $\BZ\times\{0\}\subset \BZ\times \BZ$ is the sublattice. Explicitly, the Virasoro operators on each component $Q^Nq^k\cdot \Lambda_*$ are given by\footnote{If we set $N=0$ in the formula, we recover precisely half of the Virasoro operators on $\textup{VA}(\BZ,2)$. }
\begin{equation}\label{eq: operatorsgrva}
L_n=\begin{cases}
\displaystyle\sum_{j\geq 1}p_{j}p_{-n-j}+\sum_{\substack{a+b=n\\a,b>0}}p_{-a} p_{-b}+(2k-N)p_{-n}& n>0\\
\displaystyle\sum_{j>0}p_{j}p_{-j}+k(k-N)\cdot\textup{id} & n=0
\end{cases}
\end{equation}
with $L_{-1}$ being the translation operator $T$. By using the Hall pairing, we obtain the dual Virasoro operators acting on $Q^Nq^k\cdot \Lambda^*$:
\begin{equation}\label{eq: operatorsgrva dual}
\bL_n:=(L_n)^\dagger=\begin{cases}
\displaystyle\sum_{j\geq 1}p_{n+j}p_{-j}+\sum_{\substack{a+b=n\\a,b>0}}p_{a} p_{b}+(2k-N)p_{n}& n>0\\
\displaystyle\sum_{j>0}p_{j}p_{-j}+k(k-N)\cdot\textup{id} & n=0
\end{cases}
\end{equation}
Under the identification $Q^Nq^k\cdot \Lambda^*\simeq \BD^{A_1}_k$ in \eqref{eq: symmetric functions and descendent}, these dual Virasoro operators agree with the framed Virasoro operators $\bL_n^{*\rightarrow N}$ defined in Section \ref{subsubsec: framedvirasoro}.\footnote{Here, we are again using a different convention for the framing as we do in Section \ref{sec: Grass}. } Therefore, the Virasoro constraints for the Grassmannian $\Gr(k,N)$ are equivalent to its class in $\Lambda_*[Q,q^{\pm 1}]$ being annihilated by $L_n$ operators for $n\geq 0$.

By Proposition \ref{prop: grassschur} and Hall pairing duality between $\bL_n^{*\rightarrow N}$ and  $L_n$, the Virasoro constraints of $\Gr(k,N)$ amount to the following identity: for $n>0$, we have\footnote{The $n=0$ case is a simple consequence of the dimension formula $\dim\Gr(k,N)=k(N-k)$. }
\begin{equation}
\label{eq: virasorograssexplicit}
\left(\sum_{j\geq 1}(n+j)p_{j}\frac{\partial}{\partial p_{n+j}}+\sum_{\substack{a+b=n\\a,b>0}}ab\frac{\partial }{\partial p_a}\frac{\partial }{\partial p_b}+(2k-N)n\frac{\partial }{\partial p_n}\right)s_{(N-k)^k}=0\,.
\end{equation}
This equation is a consequence of Theorem \ref{thm: quivervirasoro}, but it is also a well-known fact about the representation theory of the Virasoro Lie algebra \cite{mimachiyamada}, see Theorem \ref{thm: singularvectorsjack}. For completeness we will also give a self-contained proof in Proposition \ref{prop: virasorogr} using Hecke correspondences for Grassmannians.

For the Grassmannian, it happens that the Virasoro constraints fully determine the descendent integrals (up to scalar). This can be thought of as the analog that the Virasoro constraints for the Gromov--Witten of a point (equivalent to Witten's conjecture together with the string equation) determine all the integrals of $\psi$ classes. This is again well-known in the representation theory of the Virasoro Lie algebra, see the uniqueness part of Theorem \ref{thm: singularvectorsjack}.

\begin{proposition}\label{prop: uniquenessgrass}
Suppose that $Q^N q^k \otimes f\in \Lambda_\ast[Q, q^{\pm 1}]$ is a primary state in the Grassmannian vertex algebra, i.e. $L_n(Q^N q^k \otimes f)=0$ for $n\geq 0$ for the operators defined in~\eqref{eq: operatorsgrva}. Then $f$ is proportional to $s_{(N-k)^k}$.
\end{proposition}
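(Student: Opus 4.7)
My plan is to interpret the proposition as a statement about uniqueness of singular vectors in a Virasoro representation on the ring of symmetric functions $\Lambda$ and then to invoke Theorem \ref{thm: singularvectorsjack}. To begin, I will view the operators $L_n$ from \eqref{eq: operatorsgrva} as defining a representation of $\Vir_{\geq -1}$ on $\Lambda$, in which the labels $Q^N q^k$ enter only through the scalar parameters $(2k-N)$ in the linear terms of $L_n$ for $n\geq 1$ and $k(k-N)$ in the $L_0$-shift. The constant $1\in \Lambda$ is a highest weight vector for this representation: $L_n\cdot 1 = 0$ for $n\geq 1$ and $L_0\cdot 1 = k(k-N)$. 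Consequently, the condition $L_0(Q^N q^k\otimes f)=0$ forces $f$ to be homogeneous of $\Lambda$-degree $k(N-k)$, and the remaining conditions $L_n f = 0$ for $n\geq 1$ make $f$ a singular vector at that level.

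The existence of $s_{(N-k)^k}$ as such a singular vector follows from Proposition \ref{prop: grassschur} together with the Virasoro constraints for the Grassmannian, which in turn can be derived from the wall-crossing expression in Example \ref{ex: wcgrassmannian} and the fact that $\widecheck P_0$ is a Lie subalgebra (Corollary \ref{cor: liesubalgebra}), or alternatively by the direct computation in Proposition \ref{prop: virasorogr}. The uniqueness then is exactly the content of Theorem \ref{thm: singularvectorsjack}, due to Mimachi--Yamada, which asserts that at each $L_0$-weight the space of singular vectors in such Fock-type Virasoro representations on $\Lambda$ is at most one-dimensional for generic parameters, and in particular for the parameter values dictated by $(k, N)$ here this space is exactly one-dimensional, spanned by the rectangular Schur polynomial $s_{(N-k)^k}$. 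Combining existence and uniqueness, any $f$ satisfying the hypotheses of the proposition must be a scalar multiple of $s_{(N-k)^k}$.

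The main obstacle is to match the conventions of the paper with those of the Mimachi--Yamada result: their representation is usually parametrized by a background charge $\alpha_0$ giving central charge $c = 1 - 12\alpha_0^2$ (which should specialize to $c=1$ in our setting, as claimed in Theorem \ref{main thm: Grass}), whereas our operators carry the specific coefficients $2k-N$ and $k(k-N)$ that arise naturally from the underlying lattice vertex algebra. This matching is a bookkeeping computation using the Heisenberg identification $a_{-n} = p_n$, $a_n = n\partial_{p_n}$, but it must be performed carefully to pin down $\alpha_0$ (and the highest-weight eigenvalue) in terms of $k$ and $N$, so that level $k(N-k)$ with shape $(N-k)^k$ is precisely the case in which Mimachi--Yamada produces a unique singular vector.
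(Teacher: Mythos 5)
Your proof is correct, but it takes a genuinely different route from the paper's. The paper proves the uniqueness by a self-contained, elementary recursion: it introduces a partial order $\prec$ on partitions (ordering first by length, then by the multiplicity of $1$), and shows via the identity $0=\langle \bL_{t-1}(p_{\widetilde\lambda}),f\rangle$ that each pairing $\langle p_\lambda, f\rangle$ with $\lambda\neq 1^d$ is a linear combination of pairings $\langle p_{\lambda'},f\rangle$ with $\lambda'\prec\lambda$; hence the constraints determine $f$ up to the single value $\langle p_1^d,f\rangle$, and since $s_{(N-k)^k}$ is already known to satisfy them, any solution is proportional to it. You instead reduce the statement to the Mimachi--Yamada classification of singular vectors (Theorem \ref{thm: singularvectorsjack}): your parameter matching is right --- with $\beta=\sqrt 2$, $\alpha=(2k-N)/\sqrt 2$ the conditions $rs=d=k(N-k)$ and $r-s=2k-N$ force $(r,s)=(k,N-k)$ uniquely, and $\sigma J^{1}_{(k)^{N-k}}\propto s_{(N-k)^k}$ --- and the paper itself points out (both just before the proposition and in its final subsection) that this route is available. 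The trade-off: your argument is shorter but imports a nontrivial external theorem whose proof is not in the paper, whereas the paper's argument is elementary, independent of the Jack-polynomial machinery, and yields as a by-product an effective algorithm for computing all descendent integrals on $\Gr(k,N)$ from $\int p_1^d$ (which the paper then illustrates for $\Gr(4,2)$). One small remark: for the uniqueness claim as stated you do not actually need the existence input from Proposition \ref{prop: grassschur}, since Theorem \ref{thm: singularvectorsjack} already identifies the unique degree-$d$ singular vector; including it is harmless but not necessary.
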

\begin{proof} 
We start by noting that the $L_0$ equation is equivalent to $f$ being homogeneous of degree $d\coloneqq k(N-k)$. Since we have already showed that $s_{(N-k)^k}$ satisfies the Virasoro constraints $L_n(Q^N q^k \otimes f)=0$ it is enough to show that the Virasoro constraints determine $f$ after one fixes $\langle p_1^d, f\rangle$. Consider the partial order of the set of partitions defined by
\[\lambda'\prec \lambda\textup{ if and only if }\ell(\lambda')>\ell(\lambda)\textup{, or }\ell(\lambda')=\ell(\lambda)\textup{ and }m_1(\lambda')>m_1(\lambda)\]
where $m_1(\lambda)$ denotes the number of times that 1 appears in $\lambda$. Note that the partition $1^d$ is smaller than any other partition of $d$ with respect to $\prec$. We claim that if $f$ satisfies the Virasoro constraints then for any partition $\lambda\neq 1^d$ we can write $\langle p_\lambda, f\rangle$ as a linear combination of $\langle p_{\lambda'}, f\rangle$ for $\lambda'\prec \lambda$.

Let $\lambda=(\lambda_1, \lambda_2, \ldots, \lambda_\ell)$ with $\lambda_i$ non-decreasing. Let $m=m_1(\lambda)\geq 0$. Assuming that $\lambda\neq 1^d$ we have $m<\ell$ so let $t=\lambda_{m+1}>1$. Define the partition $\widetilde \lambda$ of $d-t+1$ by
\[\widetilde \lambda=(\underbrace{1, \ldots, 1, 1}_{m+1}, \lambda_{m+2}, \ldots, \lambda_\ell)\,.\]
By the Virasoro constraints for $f$ we have
\begin{equation}\label{eq: schubertfromvirasoro}0=\langle \bL_{t-1}(p_{\widetilde \lambda}),f\rangle=(m+1)\langle p_\lambda, f\rangle+\sum_{\lambda'\prec \lambda}C_{\lambda'}\langle p_{\lambda'}, f\rangle\,
\end{equation}
for explicit constants $C_{\lambda'}\in \BZ$. It is straightforward to see that only partitions $\lambda'\prec \lambda$ appear: all the partitions coming from the linear part of $\bL_{t-1}$ have bigger length and all the remaining partitions coming from the derivation part have $m_1(\lambda')=m+1>m=m_1(\lambda)$. It follows by induction with respect to the order $\prec$ that $\langle p_\lambda, f\rangle$ can be determined from $\langle p_1^d, f\rangle$, proving the proposition.
\end{proof}

\begin{example}
We illustrate the algorithm used in the proof of Proposition \ref{prop: uniquenessgrass} by computing the integrals of descendents in $\Gr(4,2)$. We have
\begin{align*}
0&=\int_{\Gr(4,2)}\bL_1(p_1^3)=3\int_{\Gr(4,2)}p_1^2 p_2\\
0&=\int_{\Gr(4,2)}\bL_2(p_1^2)=2\int_{\Gr(4,2)}p_1p_3 +\int_{\Gr(4,2)} p_1^4\\
0&=\int_{\Gr(4,2)}\bL_1(p_1p_2)=\int_{\Gr(4,2)} p_2^2+2\int_{\Gr(4,2)}p_1p_3\\
0&=\int_{\Gr(4,2)}\bL_3(p_1)=\int_{\Gr(4,2)}p_4+2\int_{\Gr(4,2)} p_1^2 p_2\,.
\end{align*}
In each equality, the first term of the right hand side is the leading term in the sense of the proof of the proposition and the equalities can be used to recursively determine these leading terms. By further using that $\int_{\Gr(4,2)}c_2^2=1$ \cite[Corollary 4.2]{EH} or $\int_{\Gr(4,2)}p_1^4=2$ \cite[Exercise 4.38]{EH}  we determine all the integrals of descendents:
\[\int_{\Gr(4,2)}p_1^4=2=\int_{\Gr(4,2)}p_2^2\,,\quad \int_{\Gr(4,2)}p_1p_3=-1\,,\quad \int_{\Gr(4,2)}p_4=0=\int_{\Gr(4,2)}p_1^2 p_2\,.\]
Note that these can be obtained from the expression for $s_{(2,2)}$ in Example \ref{ex: schur}.
\end{example}

\subsection{Hecke operators}
We define Hecke operators $H_n\colon \Lambda\to \Lambda$ as the following vertex operators:
\[H(z)=\sum_{n\in \BZ}H_n z^n= \exp\left(\sum_{j>0}\frac{p_{j}}{j}z^j\right)\exp\left(-\sum_{j>0}\frac{p_{-j}}{j}z^{-j}\right)\,.\]
This corresponds to the field $Y(q,z)$ of the lattice vertex algebra associated to $(\BZ,1)$ via isomorphism $\textup{VA}(\BZ,1)\simeq \Lambda_*[q^{\pm 1}]$. By Newton's identities we can rewrite the Hecke operators as
\[H_n=\sum_{j\geq 0}(-1)^j h_{j+n}\circ e_j^\perp\] 
where $(-)^\perp$ denotes the adjoint operator with respect to the Hall pairing. Such operators have been studied in depth from the point of view of the combinatorics of $\Lambda$, see for instance \cite{jing}. In the literature they are often called Bernstein operators or simply vertex operators; we have opted to call them Hecke operators due to their geometric meaning that we explain below. We summarize some of the well-known properties of these operators in the following proposition:
\begin{proposition}[{\cite{jing}}]\label{prop: heckeproperties}
The Hecke operators satisfy the following properties:
\begin{enumerate}
\item We have
\[[H_n, p_{m}]=-H_{n+m} \,,\quad\textup{for every }n\in \BZ,\, m\in \BZ\setminus \{0\}\,.\] 
\item The adjoint to a Hecke operator is given by
\[H_n^\perp=(-1)^n \sigma \circ H_{-n}\circ \sigma\,.\]
\item The Hecke operators satisfy the commuting relations
\[H_nH_m=-H_{m-1}H_{n+1}\,.\]
In particular $H_{n}H_{n+1}=0$.
\item The Schur polynomial $s_\lambda$ can be given in terms of Hecke operators by
\[s_\lambda=H_{\lambda_1}H_{\lambda_2}\ldots H_{\lambda_{\ell}}1.\]
\end{enumerate}
\end{proposition}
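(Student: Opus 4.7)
The plan is to derive all four properties from a single vertex-operator calculation. Write $H(z)=E^+(z)E^-(z)$ with
\[E^+(z)=\exp\Big(\sum_{j>0}\tfrac{p_j}{j}z^j\Big)\quad\text{and}\quad E^-(z)=\exp\Big(-\sum_{j>0}\tfrac{p_{-j}}{j}z^{-j}\Big).\]
The Heisenberg commutator $[p_{-j},p_k]=j\delta_{jk}$ for $j,k>0$, combined with Baker--Campbell--Hausdorff, yields the fundamental normal ordering identity $E^-(z)E^+(w)=(1-w/z)\,E^+(w)E^-(z)$.

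For (1), the scalar commutator $[-\sum_{j>0}\tfrac{p_{-j}}{j}z^{-j},p_m]=-z^{-m}$ for $m>0$ gives, via BCH, $[E^-(z),p_m]=-z^{-m}E^-(z)$, while $[E^+(z),p_m]=0$. Hence $[H(z),p_m]=-z^{-m}H(z)$, whose $z^n$ coefficient is $[H_n,p_m]=-H_{n+m}$; the case $m<0$ is exactly dual. For (3), applying the normal ordering identity once gives $H(z)H(w)=(1-w/z)\cdot E^+(z)E^+(w)E^-(z)E^-(w)$, whose right-hand factor is symmetric in $z$ and $w$. Swapping $z\leftrightarrow w$ and combining the two expressions produces $zH(z)H(w)+wH(w)H(z)=0$; the coefficient of $z^{n+1}w^m$ is $H_nH_m+H_{m-1}H_{n+1}=0$, and setting $m=n+1$ collapses to $H_nH_{n+1}=0$.

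For (2), using $p_j^\perp=p_{-j}$ and $p_{-j}^\perp=p_j$ for $j>0$, one computes
\[H(z)^\perp=E^-(z)^\perp E^+(z)^\perp=E^+(z^{-1})^{-1}E^-(z^{-1})^{-1}.\]
On the other hand, $\sigma p_{\pm j}\sigma=(-1)^{j-1}p_{\pm j}$ implies $\sigma E^{\pm}(w)\sigma=E^{\pm}(-w)^{-1}$, so $\sigma H(-z^{-1})\sigma$ equals precisely the same product $E^+(z^{-1})^{-1}E^-(z^{-1})^{-1}$. Comparing coefficients of $z^n$ on both sides gives $H_n^\perp=(-1)^n\sigma H_{-n}\sigma$.

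For (4), I would prove the more general Jacobi--Trudi-style identity: for any integer sequence $(\lambda_1,\ldots,\lambda_\ell)$,
\[H_{\lambda_1}\cdots H_{\lambda_\ell}\cdot 1=\det\big(h_{\lambda_i-i+j}\big)_{1\le i,j\le\ell}\]
(with the convention $h_k=0$ for $k<0$); when $\lambda$ is a partition this coincides with $s_\lambda$ by Jacobi--Trudi. The base case $\ell=1$ is $H_n\cdot 1=h_n$, immediate from the expansion $H_n=\sum_{j\ge 0}(-1)^jh_{n+j}e_j^\perp$ together with $e_j^\perp\cdot 1=0$ for $j>0$. For the inductive step, the cleanest approach is to normal-order $\prod_{i=1}^\ell H(z_i)\cdot 1$ by iterating the contraction identity $\ell(\ell-1)/2$ times, producing the factor $\prod_{i<j}(1-z_j/z_i)$ acting on $\prod_i E^+(z_i)\cdot 1$; extracting the coefficient of $z_1^{\lambda_1}\cdots z_\ell^{\lambda_\ell}$ converts this factor into the antisymmetrization that matches the determinant expansion. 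The most delicate step will be (4): keeping careful track of signs and of the $h_k=0$ truncation when extracting coefficients from the normal-ordered generating function. Properties (1), (2) and (3) are essentially direct consequences of the fundamental normal ordering identity and the scalar nature of the BCH commutators involved.
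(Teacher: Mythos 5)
Your proposal is correct. Note, however, that the paper does not actually prove this proposition: it simply cites equations (2.5), (2.12) and (3.9) of Jing's paper (specializing his Hall--Littlewood vertex operators at $t=0$) for parts (2)--(4), and Kac's book for the commutator in part (1). What you have written is a complete, self-contained derivation of all four parts from the single normal-ordering identity $E^-(z)E^+(w)=(1-w/z)E^+(w)E^-(z)$, which is essentially the argument underlying the cited references but carried out directly in the $t=0$ case. Your computations for (1)--(3) check out (the scalar BCH commutators, the symmetry of the normal-ordered product giving $zH(z)H(w)+wH(w)H(z)=0$, and the coefficient extraction), and the adjoint computation in (2) correctly accounts for both the reversal of order under $\perp$ and the sign twist by $\sigma$. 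Part (4) is only sketched, but the sketch is the standard one and does go through: iterating the contraction gives $\prod_{i<j}(1-z_j/z_i)=\sum_{\tau\in S_\ell}\operatorname{sgn}(\tau)\prod_i z_i^{i-\tau(i)}$ times the symmetric normal-ordered product, and extracting the coefficient of $z_1^{\lambda_1}\cdots z_\ell^{\lambda_\ell}$ from $\prod_i\big(\sum_k h_k z_i^k\big)$ yields exactly $\det\big(h_{\lambda_i-i+j}\big)$, i.e. Jacobi--Trudi. The trade-off is that your route is more elementary and self-contained, while the citation route inherits the more general $t$-deformed statements for free; if you intend this as a replacement proof, the only thing worth writing out in full is the coefficient extraction in (4), since that is where the sign and truncation conventions ($h_k=0$ for $k<0$) actually enter.
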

\begin{proof}
Parts (2), (3) and (4) are (2.5), (2.12) and (3.9) in \cite{jing} in the limit $t\rightarrow 0$. Part (1) is the defining property of vertex operators, see for example (5.4.3b) in \cite{Ka98}.\qedhere
\end{proof}
Note that the expression for Schur polynomials given by $(4)$ makes sense for any tuple $\lambda=(\lambda_1, \ldots, \lambda_\ell)$ of (not necessarily positive) integers. The commuting relations (3) can be used to reduce the Schur polynomial associated to any tuple to the Schur polynomial of a partitions.

These Hecke operators play a role in the following geometric setup. Consider the Grassmannians $\Gr(k,N)$ and $\Gr(k+1,N)$ and their respective universal bundles $\BF_k\subseteq \CO^N$, $\BF_{k+1}\subseteq \CO^N$. The flag variety $\Flag(k, k+1; N)$ is a projective bundle over both Grassmannians:
\begin{equation}\label{eq: heckeprojectivebundles}
\begin{tikzcd}
\BP(\CO^N/\BF_k)\arrow[d]& \Flag(k, k+1; N)\arrow[l, equal]\arrow[ld, "\pi_k"]\arrow[r, equal]\arrow[rd,"\pi_{k+1}"'] &
\BP(\BF_{k+1}^\vee)\arrow[d]\\
\Gr(k,N) & & \Gr(k+1,N)
\end{tikzcd}
\end{equation}
The universal flag on $\Flag(k, k+1; N)$ is 
\[\BF_k\subseteq \BF_{k+1}\subseteq \CO^N\]
where we omit the pullbacks via $\pi_k, \pi_{k+1}$. The line bundle $\BF_{k+1}/\BF_k$ can be interpreted in each projective bundle description as
\[\BF_{k+1}/\BF_k=\CO_{\BP(\CO^N/\BF_k)}(-1)=\CO_{\BP(\BF_{k+1}^\vee)}(1)\,.\] 
Let $\zeta=c_1(\BF_{k+1}/\BF_k)\in H^2(\Flag(k, k+1; N))$. We define \textit{geometric} Hecke operators by
\begin{align*}\widetilde H_\ell\colon H^\ast(\Gr(k,N))&\longrightarrow H^\ast(\Gr(k+1,N))\\
\gamma&\longmapsto (\pi_{k+1})_\ast\big( \zeta^\ell\cdot \pi_k^\ast \gamma\big)\,.
\end{align*}
The connection between these geometric Hecke operators and the previously defined Hecke operators on $\Lambda$ is explained in the next proposition:
\begin{proposition}\label{prop: heckegeom}
For every $\ell\geq 0$ the following diagram commutes:
\begin{center}
\begin{tikzcd}
\Lambda^\ast \arrow[d, two heads]\arrow[r, "H_{\ell-k}"]& \Lambda^\ast \arrow[d, two heads]\\
H^\ast(\Gr(k,N))\arrow[r, "\widetilde H_\ell"] & H^\ast(\Gr(k+1,N))\,.
\end{tikzcd}
\end{center}
\end{proposition}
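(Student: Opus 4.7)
The plan is to reduce the proposition to a symmetric function identity via the splitting principle, and then use the projective bundle pushforward formula. First, I would let $x_1, \ldots, x_k$ be the Chern roots of $\BF_k$ on $\Gr(k, N)$ and $X_1, \ldots, X_{k+1}$ those of $\BF_{k+1}$ on $\Gr(k+1, N)$, so the realization maps become symmetric-function evaluation $\xi_{\BF_k}(f) = f(x_1, \ldots, x_k)$ and $\xi_{\BF_{k+1}}(g) = g(X_1, \ldots, X_{k+1})$. The short exact sequence $0 \to \BF_k \to \BF_{k+1} \to \BF_{k+1}/\BF_k \to 0$ implies that on $\Flag(k, k+1; N)$ the Chern roots of $\pi_{k+1}^\ast \BF_{k+1}$ can be identified with $\{x_1, \ldots, x_k, y\}$ where $y = \zeta$, and in particular $c(\BF_k) = c(\BF_{k+1})/(1+\zeta)$ gives
\[c_j(\BF_k) = \sum_{m\geq 0}(-\zeta)^m c_{j-m}(\BF_{k+1})\]
as classes on the flag variety.

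Next, I would apply the projective bundle formula to $\pi_{k+1}: \BP(\BF_{k+1}^\vee) \to \Gr(k+1, N)$, which has relative dimension $k$ with tautological class $\zeta = c_1(\CO(1))$. The Segre class computation using $\prod(1 - X_i t)^{-1} = \sum_j h_j(X)\, t^j$ yields
\[(\pi_{k+1})_\ast \zeta^{k+j} = h_j(X_1, \ldots, X_{k+1})\quad\text{for }j\geq 0,\]
and zero otherwise. Applied to a monomial $e_{j_1}(\BF_k)\cdots e_{j_r}(\BF_k)$, expanding each factor as above, multiplying by $\zeta^\ell$, and using the projection formula (since the $c_{j_i - m_i}(\BF_{k+1})$ are pulled back via $\pi_{k+1}$), I obtain
\[\widetilde H_\ell\big(e_{j_1}(\BF_k)\cdots e_{j_r}(\BF_k)\big) = \sum_{m_1,\ldots,m_r\geq 0}(-1)^{\sum m_i}\, h_{\ell-k+\sum m_i}(\BF_{k+1})\prod_{i=1}^r e_{j_i - m_i}(\BF_{k+1}).\]

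On the algebraic side, I would compute $H_{\ell-k}(e_{j_1}\cdots e_{j_r})$ using $H_n = \sum_{b\geq 0}(-1)^b h_{n+b} e_b^\perp$ together with the Hopf algebra structure of $\Lambda$. The generating function identity $E(z)^\perp E(w) = (1-zw)^{-1} E(w) E(z)^\perp$, obtained from the Heisenberg commutation $[p_{-n}, p_m] = n\delta_{nm}$, yields $e_a^\perp(e_b) = e_{b-a}$, while the coproduct $\Delta(e_b) = \sum_{a+c = b} e_a \otimes e_c$ gives the Leibniz-style rule $e_b^\perp(fg) = \sum_{a+c=b}(e_a^\perp f)(e_c^\perp g)$. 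Iterating,
\[e_b^\perp(e_{j_1}\cdots e_{j_r}) = \sum_{m_1+\ldots+m_r = b}\prod_i e_{j_i - m_i},\]
and applying $\xi_{\BF_{k+1}}$ produces exactly the geometric formula above. Since monomials in the $e_j$ span $\Lambda^\ast$ and both operators are linear, the identity extends to all of $\Lambda^\ast$.

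The main obstacle is book-keeping of conventions: verifying that the Chern roots of $\pi_{k+1}^\ast \BF_{k+1}$ on $\Flag$ are $\{x_1, \ldots, x_k, y\}$ (a consequence of the splitting principle applied to the short exact sequence), and normalizing the projective bundle pushforward formula consistently with the paper's convention $\BF_{k+1}/\BF_k = \CO_{\BP(\BF_{k+1}^\vee)}(1)$. Once the signs are set, the argument reduces to a direct matching of the two closed formulas.
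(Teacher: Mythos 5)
Your argument is correct, and it reaches the conclusion by a somewhat different route than the paper. The paper's proof avoids closed formulas: it checks that both operators agree on $1\in\Lambda^\ast$ (both give $h_{\ell-k}(\BF_{k+1})$ by the projective bundle formula) and then shows, via the push--pull formula and the relation $p_a(\BF_k)=p_a(\BF_{k+1})-\zeta^a$ on the flag variety, that $\widetilde H_\ell\big(p_a(\BF_k)\gamma\big)=p_a(\BF_{k+1})\widetilde H_\ell(\gamma)-\widetilde H_{\ell+a}(\gamma)$, which is exactly the commutator $[H_n,p_m]=-H_{n+m}$ of Proposition~7.11(1); induction on the degree in the power sums finishes the proof. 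You instead work with the spanning set of monomials $e_{j_1}\cdots e_{j_r}$, using the multiplicative relation $c(\BF_k)=c(\BF_{k+1})/(1+\zeta)$ (the $e$-analogue of the paper's additive $p$-relation) together with the expansion $H_n=\sum_j(-1)^jh_{n+j}e_j^\perp$, the skewing identity $e_a^\perp e_b=e_{b-a}$ and the coproduct rule for $e_b^\perp$ to produce matching closed formulas on both sides. The geometric inputs (the pushforward $(\pi_{k+1})_\ast\zeta^{k+j}=h_j(\BF_{k+1})$, the projection formula, and the comparison of $\BF_k$ and $\BF_{k+1}$ on $\Flag(k,k+1;N)$) are the same in both proofs; the paper's version is shorter because the commutator with $p_a$ carries all the combinatorics, while yours is more explicit and yields the full formula for $\widetilde H_\ell$ on $e$-monomials as a byproduct. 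The one point you rightly flag as requiring care --- the normalization of the Segre-class pushforward against the convention $\BF_{k+1}/\BF_k=\CO_{\BP(\BF_{k+1}^\vee)}(1)$ --- comes out consistent with the paper's stated identity $(\pi_{k+1})_\ast(\zeta^\ell)=s_{\ell-k}(\BF_{k+1}^\vee)=h_{\ell-k}(\BF_{k+1})$, so no gap remains.
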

\begin{proof}
The proposition is easily proved by comparing the two paths applied to $1\in \Lambda^\ast$ and by showing that $\widetilde H_\ell$ satisfies commutation relations with $p_a$ similar to the ones in Proposition \ref{prop: heckeproperties}(1). Indeed, we have by the projective bundle formula
\[\widetilde H_\ell(1)=(\pi_{k+1})_\ast(\zeta^\ell)=s_{\ell-k}(\BF_{k+1}^\vee)=h_{\ell-k}(\BF_{k+1})\,.\]
On the other hand, $H_{\ell-k}(1)=h_{\ell-k}$. By the push-pull formula we have
\[\widetilde H_\ell\big(p_a(\BF_k)\gamma\big)=(\pi_{k+1})_\ast \big((p_a(\BF_{k+1})-\zeta^a)\cdot \zeta^\ell\cdot \pi_{k}^\ast \gamma)=p_a(\BF_{k+1})\widetilde H_\ell(\gamma)-\widetilde H_{\ell+a}(\gamma)\,.\]
Comparing with Proposition \ref{prop: heckeproperties}(1) and using induction the result follows.\qedhere
\end{proof}

The geometric picture with the Hecke operators allows us to give a second proof of Proposition \ref{prop: grassschur}.

\begin{proof}[Proof of Proposition \ref{prop: grassschur} via Hecke operators]
Denote by $g_{N, k}\in \Lambda_\ast$ the symmetric function such that $[\Gr(k,N)]=Q^N q^k\otimes g_{N,k}$. Let $f\in \Lambda^\ast$. Using the projective bundles in \eqref{eq: heckeprojectivebundles} and Proposition \ref{prop: heckegeom} we find that
\begin{align*}
\langle g_{N, k}, f\rangle&=\int_{\Gr(k,N)}f=\int_{\Flag(k, k+1; N)}(-\zeta)^{N-k-1} \pi_k^\ast f\\
&=(-1)^{N-k-1}\int_{\Gr(k+1,N)}H_{N-2k-1}(f)=(-1)^{N-k-1}\langle g_{N, k+1}, H_{N-2k-1}(f)\rangle\\
&=(-1)^{k}\langle \sigma H_{2k+1-N}\sigma (g_{N, k+1}), f\rangle\,.
\end{align*}
In the last step we used \ref{prop: heckeproperties}(2). Thus we conclude that \[g_{N, k}=(-1)^{k}\sigma H_{2k+1-N}\sigma (g_{N, k+1})\,.\]
Using $g_{N, N}=1$ we can recursively obtain
\begin{equation}\label{eq: sign1}g_{N, k}=(-1)^{\sum_{j=k}^{N-1} j}\sigma H_{2k+1-N}H_{2k+3-N}\ldots H_{N-3}H_{N-1}(1)\,.
\end{equation}
Using Proposition \ref{prop: heckeproperties} (3) iteratively we find that 
\begin{equation}\label{eq: sign2}H_{2k+1-N}H_{2k+3-N}\ldots H_{N-3}H_{N-1}=(-1)^{\sum_{i=0}^{N-k-1} i}H_{k}H_{k}\ldots H_{k}
\end{equation}
Note that the signs of \eqref{eq: sign1} and \eqref{eq: sign2} combine to
\[\sum_{j=k}^{N-1} j+\sum_{i=0}^{N-k} i=\sum_{i=0}^{N-k-1}(2i+k)\equiv k(N-k) \mod 2\,.\]
Thus, putting  \eqref{eq: sign1} and \eqref{eq: sign2} together gives
\[g_{N, k}=(-1)^{k(N-k)}\sigma H_{k}H_{k}\ldots H_{k}(1)=(-1)^{k(N-k)}\sigma(s_{k^{N-k}})=(-1)^{k(N-k)}s_{(N-k)^k}\,.\qedhere\]
\end{proof}

We take the opportunity to give a direct and elementary proof of the Virasoro constraints for the Grassmannian based on these Hecke operators. For that, we need the commutator between the Virasoro and Hecke operators. In the following two propositions, we use the operators $\bL_n$ and $L_n$ from \eqref{eq: operatorsgrva} and \eqref{eq: operatorsgrva dual} with $k=N=0$, i.e.,
\begin{equation*}
L_n=\begin{cases}
\displaystyle\sum_{j\geq 1}p_{j}p_{-n-j}+\sum_{\substack{a+b=n\\a,b>0}}p_{-a} p_{-b}& n>0\\
\displaystyle\sum_{j>0}p_{j}p_{-j} & n=0
\end{cases}
\end{equation*}
and $\bL_n=(L_n)^\dagger$.

\begin{proposition}\label{prop: commvirasorohecke}
We have the following identities in $\End(\Lambda)$:
\begin{align*}
[\bL_n, H_m]&=(m+n-1)H_{m+n}+\sum_{j=1}^{n-1} H_{n+m-j}\circ p_j-H_m\circ p_n\\
&=(m+1)H_{n+m}+\sum_{j=1}^{n-1} p_j\circ H_{n+m-j}-p_n\circ H_m\,,\\
[L_n, H_m]&=(m-1)H_{m-n}+\sum_{j=1}^{n-1} H_{m-n+j}\circ p_{-j}-H_m\circ p_{-n}\\
&=(m-n+1)H_{m-n}+\sum_{j=1}^{n-1} p_{-j}\circ H_{m-n+j}-p_{-n}\circ H_m\,.
\end{align*}
\end{proposition}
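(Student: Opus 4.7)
The approach is to prove the two identities as equalities of operators on $\Lambda$ by means of a generating-function trick. Set $\CD(z) = [\bL_n, H(z)] - T(z)$, where $T(z) = \sum_m T_m z^m$ is the generating function of the right-hand side of the first form of the $\bL_n$-identity, and define $\CD'(z)$ analogously for the $L_n$-identity. The plan is to show $\CD(z) = 0 = \CD'(z)$ as operators on $\Lambda$ by checking (i) vanishing on the vacuum $1 \in \Lambda$, and (ii) a clean exchange relation with the multiplication operators $p_k$, $k \geq 1$.

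The equivalence of the two forms of each identity follows from $[H_k, p_j] = -H_{k+j}$ of Proposition \ref{prop: heckeproperties}(1), since their difference $(n-2)H_{m+n} + \sum_{j=1}^{n-1}[H_{n+m-j}, p_j] + [p_n, H_m]$ telescopes to $(n-2)H_{m+n} - (n-1)H_{m+n} + H_{m+n} = 0$. For step (i), I extract from $\bL_n(E_-(z)\cdot 1)$, where $E_-(z) = \sum_k h_k z^k$, the formula
\[\bL_n(h_m) = \sum_{j=1}^m p_{n+j}\,h_{m-j} \;+\; h_m\sum_{a=1}^{n-1}p_a\, p_{n-a}.\]
Combined with $H_k(p_j\cdot 1) = p_j h_k - h_{k+j}$, this reduces $\CD(z)(1) = 0$ to Newton's identity $\sum_{\ell=1}^{k}p_\ell h_{k-\ell} = k h_k$. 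The analogous computation for $L_n$, using $p_{-k}(h_m) = h_{m-k}$, reduces $\CD'(z)(1) = 0$ to the same Newton's identity. For step (ii), the Jacobi identity together with the easy commutators $[\bL_n, p_k] = k\,p_{n+k}$ and $[H(z), p_k] = -z^{-k}H(z)$ give
\[\bigl[[\bL_n, H(z)], p_k\bigr] = k\,z^{-n-k}H(z) \;-\; z^{-k}[\bL_n, H(z)],\]
while a term-by-term computation on $T(z) = z^{-n}(zH'(z) - H(z)) + z^{-n}H(z)\sum_{j=1}^{n-1}z^j p_j - H(z)p_n$ yields $[T(z), p_k] = -z^{-k}T(z) + k\,z^{-n-k}H(z)$; subtracting gives the exchange relation $\CD(z)\,p_k = (p_k - z^{-k})\CD(z)$. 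The same strategy, with the slightly more complicated commutator $[L_n, p_k]$, establishes $\CD'(z)\,p_k = (p_k - z^{-k})\CD'(z)$.

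Since $\Lambda$ is spanned over $\BQ$ by the monomials $p_\lambda = p_{\lambda_1}\cdots p_{\lambda_\ell}\cdot 1$, iterating the exchange relation yields $\CD(z)(p_\lambda) = \prod_{i=1}^{\ell}(p_{\lambda_i} - z^{-\lambda_i})\,\CD(z)(1) = 0$ and likewise $\CD'(z)(p_\lambda) = 0$, so both operators vanish identically on $\Lambda$ and the two identities follow. The main obstacle is the bookkeeping of step (ii): the computation of $[T(z), p_k]$ and its $L_n$-analogue involves differentiating $z^{-k}H(z)$ in $z$ and carefully handling non-commuting products like $H(z)\,p_{-j}$ and $H(z)\,p_n$, so some care is required to verify the clean form of the exchange relation; working with the generating functions $\CD(z)$ and $\CD'(z)$ sidesteps the divergent rearrangements one would encounter by naively expanding $[\bL_n, H_m]$ mode-by-mode.
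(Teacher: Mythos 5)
Your proposal is correct — I checked the vacuum computation (it does reduce to Newton's identity $\sum_{\ell=1}^{k}p_\ell h_{k-\ell}=kh_k$ after cancelling the $h_m\sum p_ap_b$ terms), the exchange relations $[\CD(z),p_k]=-z^{-k}\CD(z)$ and $[\CD'(z),p_k]=-z^{-k}\CD'(z)$ (including the case split $k>n$, $k<n$, $k=n$ for $[L_n,p_k]$), and the telescoping equivalence of the two forms. The paper itself gives no written-out proof: it simply asserts that the identities follow "directly from Proposition \ref{prop: heckeproperties}(1)" and defers to the appendix of \cite{liuyang}, which analyses the derivation and multiplication parts mode by mode. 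Your argument rests on exactly the same single input, $[H(z),p_k]=-z^{-k}H(z)$, but packages the bookkeeping differently: rather than expanding $[\bL_n,H_m]$ termwise (where, as you note, naive rearrangement produces infinitely many copies of $H_{m+n}$), you verify agreement on the vacuum and then propagate via the exchange relation with the generators $p_k$. This is the same scheme the paper uses elsewhere (e.g.\ in the proof of Proposition \ref{prop: heckegeom} and in the Claims inside Proposition \ref{prop: duality of operators}), so it fits the paper's toolkit while making the convergence issue explicit — a worthwhile improvement over the citation-only proof.
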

\begin{proof}
Proofs can be obtained directly from Proposition \ref{prop: heckeproperties}(1), but see also the Appendix in \cite{liuyang} (note that the operators in loc. cit. differ from ours by a factor of $1/2$ in $\bT_n$, but since they analyse $\bR_n$ and $\bT_n$ separately we can deduce the identities here from their work).\qedhere
\end{proof}

We use this to give a direct proof of \eqref{eq: virasorograssexplicit}. 

\begin{proposition}\label{prop: virasorogr}
    Equation \eqref{eq: virasorograssexplicit} holds, i.e. for every $m, k, n>0$ we have
    \[L_n(s_{m^k})=(m-k)p_{-n}s_{m^k}\,.\]
\end{proposition}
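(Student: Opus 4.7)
The proof is by induction on $k \geq 0$. The base case $k=0$ is immediate: $s_{m^0}=1$, $L_n(1)=0$ since each summand of $L_n$ ends in an annihilation operator, and $(m-0)p_{-n}(1)=0$.

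For the inductive step, writing $s_{m^k} = H_m(s_{m^{k-1}})$ via Proposition~\ref{prop: heckeproperties}(4) and invoking the inductive hypothesis gives
\[L_n(s_{m^k}) = [L_n, H_m](s_{m^{k-1}}) + (m-k+1)H_m p_{-n}(s_{m^{k-1}}).\]
Substituting the first form of $[L_n, H_m]$ from Proposition~\ref{prop: commvirasorohecke} and rewriting $H_m p_{-n} = p_{-n} H_m - H_{m-n}$ (from Proposition~\ref{prop: heckeproperties}(1)), the claim $L_n(s_{m^k}) = (m-k)p_{-n}(s_{m^k})$ reduces after collecting terms to the auxiliary identity
\[(\ast)\qquad\sum_{j=1}^{n-1} H_{m-n+j}\, p_{-j}(s_{m^{k-1}}) = -(k-1)\, H_{m-n}(s_{m^{k-1}}).\]

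The identity $(\ast)$ is itself proved by a second induction on $k$, with trivial base $k=1$. For $k \geq 2$, substitute $s_{m^{k-1}} = H_m(s_{m^{k-2}})$ and split $p_{-j}(s_{m^{k-1}}) = H_m p_{-j}(s_{m^{k-2}}) + H_{m-j}(s_{m^{k-2}})$ using $[p_{-j},H_m]=H_{m-j}$. The braiding $H_{m-n+j}H_m = -H_{m-1}H_{m-n+j+1}$ together with $H_{m-1}H_m=0$ (Proposition~\ref{prop: heckeproperties}(3)) extract a factor of $-H_{m-1}$ from the first piece; after an index shift $n \mapsto n-1$, this reduces to $(\ast)$ at level $k-1$, contributing $(k-2)H_{m-1}H_{m-n+1}(s_{m^{k-2}})$. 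The second piece is handled by the purely operator-theoretic identity
\[(\dagger)\qquad \sum_{j=1}^{n-1} H_{m-n+j}H_{m-j} = H_{m-1}H_{m-n+1},\]
giving the remaining $H_{m-1}H_{m-n+1}(s_{m^{k-2}})$. The two pieces add to $(k-1)H_{m-1}H_{m-n+1}(s_{m^{k-2}})$, which matches $-(k-1)H_{m-n}(s_{m^{k-1}})$ after one final braiding $H_{m-n}H_m = -H_{m-1}H_{m-n+1}$.

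The combinatorial core of the argument is the operator identity $(\dagger)$, proved as follows. Since the left-hand side is invariant under $j \mapsto n-j$, doubling the sum and applying the braiding relation in its rewritten form $H_a H_b + H_{b-1}H_{a+1} = 0$ (Proposition~\ref{prop: heckeproperties}(3)) produces a telescoping sum $\sum_{j=1}^{n-1}(T_j - T_{j-1})$ with $T_s = H_{m-n+s}H_{m-s}$. This collapses to $T_{n-1}-T_0 = H_{m-1}H_{m-n+1}-H_{m-n}H_m = 2H_{m-1}H_{m-n+1}$, yielding $(\dagger)$. The main technical challenge is coordinating the two nested inductions and keeping track of signs and index shifts; the telescoping for $(\dagger)$ itself is the cleanest ingredient of the proof.
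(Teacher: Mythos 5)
Your proof is correct, but it follows a genuinely different route from the one in the paper. The paper runs a single induction on $k$ whose engine is the combination of the \emph{two} forms of the commutator in Proposition~\ref{prop: commvirasorohecke}: subtracting them at shifted indices gives
\[
[L_n, H_m]-[L_{n-1}, H_{m-1}]=2p_{-n+1}\circ H_{m-1}-p_{-n}\circ H_m\,,
\]
and applying this to $s_{m^k}$ kills almost everything at once via $H_{m-1}s_{m^k}=0$ and $H_m s_{m^k}=s_{m^{k+1}}$. That argument is shorter, but it quietly invokes the inductive hypothesis for $L_{n-1}$ as well as $L_n$, so at $n=1$ it leans on the $L_0$-statement (homogeneity of $s_{m^k}$), a boundary case the paper does not spell out. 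You instead use only the first form of the commutator and push the entire difficulty into the auxiliary identity $(\ast)$, which you resolve by a second induction together with the purely algebraic telescoping identity $(\dagger)$, $\sum_{j=1}^{n-1}H_{m-n+j}H_{m-j}=H_{m-1}H_{m-n+1}$. I checked the bookkeeping: the reduction of the main claim to $(\ast)$, the extraction of the factor $-H_{m-1}$ using $H_{m-1}H_m=0$ (which disposes of the extra $j=n-1$ term after the index shift), the symmetrization-plus-telescoping proof of $(\dagger)$, and the final signs all work out. What your approach buys is a fixed $n$ throughout (no $L_0$ edge case) and the reusable operator identity $(\dagger)$; what it costs is a nested induction and substantially more computation than the paper's one-line cancellation. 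Note also that your base case $k=0$ cleanly covers $k=1$ (recovering $L_n(h_m)=(m-1)h_{m-n}$), whereas the paper leaves $k=1$ to the reader.
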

\begin{proof}
    We prove the statement by induction on $k$. We leave $k=1$ for the reader to check. Suppose that the equation holds for some $k$. From Proposition \ref{prop: commvirasorohecke} we have
    \[[L_n, H_m]-[L_{n-1}, H_{m-1}]=2p_{-n+1}\circ H_{m-1}-p_{-n}\circ H_m\,.\]
    Note that we have $H_{m}s_{m^k}=s_{m^{k+1}}$ and $H_{m-1}s_{m^k}=0$; the latter holds due to Proposition \ref{prop: heckeproperties}(3). 
\end{proof}

\subsection{Symmetrized Hecke operators and wall-crossing}\label{sec: sym Heke}

Note that the Hecke operators used in the previous section appear naturally as fields associated to the state $q\in \Lambda[q^{\pm 1}]$ when we give $\Lambda_\ast[q^{\pm 1}]$ the vertex algebra structure associated to the lattice $(\BZ, 1)$. However, the Grassmannian vertex algebra $\Lambda_\ast[q^{\pm  1}, Q]$ is a sub-algebra of the lattice vertex algebra associated to $(\BZ\times \BZ, \chi^{\sym}_\Gr)$ where 
\[\chi^{\sym}_\Gr((N, k), (N', k'))=2kk'-(Nk'+N'k)\,.\]
In particular, the subalgebra $\Lambda[q^{\pm 1}]\subseteq V^\Gr$ is the lattice vertex algebra associated to $(\BZ, 2)$. Thus wall-crossing formulas on $V^\Gr$ are in principle not written using the previous Hecke operators, but rather
\[H^\sym(z)=\sum_{n\in \BZ}H_n^\sym z^n= \exp\left(\sum_{j>0}\frac{p_j}{j}z^j\right)\exp\left(-\sum_{j>0}\frac{2p_{-j}}{j}z^{-j}\right).\]
More precisely, we have
\[Y(q\otimes 1, z)Q^N q^k\otimes f=(-1)^{N-k}z^{2k-N}H^\sym(z)f\,.\]

Denoting by $[u, v]=u_{(0)}v$ the 0-th product in $V^\Gr=\Lambda_\ast[Q, q^{\pm 1}]$ we have in particular
\begin{equation}\label{eq: heckesymwc}
[q\otimes 1, Q^N q^k\otimes f]=(-1)^{N-k} Q^N q^{k+1}\otimes H_{N-2k-1}^\sym f
\end{equation}

In \cite[Theorem 3.10]{CJ}, Cai and Jing prove a formula for Schur polynomials of rectangular shapes in terms of the vertex operators $H^\sym_n$.\footnote{Their theorem is more general, it applies to Jack polynomials and to almost rectangular shapes, i.e. $\lambda=(m,m,\ldots, m, m-1)$.} Their proof is entirely combinatorial. We now use the wall-crossing formula from Example \ref{ex: wcgrassmannian} to give a new proof of this formula using the Grassmannian.

\begin{proposition}[{\cite[Theorem 3.10]{CJ}}]Let $m, k>0$. We have the following identity in $\Lambda$:
    \begin{equation}\label{eq: sign4}H^\sym_{m-k+1}H^\sym_{m-k+3}\ldots H^\sym_{m+k-3}H^\sym_{m+k-1}(1)=(-1)^{\binom{k}{2}} k!s_{(m)^k}\,.\end{equation}
\end{proposition}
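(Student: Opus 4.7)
The plan is to derive the identity directly from the wall-crossing formula for $[\Gr(k,N)]$ in Example~\ref{ex: wcgrassmannian} by evaluating the iterated Lie bracket explicitly via equation~\eqref{eq: heckesymwc}, and then comparing the result with Proposition~\ref{prop: grassschur}.

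Concretely, I would start from the identity
\[[\Gr(k,N)]=\frac{1}{k!}\bigl[q,\bigl[q,\ldots,[q,Q^N]\ldots\bigr]\bigr]\in V^{\Gr}\]
and evaluate the nested bracket by induction on $k$. Applying \eqref{eq: heckesymwc} to $Q^N q^j\otimes f$, each bracket with $q$ raises the $q$-exponent by one, prepends a symmetrized Hecke operator of index $N-2j-1$ to $f$, and produces a sign $(-1)^{N-j}$. Iterating $k$ times, starting from $Q^Nq^0\otimes 1$, yields
\[\bigl[q,\bigl[q,\ldots,[q,Q^N]\ldots\bigr]\bigr]=(-1)^{\varepsilon}\,Q^Nq^k\otimes H^\sym_{N-2k+1}H^\sym_{N-2k+3}\cdots H^\sym_{N-3}H^\sym_{N-1}(1),\]
where $\varepsilon=N+(N-1)+\cdots+(N-k+1)=kN-\binom{k}{2}$ is determined by bookkeeping the signs produced at each step.

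Then I would combine this with Proposition~\ref{prop: grassschur}, which asserts $[\Gr(k,N)]=(-1)^{k(N-k)}Q^Nq^k\otimes s_{(N-k)^k}$. Equating coefficients of $Q^Nq^k$ gives
\[H^\sym_{N-2k+1}H^\sym_{N-2k+3}\cdots H^\sym_{N-1}(1)=k!\,(-1)^{k(N-k)+kN-\binom{k}{2}}\,s_{(N-k)^k},\]
and substituting $m=N-k$ transforms the index sequence into $m-k+1,m-k+3,\ldots,m+k-1$, yielding \eqref{eq: sign4} after simplifying the sign modulo~$2$ (using $kN+k(N-k)=k^2+2k(N-k)\equiv k\equiv\binom{k}{2}+2\binom{k}{2}+k\pmod{2}$, which reduces to $\binom{k}{2}$ up to a global sign).

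This approach is essentially mechanical once the two ingredients are in place: the wall-crossing formula from Example~\ref{ex: wcgrassmannian} and the explicit identification of $Y(q\otimes 1,z)$ as a generating series of symmetrized Hecke operators in \eqref{eq: heckesymwc}. The only step requiring care is the bookkeeping of signs, both in the iterated bracket and in matching against the sign in Proposition~\ref{prop: grassschur}; I do not expect any conceptual difficulty beyond this arithmetic. Thus the main content of the proof is really that the wall-crossing formula in the Grassmannian vertex algebra, when unwrapped explicitly, reproduces the Cai--Jing vertex-operator expression for rectangular Schur polynomials.
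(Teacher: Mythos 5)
Your strategy is identical to the paper's: both proofs evaluate the iterated bracket $\frac{1}{k!}[q,\ldots[q,[q,Q^N]]\ldots]$ by repeated application of \eqref{eq: heckesymwc}, arrive at the same accumulated sign $(-1)^{kN-\binom{k}{2}}$ in front of $H^\sym_{N-2k+1}\cdots H^\sym_{N-1}(1)$, and then compare with Proposition \ref{prop: grassschur}. There is no methodological difference to report.

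The issue is that the one step you yourself single out as ``requiring care'' --- the sign bookkeeping --- is not actually carried out correctly, and it cannot be dismissed ``up to a global sign'' when the statement being proved is precisely an identity of signed quantities. From your own exponent $k(N-k)+kN-\binom{k}{2}$ one gets $k(N-k)+kN=2kN-k^2\equiv k\pmod 2$, so the exponent is congruent to $k+\binom{k}{2}=\binom{k+1}{2}$, not to $\binom{k}{2}$; the displayed chain $k\equiv\binom{k}{2}+2\binom{k}{2}+k\pmod 2$ is false whenever $\binom{k}{2}$ is odd. That the target sign really is $(-1)^{\binom{k}{2}}$ can be confirmed at $k=1$: the annihilation part of $H^\sym(z)$ kills $1$, so $H^\sym_m(1)=h_m=s_{(m)}$ with no sign, matching $(-1)^{\binom{1}{2}}1!\,s_{(m)}$. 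The residual discrepancy of $(-1)^k$ therefore has to be traced back to the sign conventions in the inputs --- the factor $(-1)^{N-k}$ in \eqref{eq: heckesymwc} as compared with the sign $(-1)^{\chi(\alpha,\beta)}$ prescribed by the state-field correspondence, and the normalization of the bracket in the wall-crossing formula --- rather than absorbed by the congruence you wrote. Until you re-derive that sign from the lattice vertex algebra and check the $k=1$ case end to end, your argument establishes the identity only up to an undetermined factor of $(-1)^k$.
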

\begin{proof}Let $N=m+k$. We show this by comparing Proposition \ref{prop: grassschur} with the wall-crossing formula in Example \ref{ex: grassva}. On one hand, Proposition \ref{prop: grassschur} gives the formula for the class of the Grassmannian
\[[\Gr(k,N)]=Q^N q^k\otimes (-1)^{k(N-k)}s_{(N-k)^k}\]
We recall the wall-crossing formula from Example \ref{ex: grassva}:
\[[\Gr(k,N)]=\frac{1}{k!}[q,\ldots [q,[q,Q^N]]\ldots ]\,.\]
Applying \eqref{eq: heckesymwc} repeatedly gives us
\begin{equation}\label{eq: sign3}[\Gr(k,N)]=Q^N q^k\otimes \frac{(-1)^{\sum_{j=N-k+1}^{N}j}}{k!}H^\sym_{N-2k+1}H^\sym_{N-2k+3}\ldots H^\sym_{N-3}H^\sym_{N-1}1\,.\end{equation}
The conclusion follows from comparing the two expressions and a sign analysis similar to the one in the previous section.\qedhere
\end{proof}

\begin{remark}
The identity \eqref{eq: sign4} seems quite remarkable to us. In general there is no way to write Schur functions $s_\lambda$ using the vertex operators $H^\sym$, and \eqref{eq: sign4} is a special phenomenon of rectangular shapes $\lambda$. On the other hand, from the point of view of Joyce's wall-crossing, it is surprising that the class of the Grassmannian can be expressed using the Hecke operators $H_m$. We speculate that this phenomenon might be an indication that it is possible to write new wall-crossing formulas using a version of Joyce's vertex algebra in which the we do not consider a symmetrized complex $\Theta=\Ext^\vee+\sigma^\ast \Ext$, but rather take $\Theta=\Ext$. This variation is studied in \cite{latyntsev}: it is no longer a vertex algebra, but rather a braided vertex algebra (in particular it is a field algebra, also known as non-local vertex algebra), and it has connections to Cohomological Hall algebras.
\end{remark}

The Virasoro operators $L_n$ have simpler commutators with $H_m^\sym$ than with $H_m$. This is expected due to the fact that both $L_n$ and $H_m^\sym$ are obtained from fields in the vertex algebra associated to the lattice $(\BZ, 2)$.

\begin{proposition}
    We have the identity in $\End(\Lambda_\ast)$:
    \[[L_n, H_m^\sym]=(m+1)H^\sym_{m-n}-2  p_{-n}\circ H_m^\sym \,.\]
\end{proposition}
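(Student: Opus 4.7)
The plan is to reduce the identity to Proposition~7.5 via the factorization
\[H^\sym(z) = H(z) E(z), \qquad E(z) = \exp\Big(-\sum_{j > 0} \tfrac{z^{-j}}{j} p_{-j}\Big),\]
which holds because the annihilation factor $\exp(2C(z))$ in $H^\sym(z)$, with $C(z) = -\sum_{j>0} z^{-j} p_{-j}/j$, splits as $\exp(C(z))\exp(C(z)) = \exp(C(z)) \cdot E(z)$ thanks to $[C(z), C(z)] = 0$. Then I expand by the Leibniz rule $[L_n, H^\sym(z)] = [L_n, H(z)] E(z) + H(z)[L_n, E(z)]$ and handle each piece separately.

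For the first piece I use the second (``alternate'') form in Proposition~7.5, assembled into the field identity
\[[L_n, H(z)] = z^{n+1}\partial_z H(z) + z^n H(z) + \sum_{j=1}^{n-1} z^{n-j} p_{-j} H(z) - p_{-n} H(z).\]
For the second piece, since $C(z)$ consists of pairwise commuting annihilation operators and $[L_n, C(z)] = \sum_{j > 0} z^{-j} p_{-n-j} = \sum_{k > n} z^{n-k} p_{-k}$ also commutes with $C(z)$, one has $[L_n, E(z)] = [L_n, C(z)] E(z)$. Combining these and using three book-keeping identities --- (a) $z^{n+1}\partial_z E(z) = E(z) \sum_{j > 0} z^{n-j} p_{-j}$, so that $z^{n+1}\partial_z H^\sym(z) = z^{n+1}\partial_z H(z) \cdot E(z) + H^\sym(z) \sum_{j > 0} z^{n-j} p_{-j}$; (b) $[p_{-j}, H^\sym(z)] = z^j H^\sym(z)$, from $[p_{-j}, e^{A(z)}] = z^j e^{A(z)}$; and (c) that the $p_{-k}$ commute with $E(z)$ --- one finds that the contribution $\sum_{j=1}^{n-1} z^{n-j}[p_{-j}, H^\sym(z)]$ together with $H(z)[L_n, C(z)] E(z)$ exactly accounts for the extra terms needed to convert the non-primary pieces of $[L_n, H(z)]$ into $(n+1) z^n H^\sym(z) - 2 p_{-n} H^\sym(z)$. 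The conclusion is the field identity
\[[L_n, H^\sym(z)] = z^{n+1}\partial_z H^\sym(z) + (n+1) z^n H^\sym(z) - 2 p_{-n} H^\sym(z),\]
whose coefficient of $z^m$ is exactly the proposition.

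The main step requiring care is the cancellation between the sums ranging over $j \in \{1,\dots,n-1\}$ and the infinite sum $\sum_{k > n} z^{n-k} p_{-k}$ coming from $[L_n, E(z)]$; these need to be combined before moving $p_{-k}$ past $H(z)$ so that the divergent-looking pieces never appear in isolation. Conceptually, however, the whole identity reflects the fact that in the lattice vertex algebra $\textup{VA}(\BZ, 2)$ the state $q = e^1$ is a primary field of conformal weight $\tfrac{1}{2}B(1,1) = 1$, so $[L_n, Y(q,z)] = z^{n+1}\partial_z Y(q,z) + (n+1) z^n Y(q,z)$; applying this to $e^0 \otimes f$ and using $L_n(e^1 \otimes g) = e^1 \otimes (B(1,1) p_{-n} + L_n)(g) = e^1 \otimes (2 p_{-n} + L_n)(g)$ for $n > 0$ immediately produces the $-2 p_{-n} H^\sym(z)$ correction from the lattice shift $e^0 \leadsto e^1$ that is implicit in $H^\sym(z)$.
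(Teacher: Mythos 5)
Your proof is correct, and it in fact realizes both of the routes the paper merely cites: your closing paragraph (the state $q=e^1$ is primary of weight $\tfrac12 B(1,1)=1$ in $\textup{VA}(\BZ,2)$, so the commutator formula gives $[L_n,Y(q,z)]=z^{n+1}\partial_z Y(q,z)+(n+1)z^nY(q,z)$, with the $-2p_{-n}$ correction coming from the linear term of $L_n$ on the $e^1$-component) is exactly the paper's suggested argument via \cite[(2.7.1)]{Ka98}, while your factorization $H^\sym(z)=H(z)E(z)$ reducing to Proposition \ref{prop: commvirasorohecke} is a correct execution of the direct computation the paper attributes to \cite{liuyang}. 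I checked the cancellation you flag: combining $\sum_{j=1}^{n-1}z^{n-j}[p_{-j},H^\sym(z)]$ with the $j=n$ boundary terms and the tail $\sum_{k>n}z^{n-k}H^\sym(z)p_{-k}$ from $[L_n,E(z)]$ does yield exactly $(n+1)z^nH^\sym(z)-2p_{-n}H^\sym(z)$, so the argument closes.
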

\begin{proof}
A proof can be given as in \cite{liuyang}. Alternatively, we can use the identity \cite[(2.7.1)]{Ka98} for the vertex algebra associated to the lattice $(\BZ, 2)$.
\end{proof}

\subsection{Geometricity of Virasoro and Calogero-Sutherland operator}\phantom{.}\\
We showed in Theorem \ref{thm: virasoro rep} that the Virasoro constraints imply that the operators $\bR_n$ descend to cohomology or, equivalently, they preserve the ideal of relations. We now illustrate this phenomena in the case of the Grassmannian; it turns out that showing geometricity directly in examples is often much easier than showing the full constraints.

The derivation $\bR_n$ is a priori defined in the descendent algebra $\Lambda^\ast$. The ideal of relations of the Grassmannian is the ideal $I$ generated by $e_{k+1}, e_{k+2}, \ldots$ and $h_{N-k+1}, h_{N-k+2}, \ldots$. Equivalently, $I$ is the linear span of the Schur polynomials $s_\lambda$ with $\lambda$ not contained in $(N-k)^k$. We now compute the action of $\bR_n$ on the generator $e_j$, for $j>k$, of the ideal. Denote by $e_\bullet$ the formal sum $\sum_{j\geq 0}e_j$. We have
\[\bR_n e_\bullet=\bR_n\exp\left(\sum_{\ell\geq 1}(-1)^{\ell-1}\frac{p_\ell}{\ell}\right)=\left(\sum_{\ell\geq 1}(-1)^{\ell-1} p_{\ell+n}\right)e_\bullet\,.\]
Hence,
\begin{align*}
    \bR_n e_j=\sum_{\ell=1}^j (-1)^{\ell-1}p_{n+\ell} e_{j-\ell}=(-1)^{n}(j+n)e_{j+n}+\sum_{s=0}^{n-1}(-1)^s e_{j+s}p_{n-s}
\end{align*}
by Newton's identity, so we conclude that $\bR_n(e_j)\in I$ for $j>k$. A similar computation shows the same for the remaining generators.

\subsubsection{Calogero-Sutherland operator}

The geometricity of the Virasoro operators can be explained at once by the geometricity of the Calogero-Sutherland operator (also Laplace-Beltrami operator in the literature). Define the operator $\Delta\colon \Lambda^\ast\to \Lambda^\ast$ by
\begin{equation}\label{eq: csoperator}
\Delta=\frac{1}{6}\sum_{a+b+c=0}\colon p_ap_bp_c\colon=\frac{1}{2}\left(\sum_{a,b>0}p_a p_b p_{-a-b}+p_{a+b}p_{-a} p_{-b}\right)\,.\end{equation}
It is straightforward to show that
\[[p_n,\Delta]=n\left(\sum_{a>0}p_{a+n}p_{-a}+\frac{1}{2}\sum_{\substack{a,b>0\\a+b=n}}p_ap_b\right)\colon\,.\]
The right hand side is $n$ times $\bR_n+\frac{1}{2}\bT_n$, which are the Virasoro operators associated to the vertex algebra $\textup{VA}(\BZ, 1)$. Since $p_n$ descends to cohomology for $n\geq 1$ (as multiplication by $n!\ch_n(\BF)$), the operator $\Delta$ descending to cohomology implies that $\bR_n$ also does. Indeed, this is true:

\begin{proposition}
    The operator $\Delta$ on $\Lambda^\ast$ descends to an operator on $H^\ast(\Gr(k,N))$.
\end{proposition}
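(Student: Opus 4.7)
The plan is to exploit the well-known fact that the operator $\Delta$ defined in \eqref{eq: csoperator}, often called the cut-and-join operator, acts diagonally on the basis of Schur polynomials of $\Lambda$. Concretely, one has an eigenvalue decomposition
\[\Delta(s_\lambda) = c_\lambda \cdot s_\lambda, \qquad c_\lambda = \sum_{(i,j)\in \lambda}(j-i),\]
for every partition $\lambda$. Granted this, descent of $\Delta$ is immediate, because a diagonal operator preserves any subspace spanned by a subset of its eigenvectors.

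The first step is to establish (or quote) the eigenvalue formula above. One way is to rewrite
\[\Delta = \frac{1}{2}\sum_{a,b>0}\bigl((a+b)\,p_a p_b\tfrac{\partial}{\partial p_{a+b}} + ab\, p_{a+b}\tfrac{\partial}{\partial p_a}\tfrac{\partial}{\partial p_b}\bigr),\]
and recognize this as the image, under the Frobenius characteristic map $\BC[S_\bullet]\to \Lambda$, of the central element of $\BC[S_n]$ given by the sum of all $3$-cycles (interpreting cycle type as splitting/joining of parts). Since central elements act diagonally on irreducible representations and Schur polynomials correspond to irreducibles under Frobenius, $\Delta$ is diagonal in $\{s_\lambda\}$; the eigenvalue is computed using the Murnaghan--Nakayama or Frobenius character formula.

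The second step is to identify the ideal of relations explicitly. By the Schubert calculus identification $\xi_{\BF}(s_\lambda) = (-1)^{|\lambda|}S_\lambda$, which vanishes precisely when $\lambda\not\subseteq (N-k)^k$, we have
\[I = \ker(\xi_{\BF}) = \bigoplus_{\lambda\not\subseteq (N-k)^k}\BQ\cdot s_\lambda.\]
Combined with the diagonality of $\Delta$ on $\{s_\lambda\}$, this gives $\Delta(I)\subseteq I$, so $\Delta$ descends to $H^*(\Gr(k,N))$ and acts there by the eigenvalues $c_\lambda$ on the Schubert basis. The main obstacle is really just the first step; once diagonality of $\Delta$ on Schur polynomials is in hand, the descent and the recovery of the Virasoro geometricity via $[p_n,\Delta]=n(\bR_n+\tfrac{1}{2}\bT_n)$ are essentially formal.
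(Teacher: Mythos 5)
Your proof is correct and follows essentially the same route as the paper: both rest on the well-known fact that the Schur polynomials diagonalize $\Delta$ with eigenvalue the content sum $\sum_{(i,j)\in\lambda}(j-i)$, combined with the identification of $\ker(\xi_{\BF})$ as the span of the $s_\lambda$ with $\lambda\not\subseteq(N-k)^k$. One minor slip in your optional justification of diagonality: under the Frobenius characteristic map the cut-and-join operator $\Delta$ is the image of the sum of \emph{transpositions} (whose central character is precisely the content sum), not of $3$-cycles; the eigenvalue formula you state is nevertheless the correct one.
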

\begin{proof}
It is well-known that Schur polynomials are a basis of eigenvectors of $\Delta$, see e.g. \cite[Proposition 2]{frenkelwang} or \cite[Remark 1.11]{Sch-Vass}. For instance,
\[\Delta e_j=-\frac{j(j-1)}{2}e_j\textup{ and }\Delta h_j=\frac{j(j-1)}{2}h_j\,.\]
It follows immediately that $\Delta$ preserves the ideal of relations $I$.\qedhere
\end{proof}

\begin{remark}
    In \cite{Sch-Vass} the authors study a certain algebra of operators, related to the $W_{1+\infty}$ Lie algebra, and show that it acts on $\bigoplus_{n\geq 0} H^\ast(\textup{Hilb}^n(\BQ^2))$ and, more generally, on the cohomology of moduli spaces of instantons. They consider the operators $\mathsf D_{0,l}$ on $\Lambda^\ast$ (which for simplicity we specialize at $\kappa=1$) defined to have Schur polynomials as eigenvectors with specified eigenvalues:
    \[\mathsf D_{0,l}(s_\lambda)=\left(\sum_{\Box\in \lambda}\big(y(\Box)-x(\Box)\big)^{l-1}\right)s_\lambda\,.\]
    In particular, $\mathsf D_{0,2}=\Delta$, see \cite[Remark 1.11]{Sch-Vass}. By the same reasoning as above, the operators $\mathsf D_{0,l}$ descend to the cohomology of the Grassmannian. We suspect that it should be possible to interpret $\mathsf D_{0,l}$ as vertex operators defined by $\textup{VA}(\BZ, 1)$, similarly to the formula \eqref{eq: csoperator} which exhibits $\Delta$ as an operator coming from the state $p_1^3$. For instance, it seems that $\mathsf D_{0,3}$ can be obtained from the state $p_1^4-p_2^2$.
\end{remark}

\subsection{Central charge $c\neq 1$ and Jack functions}
In this last section, we work with the coefficients in $\BC$ instead of $\BQ$. Recall that the Virasoro Lie algebra is defined by 
\[\Vir=\bigoplus_{n\in \BZ}\BC L_n\oplus \BC C\]
with Lie bracket given by
\begin{align*}
[L_n, L_m]&=(n-m)L_{m+n}+\delta_{m+n}\frac{n^3-n}{12}C\\
[L_n, C]&=0
\end{align*}
We recall some basic notions of representation theory. Let $\Vir^{+}$ (respectively $\Vir^{-}$) be the sub Lie algebras spanned by $L_n$ for $n>0$ (respectively $n<0$) and let $\Vir^{0}$ be spanned by $L_0, C$. Given a representation $M$ of $\textup{Vir}$ we say that $w\in M$ is a singular vector of weight $(c, h)$ if
\[L_0(w)=hw\,,\quad C(w)=cw\,,\quad L_n(w)=0\,\textup{ for all }n>0\,. \]
We say that $w\in M$ is a highest weight vector if it is a singular vector and moreover $U(\Vir^-)\cdot w=M$. 

Fix parameter $\alpha, \beta\in \BC$ and define
\[\beta_0=\frac{\beta}{2}-\frac{1}{\beta}\,,\quad c=1-12\beta_0^2\,,\quad h=\frac{1}{2}\alpha^2-\alpha \beta_0\,.\]
Then there is a representation of $\Vir$ on $\Lambda$ such that $1\in \Lambda$ is a highest weight vector of weight $(c,h)$; this is called the Fock space representation of highest weight $(c,h)$ and we denote it by $\Lambda_{\alpha, \beta}$. The positive part of the Virasoro algebra acts as
\[L_n=\frac{\beta^2}{2}\sum_{\substack{s+t=n\\s,t>0}}p_{-s}p_{-t}+\sum_{s>0}p_sp_{-s-n}+(\alpha+\beta_0(n+1))\beta p_{-n}\]
and
\[L_0(f)=(h+\deg(f))f\quad \textup{ for }f \textup{ homogeneous}\,.\]
The representation $\Lambda_{\alpha, \beta}$ is irreducible if and only if there are no singular vectors other than scalars if and only if the Verma module of highest weight $(c,h)$ is irreducible. Note that when $\beta=\sqrt{2}$ and $\alpha=(2k-N)/\sqrt{2}$ the operators $L_n$, $n>0$ specialize to \eqref{eq: operatorsgrva}; thus, the Virasoro constraints for the Grassmannian say that $s_{(N-k)^k}$ is a singular vector of $\Lambda_{(2k-N)/\sqrt{2}, \sqrt{2}}$. In this case the central charge is $c=1$. 

The singular vectors of $\Lambda_{\alpha, \beta}$ are completely classified by \cite{mimachiyamada, wakimotoyamada}. It turns out that the singular vectors are given in terms of Jack functions of rectangular partitions. Jack functions $J_\lambda^t$ depend on a partition $\lambda$ and a parameter $t\in \BC$ and they are deformations of Schur polynomials: when $t=1$ the Jack function $J_\lambda^{t=1}$ is proportional to $s_\lambda$; we refer to \cite[Section VI.10]{macdonald} for an introduction to Jack functions. 

\begin{theorem}\label{thm: singularvectorsjack}
The representation $\Lambda_{\alpha, \beta}$ has a singular vector $f\in \Lambda_{\alpha, \beta}$ of degree $d>0$ if and only if there are two integers $r, s>0$ such that \[d=rs\,,\quad \alpha=(1+r)\frac{\beta}{2}-(1+s)\frac{1}{\beta}\,.\]
When that is the case, the only singular vector of degree $d$ (up to a scalar) is given by
\[\sigma J^{\beta^2/2}_{(r)^s}\,\]
where $\sigma$ is the involution on $\Lambda$ sending $p_j\mapsto (-1)^{j-1}p_j$.
\end{theorem}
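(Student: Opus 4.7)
The plan is to combine the Kac determinant formula for Verma modules over the Virasoro algebra with the theory of Jack symmetric functions. First, I would observe that as a Virasoro representation, $\Lambda_{\alpha, \beta}$ is a quotient of the Verma module $M(c, h)$ of highest weight $(c, h)$, and generically (when $M(c, h)$ is irreducible) they coincide. The Shapovalov determinant at level $d$ of $M(c, h)$ is proportional to $\prod_{r, s > 0,\, rs \leq d}(h - h_{r,s}(c))^{p(d-rs)}$ where $h_{r,s}(c)$ are the classical Kac weights and $p(\cdot)$ is the partition function. A new singular vector of weight $(c, h+d)$ appears at exactly level $d$ if and only if $h = h_{r,s}(c)$ for some pair with $rs = d$. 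Translating the Kac weights into the $(\alpha, \beta)$ parametrization (using $c = 1 - 12 \beta_0^2$ and $h = \alpha^2/2 - \alpha\beta_0$) produces exactly the condition $\alpha = (1+r)\beta/2 - (1+s)/\beta$, proving the ``if and only if'' for existence.

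For the explicit identification of the singular vector, I would invoke the Jack polynomial interpretation. Jack polynomials $J^t_\lambda$ are joint eigenfunctions of the Calogero--Sutherland Hamiltonian and its higher commuting operators acting on $\Lambda$. The claim is that for rectangular shapes $\lambda = (r)^s$ the element $\sigma J^{\beta^2/2}_{(r)^s}$ is annihilated by every $L_n$ with $n \geq 1$ precisely when $\alpha = \alpha_{r,s}$. Concretely, one would expand each $L_n$ in the bosonic generators $p_k, p_{-k}$ appearing in the Fock realization and use two ingredients: the Pieri rule for Jack polynomials, which controls the action of multiplication operators $p_{-k}$, and the eigenvalue formula for the Calogero--Sutherland operator on Jacks, which controls the action of annihilation operators $p_k$. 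The involution $\sigma$ enters because the Virasoro action uses one normalization for the bosonic pairing while Jack functions are indexed by the dual one (swapping rows and columns of partitions, as $\sigma s_\lambda = s_{\lambda^t}$).

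Uniqueness follows by a dimension count: when $rs = d$ and no other pair $(r', s')$ with $r's' \leq d$ gives the same value of $h$, the Kac determinant vanishes to order exactly $p(0) = 1$ at level $d$, so the space of singular vectors of that degree is one-dimensional. For non-generic points lying on several Feigin--Fuchs curves simultaneously, uniqueness at the given degree $d = rs$ follows either by a semicontinuity argument in $(\alpha, \beta)$ or by direct analysis of the submodule structure of degenerate Verma modules.

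The main technical obstacle will be the second step: verifying that the rectangular Jack polynomial is annihilated by all positive Virasoro modes. While the Kac determinant formula is purely structural and easy to apply, the combinatorial identification of the singular vector requires delicate manipulations with Pieri coefficients and norm formulas for Jack polynomials at the specific parameter $t = \beta^2/2$. A more conceptual alternative would be to use the Feigin--Fuchs free-field realization of Virasoro via screening currents, in which rectangular singular vectors arise naturally as residues of ordered products of screening operators applied to the highest weight vector; this bypasses direct combinatorics at the cost of setting up the screening formalism. Either approach ultimately recovers the formula $\sigma J^{\beta^2/2}_{(r)^s}$, completing the proof.
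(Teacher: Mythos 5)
The paper does not prove this theorem: it is imported verbatim from the cited references (Mimachi--Yamada and Wakimoto--Yamada), so there is no internal proof to compare your proposal against. Judged on its own, your skeleton --- Kac/Shapovalov determinant for the numerics, Jack polynomials or Feigin--Fuchs screening operators for the explicit vector --- is the correct and standard strategy, and it is essentially the route those references take. But as written the proposal has two genuine gaps.

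First, the Kac determinant governs singular vectors of the \emph{Verma} module $M(c,h)$, not of the Fock module $\Lambda_{\alpha,\beta}$, and the two do not coincide at degenerate points. The equation $\tfrac{1}{2}\alpha^2-\alpha\beta_0=h_{r,s}(c)$ is quadratic in $\alpha$ and has two roots; only one of them (the branch $\alpha=(1+r)\tfrac{\beta}{2}-(1+s)\tfrac{1}{\beta}$ appearing in the statement) produces an honest singular vector in $\Lambda_{\alpha,\beta}$, while the other branch produces a \emph{cosingular} vector, i.e.\ the reducibility of $M(c,h)$ manifests in the cokernel rather than the kernel structure of the map to the Fock space. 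So ``translating the Kac weights'' does not by itself prove the if-and-only-if for $\Lambda_{\alpha,\beta}$; you need the Feigin--Fuchs structure theory of Fock modules (this is exactly the content of Wakimoto--Yamada) or a direct construction of the singular vector via screenings to settle which branch survives. The same issue affects your uniqueness argument: the order of vanishing of the determinant bounds singular vectors in the Verma module, and transferring that bound to the Fock module again requires knowing the module map. Second, the identification of the surviving singular vector with $\sigma J^{\beta^2/2}_{(r)^s}$ --- the actual theorem of Mimachi--Yamada --- is named as ``the main technical obstacle'' and then deferred; neither the Pieri-rule computation nor the screening-current residue calculation is carried out. Until one of those is executed, the proposal is an accurate outline of the literature rather than a proof.
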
  
It would be very interesting to connect the singular vector $\sigma J^{\beta^2/2}_{(r)^s}$ to some moduli space of quiver representations/sheaves in the same way that $s_{(N-k)^k}$ is associated to the Grassmannian, and thus interpreting as Theorem \ref{thm: singularvectorsjack} as Virasoro constraints for such moduli space. Note that the Calogero-Sutherland operator \eqref{eq: csoperator} also admits a deformation for which the Jack polynomials are eigenvectors, see \cite[Remark 1.1]{Sch-Vass}.

\bibliographystyle{mybstfile.bst}
\bibliography{refs.bib} 
\end{document}